\begin{document}
\bibliographystyle{alpha}
\newcommand{\cn}[1]{\overline{#1}}
\newcommand{\e}[0]{\epsilon}

\newcommand{\maxdyson}{{\rm MAX}}
\newcommand{\mindyson}{{\rm MIN}}
\newcommand{\EE}{\ensuremath{\mathbb{E}}}
\newcommand{\PP}{\ensuremath{\mathbb{P}}}
\newcommand{\var}{\textrm{var}}
\newcommand{\N}{\ensuremath{\mathbb{N}}}
\newcommand{\R}{\ensuremath{\mathbb{R}}}
\newcommand{\Rkle}{\ensuremath{\mathbb{R}^k_{>}}}
\newcommand{\Rklezero}{\ensuremath{\mathbb{R}^k_{>0}}}
\newcommand{\C}{\ensuremath{\mathbb{C}}}
\newcommand{\Z}{\ensuremath{\mathbb{Z}}}
\newcommand{\Q}{\ensuremath{\mathbb{Q}}}
\newcommand{\T}{\ensuremath{\mathbb{T}}}
\newcommand{\Yind}{\ensuremath{\mathcal{L}^{\rm{ind}}}}
\newcommand{\edge}{\textrm{edge}}
\newcommand{\E}[0]{\mathbb{E}}
\newcommand{\bo}{\mathbf}
\newcommand{\OO}[0]{\Omega}
\newcommand{\F}[0]{\mathfrak{F}}
\def \Ai {{\rm Ai}}
\newcommand{\G}[0]{\mathfrak{G}}
\newcommand{\ta}[0]{\theta}
\newcommand{\w}[0]{\omega}
\newcommand{\ra}[0]{\rightarrow}
\newcommand{\vectoro}{\overline}
\newcommand{\ws}{{\rm WS}}
\newcommand{\nc}{{\rm NC}}
\newcommand{\ncf}{{\rm NC}^f}
\newcommand{\ncmi}{{\rm NC}^{-\infty}}
\newcommand{\wxy}{\mathcal{W}_{k;\bar{x},\bar{y}}}
\newcommand{\wxylr}{\mathcal{W}_{k;\bar{x},\bar{y}}^{\ell,r}}
\newcommand{\wxylrprime}{\mathcal{W}_{k;\bar{x}',\bar{y}'}^{\ell,r}}
\newcommand{\ewxy}{\mathcal{E}_{k;\bar{x},\bar{y}}}
\newcommand{\bxyf}{\mathcal{B}_{\bar{x},\bar{y},f}}
\newcommand{\bxyflr}{\mathcal{B}_{\bar{x},\bar{y},f}^{\ell,r}}
\newcommand{\bstar}{\mathcal{B}^{*}_{\bar{x},\bar{y},f}}
\newcommand{\bprime}{\mathcal{B}'_{\bar{x},\bar{y},f}}
\newcommand{\zxyf}{Z_{\bar{x},\bar{y},f}}
\newcommand{\signc}{\Sigma}
\newcommand{\AP}{\mathfrak{a}}
\newcommand{\CM}{\mathfrak{c}}
\newcommand{\CMK}{\textrm{CM}^K_{\ell,r}}
\newcommand{\XYfM}{\textrm{XY}^{f}_M}
\newcommand{\maxone}{\mathcal{M}}
\newcommand{\fext}{\mathcal{F}_{ext}}
\newcommand{\aone}{{\rm A}}
\newcommand{\setm}{{\rm M}}
\newtheorem{theorem}{Theorem}[section]
\newtheorem{partialtheorem}{Partial Theorem}[section]
\newtheorem{conj}[theorem]{Conjecture}
\newtheorem{lemma}[theorem]{Lemma}
\newtheorem{proposition}[theorem]{Proposition}
\newtheorem{corollary}[theorem]{Corollary}
\newtheorem{claim}[theorem]{Claim}
\newtheorem{experiment}[theorem]{Experimental Result}

\def\todo#1{\marginpar{\raggedright\footnotesize #1}}
\def\change#1{{\color{green}\todo{change}#1}}
\def\note#1{\textup{\textsf{\color{blue}(#1)}}}

\theoremstyle{definition}
\newtheorem{rem}[theorem]{Remark}

\theoremstyle{definition}
\newtheorem{com}[theorem]{Comment}

\theoremstyle{definition}
\newtheorem{definition}[theorem]{Definition}

\theoremstyle{definition}
\newtheorem{definitions}[theorem]{Definitions}

\theoremstyle{definition}
\newtheorem{conjecture}[theorem]{Conjecture}

\title{Brownian Gibbs property for Airy line ensembles}

\author[I. Corwin]{Ivan Corwin}
\address{I. Corwin\\
  Microsoft Research New England: Theory group\\
  1 Memorial Drive\\
  Cambridge, MA 02142, USA}
\email{ivan.corwin@gmail.com}

\author[A. Hammond]{Alan Hammond}
\address{A. Hammond\\
  Department of Statistics\\
  University of Oxford\\
  1 South Parks Road\\
  Oxford, OX1 3TG, U.K.}
\email{hammond@stats.ox.ac.uk}

\begin{abstract}
Consider a collection of $N$ Brownian bridges $B_i:[-N,N] \to \R$, $B_i(-N) = B_i(N) = 0$, $1 \leq i \leq N$, conditioned not to intersect. The edge-scaling limit of this system is obtained by taking a weak limit as $N \to \infty$ of the collection of curves scaled so that the point $(0,2^{1/2} N)$ is fixed and space is squeezed, horizontally by a factor of $N^{2/3}$ and vertically by $N^{1/3}$. If a parabola is added to each of the curves of this scaling limit, an $x$-translation invariant process sometimes called the multi-line Airy process is obtained. We prove the existence of a version of this process (which we call the Airy line ensemble) in which the curves are almost surely everywhere continuous and non-intersecting. This process naturally arises in the study of growth processes and random matrix ensembles, as do related processes with ``wanderers'' and ``outliers''. We formulate our results to treat these relatives as well.

Note that the law of the finite collection of Brownian bridges above has the property -- called the Brownian Gibbs property -- of being invariant under the following action. Select an index $1 \leq k \leq N$ and erase $B_k$ on a fixed time interval $(a,b) \subseteq (-N,N)$; then replace this erased curve with a new curve on $(a,b)$ according to the law of a Brownian bridge between the two existing endpoints $\big(a,B_k(a)\big)$ and $\big(b,B_k(b)\big)$, conditioned to intersect neither the curve above nor the one below. We show that this property is preserved under the edge-scaling limit and thus establish that the Airy line ensemble has the Brownian Gibbs property. 

An immediate consequence of the Brownian Gibbs property is a confirmation of the prediction of M. Pr\"{a}hofer and H. Spohn that each line of the Airy line ensemble is locally absolutely continuous with respect to Brownian motion. We also obtain a proof of the long-standing conjecture of K. Johansson that the top line of the Airy line ensemble minus a parabola attains its maximum at a unique point. This establishes the asymptotic law of the transversal fluctuation of last passage percolation with geometric weights. Our probabilistic approach complements the perspective of exactly solvable systems which is often taken in studying the multi-line Airy process, and readily yields several other interesting properties of this process.
\end{abstract}

\maketitle


\section{Introduction}

One-dimensional Markov processes (such as random walks or Brownian motion) conditioned not to intersect form an important class of models which arise in the study of random matrix theory, growth processes, directed polymers, tilings and certain problems in combinatorics and representation theory (see the surveys \cite{PLF,FSreview,KJSurvey,HS}).

The probability distribution for these collections of lines may be analysed using a technique which exploits the non-intersection property: in different guises this tool is the Karlin-McGregor formula, the Lingstrom-Gessel-Viennot formula, and the physics method of free fermions. These methods yield exact expressions (as determinants) for the statistics of such conditioned processes. Asymptotic analysis then gives rise to certain universal scaling limits in the sense of convergence of finite-dimensional distributions and accordingly provides exact expressions for these distributions. Striking examples of this are the works of \cite{BDJ,PS} which give limit theorems for the finite-dimensional distribution of the fluctuations of the height functions for the polynuclear growth (PNG) model in terms of Fredholm determinants involving the extended ${\rm Airy}_2$ kernel (see also \cite{OkResh}). Using exactly solvable methods, \cite{KJPNG} further proved a functional limit theorem and established the existence of a continuous version of a stochastic process with the above family of finite-dimensional distributions which is known as the Airy (or sometimes ${\rm Airy}_2$) process.

Such scaling limits have often been studied by 
analysing  exact (and often determinantal) formulas for finite-dimensional distributions and correlation functions.   It is natural to think that such canonical random processes might also be studied by probabilistic techniques; and this study seems all the more warranted in light of the numerous significant questions about these limiting processes which have remained unanswered by the existing approaches. In this paper, we begin such a probabilistic study and solve a number of these open questions.

The main tool which we will employ is the notion of a {\it Gibbs property}. Let us illustrate this idea in an informal way, so as to avoid introducing unnecessary notation (the specific {\it Brownian Gibbs property} which we will use is given explicitly in Definition \ref{maindefBGP}). In studying random processes (or fields) $X$ from $\Sigma\to \R$ (where $\Sigma$ may be taken to be a discrete lattice, graph, or Euclidean space), it is natural to consider the conditional distribution of $X$ inside a compact subset $C$ of $\Sigma$, given its values on $\Sigma\setminus C$.  There is a class of such processes with the property that these distributions depend only on the values of $X$ on the boundary of $C$ (in the discrete case, we mean the exterior boundary). Furthermore, given these values, the distribution of $X$ on $C$ is determined relative to a reference measure in terms of a Radon Nikodym derivative which is often written in terms of a Hamiltonian, and which is measurable with respect to the sigma-field generated by $X$ on $C$ and its (exterior) boundary. This property, which is often called the Gibbs property, may be regarded as a spatial version of the Markov property. Gibbs properties for random lattice indexed fields are ubiquitous in models of statistical physics (for instance, in Ising or Potts models) and have received extensive treatment since the seminal work of Dobrushin \cite{Dob}.

We dispense with generalities and focus on a simple, though highly relevant, example of the Gibbs property. Fix $N,T \in\N$ and let  $\{X_k: [-T,T] \to \Z \}_{k=1}^{N}$ denote a system of $N$ simple symmetric random walks (starting at time $-T$ and ending at time $T$)
 whose starting and ending points satisfy $X_k(-T)=X_k(T)=-k+1$, and which are conditioned on the non-intersection requirement that $X_{k}(i)>X_{k+1}(i)$ for all $i$ and $k$. Fix two integers $-T\leq a<b\leq T$ and a line index $1\leq k\leq N$. Then, conditioned on the values of $X_k(a)$, $X_k(b)$ and $X_{k\pm 1}(i)$ for $i\in (a,b)$, the law of $\{X_{k}(i):i\in (a,b)\}$ is uniform over the set of simple random walk paths of length $b-a$ between $X_k(a)$ and $X_k(b)$ that intersect neither $X_{k+1}$ nor $X_{k-1}$. (If $k \in \{1,n\}$, there is only one path to be avoided.) In other words, the line ensemble measure is invariant under resampling according to the usual random walk measure subject to the non-intersection condition.

The continuum counterpart of this system is a variant of Dyson's Brownian motion given by several Brownian bridges conditioned not to intersect. The resampling property has an analogue as well: the rule is the same, except that the underlying measure is now an independent system of Brownian bridges between the given endpoints, rather than the uniform law on simple random walk paths. This property, which we call the {\it Brownian Gibbs property} and introduce formally in Definition \ref{maindefBGP}, is easily understood and in many ways unsurprising.

The main contribution of this article is the observation of Theorem \ref{mainthm2}, that this Brownian Gibbs property is preserved in the edge scaling limits of non-intersecting line-ensembles (such as the random walk and Brownian bridge examples which we have seen). From this perspective, the observation seems fairly intuitive; however, the processes encountered in these scaling limits arise in many other perspectives from which this Gibbs property is neither straightforward nor intuitive -- in fact, this observation has not previously appeared in the literature.

Before discussing the consequences of the Brownian Gibbs property, we briefly discuss the two main difficulties in the proof of the result. Each difficulty derives from the edge scaling limit results for such systems having only been proved in terms of finite-dimensional distributions. The first problem is whether the limiting consistent family of finite-dimensional distributions, which is called the multi-line Airy process \cite{PS}, has a version which is supported on continuous, non-intersecting curves. Restricting to just the top line, the existence of a continuous version was shown in \cite{KJPNG} by the exactly solvable systems approach involving asymptotic analysis of Fredholm determinant expressions. The second difficulty is to show convergence in distribution of the finite system of Brownian bridges to this limiting ensemble. Once these two difficulties are settled, a coupling argument serves to prove that the Brownian Gibbs property is maintained in the limit.

In fact, we resolve both difficulties simultaneously by proving a functional limit theorem for the Brownian bridge line ensembles (which is in essence tightness for this family of curves), thus showing the existence of the {\it Airy line ensemble}. In proving such tightness (as well as the non-intersecting nature of the limit), one cannot appeal entirely to the finite-dimensional distributions, as they pertain only to a finite number of deterministic times and not random exceptional times at which there may be a small gap between lines or a large modulus of continuity for some line. The proof must also handle the squeezing of space that occurs under edge scaling. The determinantal approach seeming to fail here, we appeal to a probabilistic perspective. We employ the Brownian Gibbs property of $N$ Brownian bridges to show that, focusing on the top $k$ curves in a rescaled window of size $[-T,T]$, the following three events occur with high probability: (i) the collection of curves remains uniformly absolutely continuous with respect to $k$ Brownian bridges; (ii) the minimal gap between consecutive curves remains uniformly bounded from below; (iii) the top curve and the $k$-th curve remain uniformly bounded from above and below.

That the scaling limit has the Brownian Gibbs property has a number of significant consequences. In particular, in Proposition \ref{propabscon} we show that the top line of the Airy line ensemble (often known as the Airy process) is absolutely continuous with respect to Brownian motion (with diffusion parameter 2) on any fixed interval (this is also true for every other line). This leads to a  proof of Conjecture 1.5 of \cite{KJPNG} (stated here as Theorem \ref{propjoh}), a long-standing claim that the Airy process minus a parabola achieves its maximum at a unique point. As we explain in Section \ref{absconsec}, this conjecture leads to Theorem \ref{maxdist}, which identifies and proves the convergence of the law of the endpoint of a geometrically weighted ground state directed polymer (i.e., of the point-to-line maximizing path in last passage percolation). Appealing to the Brownian absolute continuity result we prove herein, \cite{MQR} has employed the Airy process continuum statistics of \cite{CQR} to give  and prove an exact formula for the law of this endpoint.  Proposition \ref{propabscon} also extends results of \cite{Hagg} and proves a functional central limit theorem for the Airy process on short time scales. In the process of proving our main theorem, we record several results of independent interest (see Section \ref{absconsec}), including the proof of Conjecture 1.21 of \cite{AvM}.

The extensive literature on non-intersecting line ensembles suggests many directions in which to pursue the approach introduced here. The non-intersecting random walkers' model described above goes by the name {\it vicious walkers} and was introduced by de Gennes \cite{Gen} as a model of directed fibrous structures in $1+1$ dimensions. This model was then studied  in \cite{fisher,HF}. This area has remained of interest due to its connections with symmetric function theory (specifically Schur functions and enumeration of Young tableaux as in \cite{GOV}), discrete analogues of random matrix theory (for instance \cite{NF}), and 2D Yang-Mills theory \cite{FMS,HT}. The non-intersecting Brownian line ensembles that we consider are variants of Dyson Brownian motion \cite{dyson, DG, NM} and can be considered as continuum versions of the vicious walkers' model.
Part of the attention which they attract is due to the description that they offer of random matrix eigenvalue processes \cite{KJnonint}.

Within the above literature are a number of interesting and important scaling limits and perturbations to the basic model considered in this paper. Perturbations to the first few Brownian particles result in perturbed Airy-like limiting line ensembles \cite{ADvM,AFvM} (see our Section \ref{airylikeSEC}). Scaling limits in the vicinity of the bulk of the Brownian bridges lead to the Dyson sine process, and large-scale perturbations (such as separating half of the starting points and/or ending points) lead to perturbations in the limiting processes such as the Pearcy \cite{TWpearcy} or tacnode processes \cite{BDTac}. Likewise, limiting processes have been derived when the Brownian bridges are replaced by alternative path measures such as Brownian excursions \cite{TWex}, or Bessel processes \cite{KTBes}.

The approach developed in this paper may be extended to study line ensembles with a Gibbs property which penalises but does not exclude crossing of curves with consecutive labels. In an upcoming article \cite{CH2}, we employ our probabilistic techniques to construct the {\it KPZ line ensemble} (related to the multi-layer extension of the stochastic heat equation constructed in \cite{OConWar}) and prove that is has such a ``soft'' Brownian Gibbs property. The top labeled curve of this line ensemble is the fixed time Hopf-Cole solution to the KPZ equation with narrow-wedge initial data (see \cite{ACQ}). As a result of the Gibbs property we are able to prove that this solution to the KPZ equation is locally absolutely continuous with respect to Brownian motion (analogously to Proposition \ref{propabscon}).

In the present paper, non-intersecting Brownian paths play the key role; in \cite{CH2}, this role is assumed by diffusions associated to the quantum Toda lattice Hamiltonian. O'Connell \cite{OCon} discovered these diffusions and their relationship to directed polymers (and hence also to the KPZ equation -- see \cite{AKQ,QM}).

Returning to discrete line ensembles, there are a variety of examples coming from the study of growth processes (such as the various PNG line ensembles \cite{PS,KJPNG}) which have the same large $N$ limits as in the Brownian case \cite{IS,BP,BFP,CFP1,OkResh,KJnonint}. Measures on rhombus tilings are closely related to representation theory as well as to vicious walkers. In that setting, \cite{BS} relates Gibbs properties of discrete sine line ensembles to the associated determinant kernel. The papers \cite{BO,BG} construct infinite-dimensional Gibbs line ensembles corresponding to the bulk scaling limits of these tiling related line ensembles.

Rhombus tilings also represent perfect matchings for the honeycomb lattice. In \cite{KOS}, it is shown that models based on perfect matchings (on any weighted doubly-periodic bipartite graph $G$ in the plane) are exactly solvable in a rather strong sense. The authors of \cite{KOS} not only derive explicit formulas for the surface tension; they classify the Gibbs measures on tilings and explicitly compute the local probabilities in each of them. These results are a generalization of \cite{CKP} where similar results for $G = \Z^2$ with constant edge weights were obtained.

\subsection{Outline}
Subsection \ref{lineensembledef} contains the definition of a line ensemble and the Brownian Gibbs property; Subsection \ref{twohelp} contains the statement of the strong Gibbs property and monotonicity properties for Brownian Gibbs line ensembles; Subsection \ref{Airylineensemblesec} contains the definitions of the edge-scaled Dyson line ensemble.

The main results of this paper are contained in Section \ref{mainresec}. Subsection \ref{secunique} contains a uniqueness conjecture regarding the Airy line ensemble; Subsection \ref{hypsec} contains a more general formulation of the main result and a set of general hypotheses under which it holds; Subsection \ref{airylikeSEC} records how many other line ensembles satisfy these more general hypotheses.

Section \ref{absconsec} presents some interesting consequences of our main results. Subsection \ref{subsubabs} includes a proof that any line of the Airy line ensemble minus a parabola has increments which are absolutely continuous with respect to the Brownian bridge (with diffusion parameter 2); In Subsection \ref{uniquenesssec}, this absolute continuity is used to prove the uniqueness of the location at which the maximum of the top line of the Airy line ensemble minus a parabola is attained. In Subsection \ref{transec} this uniqueness result implies that this location describes the endpoint of a directed polymer model. The section also contains the proof of a conjecture of Adler and van Moerbeke.

The main technical results of the paper are contained in the estimates of Proposition \ref{propacceptprob} which is proved in Section \ref{technicalSec}. Finally, Section \ref{proofslemmas} contains a proof of the strong Gibbs property Lemma \ref{stronggibbslemma} and the monotonicity Lemmas~\ref{monotonicity} and \ref{lemmonotonetwo}.

\subsection{Acknowledgments}
This project was initiated at the 2010 Clay Mathematics Institute Summer School in Buzios, Brazil. The authors also thank the Mathematical Science Research Institute, the Fields Institute and the Mathematisches Forschungsinstitut Oberwolfach for their hospitality and support, as much of this work was completed during stays at these institutes. We thank Jinho Baik, Jeremy Quastel and Herbert Spohn for their input and interest. We also thank our referee for a thorough reading of this work and many useful comments. A.H. would like to thank Scott Sheffield for drawing attention to a talk in 2006 in which Andrei Okounkov proposed problems closely related to the discussion in Section~\ref{secunique} and for interesting ensuing conversations, and Neil O'Connell and Jon Warren for useful early discussions regarding approaches to proving the results in this article. I.C. recognizes support and travel funding from the NSF through the PIRE grant OISE-07-30136 and grant DMS-1208998; as well as support from the Clay Mathematics Institute through a Clay Research Fellowship and Microsoft Research through the Schramm Memorial Fellowship. A.H. was supported principally by EPSRC grant EP/I004378/1.

\section{Definitions, notations and basic lemmas}\label{defnotsec}

We begin by providing the necessary definitions and notations to state our main results. We also include the statement of our two primary tools utilized in proving the main result -- the strong Gibbs property and the monotonicity properties of line ensembles. We also include some basic facts about Brownian motions and Brownian bridges.

\subsection{Line ensembles and the Brownian Gibbs property}\label{lineensembledef}
In order to state our main result we must introduce the concept of a line ensemble and the Brownian Gibbs property.

\begin{definition}\label{maindef}
Let $\Sigma$ be a (possibly infinite) interval of $\Z$, and let $\Lambda$ be an interval of $\R$. 
Consider the set $X$ of continuous functions $f:\Sigma\times \Lambda \rightarrow \R$ endowed with the topology of uniform convergence on compact subsets of $\Sigma\times\Lambda$. Let $\mathcal{C}$ denote the sigma-field  generated by Borel sets in $X$.

A {\it $\Sigma$-indexed line ensemble} $\mathcal{L}$ is a random variable defined on a probability space $(\Omega,\mathcal{B},\PP)$, taking values in $X$ such that $\mathcal{L}$ is a $(\mathcal{B},\mathcal{C})$-measurable function. Intuitively, $\mathcal{L}$ is a collection of random continuous curves (even though we use the word ``line'' we are referring to continuous curves), indexed by $\Sigma$, each of which maps $\Lambda$ into $\R$. We will often slightly abuse notation and write $\mathcal{L}:\Sigma\times \Lambda \rightarrow \R$, even though it is not $\mathcal{L}$ which is such a function, but rather $\mathcal{L}(\omega)$ for each $\omega \in \Omega$. Furthermore, we write $\mathcal{L}_i:=(\mathcal{L}(\omega))(i,\cdot)$ for the line indexed by $i\in\Sigma$. Given a $\Sigma$-indexed line ensemble $\mathcal{L}$, and a sequence of such ensembles $\big\{ \mathcal{L}^N: N \in \N \big\}$,  a natural notion of convergence is the weak-* convergence of the measure on $(X,\mathcal{C})$ induced by $\mathcal{L}^N$, to the measure induced by $\mathcal{L}$; we call this notion {\it weak convergence as a line ensemble} and denote it by $\mathcal{L}^N\Rightarrow \mathcal{L}$. In order words, this means that for all bounded continuous functionals $f$, $\int d\PP(\omega) f(\mathcal{L}^{N}(\omega)) \to \int d\PP(\omega) f(\mathcal{L}(\omega))$ as $N\to \infty$. A line ensemble is {\it non-intersecting} if, for all $i<j$, $\mathcal{L}_i(r)>\mathcal{L}_j(r)$ for all $r\in \Lambda$. All statements are to be understood as being almost sure with respect to $\PP$.
\end{definition}

We turn now to formulating the Brownian Gibbs property. As a matter of convention, all Brownian bridges will have diffusion parameter 1 unless otherwise noted.
\begin{definition}\label{maindefBGP}
Let $k \in \N$. A point $\vectoro{x}=(x_{1},\ldots,x_{k} \big) \in \R^k$ is called a $k$-decreasing list if $x_i > x_{i+1}$ for $1 \leq i \leq k-1$. We write $\Rkle \subseteq \R^k$ for the set of $k$-decreasing lists. Let $\vectoro{x}=(x_{1},\ldots,x_{k} \big)$ and  $\vectoro{y}=(y_{1},\ldots, y_{k} \big)$ be two $k$-decreasing lists. Let $a,b \in \R$ satisfy $a < b$, and  let $f,g:[a,b] \to \R^*$ (where $\R^*=\R\cup\{-\infty,+\infty\}$) be two given continuous functions that satisfy $f(r)>g(r)$ for all $r\in[a,b]$ as well as the boundary conditions $f(a)>x_{1}$, $f(b)>y_{1}$ and $g(a)<x_{k}$, $g(b)<y_{k}$.

The {\it $(f,g)$-avoiding Brownian line ensemble on the interval $[a,b]$ with entrance data  $\vectoro{x}$ and exit data $\vectoro{y}$} is a line ensemble $\mathcal{Q}$ with $\Sigma=\{1,\ldots, k\}$, $\Lambda=[a,b]$ and with the law of $\mathcal{Q}$ equal to the law of $k$ independent Brownian bridges $\{B_i:[a,b] \to \R\}_{i=1}^{k}$ from $B_i(a) = x_i$ to $B_i(b) = y_i$ conditioned on the event that $f(r) > B_1(r)>B_2(r)>\cdots >B_k(r) > g(r)$ for all $r \in [a,b]$. Note that any such line ensemble $\mathcal{Q}$ is necessarily non-intersecting.

Now fix an interval $\Sigma\subseteq \Z$ and $\Lambda\subseteq \R$ and let $K=\{k_1,k_1+1,\ldots,k_2-1, k_2\} \subset \Sigma$ and $a,b \in \Lambda$, with $a <b$. Set $f=\mathcal{L}_{k_1-1}$ and $g=\mathcal{L}_{k_2+1}$ with the convention that if $k_1-1\notin\Sigma$ then $f\equiv +\infty$ and likewise if $k_2+1\notin \Sigma$ then $g\equiv -\infty$. Write $D_{K,a,b} = K \times (a,b)$ and $D_{K,a,b}^c = (\Sigma \times \Lambda) \setminus  \Lambda_{K,a,b}$. A $\Sigma$-indexed line ensemble $\mathcal{L}:\Sigma\times \Lambda \to \R$ is said to have the {\it Brownian Gibbs property} if
\begin{equation*}
\textrm{Law}\Big(\mathcal{L} \big\vert_{D_{K,a,b}} \textrm{conditional on } \mathcal{L} \big\vert_{D_{K,a,b}^c}\Big) = \textrm{Law}(\mathcal{Q}),
\end{equation*}
where $\mathcal{Q}_{i}=\tilde{\mathcal{Q}}_{i-k_1+1}$ and $\tilde{\mathcal{Q}}$ is the $(f,g)$-avoiding Brownian line ensemble on $[a,b]$ with entrance data $\big(\mathcal{L}_{k_1}(s),\ldots,\mathcal{L}_{k_2}(s)\big)$ and exit data  $\big(\mathcal{L}_{k_1}(t),\ldots,\mathcal{L}_{k_2}(t)\big)$. Note that $\tilde{\mathcal{Q}}$ is introduced because, by definition, any such $(f,g)$-avoiding Brownian line ensemble is indexed from $1$ to $k_2-k_1+1$, but we want $\mathcal{Q}$ to be indexed from $k_1$ to $k_2$.
\end{definition}

\begin{definition}\label{WBdef}
Let $k\in \N$, $a <b$, and $\bar{x},\bar{y}\in \Rkle$. Write $\wxy^{a,b}$ for the law of $k$ independent Brownian bridges
$B_i:[a,b] \to \R$, $1 \leq i \leq k$, that satisfy $B_i(a) = x_i$ and $B_i(b) = y_i$. Write $\ewxy^{a,b}$ for the expectation under $\wxy^{a,b}$.

Let  $f:[a,b] \to \R$ be a measurable function such that $x_{k}>f(a)$ and $y_k>f(b)$.
Define the non-crossing event on an interval $A\subset [a,b]$ by
\begin{equation*}
\ncf_A =\Big\{ \, \textrm{for all } r\in A,  B_{i}(r) > B_{j}(r) \textrm{ for all } 1\leq i<j\leq k \textrm{ and  $B_k(r) > f(r)$} \Big\}.
\end{equation*}

The conditional measure $\wxy^{a,b} \big( \cdot \big\vert \ncf_{[a,b]} \big)$ is the $(\infty,f)$-avoiding line ensemble on $[a,b]$  with entrance data $\bar{x}$ and exit data $\bar{y}$; it will be denoted by $\bxyf^{a,b}(\cdot)$.

We define the {\it acceptance probability} as
\begin{equation*}
\AP(a,b,\bar{x},\bar{y},f)= \wxy^{a,b}(\ncf_{[a,b]}).
\end{equation*}
Note that this implies that
\begin{equation*}
\bxyf^{a,b} (E) = \frac{\wxy^{a,b}(E\cap \ncf_{[a,b]})}{\AP(a,b,\bar{x},\bar{y},f)}.
\end{equation*}
\end{definition}

\subsection{Two helpful lemmas}\label{twohelp}
The following two sets of lemmas will be essential to the proof of our main results. The proofs of these results appear at the end of the paper, in Section \ref{proofslemmas}. We end this subsection with a few useful tidbits about Brownian bridges.

\subsubsection{Strong Gibbs property}
In order to introduce the strong Gibbs property for Brownian Gibbs line ensembles, we first introduce the concept of a stopping domain.

\begin{definition}\label{defstopdom}
Consider a line ensemble $\mathcal{L}: \{1,\ldots, N\}\times [a,b]\to \R$. 
For $a<\ell<r<b$, and $1\leq k\leq N$ denote the sigma-field generated by $\mathcal{L}$ outside $\{1,\ldots,k\}\times [a,b]$ by
\begin{equation*}
\mathcal{F}_{ext}(k,\ell,r)=\sigma\Big\{\mathcal{L}_1,\ldots, \mathcal{L}_k \textrm{ on } [a,b]\setminus(\ell,r), \textrm{ and } \mathcal{L}_{k+1},\ldots,\mathcal{L}_N \textrm{ on } [a,b]\Big\}.
\end{equation*}

The random variable $(\mathfrak{l},\mathfrak{r})$ is called a {\it stopping domain} for lines $\mathcal{L}_1,\ldots,\mathcal{L}_k$ if for all $\ell<r$,
\begin{equation*}
\{\mathfrak{l} \leq \ell , \mathfrak{r}\geq r\} \in \mathcal{F}_{ext}(k,\ell,r).
\end{equation*}
In other words, the domain is determined by the information outside of it. We will generally assume that, when discussing a stopping domain, it is for the top $k$ indexed line in the line ensemble (and will not mention this).
\end{definition}

We will make use of a version of the strong Markov property where the concept of stopping domain introduced in Definition \ref{defstopdom} plays the role of stopping time.

Let $C^k(\ell,r)$ denote the set of continuous functions $(f_1,\ldots, f_k)$ with each $f_i:[\ell,r]\to \R$. Define
\begin{equation*}
C^k = \left\{ (\ell,r,f_1,\ldots, f_k): \ell<r \textrm{ and } (f_1\ldots, f_k)\in C^k(\ell,r)\right\}.
\end{equation*}
Let $bC^k$ denote the set of Borel measurable functions from $C^k\to \R$.



\begin{lemma}\label{stronggibbslemma}
Consider a line ensemble $\mathcal{L}: \{1,\ldots, N\}\times [a,b]\to \R$ which has the Brownian Gibbs property.
 Write $\PP$ and $\EE$ as the probability measure and expectation on $\mathcal{L}$.
 Fix $k\in \{1,\ldots, N\}$. For all random variables $(\mathfrak{l},\mathfrak{r})$ which are stopping domains for lines $\mathcal{L}_1,\ldots, \mathcal{L}_k$, the following {\it strong Brownian Gibbs property} holds: for all $F\in bC^k$,  $\PP$ almost surely,
\begin{equation*}\label{strongeqn}
\EE\Big[F \big( \mathfrak{l},\mathfrak{r},\mathcal{L}_{1}\big\vert_{(\mathfrak{l},\mathfrak{r})},\ldots, \mathcal{L}_{k}\big\vert_{(\mathfrak{l},\mathfrak{r})} \big) \Big\vert \mathcal{F}_{ext}(k,\mathfrak{l},\mathfrak{r})\Big] = \bxyf^{\mathfrak{l},\mathfrak{r}} \big[F(\mathfrak{l},\mathfrak{r},B_1,\ldots, B_k) \big],
\end{equation*}
where $\bar{x} = \{\mathcal{L}_i(\mathfrak{l})\}_{i=1}^{k}$, $\bar{y} = \{\mathcal{L}_i(\mathfrak{r})\}_{i=1}^{k}$, $f(\cdot)=\mathcal{L}_{k+1}(\cdot)$ (or $-\infty$ if $k=N$), $\bxyf^{\mathfrak{l},\mathfrak{r}}$ is given in Definition~\ref{WBdef}.
\end{lemma}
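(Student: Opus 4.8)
The plan is to derive the strong Brownian Gibbs property from the (ordinary) Brownian Gibbs property by the standard discretization-of-stopping-times technique, adapted to stopping domains. First I would reduce to the case of a bounded, continuous functional $F$: by a monotone class / functional form of the $\pi$-$\lambda$ argument it suffices to check the identity for $F$ of the form $F(\ell,r,f_1,\dots,f_k) = \phi(\ell,r)\, G(f_1,\dots,f_k)$ with $\phi$ continuous and $G$ a bounded continuous cylinder functional, and then extend to all of $bC^k$. Fix such an $F$. The goal is to show that for every bounded $\mathcal{F}_{ext}(k,\mathfrak{l},\mathfrak{r})$-measurable test random variable $H$ (equivalently, on every event in that sigma-field) one has $\EE[H \cdot F(\mathfrak{l},\mathfrak{r},\mathcal{L}_1|_{(\mathfrak{l},\mathfrak{r})},\dots)] = \EE[H\cdot \mathcal{B}_{\bar{x},\bar{y},f}^{\mathfrak{l},\mathfrak{r}}[F(\mathfrak{l},\mathfrak{r},B_1,\dots,B_k)]]$, where $\bar{x},\bar{y},f$ are the (random) data read off from $\mathcal{L}$ at the endpoints $\mathfrak{l},\mathfrak{r}$ and on line $k+1$.

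Next I would discretize. For $n\in\N$ let $\mathcal{D}_n$ be the dyadic grid of mesh $2^{-n}$ inside $[a,b]$, and define $\mathfrak{l}_n$ = the largest dyadic point $\le \mathfrak{l}$ and $\mathfrak{r}_n$ = the smallest dyadic point $\ge \mathfrak{r}$, so that $(\mathfrak{l}_n,\mathfrak{r}_n)\downarrow$ something containing $(\mathfrak{l},\mathfrak{r})$ and $\mathfrak{l}_n\uparrow\mathfrak{l}$, $\mathfrak{r}_n\downarrow\mathfrak{r}$. The key structural point, immediate from the stopping-domain definition, is that for each fixed pair of dyadic points $\ell<r$ the event $\{\mathfrak{l}_n = \ell,\ \mathfrak{r}_n = r\}$ lies in $\mathcal{F}_{ext}(k,\ell,r)$: indeed it can be written using the events $\{\mathfrak{l}\le \ell', \mathfrak{r}\ge r'\}$ for finitely many dyadic $\ell'\ge \ell$, $r'\le r$, each of which is $\mathcal{F}_{ext}(k,\ell',r')\subseteq \mathcal{F}_{ext}(k,\ell,r)$-measurable (the sigma-fields are nested: shrinking the window only adds information, since it exposes more of $\mathcal{L}_1,\dots,\mathcal{L}_k$). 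Then I would decompose over this countable partition:
\begin{equation*}
\EE\big[H\cdot F(\mathfrak{l}_n,\mathfrak{r}_n,\mathcal{L}_1|_{(\mathfrak{l}_n,\mathfrak{r}_n)},\dots)\big] = \sum_{\ell<r \in \mathcal{D}_n} \EE\big[H\,\mathbf{1}_{\{\mathfrak{l}_n=\ell,\mathfrak{r}_n=r\}}\, F(\ell,r,\mathcal{L}_1|_{(\ell,r)},\dots)\big],
\end{equation*}
and on each term apply the ordinary Brownian Gibbs property with the deterministic window $(\ell,r)$ and index set $\{1,\dots,k\}$: since $H\,\mathbf{1}_{\{\mathfrak{l}_n=\ell,\mathfrak{r}_n=r\}}$ is $D_{\{1,\dots,k\},\ell,r}^c$-measurable, conditioning replaces $F(\ell,r,\mathcal{L}_1|_{(\ell,r)},\dots)$ by $\mathcal{B}_{\bar{x}^{(\ell)},\bar{y}^{(r)},f}^{\ell,r}[F(\ell,r,B_1,\dots,B_k)]$ with $\bar x^{(\ell)}=\{\mathcal{L}_i(\ell)\}$, $\bar y^{(r)}=\{\mathcal{L}_i(r)\}$, $f=\mathcal{L}_{k+1}$. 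Re-summing gives the desired identity with $(\mathfrak{l}_n,\mathfrak{r}_n)$ in place of $(\mathfrak{l},\mathfrak{r})$.

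Finally I would pass to the limit $n\to\infty$. On the left, $\mathfrak{l}_n\to\mathfrak{l}$, $\mathfrak{r}_n\to\mathfrak{r}$, and by continuity of the curves of $\mathcal{L}$ together with continuity of $F$ (and dominated convergence, using boundedness of $F$), $F(\mathfrak{l}_n,\mathfrak{r}_n,\mathcal{L}_1|_{(\mathfrak{l}_n,\mathfrak{r}_n)},\dots)\to F(\mathfrak{l},\mathfrak{r},\mathcal{L}_1|_{(\mathfrak{l},\mathfrak{r})},\dots)$ almost surely. On the right one must show $\mathcal{B}_{\bar{x}^{(\mathfrak{l}_n)},\bar{y}^{(\mathfrak{r}_n)},f}^{\mathfrak{l}_n,\mathfrak{r}_n}[F]\to \mathcal{B}_{\bar{x},\bar{y},f}^{\mathfrak{l},\mathfrak{r}}[F]$ almost surely; this is a continuity statement for the $(\infty,f)$-avoiding Brownian line ensemble in its entrance/exit times and data, which follows from the explicit description of that measure as a Brownian bridge law conditioned on $\mathrm{NC}^f$, the continuity of $\mathcal{L}_{k+1}=f$ and of the endpoint data $r\mapsto \mathcal{L}_i(r)$, and the fact that the acceptance probability $\AP(\mathfrak{l}_n,\mathfrak{r}_n,\bar x^{(\mathfrak{l}_n)},\bar y^{(\mathfrak{r}_n)},f)$ converges to the (a.s.\ positive) limit $\AP(\mathfrak{l},\mathfrak{r},\bar x,\bar y,f)$ — note positivity here uses that $\mathcal{L}$ is itself non-intersecting and lies strictly above $f$, so the limiting endpoint configuration is strictly ordered.

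The main obstacle is precisely this last continuity-of-the-conditioned-measure step: one is conditioning on the event $\mathrm{NC}^f$ whose probability tends to a positive but possibly small limit, and the conditioning window, endpoints, and lower boundary $f$ are all moving simultaneously, so some care is needed to make the convergence uniform enough to push through the bounded functional. A clean way to handle it is to couple the Brownian bridges on $[\mathfrak{l}_n,\mathfrak{r}_n]$ with those on $[\mathfrak{l},\mathfrak{r}]$ via a standard Brownian-bridge decomposition, bound the total variation distance between the unconditioned bridge laws (which goes to $0$ as the endpoints and times converge, by absolute continuity of Brownian bridge with respect to time/endpoint perturbation — a computation one can quote from the "useful tidbits about Brownian bridges" promised at the end of the subsection), and then divide by acceptance probabilities that are bounded away from $0$ along the sequence. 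Everything else — the reduction to product functionals, the measurability of $\{\mathfrak{l}_n=\ell,\mathfrak{r}_n=r\}$, and the re-summation — is routine given the ordinary Brownian Gibbs property.
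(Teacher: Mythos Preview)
Your overall architecture --- reduce to product/cylinder functionals, discretize the stopping domain, apply the ordinary Brownian Gibbs property on each dyadic cell, pass to the limit via continuity of the conditioned bridge law --- is exactly the paper's strategy. There is, however, a genuine error in your discretization step that breaks the measurability argument.

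You round outward: $\mathfrak{l}_n \le \mathfrak{l}$, $\mathfrak{r}_n \ge \mathfrak{r}$, so that $[\mathfrak{l}_n,\mathfrak{r}_n] \supseteq [\mathfrak{l},\mathfrak{r}]$. You then claim $\{\mathfrak{l}_n=\ell,\ \mathfrak{r}_n=r\}\in\mathcal{F}_{ext}(k,\ell,r)$ because it can be built from events $\{\mathfrak{l}\le\ell',\ \mathfrak{r}\ge r'\}$ with $\ell'\ge\ell$, $r'\le r$, and you assert $\mathcal{F}_{ext}(k,\ell',r')\subseteq\mathcal{F}_{ext}(k,\ell,r)$. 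That inclusion is backwards: for $[\ell',r']\subseteq[\ell,r]$ one has $\mathcal{F}_{ext}(k,\ell',r')\supseteq\mathcal{F}_{ext}(k,\ell,r)$ (as you yourself note, shrinking the window \emph{adds} information). Concretely, $\{\mathfrak{l}_n=\ell\}$ requires knowing $\{\mathfrak{l}<\ell+2^{-n}\}$, which in general depends on the curves $\mathcal{L}_1,\dots,\mathcal{L}_k$ on $(\ell,\ell+2^{-n})\subseteq(\ell,r)$ and so need not lie in $\mathcal{F}_{ext}(k,\ell,r)$. The same problem afflicts $H$: on your event one has $[\mathfrak{l},\mathfrak{r}]\subseteq[\ell,r]$, so $\mathcal{F}_{ext}(k,\mathfrak{l},\mathfrak{r})\supseteq\mathcal{F}_{ext}(k,\ell,r)$, and an $\mathcal{F}_{ext}(k,\mathfrak{l},\mathfrak{r})$-measurable $H$ need not be $\mathcal{F}_{ext}(k,\ell,r)$-measurable.

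The fix, which is what the paper does, is to round the other way: take $\mathfrak{l}_n$ to be the first dyadic point \emph{above} $\mathfrak{l}$ and $\mathfrak{r}_n$ the last dyadic point \emph{below} $\mathfrak{r}$, so that $[\mathfrak{l}_n,\mathfrak{r}_n]\subseteq(\mathfrak{l},\mathfrak{r})$. Then the event $\{\mathfrak{l}_n=\ell,\ \mathfrak{r}_n=r\}$ is built from events $\{\mathfrak{l}\le\ell',\ \mathfrak{r}\ge r'\}$ with $\ell'\le\ell$ and $r'\ge r$, each of which lies in $\mathcal{F}_{ext}(k,\ell',r')\subseteq\mathcal{F}_{ext}(k,\ell,r)$; likewise $H$ restricted to this event is $\mathcal{F}_{ext}(k,\ell,r)$-measurable. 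This is the exact analogue of rounding a stopping time \emph{up} in the classical strong Markov proof, for the same reason. Once you reverse the rounding direction, the rest of your outline (including your discussion of the continuity of $\mathcal{B}^{\ell,r}_{\bar x,\bar y,f}[F]$ in the boundary data, which the paper simply asserts) goes through.
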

The proof of this lemma is given in Section \ref{proofslemmas}. The main message of the lemma is that the distribution of the value of a line ensemble inside a stopping domain is entirely determined by the boundary data and specified according to the non-intersecting Brownian bridge measure.

\subsubsection{Monotonicity results}
The following lemmas demonstrate certain typies of monotonicity which exist between non-intersecting Brownian bridge measures.

\begin{lemma}\label{monotonicity}
Fix $k\in \N$, $a<b$ and two measurable functions $f,g:[a,b]\rightarrow \R\cup\{-\infty\}$ such that for all $s\in [a,b]$, $f(s)\leq g(s)$. Let $\bar{x},\bar{y}\in \Rkle$ be two k-decreasing lists such that $x_k \geq g(a)$ and $y_k \geq g(b)$.
Recalling Definition~\ref{WBdef}, set $\PP^{k}_f = \wxy^{a,b} \big( \cdot \big\vert \nc^{f}_{[a,b]} \big)$, and likewise define $\PP^{k}_g$. Then there exists a coupling of $\PP^{k}_f$ and $\PP^{k}_g$ such that almost surely $B^f_i(s)\leq B^g_i(s)$ for all $i\in \{1,\ldots, k\}$ and all $s\in [a,b]$.
\end{lemma}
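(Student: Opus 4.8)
The plan is to build a monotone coupling by approximating the conditioned Brownian bridge ensembles by discrete non-intersecting random walk bridges, establishing the corresponding monotonicity in the discrete setting (where the conditioned measures are explicit and finite), and then passing to the limit. The discrete monotonicity is the heart of the matter, and I would prove it via a Markov-chain monotone-coupling argument: consider the Glauber-type dynamics that resamples one vertex of one walk at a time (or, more robustly, a heat-bath/Gibbs sampler on the whole configuration), show that this dynamics is attractive with respect to the coordinatewise partial order on $k$-tuples of paths, and that the two dynamics — one with lower barrier $f$, one with lower barrier $g$ — can be run simultaneously so that the $f$-chain stays pointwise below the $g$-chain. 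Since raising the lower barrier from $f$ to $g$ only removes configurations from the state space (those that dip below $g$ somewhere) and does so in a way that is monotone (if a configuration is killed, so is every configuration below it, because $f \le g$ means the constraint $B_k \ge g$ is harder to satisfy the lower the paths are), the stationary measures inherit the stochastic ordering. The key structural input is that the single-site conditional law of the $i$-th walk at a site, given everything else, is (a discretized bridge increment) conditioned to lie in an interval whose endpoints are monotone in the neighbouring configuration; conditioning a log-concave increment to an interval that shifts upward produces a stochastically larger law, which is exactly the attractiveness we need.

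Concretely, the steps in order are: (1) Fix a mesh and approximate each Brownian bridge by a simple random walk bridge (or a Gaussian random walk bridge) on $[a,b]$ with $\lceil (b-a)/\delta\rceil$ steps, matching endpoints $x_i,y_i$ as closely as the lattice allows, and with the lower curves $f,g$ replaced by their values sampled on the mesh; condition on non-intersection and on staying above the discretized barrier. Standard invariance-principle / Donsker-type arguments (together with the fact, used elsewhere in the paper, that the acceptance probability $\AP$ is positive and continuous in its arguments) give weak convergence of the discrete conditioned ensembles to $\PP^k_f$ and $\PP^k_g$ respectively as $\delta \to 0$. (2) In the discrete model, run the heat-bath dynamics described above from the maximal configuration (or from any pair of ordered initial conditions); verify attractiveness of the single-site update by the log-concavity / interval-conditioning lemma, and conclude that the stationary laws satisfy $\mu^\delta_f \preceq \mu^\delta_g$ in the coordinatewise stochastic order, realized by an explicit monotone coupling $\big(B^{f,\delta},B^{g,\delta}\big)$ with $B^{f,\delta}_i \le B^{g,\delta}_i$ pointwise on the mesh. (3) Take $\delta \to 0$: the coupled pair $\big(B^{f,\delta},B^{g,\delta}\big)$ is tight (each marginal converges), so along a subsequence it converges in distribution, on $C([a,b])^{2k}$, to a pair $\big(B^f,B^g\big)$ with the correct marginals $\PP^k_f,\PP^k_g$; the closed condition $B^f_i(s)\le B^g_i(s)$ for all $i$ and all $s$ is preserved under weak limits, which yields the desired coupling.

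\textbf{Main obstacle.} The step I expect to be delicate is the discrete attractiveness together with control of the approximation. Attractiveness is not automatic for non-intersecting walkers: updating walk $i$ at site $j$ must stay strictly between walks $i-1$ and $i+1$ at site $j$, and one must check that raising the neighbouring walks (or the barrier) shifts the feasible interval upward \emph{and} that the resampled value is stochastically increasing in that interval — this is true for bridge increments because the relevant one-step conditional density is log-concave, so conditioning to $[\alpha,\beta]$ gives a law that is monotone in $(\alpha,\beta)$ in the usual stochastic order, but it requires a clean lemma and care at the boundary of the state space (e.g.\ when the feasible interval degenerates). A secondary technical point is ensuring the discrete approximations of $f$ and $g$ retain the inequality $f_\delta \le g_\delta$ and the boundary conditions $x_k \ge g(a)$, $y_k \ge g(b)$ pass to the discretization (one may need to perturb endpoints slightly and then remove the perturbation), and that the joint weak convergence in (3) does not require more than tightness of marginals — which it does not, since a sequence of couplings of tight families is tight. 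An alternative that sidesteps some discretization bookkeeping is to argue monotonicity directly in the continuum using a resampling (Gibbs-sampler) dynamics on the Brownian ensemble justified by the strong Gibbs property of Lemma~\ref{stronggibbslemma}, but establishing existence and convergence of such infinite-dimensional dynamics is itself nontrivial, so the discrete route is likely the cleaner one to write down.
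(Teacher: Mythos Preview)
Your proposal is correct and follows essentially the same route as the paper: approximate by discrete non-intersecting random-walk bridges, build a monotone coupling via a Glauber-type Markov chain whose dynamics preserve the coordinatewise order, and pass to the limit by the invariance principle. The paper's version is somewhat more spare --- it uses simple $\pm n^{-1}$ flips (so attractiveness is checked by a direct case analysis rather than a log-concavity lemma), and it initializes both chains at the \emph{lowest} admissible configuration (which is automatically ordered since $f\le g$) rather than at the maximal one --- but the skeleton is the same.
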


\begin{lemma}\label{lemmonotonetwo}
Fix $k\in \N$, $a<b$, a measurable function $f:[a,b]\rightarrow \R\cup\{-\infty\}$ and a measurable set $A\subseteq [a,b]$. Consider two pairs of k-decreasing lists $\bar{x},\bar{y}$ and $\bar{x}',\bar{y}'$ such that $x_k,x'_k\geq f(a)$, $y_k,y'_k \geq f(b)$ and $x_i' \geq x_i$ and $y_i' \geq y_i$ for each $1 \leq i \leq k$.
Then the laws $\wxy^{a,b} \big( \cdot \big\vert \ncf_A \big)$ and $\mathcal{W}^{a,b}_{k;\bar{x}',\bar{y}'} \big( \cdot \big\vert \ncf_A \big)$ may be coupled so that, denoting by $B_i$ and $B'_i$ the curves defined under the respective measures, $B_i'(s) \geq B_i(s)$ for each $1 \leq i \leq k$ and for all $s \in [a,b]$.
\end{lemma}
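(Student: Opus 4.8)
The plan is to prove Lemma~\ref{lemmonotonetwo} by first establishing the corresponding statement for \emph{unconditioned} Brownian bridges, and then promoting it to the conditioned measures via a standard ``monotone coupling survives conditioning on an increasing event'' argument. For the first step, observe that if $\bar{x}'\geq\bar{x}$ and $\bar{y}'\geq\bar{y}$ componentwise, then $k$ independent Brownian bridges $B_i'$ from $x_i'$ to $y_i'$ and $B_i$ from $x_i$ to $y_i$ can be coupled so that $B_i'(s)\geq B_i(s)$ for all $s$ and all $i$: simply write $B_i'(s)=B_i(s)+\ell_i(s)$ where $\ell_i$ is the affine interpolation between $x_i'-x_i\geq 0$ and $y_i'-y_i\geq 0$, which is nonnegative on $[a,b]$; since a Brownian bridge plus a deterministic affine function is again a Brownian bridge with the shifted endpoints, and the $B_i$ are independent, this is a valid coupling of $\wxy^{a,b}$ and $\mathcal{W}^{a,b}_{k;\bar{x}',\bar{y}'}$ with $B_i'\geq B_i$ pathwise.

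The trouble with that particular coupling is that it need not respect the conditioning: even if $B_i'\geq B_i$, the event $\ncf_A$ is not monotone in the vector of curves in a way that would let us condition both marginals simultaneously under this single coupling. So the second step is to pass to the conditioned laws using a general principle. The event $\ncf_A$ (non-crossing among the $B_i$'s and $B_k>f$ on $A$) is an increasing event with respect to the coordinatewise partial order on $C^k(a,b)$ \emph{only} in the sense that raising the top curves and lowering nothing keeps non-crossing — but $\ncf_A$ is genuinely increasing in the lower curve $B_k$ (raising $B_k$ only helps $B_k>f$ and helps $B_{k-1}>B_k$... no, it hurts $B_{k-1}>B_k$). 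The honest statement is: $\ncf_A$ is increasing in $B_1$ (raising the top curve only relaxes $B_1>B_2$... again raising $B_1$ helps). The point I need is the FKG-type / monotone-rearrangement fact proved in the companion Lemma~\ref{monotonicity} machinery: namely, that one can build a \emph{single} monotone coupling of $\wxy^{a,b}(\cdot\mid\ncf_A)$ and $\mathcal{W}^{a,b}_{k;\bar{x}',\bar{y}'}(\cdot\mid\ncf_A)$ by running a Markov chain (Gibbs sampler) on each side, resampling one curve at a time from its conditional bridge law given the neighbours, started respectively from a coupled pair of configurations with $B_i'\geq B_i$ — and showing the partial order is preserved at each resampling step via the single-curve monotonicity of Brownian bridge conditioned to stay above a given function (which itself reduces to the one-line, one-sided version of Lemma~\ref{monotonicity}).

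Concretely, the key steps in order: (i) record the single-line fact that for fixed endpoints $x\leq x'$, $y\leq y'$ and a fixed lower barrier $h\leq h'$, a Brownian bridge from $(x,y)$ conditioned to stay above $h$ is stochastically dominated, via a pathwise coupling, by a Brownian bridge from $(x',y')$ conditioned to stay above $h'$; this is the $k=1$ instance and follows from reflection / a direct coupling as in the proof of Lemma~\ref{monotonicity}. (ii) Set up a pair of sequential Gibbs updates: on the unprimed side the state is a non-crossing $k$-tuple with $B_k>f$ on $A$, and one cyclically picks an index $i$ and replaces $B_i$ by a fresh draw from a Brownian bridge between $B_i(a),B_i(b)$ conditioned to lie in $(B_{i-1},B_{i+1})$ on $A$ (with $B_0\equiv f_{\rm top}$-type conventions and $B_{k+1}$ replaced by $f$); do the same on the primed side. (iii) Using (i), show that if before the update the two states satisfy $B_j'\geq B_j$ for all $j$, and we use the common randomness of step (i) to draw the two new $i$-th curves, then after the update we still have $B_j'\geq B_j$ for all $j$ — the only changed coordinate is $i$, and its new value is sandwiched by the monotonicity in both the endpoints and the (ordered) barriers $B_{i\pm1}$ vs $B_{i\pm1}'$ and $f$ vs $f$. (iv) Argue that these Gibbs chains are ergodic with unique stationary measures $\wxy^{a,b}(\cdot\mid\ncf_A)$ and $\mathcal{W}^{a,b}_{k;\bar{x}',\bar{y}'}(\cdot\mid\ncf_A)$ respectively (irreducibility and aperiodicity on the space of configurations, plus the fact that each single-curve conditional is exactly the Brownian-bridge-in-a-tube law that appears in the definition of the conditioned ensemble), so that the stationary coupling inherits $B_j'\geq B_j$ from (iii), taking $A=[a,b]$ at the end to read off the claim for all $s\in[a,b]$.

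The main obstacle is step~(iii)/(i): making the single-curve monotonicity genuinely pathwise and checking it is compatible with \emph{both} moving the endpoints and moving the confining barriers at once, so that the coordinatewise order is exactly preserved under a Gibbs update. Reflection-coupling arguments for a bridge conditioned to stay above a moving barrier are classical but fiddly when the barrier itself is a random (previously-updated) curve rather than deterministic; the clean way is to first prove the deterministic-barrier version and then condition, invoking that the freshly-drawn neighbours are measurable with respect to data already coupled in the correct order. A secondary, more routine obstacle is the ergodicity in step~(iv): one must verify the Gibbs sampler does not get stuck, which is immediate here because the Brownian-bridge-in-a-tube conditional laws have full support on the (open) set of admissible curves, so standard Harris-chain arguments apply.
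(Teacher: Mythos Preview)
Your strategy---build a monotone Markov chain whose unique invariant measure is the target conditioned law on each side, couple the two chains so that the componentwise order is preserved at every step, and pass to stationarity---is exactly the strategy the paper uses. The difference lies in implementation, and the paper's implementation is substantially cleaner than yours in a way that matters.

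The paper does not attempt a continuum Gibbs sampler. Instead it discretises: it replaces the Brownian bridges by simple random walk bridges on a mesh of width $n^{-1}$ in space and $n^{-2}$ in time, and runs a \emph{local} Glauber dynamics (independent Poisson clocks attempting single up/down flips at each lattice site of each curve, accepted only if non-intersection with neighbours and with $f$ is maintained). On a finite state space, irreducibility plus the obvious invariant measure gives convergence for free; the initial configuration is taken to be the pointwise \emph{lowest} admissible trajectory, which exists, is unique, and is automatically ordered between the two sides because the endpoints are ordered. Order preservation under a single local flip is a one-line check. One then lets the chain run to stationarity and finally sends $n\to\infty$ via the invariance principle for bridges.

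Your version, by contrast, resamples an entire curve at each step directly in the continuum. This creates two honest difficulties you acknowledge but underrate. First, the ``single-line fact'' you need in step~(i)/(iii) is not one-sided: when you resample an interior curve $B_i$ you condition it to lie in the tube $(B_{i+1},B_{i-1})$ on $A$, so you need monotonicity simultaneously in the endpoints, the lower barrier, \emph{and} the upper barrier. That two-sided statement is essentially a $k=1$ instance of the very family of lemmas you are proving, and a clean reflection-coupling proof of it is not obvious---you risk circularity. Second, the ergodicity of a Gibbs sampler on an infinite-dimensional state space is not a throwaway ``standard Harris-chain'' remark; you would need to exhibit a small set and verify drift/minorisation, and the full-support observation alone does not deliver uniqueness of the invariant measure or convergence from an arbitrary starting point. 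You also need to produce an ordered pair of admissible initial configurations, which is easy after discretisation (take the lowest trajectories) but not entirely trivial in the continuum when $A\subsetneq[a,b]$ and $f$ is merely measurable.

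None of this is fatal---your outline can be pushed through---but the paper's discretisation trick converts every one of these issues into a triviality at the cost of a single limiting step at the end. If you want to keep your write-up, the cleanest fix is to insert exactly that discretisation before running your chain.
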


\subsubsection{Brownian bridge properties}\label{BBD}

The following decomposition is closely related to the L\'{e}vy-Ciesielski construction of Brownian motion discussed in \cite{McKean} and can be proved by checking that the covariance of the constructed process $B(s)$ coincides with that of a Brownian bridge.

\begin{lemma}\label{browniandecomp}
Fix $j\in \N$, $T>0$ and consider a sequence of times $0=t_0<t_1<\cdots < t_j=T$. Define a sequence of independent centered Gaussian random variables $\{N_i\}_{i=1}^{j-1}$ so that
\begin{equation*}
\EE[N_i^2] = \frac{(t_i-t_{i-1})(T-t_{i})}{(T-t_{i-1})}.
\end{equation*}
For each $i\in \{1,\ldots,j-1\}$ define an interpolation function
\begin{equation*}
I_i(s) =
\begin{cases}
0 & 0\leq s\leq t_{i-1}\\
\frac{s-t_{i-1}}{t_i-t_{i-1}} N_i & t_{i-1}\leq s\leq t_i\\
\frac{T-s}{T-t_i}N_i & t_i\leq s\leq T,
\end{cases}
\end{equation*}
and let $I_j\equiv 0$.
Define a sequence of independent Brownian bridges $\{B_i\}_{i=1}^j$ such that $B_i:[0,t_i-t_{i-1}]\rightarrow \R$ with the property that $B_i(0)=B_i(t_i-t_{i-1})=0$ and let $m(s) = \max \big\{i:t_i<s \big\}$. Then the random function $B:[0,T]\rightarrow \R$,
\begin{equation*}
B(s) = \sum_{i=1}^{m(s)+1} I_i(s) +   B_{m(s)+1}(s-t_{m(s)})\, ,
\end{equation*}
is equal in law to a Brownian bridge $B'$ on $[0,T]$ (i.e., with $B'(0)=B'(T)=0$ and $\EE[B'(s)^2] = \tfrac{s(T-s)}{T}$).
\end{lemma}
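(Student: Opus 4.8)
The plan is to verify the claimed identity in law by checking that $B$ is a Gaussian process with the correct mean and covariance function, since a centered continuous Gaussian process on $[0,T]$ with covariance $\tfrac{s\wedge s'(T - s\vee s')}{T}$ is, by definition, a standard Brownian bridge. First I would observe that $B$ is manifestly Gaussian: it is built as a finite sum of the independent centered Gaussians $N_i$ (multiplied by deterministic interpolation functions $I_i$) together with the independent Brownian bridges $B_i$, and all of these are jointly Gaussian. Continuity of $B$ follows because each $I_i$ is piecewise linear and continuous with $I_i(t_{i-1}) = I_i(T) = 0$ (so the summands agree at the breakpoints $t_i$, making $\sum_i I_i$ continuous on all of $[0,T]$), and because the Brownian bridge $B_{m(s)+1}(s - t_{m(s)})$ contributes a continuous piece on each $[t_{i-1}, t_i]$ that vanishes at both endpoints of that subinterval. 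It is also immediate that $B(0) = B(T) = 0$.

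The core computation is the covariance. I would first record the covariance of the piecewise-linear part: the function $I_i$ is, up to the scalar $N_i$, the ``hat'' that is linear from $(t_{i-1},0)$ up to $(t_i,1)$ and then linear back down to $(T,0)$, so $\operatorname{Cov}\big(\sum_i I_i(s), \sum_i I_i(s')\big) = \sum_{i=1}^{j-1} \EE[N_i^2]\, h_i(s) h_i(s')$ where $h_i$ is that normalized hat function. One then checks, for $s \le s'$, that this sum telescopes: with the prescribed variances $\EE[N_i^2] = \tfrac{(t_i - t_{i-1})(T - t_i)}{T - t_{i-1}}$, the partial contribution of the hats reconstructs exactly the bridge covariance $\tfrac{s(T - s')}{T}$ \emph{restricted to the coarse grid}, i.e.\ when $s \in [t_{p-1}, t_p]$ and $s' \in [t_{q-1}, t_q]$ this part accounts for the ``between-blocks'' correlation plus the linear-in-block pieces. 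The remaining discrepancy between $\tfrac{s(T-s')}{T}$ and the hat contribution is, when $s$ and $s'$ lie in the same block $[t_{i-1},t_i]$, precisely the covariance $\tfrac{(s - t_{i-1})(t_i - s')}{t_i - t_{i-1}}$ of a standard bridge on $[0, t_i - t_{i-1}]$ evaluated at $s - t_{i-1}$ and $s' - t_{i-1}$, and is zero when they lie in different blocks; this is exactly supplied by the independent bridges $B_{m(s)+1}$, which are independent of the $N_i$'s. Summing the two contributions yields $\operatorname{Cov}(B(s), B(s')) = \tfrac{s(T - s')}{T}$ for $s \le s'$, as required.

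The only real bookkeeping obstacle is the telescoping identity for the hat-function part of the covariance; everything else (Gaussianity, continuity, boundary values, independence) is routine. A clean way to organize that step is to induct on the number of grid points $j$, or equivalently to use the standard fact that a Brownian bridge on $[0,T]$ conditioned on its value at an interior time $t_i$ decomposes into two independent bridges on $[0,t_i]$ and $[t_i,T]$ with the conditioned value interpolated linearly — this is exactly the Lévy–Ciesielski / Brownian-bridge splitting referenced before the statement — so that iterating the split across $t_1 < \cdots < t_{j-1}$ produces $j$ independent bridges on the subintervals plus the linear interpolation of the (Gaussian, with the stated conditional variances) values at the grid points, which is precisely the displayed formula for $B$.
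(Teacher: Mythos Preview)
Your proposal is correct and follows precisely the approach indicated by the paper: the paper's own proof is a one-line remark that the result ``can be proved by checking that the covariance of the constructed process $B(s)$ coincides with that of a Brownian bridge,'' and you carry out exactly this covariance computation, together with the Gaussianity and continuity observations that make it sufficient. Your alternative organization via iterated bridge splitting at the grid points is also what the paper alludes to in its reference to the L\'evy--Ciesielski construction and in Figure~\ref{figbrbrconst}.
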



\begin{figure}
\begin{center}
\includegraphics[width=0.5\textwidth]{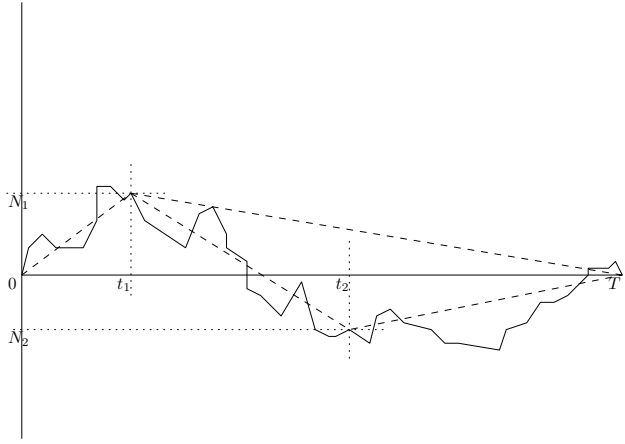}
\end{center}
\caption{Illustrating Lemma \ref{browniandecomp}. A Brownian bridge may be constructed by fixing its value at an intermediate time $t_1$ according to a suitably scaled Gaussian random variable $N_1$, and then inserting a Brownian bridge (plus linear shift) between $(0,0)$ and $(t_1,N_1)$ and a second Brownian bridge (plus linear shift) between $(t_1,N_1)$ and $(T,0)$. This construction can be performed iteratively and results in a decomposition involving a collection of independent Gaussian random variables, and independent Brownian bridges. The case illustrated corresponds to two iterates of this construction.
%
%
%
}\label{figbrbrconst}
\end{figure}

\begin{corollary}\label{cor:touch}
Fix a continuous function $f:[0,1]\rightarrow \R$ such that $f(0)>0$ and $f(1)>0$. Let $B$ be a standard Brownian bridge on $[0,1]$. Define two events: $C=\{\exists \, t\in (0,1): B(t)>f(t)\}$ (crossing) and $T=\{\exists \, t\in (0,1):B(t)=f(t)\}$ (touching). Then $\PP(T\cap C^{c})=0$.
\end{corollary}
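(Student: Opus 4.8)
The plan is to recast $T\cap C^{c}$ as an event about $\max_{[0,1]}(B-f)$ and then to show that the law of this maximum has no atom at $0$. Since $B(0)=B(1)=0$ while $f(0)>0$ and $f(1)>0$, the event $C^{c}$ says precisely that $B(r)\le f(r)$ for every $r\in[0,1]$, i.e.\ $\max_{[0,1]}(B-f)\le 0$; intersecting with $T$ forces this maximum to equal $0$; and conversely, on $\{\max_{[0,1]}(B-f)=0\}$ one has $B\le f$ everywhere (so $C^{c}$ holds) and $B(r)=f(r)$ at any maximiser $r$, which lies in $(0,1)$ because $B-f$ is strictly negative at both endpoints. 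Thus $T\cap C^{c}=\{\max_{[0,1]}(B-f)=0\}$, and on this event $B-f$ attains its maximum over $[0,1]$ at an interior point. Writing $D_{n}$ for the event that $B-f$ attains its maximum over $[0,1]$ somewhere in $[1/n,1-1/n]$, we therefore obtain an increasing union $T\cap C^{c}=\bigcup_{n\ge1}\big((T\cap C^{c})\cap D_{n}\big)$, and on $(T\cap C^{c})\cap D_{n}$ one has $M_{1/n}=0$, where $M_{\delta}:=\max_{[\delta,1-\delta]}(B-f)$. Hence it suffices to prove that $\PP(M_{\delta}=0)=0$ for each fixed $\delta\in(0,1/2)$.

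Next, I would establish the stronger fact that the law of $M_{\delta}$ on $\R$ has no atoms at all, the key input being a translation invariance of $B$ on $[\delta,1-\delta]$: for every $c\in\R$, the law of $B\vert_{[\delta,1-\delta]}$ (as a measure on $C([\delta,1-\delta])$) is mutually absolutely continuous with the law of $B\vert_{[\delta,1-\delta]}+c$. Indeed, conditionally on $(B(\delta),B(1-\delta))=(u,v)$ the process $B\vert_{[\delta,1-\delta]}$ is a Brownian bridge from $u$ to $v$, and translating it by $c$ yields a Brownian bridge from $u+c$ to $v+c$, which is exactly the conditional law of $B\vert_{[\delta,1-\delta]}$ given $(B(\delta),B(1-\delta))=(u+c,v+c)$. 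Since the Gaussian vector $(B(\delta),B(1-\delta))$ is nondegenerate for $\delta<1/2$ and hence has a density that is strictly positive on all of $\R^{2}$, the law of $B\vert_{[\delta,1-\delta]}$ and that of its $c$-translate are mixtures of the same family of bridge laws against everywhere-positive weights, so they have the same null sets. Translating $B\vert_{[\delta,1-\delta]}$ by $c$ translates $M_{\delta}$ by $c$; thus an atom of the law of $M_{\delta}$ at a point $a$ would force atoms at $a-c$ for every $c\in\R$, contradicting the fact that a probability measure has at most countably many atoms. Therefore $M_{\delta}$ has no atoms, in particular $\PP(M_{\delta}=0)=0$, and the corollary follows.

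I expect the crux to be the translation-invariance argument above, the point being that the obvious alternative does not work: one would like to stop $B$ at the first time $\tau$ it meets $f$ and invoke the strong Markov property to conclude that the bridge immediately rises strictly above $f$, but for a merely continuous $f$ this can fail --- if $f$ increases very sharply to the right of $\tau$ (say like $s\mapsto s^{1/3}$) then the Brownian bridge stays below $f$ there with positive probability, so the argument must exploit the global, two-sided constraint $B\le f$ rather than a one-sided analysis near a single touching point. A secondary technical point is that one cannot translate $B$ on the whole of $[0,1]$ by a constant, since that produces a mutually singular law (a nonzero constant is not in the Cameron--Martin space of a bridge with prescribed endpoints); this is precisely why the restriction to $[\delta,1-\delta]$ and the exhaustion over $\delta$ are needed.
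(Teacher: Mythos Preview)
Your proof is correct and takes a genuinely different route from the paper's. The paper invokes Lemma~\ref{browniandecomp} with $j=2$, $t_1=1/2$ to write $B = I_1 + (\text{two independent bridges})$, where $I_1$ is a tent function of height $N_1$ (a centered Gaussian independent of the bridge pieces). Conditioning on the two bridge pieces, the paper observes that $T\cap C^{c}$ can hold for at most one value of $N_1$: raising $N_1$ raises $B$ strictly on $(0,1)$, hence raises $\max_{[0,1]}(B-f)$ strictly once that maximum is nonnegative, so the level set $\{\max(B-f)=0\}$ is a single point in $N_1$. Since $N_1$ has a density, the conditional probability vanishes.

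Both arguments rest on the same underlying idea---a one-parameter family of vertical perturbations that shifts the maximum of $B-f$---but implement it differently. The paper's tent perturbation vanishes at $0$ and $1$, which lets it work directly on $[0,1]$ and yields a three-line proof via a single Gaussian conditioning. Your constant translation does not preserve the endpoint constraints, which forces the restriction to $[\delta,1-\delta]$, the mixture/absolute-continuity step, and the exhaustion over $\delta$. In exchange, your argument is more portable: it applies verbatim to any process whose restriction to compact subintervals of the interior is quasi-invariant under constant vertical shifts, without requiring an explicit bridge decomposition like Lemma~\ref{browniandecomp}. Your closing remark about why the naive strong-Markov approach fails for merely continuous $f$ is also apt and worth keeping.
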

\begin{proof}
From Lemma \ref{browniandecomp} (with the choice $j=2$, $T=1$ and $t_1=1/2$), we can decompose $B$ into two independent Brownian bridges $B_i:[0,1/2]\rightarrow \R$ for $i=1,2$, and an independent centered Gaussian random variable  $N_1$ with $\EE[N_1^2]=1/4$. Let $E=T\cap C^{c}$. Then, conditioned on $B_1$ and $B_2$, the event $E$ holds only for a particular (though random) value of $N_1$. However, due to independence and since $N_1$ is Gaussian, the probability it takes a given value is zero, thus proving the corollary.
\end{proof}

Although the following result may be considered standard, we include it for the reader's convenience.
\begin{corollary}\label{coropen}
Consider the space of continuous functions from $[0,1]$ to $\R$ endowed with the uniform topology and let $U$ be an open subset which contains a function $f$ such that $f(0)=f(1)=0$. Let $B:[0,1]\rightarrow \R$ be a standard Brownian bridge. Then $\PP(B[0,1] \subseteq U)>0$.
\end{corollary}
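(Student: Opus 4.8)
The plan is to use the open set $U$ and the point $f\in U$ to extract a radius $\e>0$ with $\{g:\|g-f\|_\infty<\e\}\subseteq U$, so that it suffices to prove the tube estimate $\PP\big(\|B-f\|_\infty<\e\big)>0$. I would obtain this by applying the decomposition of Lemma \ref{browniandecomp} with $T=1$ and a large number $j$ of equally spaced times $t_\ell=\ell/j$, $0\le\ell\le j$. Writing $B$ in the form guaranteed by that lemma, one checks that the piecewise-linear term $L(s):=\sum_{i=1}^{m(s)+1}I_i(s)$ is exactly the piecewise-linear interpolation through the nodes $(t_\ell,B(t_\ell))$ (the inserted sub-bridges $B_i$ vanish at the endpoints of their intervals, so $B(t_\ell)=\sum_{i\le\ell}I_i(t_\ell)$), while $\sup_s\big|B_{m(s)+1}(s-t_{m(s)})\big|=\max_{1\le i\le j}\sup|B_i|$. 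Letting $\hat f$ be the piecewise-linear interpolation of $f$ through the same nodes, the triangle inequality gives
\[
\|B-f\|_\infty \ \le\ \max_{1\le \ell\le j-1}\big|B(t_\ell)-f(t_\ell)\big|\ +\ \|\hat f-f\|_\infty\ +\ \max_{1\le i\le j}\sup|B_i| ,
\]
since $L-\hat f$ is piecewise linear with nodes $t_\ell$ and hence attains its sup-norm at a node.

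Next I would force each of the three terms below $\e/3$ on an event of positive probability. By uniform continuity of $f$ on $[0,1]$, the middle term $\|\hat f-f\|_\infty$ is deterministically smaller than $\e/3$ once $j$ is large; fix such a $j$, to be enlarged once more below. For the first term, the formulas in Lemma \ref{browniandecomp} express the vector $\big(B(t_1),\dots,B(t_{j-1})\big)$ as an invertible affine image of the independent centered Gaussians $(N_1,\dots,N_{j-1})$, each of which has strictly positive variance because $t_\ell-t_{\ell-1}>0$ and $T-t_\ell>0$ for $1\le\ell\le j-1$; hence it is a nondegenerate Gaussian vector with everywhere-positive density, and $\PP\big(\max_\ell|B(t_\ell)-f(t_\ell)|<\e/3\big)>0$. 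For the third term, the $B_i$ are mutually independent and independent of the $N_\ell$, each a Brownian bridge on an interval of length $1/j$; by Brownian scaling $\sup_{[0,1/j]}|B_i|$ has the law of $j^{-1/2}\sup_{[0,1]}|B'|$ for a standard bridge $B'$, so
\[
\PP\Big(\max_{1\le i\le j}\sup|B_i|<\tfrac{\e}{3}\Big)=\PP\big(\sup_{[0,1]}|B'|<\tfrac{\e}{3}\sqrt{j}\,\big)^{\,j}.
\]
Since $\sup_{[0,1]}|B'|<\infty$ almost surely (path continuity on a compact interval), $\PP\big(\sup_{[0,1]}|B'|<c\big)\to1$ as $c\to\infty$; so enlarging $j$ if necessary makes this factor positive. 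The first event is measurable with respect to $(N_\ell)$ and the third with respect to $(B_i)$, hence they are independent, so their intersection has positive probability, and on it $\|B-f\|_\infty<\e$. As the process $B$ of Lemma \ref{browniandecomp} is equal in law to a standard Brownian bridge, this yields $\PP\big(\|B-f\|_\infty<\e\big)>0$ and therefore $\PP(B[0,1]\subseteq U)>0$.

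There is no genuine obstacle here; the only points needing care are the combinatorial bookkeeping that identifies $L$ with the interpolation of the node values $B(t_\ell)$ (so that $\hat f$ is compared on exactly those nodes), and the appearance of the small-ball probability $\PP\big(\sup_{[0,1]}|B'|<c\big)$ — which the decomposition supplies for free, positive for large $c$, without invoking any reflection-principle formula. An alternative route is via Cameron--Martin: approximate $f$ uniformly by a $C^1$ function $f_n$ with $f_n(0)=f_n(1)=0$, then use mutual absolute continuity of the laws of $B$ and $B+f_n$ to reduce to the $f\equiv0$ small-ball estimate; but this still requires positivity of that small-ball probability, which the argument above delivers cleanly and self-containedly, so I prefer the version based on Lemma \ref{browniandecomp}.
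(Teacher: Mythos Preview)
Your proof is correct and follows essentially the same approach as the paper's: both use the decomposition of Lemma~\ref{browniandecomp} to split $B$ into a piecewise-linear part determined by the node values (a nondegenerate Gaussian vector) and independent sub-bridges, then bound each contribution separately and multiply by independence. The only cosmetic difference is that the paper first replaces $f$ by a piecewise-linear function lying in $U$ (so that $\hat f=f$ and your middle term vanishes), whereas you keep $f$ general and control $\|\hat f-f\|_\infty$ via uniform continuity; correspondingly, the paper invokes Lemma~\ref{lembridgemod} for positivity of the sub-bridge small-ball probability at the fixed level $\delta/2$, while your scaling argument with $j$ large pushes that probability toward~$1$.
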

\begin{proof}
Set $E=\{B[0,1] \subseteq U\}$. The set $U$ being open, it contains a piecewise linear function $f$ such that $f(0)=f(1)=0$. Moreover, there is a $\delta>0$ such that $\{g:|g-f|\leq \delta\}\subseteq U$. Assume that $f$ has a discontinuous derivative at times $t_1<\cdots t_{j-1}$ for some $j$ (also set $t_0=0$ and $t_j=1$). Using the decomposition of Lemma \ref{browniandecomp} (into $N_i$ and $B_i$) we have
$$
\PP[E] \geq \PP \Big( \bigcap_{i=1}^{j-1} \big\{|B(t_i)-f(t_i)|<\delta/2\big\}\Big) \PP \Big( \max_{1\leq i\leq j}\max_{s\in [0,t_{i}-t_{i-1}]} |B_i(s)| \leq \delta/2 \Big).
$$
Since $\{B(t_i)\}_{i=1}^{j-1}$ are jointly Gaussian, the first term on the right can be bounded from below by some $\eta=\eta(\delta,t_1,\ldots,t_{j-1})$, while the independence of the bridges $B_{i}(\cdot)$ imply that the second term can be bounded below by
$$
\prod_{i=1}^{j} \PP \Big( \max_{s\in [0,t_{i}-t_{i-1}]} |B_i(s)|  \leq \delta/2 \Big) >\eta'>0
$$
for  some other $\eta'=\eta'(\delta,t_1,\ldots,t_{j-1})$. This last fact follows because, by Lemma~\ref{lembridgemod}, the maximum absolute value of a Brownian bridge has  a continuous distribution on $(0,\infty)$. From the above two bounds, it follows that $\PP[E]>\eta\eta'>0$, as desired.
\end{proof}
We also record a fact about Brownian bridge which will be useful on several occasions.
\begin{lemma}\label{lembridgemod}
Let $B:[0,T] \to \R$, $B(0)=B(T)=0$, be a Brownian bridge.
Let $M^+=\sup \big\{ B(t): 0 \leq t \leq T \big\}$.  Then, for $r > 0$,
$$
\PP \Big( M^+ > r \Big) = \exp \big\{ - \tfrac{2r^2}{T} \big\}.
$$
\end{lemma}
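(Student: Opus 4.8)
The plan is to read off the law of $M^+$ from the reflection principle for standard Brownian motion, combined with the description of the Brownian bridge as Brownian motion conditioned on its terminal value.

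First I would recall the reflection-principle identity for a standard Brownian motion $W$ started at $0$: for every $r>0$ and every $y\le r$,
\[
\PP\big(\,\textstyle\sup_{0\le t\le T} W(t)\ge r,\ W(T)\in dy\,\big) \;=\; \tfrac{1}{\sqrt{2\pi T}}\, e^{-(2r-y)^2/(2T)}\,dy .
\]
This is obtained by reflecting the trajectory of $W$ after $\tau_r=\inf\{t\ge 0: W(t)=r\}$: on the event $\{\sup_{[0,T]}W\ge r\}=\{\tau_r\le T\}$ the time $\tau_r$ is almost surely finite, the post-$\tau_r$ increments form an independent Brownian motion by the strong Markov property, and reflecting them maps $\{W(T)\in dy,\ \tau_r\le T\}$ bijectively onto $\{W(T)\in d(2r-y)\}$ (using that $2r-y\ge r$ forces $\tau_r\le T$). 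Dividing by the Gaussian density $\tfrac{1}{\sqrt{2\pi T}}e^{-y^2/(2T)}\,dy$ of $W(T)$ gives the conditional probability
\[
\PP\big(\,\textstyle\sup_{0\le t\le T} W(t)\ge r \,\big\vert\, W(T)=y\,\big) \;=\; e^{-(2r^2-2ry)/T},
\]
whose value at $y=0$ is $e^{-2r^2/T}$.

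Next I would use the standard fact that the regular conditional law of $(W(t))_{0\le t\le T}$ given $W(T)=0$ is precisely the law of the Brownian bridge $B$; hence $\PP(M^+\ge r)=e^{-2r^2/T}$. Finally, since $r\mapsto e^{-2r^2/T}$ is continuous, the law of $M^+$ carries no atom in $(0,\infty)$, so $\PP(M^+=r)=0$ and therefore $\PP(M^+>r)=\PP(M^+\ge r)=e^{-2r^2/T}$, which is the assertion. (Alternatively one may first reduce to $T=1$ via the scaling $B(\cdot)\mapsto T^{-1/2}B(T\,\cdot)$ and substitute $r\mapsto r/\sqrt{T}$ at the end.)

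The only step that is not a one-line computation or citation is the conditioning, because $\{W(T)=0\}$ is a null event; I would make it rigorous by working with the joint law of $\big(\sup_{[0,T]}W,\,W(T)\big)$, which has a density on its support. Differentiating the displayed identity in $r$ gives this joint density, namely $p(a,y)=\tfrac{1}{\sqrt{2\pi T}}\tfrac{2(2a-y)}{T}e^{-(2a-y)^2/(2T)}$ for $a\ge\max(y,0)$; dividing by $\tfrac{1}{\sqrt{2\pi T}}e^{-y^2/(2T)}$ and setting $y=0$ yields the genuine conditional density $\tfrac{4a}{T}e^{-2a^2/T}$ of $\sup_{[0,T]}W$ given $W(T)=0$, and $\PP(M^+\ge r)=\int_r^\infty \tfrac{4a}{T}e^{-2a^2/T}\,da=e^{-2r^2/T}$. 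Everything else is bookkeeping.
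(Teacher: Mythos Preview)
Your proof is correct. The paper does not actually prove the statement but simply cites it as formula $(3.40)$ in Chapter~4 of Karatzas--Shreve; your reflection-principle derivation is precisely the standard argument that underlies that formula, so there is no substantive difference in approach, only in that you have written out what the paper leaves as a reference.
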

\begin{proof}
The formula appears as $(3.40)$ in Chapter $4$ of  \cite{Karat-Schreve}.
\end{proof}

The following result is also used. Recall that (specializing the notation from Definition~\ref{WBdef}) we write the measure of one Brownian bridge $B:[a,b] \to \R$ with constraints $B(a) = x$ and $B(b) = y$ as $\mathcal{W}^{a,b}_{1;x,y}$.
\begin{lemma}\label{lembrownbridge}
Let $M,\delta > 0$. Then
$$
\mathcal{W}_{1;\delta,M}^{0,1} \Big( B(s) > 0 \, \, \forall \, \, s \in [0,1]   \Big)
 \leq 4 \big(2/\pi\big)^{1/2} \big( \delta M \big)^{1/2}.
$$
\end{lemma}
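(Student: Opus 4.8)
The plan is to reduce the statement to the classical reflection‑principle formula for the probability that a Brownian bridge with positive endpoints avoids $0$. Let $W$ be a standard Brownian motion with $W(0)=\delta$; the bridge $B\sim\mathcal{W}_{1;\delta,M}^{0,1}$ is distributed as $W$ conditioned on $\{W(1)=M\}$. Reflecting the trajectory of $W$ about $0$ from its first hitting time of $0$ onwards, and using that every Brownian path from $\delta$ to $-y$ (with $y>0$) necessarily passes through $0$, one has $\PP\big(\exists\, s\in[0,1]:W(s)=0,\ W(1)\in dy\big)=\PP\big(W(1)\in d(-y)\big)$; dividing by the Gaussian density of $W(1)$ at $M$ gives $\PP(\exists\, s:B(s)=0)=e^{-2\delta M}$, that is,
\[
\mathcal{W}_{1;\delta,M}^{0,1}\big(B(s)>0\ \forall\, s\in[0,1]\big)=1-e^{-2\delta M}.
\]
This identity is classical and may alternatively simply be quoted from \cite{Karat-Schreve}, in the same spirit as Lemma~\ref{lembridgemod}.

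Granting it, the lemma follows from a short chain of elementary inequalities: $1-e^{-2\delta M}\le\min(1,2\delta M)$; next $\min(1,2t)\le(2t)^{1/2}$ for all $t\ge0$ (immediate on treating $2t\le1$ and $2t>1$ separately); and finally $2^{1/2}\le4(2/\pi)^{1/2}$, which is just $\pi\le16$. Composing them yields $\mathcal{W}_{1;\delta,M}^{0,1}\big(B(s)>0\ \forall\, s\big)\le(2\delta M)^{1/2}\le4(2/\pi)^{1/2}(\delta M)^{1/2}$. So the stated constant is very generous, and there is no serious obstacle; the only real point is justifying the avoidance formula for a bridge with two unequal positive endpoints, which is why I would either cite it or include the one‑line reflection computation above.

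If one prefers to stay strictly within the toolkit already developed rather than invoke that formula in full generality, here is a self‑contained substitute. Using Lemma~\ref{browniandecomp} with $j=2$ and $t_1=1/2$, condition on $X:=B(1/2)$, which is Gaussian with mean $(\delta+M)/2$ and variance $1/4$ and which splits $B$ into two conditionally independent bridges on $[0,1/2]$ and $[1/2,1]$ with endpoint pairs $(\delta,X)$ and $(X,M)$. By the time‑reversal symmetry of the Brownian bridge we may assume $\delta\le M$. Retaining only the avoidance probability of the first half (bounded by $4\delta X$ on $\{X>0\}$ via the length‑$\tfrac12$ avoidance formula) and discarding the other factor,
\[
\mathcal{W}_{1;\delta,M}^{0,1}\big(B(s)>0\ \forall\, s\big)\ \le\ \EE\big[\mathbf{1}_{\{X>0\}}\,4\delta X\big]\ =\ 4\delta\,\EE[X^+]\ =\ 2\delta(\delta+M)+\delta(2/\pi)^{1/2},
\]
where I used $\EE[X^+]=\mu\Phi(2\mu)+\tfrac12\phi(2\mu)\le\mu+\tfrac12\phi(0)$ with $\mu=(\delta+M)/2$ and $\phi,\Phi$ the standard normal density and distribution function. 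Here $\delta(2/\pi)^{1/2}\le(2/\pi)^{1/2}(\delta M)^{1/2}$ because $\delta\le(\delta M)^{1/2}$ when $\delta\le M$; and $2\delta(\delta+M)\le4\delta M\le(\pi/2)^{1/2}(\delta M)^{1/2}$ when $\delta M<\pi/32$, while for $\delta M\ge\pi/32$ one uses the trivial bound $\mathcal{W}_{1;\delta,M}^{0,1}(\,\cdot\,)\le1\le4(2/\pi)^{1/2}(\delta M)^{1/2}$. Since $(2/\pi)^{1/2}+(\pi/2)^{1/2}\le4(2/\pi)^{1/2}$ (i.e. $\pi\le6$), the claimed bound follows in all cases. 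The one subtlety in this variant is that $\EE[X^+]$ grows with $M$, so the split according to the size of $\delta M$, together with the reduction to $\delta\le M$, is genuinely needed — this is the step I would expect to require the most care.
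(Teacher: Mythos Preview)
Your first approach is correct and, in fact, much cleaner than the paper's. The exact identity
$\mathcal{W}_{1;\delta,M}^{0,1}(B>0\text{ on }[0,1])=1-e^{-2\delta M}$ is classical (a direct reflection computation, or e.g.\ Karatzas--Shreve), and your chain of elementary inequalities from it to $4(2/\pi)^{1/2}(\delta M)^{1/2}$ is sound. The paper proceeds quite differently: it never uses the two-endpoint avoidance formula. Instead it observes that on the short subinterval $[0,\delta M^{-1}]$ the linear interpolant stays below $2\delta$, replaces the bridge by Brownian motion conditioned on $B(1)>0$ (paying a factor $2$), and then applies the reflection principle for Brownian motion on $[0,\delta M^{-1}]$ together with a crude density bound. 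Your route buys brevity and a sharper intermediate bound ($1-e^{-2\delta M}\le (2\delta M)^{1/2}$); the paper's route buys self-containment relative to the toolkit it has actually set up (only Lemma~\ref{lembridgemod}, i.e.\ the equal-endpoint case, is quoted from \cite{Karat-Schreve}).

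One caveat on your alternative ``self-contained'' argument: it is not actually self-contained in the sense you advertise. When you bound the avoidance probability on $[0,1/2]$ by $4\delta X$, you are invoking precisely the same unequal-endpoint avoidance formula ($1-e^{-2ab/T}\le 2ab/T$) that your first approach relies on --- just on a shorter interval. Lemma~\ref{lembridgemod} only treats bridges pinned to zero at both ends, so it does not give you this bound directly. The arithmetic of the alternative is fine, but it does not avoid the extra input. If you want a genuinely self-contained variant, the paper's short-interval reduction to Brownian motion is one way; otherwise, simply citing the reflection formula (as in your first approach) is the right move.
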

\begin{proof}
Note that $B:[0,1] \to \R$ under $\mathcal{W}_{1;\delta,M}^{0,1}$
may be represented $B(s) = \delta + (M - \delta)s +  B'(s)$, where $B':[0,1] \to \R$, $B'(0) = B'(1) = 0$, is   standard Brownan bridge.
Since  $\delta + (M - \delta)s \leq 2 \delta$ for $s \in [0,\delta M^{-1}]$,
\begin{equation}\label{eqlembbone}
\mathcal{W}_{1;\delta,M}^{0,1} \Big( B(s) > 0 \, \, \forall \, \, s \in [0,1]   \Big)
\leq
\mathcal{W}_{1;0,0}^{0,1} \Big( B(s) > - 2 \delta \, \, \forall \, \, s \in \big[ 0, \delta M^{-1} \big] \Big).
\end{equation}
We further write $\mathcal{W}_{1;0,*}^{0,1}$ for Brownian motion $B:[0,1] \to \R$, $B(0) = 0$,
(the $*$ indicating that the right-hand endpoint value is unspecified).
Note then that
\begin{eqnarray}
 & & \mathcal{W}_{1;0,0}^{0,1} \Big( B(s) > - 2 \delta \, \, \forall \, \, s \in \big[ 0, \delta M^{-1} \big] \Big) \nonumber \\
 & \leq &  \mathcal{W}_{1;0,*}^{0,1} \Big( B(s) > - 2 \delta \, \, \forall \, \, s \in \big[ 0, \delta M^{-1} \big]  \Big\vert B(1) > 0 \Big) \nonumber \\
 & \leq &  2 \mathcal{W}_{1;0,*}^{0,1} \Big( B(s) > - 2 \delta \, \, \forall \, \, s \in \big[ 0, \delta M^{-1} \big]  \Big). \label{eqlembbtwo}
\end{eqnarray}
The first inequality follows because standard Brownian bridge is stochastically dominated by Brownian motion conditioned to have positive end-value. The second inequality follows from the fact that $\PP(B(1)>0)=1/2$. Note further that
\begin{eqnarray}
& & \mathcal{W}_{0,*}^{1;0,1} \Big( B(s) \leq - 2 \delta \, \, \textrm{ for some } \, \, s \in \big[ 0, \delta M^{-1} \big]  \Big) \nonumber \\
& =  & 2 \mathcal{W}_{1;0,*}^{0,1} \Big( B \big( \delta M^{-1} \big) \leq - 2 \delta \Big) \nonumber \\
 & = & 2 \mathbb{P} \Big( N \geq 2 \big( \delta M \big)^{1/2} \Big) \geq 1 - 2  \big(2/\pi\big)^{1/2} \big( \delta M \big)^{1/2}. \label{eqlembbthree}
\end{eqnarray}
The first equality depended on the reflection principle. In the second, $N$ under $\mathbb{P}$ has the law of a standard normal random variable; the inequality is due to the density of this distribution being uniformly bounded above by $(2\pi)^{-1/2}$.

Combining (\ref{eqlembbone}),  (\ref{eqlembbtwo}) and  (\ref{eqlembbthree}), we obtain
the statement of the lemma.
\end{proof}

\subsection{Airy line ensemble}\label{Airylineensemblesec}
We now define the {\it Dyson} and {\it edge-scaled Dyson} line ensembles which form the sequence of line ensembles whose limit we will consider.
\begin{figure}
\begin{center}
\includegraphics[width=.8\textwidth,height=.4\textwidth]{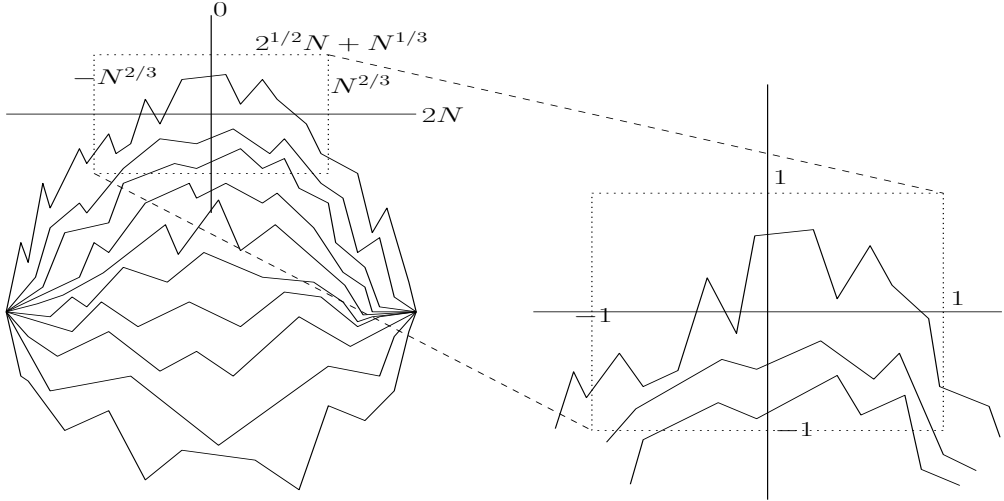}
\end{center}
\caption{An illustration in the left-hand sketch of $N$ Brownian bridges and the scaling window of width $N^{2/3}$ and height $N^{1/3}$. Scaling yields the edge-scaled Dyson line ensemble depicted on the right.}\label{BMfig}
\end{figure}

\begin{definition}\label{Dn}
For each $N\in \N$, define a Brownian bridge line ensemble $B^N:\{1,\ldots,N\}\times [-N,N]\rightarrow \R$ such that $B^N=\{B^N_1,\ldots,B^N_N\}$ is equal in law to the limit (as $\e$ goes to zero) of the law of $N$ Brownian bridges $\{\tilde{B}^N_{1},\ldots, \tilde{B}^N_{N}\}$ with $\tilde{B}^N_{i}(-N)=\tilde{B}^N_{i}(N)=-i\e$ and conditioned on $\tilde{B}^N_{i}(t)> \tilde{B}^N_{i+1}(t)$ for each $1 \leq i \leq N-1$ and $t \in \big[ -N,N \big]$. Clearly this line ensemble is continuous and non-intersecting, and furthermore it has the Brownian Gibbs property.\footnote{One way of seeing this is as follows: For a fixed $\delta$ observe that as the starting and ending points go to zero, the distributions of the height of the $N$ lines  at $\pm(N-\delta)$ converge to a non-trivial limit which can be explicitly calculated via the Karlin-McGregor formula \cite{KM}. The resulting ensemble on the interval $[-N+\delta,N-\delta]$ with this non-trivial entrance and exit law is continuous and non-intersecting and has the Brownian Gibbs property. As $\delta$ goes to zero this procedure yields a consistent family of measures which one identifies as the desired line ensemble with starting and ending height all identically zero.}

We define the $N$-th {\it edge-scaled Dyson line ensemble} $\mathcal{D}^N$ (on the same probability space as $B^N$) to be the ordered list $\big(\mathcal{D}^N_{1},\ldots,\mathcal{D}^N_N \big)$ where $\mathcal{D}^N_i : \big[ -N^{1/3},N^{1/3} \big] \to \R$ are rescaled Dyson lines given by
\begin{equation}\label{Ykn}
\mathcal{D}^N_i(t) = N^{-1/3} \Big(B_i\big(N^{2/3}t\big) - 2^{1/2} N \Big).
\end{equation}
We write $\PP^N$ for the probability measure associated with the $N$-th edge-scaled Dyson line ensemble.

We define $\mathcal{D}^N(k,A)$ for $k\geq 1$ and $A\subseteq[-N^{1/3},N^{1/3}]$ to be the set of top $k$ lines of $\mathcal{D}^N$ at times given by $A$. In practice, we will consider $A=[-T,T]$ or $A=\{t_1,\ldots, t_m\}$. The law $\mathcal{D}^{N}(N,A)$ represents the entire edge-scaled Dyson line ensemble at times given by $A$.
\end{definition}

One should note that the scaling in (\ref{Ykn}) involves centering at $2^{1/2}N$. This is due to the fact (cf. \cite{ADvM}) that for $\alpha\in [-1,1]$,
\begin{equation*}
\lim_{N\to \infty} \frac{B_1(\alpha N)}{N} = \sqrt{2(1-\alpha^2)}.
\end{equation*}

If we fix $A=\{t_1\}$ then $\mathcal{D}^{N}(N,A)$ is a determinantal point process consisting of $N$ distinct points. It is natural to
add a parabola from the edge-scaled Dyson line ensemble because the limiting shape for the edge has non-zero concavity. Call $\tilde{\mathcal{D}}^{N}_i(t) := 2^{1/2}\mathcal{D}^{N}_i(t) +t^2$ and similarly extend the above introduced notation. Though we will not make extensive use of this parabolically shifted line ensemble, it is presently convenient. The point process given by $\tilde{\mathcal{D}}^{N}(N,A)$ may be written in terms of a correlation kernel $K_N$; as $N$ goes to infinity, this kernel converges in the trace-class norm (see \cite{BS:book} for a definition) to a limiting kernel, known as the Airy kernel $K_{\Ai}$. The general theory of determinantal point processes (cf. \cite{Sosh}) implies that, for any fixed $k$, and for $A=\{t_1 \}$, the probability measure on points in $\tilde{\mathcal{D}}^{N}(k,A)$ converges to a limiting measure on $k$ distinct points. Likewise, for $A=\{t_1,\ldots, t_m\}$ and for any fixed $m\geq 1$, the joint probability measure on points in $\tilde{\mathcal{D}}^{N}(k,A)$ converges to a limiting measure on $k$ distinct points at times in $A$.

These measures are called the {\it finite-dimensional distributions of the multi-line Airy process}. As the finite set of times $A$ is augmented, and likewise as $k$ increases, these finite-dimensional distributions form a consistent family, so that Kolmogorov's consistency theorem implies the existence of a stochastic process with these marginal distributions. However, much as in the construction of Brownian motion from its finite-dimensional distributions, this implies neither continuity nor any other regularity properties of the process thus constructed. Johansson \cite{KJPNG} considered the top line (the case that $k=1$) and proved that there exists a {\it continuous version} of the  above stochastic process on any interval $A=[-T,T]$. This should be considered as analogous to proving that there exists a continuous version of the Brownian motion, which is initially only specified by its finite-dimensional distributions. In the process of proving our main result (that the Brownian Gibbs property for $\mathcal{D}^N$ passes over to the $N \to \infty$ limit), we will need to show also that there exists a version of the multi-line Airy process supported on continuous, non-intersecting curves for any interval $A=[-T,T]$. We will call this the {\it Airy line ensemble}.

The exact nature of the finite-dimensional distributions of the multi-line Airy process are, in fact, unnecessary for the statement and proof of our results.

\section{Main results}\label{mainresec}

Our first result, Theorem~\ref{mainthm}, proves the existence of a continuous, non-intersecting Airy line ensemble which, after subtracting a parabola and scaling down by a factor of $2^{1/2}$, has the Brownian Gibbs property and is the edge scaling limit for non-intersecting Brownian bridges. We then formulate some hypotheses on sequences of line ensembles under which we will prove existence of continuous, non-intersecting limiting line ensembles with Brownian Gibbs properties. Finally, we discuss some of the more general Airy-like line ensembles which satisfy these hypotheses. In Section \ref{absconsec} we turn to some of the consequences of our results.

\begin{theorem}\label{mainthm}
There exists a unique continuous non-intersecting $\N$-indexed line ensemble which has finite-dimensional distributions given by the multi-line Airy process. We call this the {\it $\N$-indexed Airy line ensemble} and denote it by $\mathcal{A}:\N\times \R\rightarrow \R$. Moreover, the $\N$-indexed line ensemble $\mathcal{L}:\N\times\R\rightarrow \R$ given $\mathcal{L}_i(x):=2^{-1/2}\big(\mathcal{A}_i(x)-x^2\big)$
for each $i \in \N$, has the Brownian Gibbs property.

Additionally, for any $k\geq 1$ and $T>0$ the line ensemble $\mathcal{D}^N(k,[-T,T])$ converges weakly (Definition \ref{maindef}) as $N \to \infty$ to the line ensemble given by $\mathcal{L}$ restricted to $\{1,\ldots,k\}\times [-T,T]$.
\end{theorem}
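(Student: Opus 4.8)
The plan is to establish Theorem~\ref{mainthm} by proving a functional limit theorem for the top $k$ curves of the edge-scaled Dyson line ensembles $\mathcal{D}^N(k,[-T,T])$, extracting the Airy line ensemble as the limit, and transferring the Brownian Gibbs property through the weak limit via a coupling argument. The essential input, which I would isolate as a technical proposition (the analogue of Proposition~\ref{propacceptprob}), is a quantitative control on the acceptance probabilities $\AP$ and on the top-line location: for the top $k$ rescaled Dyson lines on a window $[-T,T]$, and with high probability as $N\to\infty$, (i) the curves are uniformly absolutely continuous with respect to $k$ independent Brownian bridges; (ii) the minimal gap $\min_i \inf_{s\in[-T,T]}\big(\mathcal{D}^N_i(s)-\mathcal{D}^N_{i+1}(s)\big)$ is bounded below; and (iii) $\mathcal{D}^N_1$ and $\mathcal{D}^N_k$ are bounded above and below on $[-T,T]$. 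The crucial point, and where I expect the main obstacle, is that none of these three events is expressible in terms of finitely many deterministic times, so the determinantal finite-dimensional-distribution machinery does not directly apply; instead one must use the Brownian Gibbs property of $\mathcal{D}^N$ together with the strong Gibbs property (Lemma~\ref{stronggibbslemma}) and the monotonicity lemmas (Lemmas~\ref{monotonicity} and~\ref{lemmonotonetwo}) to reduce control of random exceptional times to control of a single deterministic boundary configuration, which can then be bounded by the finite-dimensional (Airy-kernel) asymptotics recalled in Subsection~\ref{Airylineensemblesec}.

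First I would set up the deterministic boundary input: using the convergence of the $N$-th Dyson determinantal point process at a finite set of times $A\subseteq[-T,T]$ to the multi-line Airy finite-dimensional distributions (trace-class convergence of $K_N$ to $K_{\Ai}$), I would obtain, uniformly in $N$ large, that at $t=\pm T$ (and at finitely many auxiliary times) the top $k$ values of $\mathcal{D}^N$ lie in a compact set and the consecutive gaps there are bounded below, with probability at least $1-\epsilon$. Conditioning on such a favourable boundary event, I would then apply the Brownian Gibbs property to resample the top $k$ curves on $(-T,T)$ as $k$ non-intersecting Brownian bridges above the curve $\mathcal{D}^N_{k+1}$; using Lemma~\ref{monotonicity} to replace $\mathcal{D}^N_{k+1}$ by $-\infty$ (a stochastically highest comparison) and Lemma~\ref{lemmonotonetwo} to monotonically adjust the boundary data to extremal values in the compact set, I reduce the analysis to a fixed $(\infty,-\infty)$-avoiding Brownian line ensemble with deterministic endpoints. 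For that fixed object, tightness in the uniform topology on $[-T,T]$ follows from standard Brownian bridge modulus-of-continuity estimates (Lemma~\ref{lembridgemod} and the decomposition Lemma~\ref{browniandecomp}), control of the minimal gap follows from the acceptance-probability lower bound together with Lemma~\ref{lembrownbridge}, and uniform absolute continuity with respect to $k$ Brownian bridges holds because the Radon--Nikodym derivative is exactly $\mathbf{1}_{\ncf_{[-T,T]}}/\AP$, whose normalisation is bounded below on the favourable boundary event. Combining these, the laws $\mathcal{D}^N(k,[-T,T])$ form a tight family on $X$, and any subsequential limit is supported on continuous, non-intersecting curves.

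Next I would identify the limit and promote weak convergence to the full statement. Since the finite-dimensional distributions of $\mathcal{D}^N(k,A)$ converge (for every finite $A$) to the multi-line Airy finite-dimensional distributions, every subsequential limit has those finite-dimensional marginals; continuity and non-intersection having been established, this pins down a unique continuous non-intersecting line ensemble, which I name $\mathcal{A}$ restricted to $\{1,\dots,k\}\times[-T,T]$, and consistency in $k$ and $T$ (Kolmogorov extension plus the uniqueness just obtained) yields the $\N$-indexed Airy line ensemble. Because the whole sequence has the same limit, $\mathcal{D}^N(k,[-T,T])\Rightarrow \mathcal{L}\big\vert_{\{1,\dots,k\}\times[-T,T]}$ where $\mathcal{L}_i(x)=2^{-1/2}(\mathcal{A}_i(x)-x^2)$, which is the last assertion of the theorem. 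Finally, to see that $\mathcal{L}$ inherits the Brownian Gibbs property: fix $K=\{k_1,\dots,k_2\}$ and $a<b$; each $\mathcal{D}^N$ has the Brownian Gibbs property (as noted in Definition~\ref{Dn}), so the conditional law of $\mathcal{D}^N$ on $D_{K,a,b}$ given the exterior is an $(f,g)$-avoiding Brownian line ensemble determined by the boundary data; passing to the limit using the established weak convergence, the tightness/uniform-absolute-continuity estimates (to rule out boundary pathologies, with Corollary~\ref{cor:touch} ensuring the limiting avoidance constraints are non-degenerate), and continuity of the $(f,g)$-avoiding law in its boundary data, one obtains that the conditional law of $\mathcal{L}$ on $D_{K,a,b}$ given the exterior equals the corresponding $(f,g)$-avoiding Brownian line ensemble, i.e.\ the Brownian Gibbs property. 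The hard part throughout is the technical proposition of the first paragraph; everything after it is a relatively standard tightness-plus-identification-plus-coupling argument.
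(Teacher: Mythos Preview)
Your overall architecture matches the paper's: isolate a technical proposition controlling acceptance probability, minimal gap, and height of the top $k$ curves uniformly in $N$; deduce tightness and non-intersection; identify the limit by finite-dimensional distributions; and transfer the Brownian Gibbs property by a Skorohod-plus-resampling coupling (the paper's Propositions~\ref{proptightnessH} and~\ref{propBrownianGibbsH}). The last two steps of your sketch are essentially correct and close to what the paper does.

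The gap is in your proposed proof of the technical proposition. You write that, conditionally on good boundary data at $\pm T$, you will ``use Lemma~\ref{monotonicity} to replace $\mathcal{D}^N_{k+1}$ by $-\infty$'' and thereby reduce to a fixed $(\infty,-\infty)$-avoiding ensemble with deterministic endpoints. This does not work. First, by Lemma~\ref{monotonicity}, lowering the floor $f$ lowers the curves, so the comparison is stochastically \emph{lowest}, not highest; but more importantly, a one-sided stochastic ordering of the curves does not transfer to control of the modulus of continuity, and it says nothing about the \emph{actual} acceptance probability $\AP(\cdot,\cdot,\cdot,\cdot,\mathcal{D}^N_{k+1})$, which is what enters the Radon--Nikodym derivative you invoke. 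The genuine obstruction is that $\mathcal{D}^N_{k+1}$ on $(-T,T)$ is not controlled by boundary data at $\pm T$ alone: a priori it could spike sharply in the interior, forcing the curves above it to spike as well and driving both the modulus of continuity and $\AP$ to behave badly. Finite-dimensional convergence at finitely many times cannot rule this out, and monotonicity in $f$ cannot remove it.

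The paper's resolution is the concave-majorant device of Section~\ref{proof:propacceptprob}. One first proves uniform-in-$N$ bounds $\sup_{[-T,T]}\mathcal{L}^N_1\leq M$ and $\mathcal{L}^N_{k+1}(\pm T)\geq -M$ (Lemmas~\ref{lemnobigmax} and~\ref{lemnobigmin} plus one-point convergence); since $\mathcal{L}^N_{k+1}\leq \mathcal{L}^N_1$, these force the least concave majorant of $\mathcal{L}^N_{k+1}$ to have slope bounded by $K=2M$ except on short intervals near $\pm T$. One then passes to a random \emph{stopping domain} $(\tilde{\mathfrak{l}}_K,\tilde{\mathfrak{r}}_K)\supseteq[-T+1,T-1]$ on which the majorant is $K$-Lipschitz, and invokes Proposition~\ref{propconcave}: for \emph{any} lower boundary $f$ whose concave majorant is $K$-Lipschitz and \emph{any} entrance/exit data in a compact set, the $k$ conditioned bridges have uniformly positive acceptance probability and uniformly positive minimal gap on the interval shrunk by one unit at each end. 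The proof of Proposition~\ref{propconcave} itself requires a further non-obvious idea (Lemmas~\ref{lemellr} and~\ref{lemqn}): a ``pushing map'' change-of-variables on the Gaussian vector $\big(B_1(\ell+1),\ldots,B_k(\ell+1)\big)$ showing that configurations with two nearly coincident entries are exponentially less likely than their separated images. None of this is visible in your reduction to $f\equiv -\infty$; that is the missing idea.
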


\begin{proof}
This follows immediately from the more general results of Section \ref{hypsec}. Proposition \ref{hypsat} shows that the edge-scaled Dyson line ensemble satisfies the hypotheses of Theorem \ref{mainthm2} which in turn proves the above result. The weak convergence result follows from Proposition \ref{proptightnessH} which is shown along the way to proving Theorem \ref{mainthm2}.
\end{proof}

\subsection{Some remarks about the proof of Theorem~\ref{mainthm}}
The above proof appeals to a more general set of results which we will soon present in Theorem \ref{mainthm2}. However, we now briefly explain the approach by which we will derive these results. We start by considering the edge-scaled Dyson line ensemble restricted to the top $k$ lines in an interval $[-T,T]$ (i.e., $\mathcal{D}^N(k,[-T,T])$). We will prove Theorem~\ref{mainthm} (and Theorem \ref{mainthm2}) by showing that, as $N$ tends to infinity, this sequence converges weakly as a line ensemble to a limiting line ensemble which is continuous, non-intersecting, has finite-dimensional distributions given by the multi-line Airy process minus a parabola and scaled down by $2^{1/2}$ (all of which is shown in Proposition \ref{proptightnessH}), and has the Brownian Gibbs property (shown in Proposition \ref{propBrownianGibbsH}). Once these facts are available, consistency of measures with respect to the interval $[-T,T]$ and with respect to $k$ yields Theorem~\ref{mainthm}.

The key to proving Propositions \ref{proptightnessH} and \ref{propBrownianGibbsH} is Proposition \ref{propacceptprob} which should be considered to be the main technical component of this article. This proposition shows that the $k$ lines on the interval $[-T,T]$ remain  sufficiently well-behaved uniformly in high $N$. This is measured in two ways.

The first measure is the {\it acceptance probability} (see Definition \ref{WBdef}). Roughly speaking, the acceptance probability for a line ensemble on an interval $[a,b]$ is the probability that the following operation is accepted: remove the top $k$ curves on the entire interval $[a,b]$, and redraw them with the same starting and ending points according to the law of independent Brownian bridges. The outcome is accepted if there is no point of intersection between the resampled curves or with the $(k+1)$-st curve. The second measure is the minimal gap between any two of the top $k$ lines on an interval $[a,b]$.

Proposition \ref{propacceptprob} shows that both the acceptance probability and the minimal gap stay bounded from below with high probability as $N \to \infty$, and that the top $k$ curves stay bounded between $\pm M$ for $M$ large enough, with a uniformly high probability. The first estimate shows that the top $k$ lines on $[-T,T]$ are uniformly equicontinuous in $N$ with high probability; alongside the known finite-dimensional convergence for $\mathcal{D}^N(k,\{t_1,\ldots, t_m\})$ (see Section \ref{airylikeSEC}), this yields the necessary tightness to prove the weak convergence statement of Proposition \ref{proptightnessH}.  The minimal gap estimate shows that the limiting measure is supported on non-intersecting lines.

That the limiting line ensemble has the Brownian Gibbs property is then shown by using the Skorohod representation theorem to couple the line ensembles for all $N$ so as to have uniform convergence of all $k$ lines on the interval $[-T,T]$. We reformulate the Brownian Gibbs property in terms of a resampling procedure and couple the Brownian bridges used for this resampling. Given these two layers of coupling, we show that the limiting line ensemble inherits the invariance in law under resampling from the finite $N$ ensemble and thus has the Brownian Gibbs property -- hence Proposition \ref{propBrownianGibbsH}.

We have not mentioned here how we will go about proving Proposition~\ref{propacceptprob}. The proof appears Section~\ref{technicalSec} which begins by discussing the ideas involved.

\subsection{Possible uniqueness results for Brownian Gibbs measures}\label{secunique}

Recall that a Gibbs measure is said to be extremal if it may not be written non-trivially as a sum of two Gibbs measures. The $\N$-indexed line ensemble $\mathcal{L}$ appearing in Theorem~\ref{mainthm} is an example of a Brownian Gibbs measure which presumably is extremal, as are
its affine shifts $\mathcal{L}^{(x,y)} = \mathcal{L}(x + \cdot) + y$ for $(x,y) \in \R^2$. Each element of the family $\big\{ \mathcal{L}^{(0,y)}: y \in \R \big\}$ also enjoys the property of being statistically invariant under horizontal shifts in the argument once the parabolic shift is removed. To the best of our knowledge, it was Scott Sheffield who first raised the possibility that, in fact, this family of measures exhausts the list of such extremal Gibbs measures:
\begin{conjecture}\label{conj}
We say that an $\N$-indexed line ensemble $\mathcal{A}$ is $x$-invariant if $\mathcal{A} \big( s + \cdot \big)$ is equal in distribution to $\mathcal{A}$ for each $s \in \R$. The set of extremal Brownian Gibbs $\N$-indexed line ensembles $\mathcal{L}$ which have the property that $\mathcal{A}$ (given by $\mathcal{A}_i(t) = 2^{1/2} \mathcal{L}_i(t) + t^2$ for $i \in \N$) is $x$-invariant is  $\big\{ \mathcal{L}^{(0,y)}: y \in \R \big\}$, where $\mathcal{L}$ appears in Theorem \ref{mainthm}.
\end{conjecture}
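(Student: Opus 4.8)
The plan is to reduce the classification to a matching of finite-dimensional distributions and then to force that matching by combining the monotonicity lemmas with the $x$-invariance hypothesis. By the uniqueness assertion of Theorem~\ref{mainthm} it suffices to establish two statements. First, each member $\mathcal{L}^{(0,y)}$ of the proposed family is an \emph{extremal} Brownian Gibbs line ensemble. Second, if $\widehat{\mathcal{L}}$ is any extremal Brownian Gibbs $\N$-indexed line ensemble for which $\widehat{\mathcal{A}}_i(t):=2^{1/2}\widehat{\mathcal{L}}_i(t)+t^2$ is $x$-invariant, then $\widehat{\mathcal{A}}$ has the same finite-dimensional distributions as $\mathcal{A}(\cdot)+y$ for some $y\in\R$, where $\mathcal{A}$ is the Airy line ensemble of Theorem~\ref{mainthm}; then the uniqueness in Theorem~\ref{mainthm}, applied to $\widehat{\mathcal{A}}(\cdot)-y$, identifies $\widehat{\mathcal{L}}$ in law with $\mathcal{L}^{(0,y')}$ for the appropriate constant $y'$.

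For the second statement I would first extract a priori one-point control from the Brownian Gibbs property. Using the strong Gibbs property (Lemma~\ref{stronggibbslemma}) together with the monotonicity Lemmas~\ref{monotonicity} and~\ref{lemmonotonetwo}, one should show that, for each fixed index $i$, the law of $\widehat{\mathcal{L}}_i(0)$ is tight with uniformly controlled tails: non-crossing forces consecutive lines apart (a lower bound on gaps, in the spirit of Proposition~\ref{propacceptprob}), while the parabola in the definition of $\widehat{\mathcal{A}}$, combined with $x$-invariance, prevents the lines from wandering too far upward --- were $\widehat{\mathcal{A}}_1(s)+s^2$ to have a heavy upper tail, then by $x$-invariance the resampling of a long window prescribed by Lemma~\ref{stronggibbslemma} would with positive probability violate the non-crossing constraint, a contradiction. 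With these bounds one also obtains, exactly as in Section~\ref{absconsec}, that every line of $\widehat{\mathcal{L}}$ has increments locally absolutely continuous with respect to Brownian motion.

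Armed with these estimates I would run a monotone coupling between $\widehat{\mathcal{L}}$ and vertical translates of the ensemble $\mathcal{L}$ of Theorem~\ref{mainthm}. Restrict both to their top $k$ lines on $[-T,T]$. By the Brownian Gibbs property, each is conditionally --- given its data on the complement of $\{1,\dots,k\}\times(-T,T)$ --- an $(f,g)$-avoiding Brownian line ensemble (Definition~\ref{maindefBGP}) with $f\equiv+\infty$, determined by the entrance and exit data at $\pm T$ and by the $(k+1)$-st line. Using the a priori bounds and $x$-invariance, I would choose $y_T^-\le y_T^+$ so that, with probability tending to one as $T\to\infty$, the boundary data and the $(k+1)$-st line of $\widehat{\mathcal{L}}$ lie between the corresponding objects of $\mathcal{L}^{(0,y_T^-)}$ and $\mathcal{L}^{(0,y_T^+)}$; chaining Lemmas~\ref{monotonicity} and~\ref{lemmonotonetwo} then sandwiches the interior laws. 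The role of $x$-invariance is to force $y_T^+-y_T^-\to 0$ and $y_T^\pm$ to converge to a common constant $y$: heuristically, the data of $\widehat{\mathcal{L}}$ near $\pm T$ must, after the parabolic correction is added back, be distributed like $\widehat{\mathcal{A}}(0)$, and consistency of this fixed marginal with the Gibbs dynamics should pin it to the Airy point process. Letting $T\to\infty$ yields $\widehat{\mathcal{A}}\stackrel{d}{=}\mathcal{A}(\cdot)+y$ on every compact set, which is the second statement. For the first statement, extremality of $\mathcal{L}^{(0,y)}$ is equivalent to triviality of the tail $\sigma$-field $\bigcap_{k,T}\sigma\big(\mathcal{L}\textrm{ on }(\N\times\R)\setminus(\{1,\dots,k\}\times[-T,T])\big)$, which I would derive from a decorrelation estimate: the acceptance-probability and containment bounds of Proposition~\ref{propacceptprob} show that the conditional law of $\mathcal{L}$ inside a fixed window is asymptotically insensitive to the data outside an enlarging region, so no tail event can have probability strictly between $0$ and $1$.

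The principal obstacle is pinning down the one-point law --- equivalently, closing the gap $y_T^+-y_T^-\to 0$ in the sandwich above. The $x$-invariance hypothesis supplies only averaged information: together with extremality it gives ergodicity under horizontal shifts, hence convergence of the empirical one-point distribution of $\{\widehat{\mathcal{A}}_i(s)\}_i$ as $s$ ranges over $[-T,T]$ to a fixed marginal, but not the identification of that marginal. What is genuinely required is a self-contained statement that the Brownian Gibbs property together with parabolic curvature of the lines determines the one-point marginal, and hence, through the Gibbs relation, every finite-dimensional marginal. Proving this appears to demand either a quantitative comparison of $\widehat{\mathcal{L}}$ with the finite edge-scaled Dyson ensembles $\mathcal{D}^N$ --- realizing $\widehat{\mathcal{L}}$ as a limit of such ensembles with admissible boundary data and invoking the convergence and uniqueness already proved in Theorem~\ref{mainthm} --- or a bootstrap upgrading the crude a priori one-point bounds to the exact Tracy--Widom asymptotics. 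I expect this to be the crux of any proof, and the reason the statement is presented here as a conjecture.
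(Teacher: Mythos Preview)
The paper does not prove this statement: it is presented as Conjecture~\ref{conj}, an open problem attributed to Sheffield, with no accompanying argument. There is therefore no proof in the paper against which to compare your proposal.

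Your write-up is candid about this: you yourself identify the decisive obstruction --- determining the one-point marginal from the Brownian Gibbs property plus $x$-invariance --- and conclude that this is ``the reason the statement is presented here as a conjecture.'' That assessment is accurate. The sandwiching scheme you outline (monotone couplings via Lemmas~\ref{monotonicity} and~\ref{lemmonotonetwo}, a priori control from Proposition~\ref{propacceptprob}, and tail triviality for extremality) is a sensible heuristic framework, but as you acknowledge, the step $y_T^+-y_T^-\to 0$ is not an argument: nothing in the paper's toolkit forces the one-point law of an arbitrary $x$-invariant extremal Brownian Gibbs ensemble to be Tracy--Widom, and without that the sandwich does not close. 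Your proposal should be read as a plausible roadmap rather than a proof, which is consistent with the conjecture's status in the paper.
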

Beyond its intrinsic interest,  this conjecture may be worth investigating in light of its possible use as an invariance principle for deriving convergence of systems to the Airy line ensemble; indeed, we understand that Andrei Okounkov suggested problems in this direction in a talk in 2006. As such, the characterization could serve as a route to universality results. (See also \cite{BaikSuidan} for some partial universality results using a different approach involving the Koml{\'o}s-Major-Tusn{\'a}dy coupling of random walks with Brownian motion.)

\subsection{General hypotheses and results}\label{hypsec}
We now formulate some general hypotheses under which we will prove existence of continuous non-intersecting limiting line ensembles with the Brownian Gibbs property.

\begin{definition}\label{hypDef}
Fix $k\geq 1$ and $T>0$. Consider a sequence $\{k_i\}_{i\geq 1}$ and $\{T_i\}_{i\geq 1}$ such that there exists an $N_0$ such that for all $N \geq N_0$, $k_{N}\geq k+1$ and $T_N  \geq T +1$. A sequence of line ensembles $\{\mathcal{L}^N\}_{N=1}^{\infty}$, $\mathcal{L}^N:\{1,\ldots, k_N\}\times [-T_N,T_N]\rightarrow \R$ satisfies Hypothesis $(H)_{k,T}$ if it satisfies the following three hypotheses:
\begin{itemize}
\item $(H1)_{k,T}$: for each $N$, $\mathcal{L}^N$ is a non-intersecting line ensemble with the Brownian Gibbs property.
\item $(H2)_{k,T}$:
for every finite set $S \subseteq [-T,T]$, the joint distribution of $\big\{ \mathcal{L}_i^N(s): 1 \leq i \leq k , s \in S \big\}$ converges weakly.
\item $(H3)_{k,T}$: for all $\e > 0$ and all $t\in [-T,T]$ there exists $\delta>0$ and $N_1\geq N_0$ such that for all $N \geq N_1$,
\begin{equation*}
\PP^N \Big( \min_{1\leq i\leq k-1} \big\vert\mathcal{L}^N_i(t)-\mathcal{L}^N_{i+1}(t)\big\vert<\delta \Big) < \e.
\end{equation*}
\end{itemize}
If one requires the convergence in  $(H2)_{k,T}$ to be only for singletons $S=\{s\}$ we write $(H2')_{k,T}$ instead; in that case we will refer to all three hypotheses $(H1)_{k,T}$, $(H2')_{k,T}$ and $(H3)_{k,T}$ as $(H')_{k,T}$.
\end{definition}

\begin{definition}
Fix $k\geq 1$ and $T>0$. Consider a sequence of line ensembles $\{\mathcal{L}^N\}_{N=1}^{\infty}$ satisfying Hypothesis $(H')_{k,T}$. Define the minimal gap between the first $k$ lines on the interval $[a,b]\subseteq [-T,T]$ to be
\begin{equation*}
M_{k,a,b}^N = \min_{1 \leq i \leq k - 1} \min_{s \in [a,b]} \big\vert \mathcal{L}^N_i(s)  - \mathcal{L}^N_{i+1}(s)  \big\vert.
\end{equation*}
Recall also the acceptance probability $\AP(a,b,\bar{x},\bar{y},f)$ given in Definition \ref{WBdef}.
\end{definition}

We can now state the article's main technical component. Note that the reason we require hypotheses $(H)_{k,T+2}$ is that we need an extra buffer region around $[-T,T]$ in order to establish sufficient control over the lines in that interval.

\begin{proposition}\label{propacceptprob}
Fix $k\geq 1$ and $T>0$. Consider a sequence of line ensembles $\{\mathcal{L}^N\}_{N=1}^{\infty}$ satisfying Hypothesis $(H')_{k,T+2}$. There exists a stopping domain $\big( \mathfrak{l}^N,\mathfrak{r}^N \big)$
with $- T-2  \leq \mathfrak{l}^N \leq -T$ and $T \leq \mathfrak{r}^N \leq T+2$ (almost surely) such that the following holds.
For all $\epsilon > 0$, there exists $\delta = \delta(k,T)$ and $N_0 = N_0(k,T)$, such that, for $N \geq N_0$,
\begin{equation}\label{APpropeqn}
\PP^N \bigg(\AP\Big(\mathfrak{l}^N,\mathfrak{r}^N,\{\mathcal{L}^N_{i}\big(\mathfrak{l}^N\big)\}_{i=1}^{k},\{\mathcal{L}^N_{i}\big(\mathfrak{r}^N\big)\}_{i=1}^{k}, \mathcal{L}^N_{k+1}(\cdot)\Big) < \delta \bigg) < \epsilon,
\end{equation}
and
\begin{equation}\label{GAPpropeqn}
\PP^N \Big( M_{k,-T,T}^N  < \delta \Big) < \epsilon.
\end{equation}
Additionally we have that for all $\epsilon>0$ there exists $M>0$ such that, for each $N \in \N$,
\begin{equation}\label{maxmineqn}
\PP^N\Big(-M \leq \mathcal{L}^N_i(t) \leq M \textrm{ for all } t\in[-T-2,T+2], 1\leq i\leq k \Big) \geq 1-\epsilon.
\end{equation}
\end{proposition}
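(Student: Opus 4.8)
The plan is to build the random interval $(\mathfrak{l}^N,\mathfrak{r}^N)$ by hand so that all three conclusions become accessible, and to propagate control from a single deterministic time to an entire interval using the strong Gibbs property (Lemma \ref{stronggibbslemma}) together with the monotonicity lemmas (Lemmas \ref{monotonicity} and \ref{lemmonotonetwo}). First I would establish \eqref{maxmineqn}, which is the foundation for everything else. At the five fixed times $t\in\{-T-2,-T-1,0,T+1,T+2\}$ (say), hypothesis $(H2')_{k,T+2}$ gives tightness of the marginals $\{\mathcal{L}^N_i(t)\}_{i\le k}$, so for large $N$ the top $k$ values at these times lie in $[-M_0,M_0]$ with probability $1-\epsilon$, and one handles small $N$ by absorbing finitely many ensembles into the constant $M$. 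To upgrade control at these reference times to control on all of $[-T-2,T+2]$, I would use the Brownian Gibbs property to resample the top $k$ curves on each subinterval between consecutive reference times: conditionally the curves are an $(f,g)$-avoiding Brownian line ensemble, and by Lemma \ref{monotonicity} this is stochastically dominated above by unconstrained Brownian bridges between the (bounded) endpoint data, and below by the bottom curve $\mathcal{L}^N_{k+1}$; a further argument (or a crude a priori lower bound for $\mathcal{L}^N_{k+1}$ at one time, again from tightness, plus monotonicity) bounds the ensemble below. The fluctuation of a Brownian bridge over a bounded interval is controlled by Lemma \ref{lembridgemod}, giving a uniform $M$.

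Next I would construct the stopping domain. The idea is to look for times $\mathfrak{l}^N\in[-T-2,-T]$ and $\mathfrak{r}^N\in[T,T+2]$ at which the top $k$ curves are "nicely spread out" — concretely, where consecutive gaps $\mathcal{L}^N_i-\mathcal{L}^N_{i+1}$ all exceed some threshold and the curves are well inside $[-M,M]$. One finds such times with high probability by partitioning each of $[-T-2,-T]$ and $[T,T+2]$ into many deterministic subintervals and arguing that, by $(H3)_{k,T+2}$ applied at a fixed time in each subinterval together with a continuity estimate (itself coming from the Gibbs resampling as above), at least one subinterval contains a good time; then $\mathfrak{l}^N$ ($\mathfrak{r}^N$) is taken as the leftmost (rightmost) good time in a designated scan order, which is readable from $\mathcal{F}_{ext}(k,\ell,r)$ since goodness at a time $s\notin(\ell,r)$ is measurable with respect to the exterior data — making $(\mathfrak{l}^N,\mathfrak{r}^N)$ a genuine stopping domain. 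With $\mathfrak{l}^N,\mathfrak{r}^N$ chosen so that the boundary data $\bar x=\{\mathcal{L}^N_i(\mathfrak{l}^N)\}$, $\bar y=\{\mathcal{L}^N_i(\mathfrak{r}^N)\}$ have all entries in a compact set and all consecutive gaps at least some $\eta>0$, and $\mathfrak{r}^N-\mathfrak{l}^N\in[2T,2T+4]$, the acceptance probability $\AP(\mathfrak{l}^N,\mathfrak{r}^N,\bar x,\bar y,\mathcal{L}^N_{k+1})$ is bounded below: conditionally on the good event the problem reduces to a non-intersection probability for $k$ Brownian bridges with separated, bounded endpoints over a bounded time interval avoiding a curve $f=\mathcal{L}^N_{k+1}$ that \eqref{maxmineqn} places below $-M$; by Lemma \ref{monotonicity} (pushing $f$ down to $-\infty$) and a standard estimate this probability exceeds a deterministic $\delta(k,T)>0$, yielding \eqref{APpropeqn}.

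Finally, \eqref{GAPpropeqn}, the minimal-gap bound on the \emph{whole} interval $[-T,T]$, is the step I expect to be the main obstacle, because $(H3)$ only controls gaps at individual deterministic times and a gap could close at a random exceptional time. The plan is to invoke the strong Gibbs property on the stopping domain $(\mathfrak{l}^N,\mathfrak{r}^N)$: conditionally on $\mathcal{F}_{ext}$, the top $k$ curves on $(\mathfrak{l}^N,\mathfrak{r}^N)$ are distributed as $\bxyf^{\mathfrak{l}^N,\mathfrak{r}^N}$ with $\bar x,\bar y$ in the good set and $f=\mathcal{L}^N_{k+1}\le -M$. One then needs a \emph{quantitative} lower bound on the minimal gap of such an avoiding ensemble that is uniform over the good boundary data and over $N$; this follows from comparing with the unconditioned measure (the acceptance probability is bounded below by \eqref{APpropeqn}, so events of Brownian-measure $\ge 1-\epsilon\delta$ have conditional probability $\ge 1-\epsilon$) and then estimating, for $k$ independent Brownian bridges with endpoints whose consecutive gaps exceed $\eta$, the probability that some consecutive pair comes within $\delta$ of each other somewhere on $[-T,T]$ — this tends to $0$ as $\delta\to0$ by Lemma \ref{lembrownbridge} and a union bound over a fine mesh plus a modulus-of-continuity estimate. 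Averaging the conditional bound over $\mathcal{F}_{ext}$ and combining with the high-probability good event gives \eqref{GAPpropeqn}, where one is careful to choose the various thresholds in the order: first $M$ (from \eqref{maxmineqn}), then $\eta$ (the good-gap threshold at the endpoints, via $(H3)$), then $\delta$ small depending on $k,T,M,\eta$.
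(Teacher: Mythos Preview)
Your plan has a genuine gap in the step that is supposed to establish \eqref{APpropeqn}, and the same gap propagates to your argument for \eqref{GAPpropeqn}.

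The issue is your treatment of the lower boundary curve $f=\mathcal{L}^N_{k+1}$. You write that ``\eqref{maxmineqn} places $f$ below $-M$'' and then propose to ``push $f$ down to $-\infty$'' via Lemma~\ref{monotonicity} to lower-bound the acceptance probability. Both steps fail. First, \eqref{maxmineqn} concerns only the top $k$ curves; under $(H')_{k,T+2}$ you have no pointwise lower bound on $\mathcal{L}^N_{k+1}$, and certainly no statement that it lies below $-M$. Second, and more fundamentally, replacing $f$ by $-\infty$ \emph{increases} the acceptance probability (the event $\ncf_{[a,b]}$ becomes easier when the floor is removed), so monotonicity gives the inequality in the wrong direction for a lower bound on $\AP(\cdot,f)$. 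Even with perfectly separated, bounded endpoint data $\bar{x},\bar{y}$, the curve $\mathcal{L}^N_{k+1}$ could rise sharply somewhere in $(\mathfrak{l}^N,\mathfrak{r}^N)$ to within a hair of $\mathcal{L}^N_k$, driving $\AP$ to zero; nothing in your construction rules this out.

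This is exactly why the paper's stopping domain is built from $\mathcal{L}^N_{k+1}$ rather than from the top $k$ curves. The paper defines $(\tilde{\mathfrak{l}}_K,\tilde{\mathfrak{r}}_K)$ as the sub-interval on which the \emph{least concave majorant} of $\mathcal{L}^N_{k+1}$ has slope bounded by $K$, and then takes $\mathfrak{l}^N=\tilde{\mathfrak{l}}_K+1$, $\mathfrak{r}^N=\tilde{\mathfrak{r}}_K-1$. This gives $f\in\CMK$ on the domain (so $f$ cannot rise faster than slope $K$ from its endpoint values), at the price of having \emph{no} a~priori separation of the boundary data $\bar{x},\bar{y}$ at $\tilde{\mathfrak{l}}_K,\tilde{\mathfrak{r}}_K$. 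The substantial work is then Proposition~\ref{propconcave}: under $\bxyflr$ with $f\in\CMK$ and merely bounded $(\bar{x},\bar{y})$, the curves become $\epsilon$-well-spaced one unit inward with uniformly positive probability, and this is upgraded to high-probability lower bounds on $\AP$ and on the minimal gap. The mechanism by which bad boundary data self-improves (Lemmas~\ref{lemellr} and~\ref{lemqn}, the ``pushing map'' Jacobian argument) is the core idea you are missing; your plan tries to secure good boundary data directly via $(H3)$ at the stopping-domain endpoints, but this leaves $f$ uncontrolled and the acceptance-probability bound does not follow.
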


The proof of this proposition is fairly involved and will be given in Section \ref{proof:propacceptprob}.
We mention in passing that the use of the stopping domain  $\big( \mathfrak{l}^N,\mathfrak{r}^N \big)$
is a technical device needed for the proposition's proof. At least for line ensembles satisfying Hypothesis $(H')_{k,T+2}$, it is a consequence after the fact of the next proposition that~(\ref{APpropeqn}) also holds when $\mathfrak{l}^N$ and  $\mathfrak{r}^N$ are replaced by deterministic times.

Given Proposition~\ref{propacceptprob}, we can prove the following two propositions:

\begin{proposition}\label{proptightnessH}
Fix $k\geq 1$ and $T>0$. Consider a sequence of line ensembles $\{\mathcal{L}^N\}_{N=1}^{\infty}$ satisfying Hypothesis $(H)_{k,T+2}$. Then $\{\mathcal{L}^N_{i}\}_{i=1}^{k}$ converges weakly as $N \to \infty$  to a unique limit, which is the continuous non-intersecting line ensemble $\mathcal{L}^{\infty}:\{1,\ldots,k\}\times [-T,T]\rightarrow \R$ whose finite-dimensional distributions coincide with the limiting distributions ensured by Hypothesis $(H2)_{k,T+2}$.
\end{proposition}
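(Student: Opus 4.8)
The plan is to establish tightness of the sequence $\{\mathcal{L}^N_i\}_{i=1}^k$ in the space of continuous curves on $[-T,T]$, identify the limit via the finite-dimensional convergence furnished by $(H2)_{k,T+2}$, and argue that the limit is non-intersecting. The main input is Proposition \ref{propacceptprob}: the acceptance-probability lower bound $(\ref{APpropeqn})$, the minimal-gap lower bound $(\ref{GAPpropeqn})$, and the uniform height bound $(\ref{maxmineqn})$.

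First I would derive an equicontinuity (modulus of continuity) estimate. Fix the stopping domain $(\mathfrak{l}^N,\mathfrak{r}^N)$ from Proposition \ref{propacceptprob}, which sits inside $[-T-2,T+2]$ and contains $[-T,T]$. By the strong Brownian Gibbs property (Lemma \ref{stronggibbslemma}), conditionally on $\mathcal{F}_{ext}(k,\mathfrak{l}^N,\mathfrak{r}^N)$ the curves $\mathcal{L}^N_1,\ldots,\mathcal{L}^N_k$ restricted to $(\mathfrak{l}^N,\mathfrak{r}^N)$ are distributed as $\mathcal{B}^{\mathfrak{l}^N,\mathfrak{r}^N}_{\bar x,\bar y,f}$, i.e. $k$ independent Brownian bridges between the observed endpoints, conditioned on the non-crossing event $\ncf_{[\mathfrak{l}^N,\mathfrak{r}^N]}$. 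On the event that the acceptance probability exceeds $\delta$ (which, by $(\ref{APpropeqn})$, holds off an event of probability $<\epsilon$), this conditional law is absolutely continuous with respect to $k$ independent Brownian bridges with Radon--Nikodym derivative bounded by $\delta^{-1}$. Hence any event measuring a large oscillation of the bridges has conditional probability at most $\delta^{-1}$ times its probability under the free (unconditioned) bridge law. For free Brownian bridges on an interval of length at most $2T+4$, the modulus-of-continuity tail is standard (e.g. via Lemma \ref{lembridgemod} applied to bridge increments, or L\'evy's modulus of continuity); combined with the uniform endpoint control implied by $(\ref{maxmineqn})$, this gives, for every $\eta>0$,
\begin{equation*}
\limsup_{N\to\infty}\PP^N\Big(\sup_{1\le i\le k}\sup_{\substack{s,t\in[-T,T]\\ |s-t|\le \rho}}\big|\mathcal{L}^N_i(s)-\mathcal{L}^N_i(t)\big|>\eta\Big)\xrightarrow[\rho\to0]{}0.
\end{equation*}
Together with tightness of the one-point marginals (from $(H2)_{k,T+2}$, or from $(\ref{maxmineqn})$), the Arzel\`a--Ascoli criterion gives tightness of $\{\mathcal{L}^N_i\}_{i=1}^k$ as a sequence of random elements of $C(\{1,\dots,k\}\times[-T,T])$, hence relative compactness in the weak-$*$ topology of Definition \ref{maindef}.

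Next I would identify the limit. Hypothesis $(H2)_{k,T+2}$ says the finite-dimensional distributions converge; since tightness plus convergence of all finite-dimensional marginals pins down the weak limit uniquely, every subsequential limit has the prescribed finite-dimensional distributions, so $\{\mathcal{L}^N_i\}_{i=1}^k\Rightarrow\mathcal{L}^\infty$ for a unique continuous line ensemble $\mathcal{L}^\infty$ on $\{1,\dots,k\}\times[-T,T]$. Finally, non-intersection: the event $\{\mathcal{L}^\infty_i(s)\ge \mathcal{L}^\infty_{i+1}(s)\ \text{for some }s\in[-T,T],\ \text{some }i\}$ is closed in the uniform topology, so by the portmanteau theorem its probability is at most $\liminf_N\PP^N(M^N_{k,-T,T}\le 0)$, and by $(\ref{GAPpropeqn})$ this is $0$; thus $\mathcal{L}^\infty$ is almost surely non-intersecting. (Strictly, one applies portmanteau to the events $\{M_{k,-T,T}\le\delta'\}$ for $\delta'>0$ and lets $\delta'\to0$ after using $(\ref{GAPpropeqn})$ uniformly in $N$.)

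The main obstacle is the equicontinuity step: the finite-dimensional convergence in $(H2)$ gives no direct control at random exceptional times where a curve could have a large local oscillation, so one genuinely needs the Brownian-bridge comparison via the strong Gibbs property and the acceptance-probability bound to transfer the well-understood modulus of continuity of free bridges to the conditioned ensemble. Care is also needed because the comparison is valid only on the high-probability event where the acceptance probability is bounded below and the endpoints lie in $[-M,M]$; one handles the complementary event by its small probability (at most $\epsilon$, uniformly in large $N$) and lets $\epsilon\to0$ at the end. Everything else — Arzel\`a--Ascoli, uniqueness of the limit from finite-dimensional distributions, and the portmanteau argument for non-intersection — is routine.
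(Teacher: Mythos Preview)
Your proposal is correct and follows essentially the same route as the paper: tightness via the strong Gibbs property on the stopping domain $(\mathfrak{l}^N,\mathfrak{r}^N)$ combined with the acceptance-probability bound to transfer Brownian-bridge modulus-of-continuity estimates to the conditioned ensemble, then identification of the limit from $(H2)_{k,T+2}$ and non-intersection from the minimal-gap bound. Your phrasing of the bridge comparison as a Radon--Nikodym bound $\leq \delta^{-1}$ is exactly the paper's observation that conditioning on an event of probability at least $\delta$ inflates bad-event probabilities by at most $\delta^{-1}$; the paper just writes this out via an explicit coupling with standard bridges on $[0,1]$. One small slip: in your non-intersection paragraph the portmanteau inequality goes the other way for closed sets, but your parenthetical remark (apply it to $\{M\le\delta'\}$, or equivalently to the open set $\{M<\delta'\}$, and send $\delta'\to0$) is the correct argument and matches what the paper means by ``follows immediately from (\ref{GAPpropeqn}).''
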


\begin{proposition}\label{propBrownianGibbsH}
The line ensemble $\mathcal{L}^{\infty}$ specified in Proposition \ref{proptightnessH} has the Brownian Gibbs property.
\end{proposition}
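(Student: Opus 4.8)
The plan is to show that the limiting ensemble $\mathcal{L}^\infty$ inherits the Brownian Gibbs property from the pre-limit ensembles $\mathcal{L}^N$ by means of a double coupling argument, using the weak convergence established in Proposition~\ref{proptightnessH} together with the Skorohod representation theorem. First I would reformulate the Brownian Gibbs property as invariance under a resampling map: fix $K=\{k_1,\ldots,k_2\}\subseteq\{1,\ldots,k\}$ and a deterministic interval $(a,b)\subseteq(-T,T)$, and let $R$ denote the operation that erases the curves $\mathcal{L}_{k_1},\ldots,\mathcal{L}_{k_2}$ on $(a,b)$ and redraws them as the $(f,g)$-avoiding Brownian line ensemble with the appropriate boundary data, using an auxiliary family of independent Brownian bridges as the source of randomness. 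The statement to be proved is then that $\mathcal{L}^\infty$ and $R(\mathcal{L}^\infty)$ have the same law. Since $\mathcal{L}^N$ has the Brownian Gibbs property (Hypothesis $(H1)$), $\mathcal{L}^N \overset{d}{=} R(\mathcal{L}^N)$ for every $N$, so it suffices to pass this identity to the limit.

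The key steps, in order, are as follows. (i) By Proposition~\ref{proptightnessH}, $\{\mathcal{L}^N_i\}_{i=1}^k \Rightarrow \mathcal{L}^\infty$ as line ensembles on $\{1,\ldots,k\}\times[-T,T]$; apply Skorohod's representation theorem to realize all the $\mathcal{L}^N$ and $\mathcal{L}^\infty$ on a common probability space so that $\mathcal{L}^N \to \mathcal{L}^\infty$ uniformly on $\{1,\ldots,k\}\times[-T,T]$ almost surely. (ii) Couple the resampling randomness: use a single family of independent standard Brownian bridges $\{B_i\}$ (on a further product space) to drive the resampling in both $\mathcal{L}^N$ and $\mathcal{L}^\infty$, scaling/shifting them to match the (converging) endpoint data $\bar{x}^N=\{\mathcal{L}^N_i(a)\}$, $\bar{y}^N=\{\mathcal{L}^N_i(b)\}$. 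Because $\bar{x}^N\to\bar{x}^\infty$ and $\bar{y}^N\to\bar{y}^\infty$ and the lower curve $f=\mathcal{L}^N_{k_2+1}$ converges uniformly to $f^\infty=\mathcal{L}^\infty_{k_2+1}$, the unconditioned redrawn curves converge uniformly a.s. (iii) Show the conditioning events converge: the non-crossing event $\ncf_{[a,b]}$ for the resampled system at level $N$ converges, in the sense that its indicator converges a.s., to that for the limit — here one needs that the limiting configuration has a.s. no tangency of the relevant curves, which follows from Corollary~\ref{cor:touch} applied to the Brownian bridges used in resampling (a crossing-without-touching event has probability zero), together with the fact that $\mathcal{L}^\infty$ is strictly non-intersecting (Proposition~\ref{proptightnessH}) so the limiting boundary inequalities $f^\infty(a)>x^\infty_1$ etc. are strict. (iv) Conclude that $R(\mathcal{L}^N)\to R(\mathcal{L}^\infty)$ a.s. in the uniform topology on $\{1,\ldots,k\}\times[-T,T]$, hence $R(\mathcal{L}^N)\Rightarrow R(\mathcal{L}^\infty)$; combining with $\mathcal{L}^N\overset{d}{=}R(\mathcal{L}^N)$ and $\mathcal{L}^N\Rightarrow\mathcal{L}^\infty$ gives $\mathcal{L}^\infty \overset{d}{=} R(\mathcal{L}^\infty)$, which is exactly the Brownian Gibbs property for deterministic $(a,b)$ and $K\subseteq\{1,\ldots,k\}$. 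Finally, since $k$ and $T$ were arbitrary and the limits are consistent, this yields the property for the full $\N$-indexed ensemble.

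The main obstacle I expect is step (iii): controlling the convergence of the conditioning. The acceptance probability $\AP$ is in general not continuous as a functional of the boundary data and the barrier curve — it can drop discontinuously when curves become tangent — so one must argue that the limiting configuration almost surely avoids the pathological boundary of the event. There are two sub-points: first, that the resampled Brownian bridges a.s. do not achieve a tangency with $f^\infty$ or with each other at the instant they would otherwise cross (Corollary~\ref{cor:touch} is designed for exactly this); second, that $\AP(a,b,\bar{x}^N,\bar{y}^N,f^N)$ is bounded below uniformly in $N$ with high probability, so that the conditioned laws do not degenerate — this is where Proposition~\ref{propacceptprob}, and in particular~(\ref{APpropeqn}) and~(\ref{maxmineqn}), is used (one may need to invoke it on a slightly enlarged interval and restrict, which is why Hypothesis $(H)_{k,T+2}$ rather than $(H)_{k,T}$ is assumed). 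A careful bookkeeping of these two facts, combined with the a.s. uniform convergence from Skorohod, is what makes the passage to the limit of the resampling identity rigorous; the rest is soft.
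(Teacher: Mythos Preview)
Your plan is essentially the paper's own argument: Skorohod representation, a coupled resampling using shared Brownian bridges, and Corollary~\ref{cor:touch} to rule out the tangency obstruction. Two points of comparison are worth noting.

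First, the paper makes the coupling of the \emph{conditioned} resampling explicit via rejection sampling: one fixes an i.i.d.\ sequence of candidate bridges $(B_\ell)_{\ell\ge 1}$, defines $\ell(N)$ to be the index of the first candidate whose affine shift avoids the neighbouring curves of $\mathcal{L}^N$, and then proves that $\ell(N)\to\ell(\infty)$ almost surely. Your step~(iv) asserts $R(\mathcal{L}^N)\to R(\mathcal{L}^\infty)$ a.s., but your description in~(ii)--(iii) couples only the \emph{unconditioned} bridges and argues that the non-crossing indicator converges; this does not by itself produce an a.s.\ coupling of the conditioned laws. The rejection-sampling index is exactly the device that bridges this gap: indicator convergence for each fixed candidate (via Corollary~\ref{cor:touch}) plus eventual acceptance gives $\ell(N)=\ell(\infty)$ for all large $N$, hence a.s.\ convergence of the accepted path. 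You have the right ingredients but should make this mechanism explicit.

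Second, you do not actually need to invoke Proposition~\ref{propacceptprob} directly here. The paper instead uses that $\mathcal{L}^\infty$ is strictly non-intersecting (already secured by Proposition~\ref{proptightnessH}) together with Corollary~\ref{coropen}: since the limiting curves are separated by some $\delta>0$ on $[a,b]$, there is a deterministic corridor of positive Wiener measure lying strictly between $\mathcal{L}^\infty_{i-1}$ and $\mathcal{L}^\infty_{i+1}$, and hence some candidate $B_\ell$ falls in it; uniform convergence then forces acceptance for all large $N$ with this same $\ell$, proving $\{\ell(N)\}$ is bounded. This is cleaner than appealing to the quantitative acceptance-probability bound on a stopping domain.
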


Combining these propositions yields the following general theorem:

\begin{theorem}\label{mainthm2}
Consider a sequence of line ensembles $\{\mathcal{L}^N\}_{N=1}^{\infty}$ satisfying Hypothesis $(H)_{k,T}$ for all $k\geq 1$ and all $T>0$. Then there exists a unique continuous non-intersecting $\N$-indexed line ensemble $\mathcal{L}:\N\times \R\to \R$ which has finite-dimensional distributions given by those ensured by Hypothesis $(H2)_{k,T}$ and which has the Brownian Gibbs property.
\end{theorem}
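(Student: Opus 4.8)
The plan is to deduce Theorem~\ref{mainthm2} by patching together the finite-window results of Propositions~\ref{proptightnessH} and \ref{propBrownianGibbsH} via a compatibility/consistency argument in both the window parameter $T$ and the number of lines $k$. First I would fix, for each pair $(k,T)$ with $k\in\N$ and $T>0$, the line ensemble $\mathcal{L}^{\infty,k,T}:\{1,\ldots,k\}\times[-T,T]\to\R$ produced by Proposition~\ref{proptightnessH} as the weak limit of $\{\mathcal{L}^N_i\}_{i=1}^k$ restricted to $[-T,T]$; by the same proposition it is continuous, non-intersecting, and has finite-dimensional distributions given by Hypothesis $(H2)_{k,T}$, and by Proposition~\ref{propBrownianGibbsH} it has the Brownian Gibbs property. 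The key observation is that these limits are mutually consistent: if $k'\le k$ and $T'\le T$, then $\mathcal{L}^{\infty,k,T}$ restricted to $\{1,\ldots,k'\}\times[-T',T']$ has the same law as $\mathcal{L}^{\infty,k',T'}$. This follows because both are weak limits (along the same sequence, or along nested subsequences) of the restrictions of $\{\mathcal{L}^N\}$ to the respective index-sets and intervals, and the restriction map on the space $X$ of continuous functions with the topology of uniform convergence on compacts is continuous, so it commutes with weak limits; uniqueness of the limit in Proposition~\ref{proptightnessH} then forces equality of the laws.

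Given this consistency, I would invoke the Kolmogorov extension theorem — applied not to finite-dimensional marginals but to the projective system indexed by $(k,T)$ of laws on spaces of continuous functions — to produce a single probability measure on the space of continuous functions $\mathcal{L}:\N\times\R\to\R$ (with the topology of uniform convergence on compact subsets of $\N\times\R$, which is a Polish space, so the extension theorem applies) whose restriction to each $\{1,\ldots,k\}\times[-T,T]$ is $\mathcal{L}^{\infty,k,T}$. This $\mathcal{L}$ is automatically continuous by construction; it is non-intersecting because non-intersection on $\N\times\R$ is the countable intersection over $k$ and over rational $T$ of the non-intersection events for the restrictions, each of which has probability one; and its finite-dimensional distributions agree with those ensured by Hypothesis $(H2)_{k,T}$ since any finite set of space-time points lies in some $\{1,\ldots,k\}\times[-T,T]$.

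Next I would verify the Brownian Gibbs property for $\mathcal{L}$ directly from Definition~\ref{maindefBGP}. Fix $K=\{k_1,\ldots,k_2\}\subset\N$ and $a<b$; choose $k\ge k_2+1$ and $T>|a|\vee|b|$ (with a little room to spare, say using $(H)_{k,T+2}$) so that the resampling region $D_{K,a,b}$ and the bounding curves $\mathcal{L}_{k_1-1}$, $\mathcal{L}_{k_2+1}$ all lie inside $\{1,\ldots,k\}\times[-T,T]$. The conditional law of $\mathcal{L}|_{D_{K,a,b}}$ given $\mathcal{L}|_{D_{K,a,b}^c}$ is determined by the law of $\mathcal{L}$ restricted to $\{1,\ldots,k\}\times[-T,T]$, which equals $\mathcal{L}^{\infty,k,T}$, and Proposition~\ref{propBrownianGibbsH} tells us that ensemble has the Brownian Gibbs property; hence so does $\mathcal{L}$. (One should note that the Brownian Gibbs resampling prescription genuinely depends only on data within a bounded window once $K$, $a$, $b$ are fixed, since the bounding curves are the single lines $\mathcal{L}_{k_1-1}$ and $\mathcal{L}_{k_2+1}$, with the conventions $f\equiv+\infty$, $g\equiv-\infty$ covering the edge cases.) Finally, uniqueness: any two continuous non-intersecting $\N$-indexed line ensembles with the Brownian Gibbs property and the prescribed finite-dimensional distributions must, when restricted to $\{1,\ldots,k\}\times[-T,T]$, both equal the unique object of Proposition~\ref{proptightnessH}; since laws on $X$ are determined by their restrictions to the nested exhausting family $\{1,\ldots,k\}\times[-T,T]$, the two ensembles have the same law.

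The main obstacle is the consistency step — making rigorous that the weak limits $\mathcal{L}^{\infty,k,T}$ for different $(k,T)$ really are restrictions of one another. The subtlety is that Proposition~\ref{proptightnessH} produces each $\mathcal{L}^{\infty,k,T}$ as a weak limit of the \emph{full} sequence $\{\mathcal{L}^N\}$ suitably restricted, and one must check that restricting the limit agrees with the limit of the restrictions; this is exactly continuity of the restriction map composed with uniqueness of the limit, but it must be stated carefully, and one must also confirm that Hypothesis $(H)_{k,T}$ for all $k,T$ implies $(H)_{k',T'}$ for the restricted ensembles (which is immediate for $(H1)$ and $(H2)$, and for $(H3)$ follows since the minimal-gap event for $k'$ lines is contained in that for $k$ lines). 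Everything else — the extension theorem, the almost-sure properties, the Gibbs verification — is then routine bookkeeping on a Polish space.
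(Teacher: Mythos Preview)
Your proposal is correct and follows essentially the same approach as the paper: the paper's proof is a three-sentence sketch that invokes Propositions~\ref{proptightnessH} and~\ref{propBrownianGibbsH} on each finite window, appeals to consistency to build the $\N$-indexed ensemble, and notes that the Brownian Gibbs property is a local condition so passes to the full ensemble. You have simply fleshed out each of these steps (the consistency of the $(k,T)$-indexed limits via continuity of restriction, the projective-limit construction on a Polish space, and the observation that any instance of the Gibbs resampling involves only finitely many curves on a bounded interval), and your identification of the consistency step as the one place requiring care is apt.
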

\begin{proof}
We have proved the above result on any finite interval $[-T,T]$ for the top $k$ lines. We can conclude the existence and uniqueness of the infinite ensemble via consistency. The Brownian Gibbs property depends on only a finite number of lines and a finite interval of times, so that it extends to the full $\N$-indexed line ensemble.
\end{proof}

Accepting Proposition \ref{propacceptprob}, we may now prove Propositions \ref{proptightnessH} and \ref{propBrownianGibbsH}.

\begin{proof}[Proof of Proposition \ref{proptightnessH}]
Hypothesis $(H2)_{k,T+2}$ ensures convergence of finite-dimensional distributions. To prove weak convergence of line ensembles, it suffices (in light of Theorem 8.1 of \cite{Bill}) to  prove tightness. (We must then additionally prove the non-intersection property of the limiting ensemble.) The basic idea is to use the estimates of Proposition \ref{propacceptprob}. In particular, tightness relies on the fact that having a strictly positive acceptance probability implies that the actual lines in consideration look sufficiently like Brownian bridges that their moduli of continuity go to zero in probability. The proof must cope with the fact that our estimate for acceptance probability is not at the deterministic times $\pm T$ but rather in a random stopping domain $[\mathfrak{l}^N,\mathfrak{r}^N]$ with $-T-2\leq \mathfrak{l}^N \leq -T$ and $T\leq \mathfrak{r}^N \leq T+2$. This complication is resolved through a careful application of the strong Gibbs property which reduces the problem to a single claim which is then proved via a coupling argument.

The tightness criteria for $k$ continuous functions is the same as for a single function. For an interval $[a,b]$, define the $k$-line modulus of continuity
\begin{equation}\label{eqkline}
w_{a,b}(\{f_1,\ldots, f_k\},r) = \sup_{1\leq i\leq k} \sup_{\substack{s,t\in [a,b]\\ |s-t|<r}} \big\vert f_i(s)-f_i(t)\big\vert.
\end{equation}
Define the event
\begin{equation*}
W_{a,b}(\rho,r) = \big\{w_{a,b}(\{f_1,\ldots, f_k\},r) \leq \rho\big\}.
\end{equation*}
As an immediate generalization of Theorem 8.2 of \cite{Bill},
a sequence of probability measures $P_N$ on $k$ functions $f=\{f_1,\ldots f_k\}$ on the interval $[a,b]$ is tight if  the one-point (single $t$) distribution is tight and if for each positive $\rho$ and $\eta$ there exists a $r>0$ and an integer $N_0$ such that
$$P_N\big(W_{a,b}(\rho,r)\big)\geq 1-\eta, \qquad N \geq N_0.$$
Note that even when the line ensemble measures replacing $P_N$ are on more than $k$ lines, $W_{a,b}(\rho,r)$ will still refer to the top $k$ lines on the interval $[a,b]$.

Turning to our case, the one-point distribution is clearly tight because we have one-point convergence by means of Hypothesis  $(H2')_{k,T+2}$. We wish to prove further that for all $\rho$ and $\eta$ positive, we can choose $r>0$ small enough and $N_0$ large enough so that $\PP^N(W_{-T,T}(\rho,r))\geq 1-\eta$ for all $N \geq N_0$.

Observe that, as we are assuming Hypothesis $(H)_{k,T}$, we may apply Proposition \ref{propacceptprob}. This ensures the almost sure existence of a stopping domain $(\mathfrak{l}^N,\mathfrak{r}^N)$  which contains $[-T,T]$, is contained in $[-T-2,T+2]$ and is such that for all $\e>0$ there exists $\delta>0$ and $N_1$ such that for all $N \geq N_1$,
\begin{equation}\label{revapeqn}
\PP^N\Big(\AP(\mathfrak{l}^N,\mathfrak{r}^N) \geq  \delta \Big) \geq  1-\epsilon
\end{equation}
where we have abbreviated
\begin{equation*}
\AP(\mathfrak{l}^N,\mathfrak{r}^N)= \AP\Big(\mathfrak{l}^N,\mathfrak{r}^N,\{\mathcal{L}^N_{i}\big(\mathfrak{l}^N\big)\}_{i=1}^{k},\{\mathcal{L}^N_{i}\big(\mathfrak{r}^N\big)\}_{i=1}^{k}, \mathcal{L}^N_{k+1}(\cdot)\Big).
\end{equation*}
Observe that
\begin{equation*}
\PP^N\big(W_{-T,T}(\rho,r)\big) \geq \PP^N \Big(W_{-T,T}(\rho,r),  \AP(\mathfrak{l}^N,\mathfrak{r}^N)\geq \delta, S_{k,M} \Big)
\end{equation*}
where
\begin{equation*}
S_{k,M}= \Big\{\mathcal{L}^N_1(\{\mathfrak{l}^{N},\mathfrak{r}^{N}\}) \leq M,\, \mathcal{L}^N_k(\{\mathfrak{l}^{N},\mathfrak{r}^{N}\}) \geq -M\Big\}.
\end{equation*}
We claim that for any $\rho,\eta>0$ there exists $r>0$, $\delta\in (0,1)$, $M>0$ and $N_0$ such that, for $N \geq N_0$,
\begin{equation}\label{lhsabove}
\PP^N\Big(W_{-T,T}(\rho,r) , \AP(\mathfrak{l}^N,\mathfrak{r}^N)\geq \delta, S_{k,M} \Big) > 1-\eta.
\end{equation}

Note that both $\{\AP(\mathfrak{l}^N,\mathfrak{r}^N)\geq \delta\}$ and $S_{k,M}$ are measurable with respect to $\mathcal{F}_{ext}(k,\mathfrak{l}^N,\mathfrak{r}^N)$. With ${\bf 1}$ denoting the indicator function, we can thus write the left-hand side of (\ref{lhsabove}) as
\begin{equation}\label{condexp}
\EE\Big[{\bf 1}\big\{\AP(\mathfrak{l}^N,\mathfrak{r}^N)\geq \delta, S_{k,M}\big\}\,\EE\big[{\bf 1}\big\{W_{-T,T}(\rho,r)\big\} \big\vert \mathcal{F}_{ext}(k,\mathfrak{l}^N,\mathfrak{r}^N)\big] \Big],
\end{equation}
where the sigma-field $\mathcal{F}_{ext}(k,\mathfrak{l}^N,\mathfrak{r}^N)$ is defined in (\ref{defstopdom}).

Observe that by applying the strong Gibbs property given in Lemma \ref{stronggibbslemma}, $\PP$ almost surely
\begin{equation}\label{strongapp}
\EE\big[{\bf 1}\big\{ W_{-T,T}(\rho,r)\big\} \big\vert \mathcal{F}_{ext}(k,\mathfrak{l}^N,\mathfrak{r}^N)\big] = \mathcal{B}_{\bar{x},\bar{y},f}^{\mathfrak{l}^N,\mathfrak{r}^N}\big[W_{-T,T}(\rho,r) \big]
\end{equation}
where, for $1\leq i \leq k$, $x_i= \mathcal{L}^N_i(\mathfrak{l}^{N})$, $y_i= \mathcal{L}^N_i(\mathfrak{r}^{N})$, $f(\cdot)=\mathcal{L}^N_{k+1}(\cdot)$ and where $W_{-T,T}$ is a function of $\mathcal{L}_1,\ldots, \mathcal{L}_k$ in the left-hand side and $B_1,\ldots, B_k$ in the right-hand side (i.e., these correspond to the choice of the functions $f_1,\ldots, f_k$ in the definition of $W_{-T,T}$).

\noindent {\bf Claim:} let $\rho, \eta > 0$, $\delta\in (0,1)$ and $M>0$. There exists $r>0$  such that, for all $a\in [-T-2,-T]$, $b\in [T,T+2]$, and $\bar{x},\bar{y} \in \R^k_>$, $f:[a,b] \to \R$ satisfying $x_1,y_1\leq M$, $x_k,y_k\geq -M$ and $\AP(a,b,\bar{x},\bar{y},f)~\geq~\delta$,
\begin{equation*}
\bxyf^{a,b}\big[W_{-T,T}(\rho,r)\big] \geq 1-\eta/2.
\end{equation*}
As noted before, $W_{-T,T}(\rho,r)$ is an event depending on the modulus of continuity of the random $k$ lines between $[-T,T]$ specified by the measure $\wxy^{a,b}$ (recall $\wxy^{a,b}$ and $\ewxy^{a,b}$ are given in Definition \ref{WBdef} and denote the measure and expectation of $k$ Brownian bridges on $[a,b]$ with starting values $\bar{x}$ and ending values $\bar{y}$).

Let us assume the claim for the moment. By choosing $r$ small enough (depending on $\rho,\eta,\delta,M$), and using (\ref{strongapp}) we may bound (\ref{condexp}) as at least
\begin{equation}\label{claimapp}
(1-\eta/2) \EE\Big[{\bf 1}(\AP(\mathfrak{l}^N,\mathfrak{r}^N)\geq \delta, S_{k,M})\Big] = (1-\eta/2)\PP^N(\AP(\mathfrak{l}^N,\mathfrak{r}^N)\geq \delta, S_{k,M}).
\end{equation}
Since we may apply Proposition \ref{propacceptprob} it follows from (\ref{revapeqn}) and (\ref{maxmineqn}) that by choosing $M$ large enough and $\delta$ small enough,
$$\PP^N\big(\AP(\mathfrak{l}^N,\mathfrak{r}^N)\geq \delta, S_{k,M}\big) \geq 1-\eta/2.$$
For these values of $\delta,M$, by taking $r$ to be small enough so that the above bound for (\ref{condexp}) holds, we can bound (\ref{lhsabove}) by $(1-\eta/2)^2 \geq 1-\eta$ which is exactly as desired to prove tightness.

Note that the tightness implies almost sure continuity of the limiting line ensemble. That said, it does not provide the non-intersection condition. However, this follows immediately from (\ref{GAPpropeqn}) of Proposition \ref{propacceptprob}.

We now finish by proving the claim. First note that if we replace $W_{-T,T}(\rho,r)$ by $W_{a,b}(\rho,r)$ in the statement of the claim, then by containment of events it follows that proving this modified version of the claim implies the stated claim. We will make such a replacement and prove the modified version.

Consider $\{\tilde{B}_i\}_{i=1}^k$ distributed as independent Brownian bridges on $[0,1]$ such that $\tilde{B}_i(0)=0$ and $\tilde{B}_i(1)=0$. We will use these as the basis for a coupling proof of the claim. For each $\tilde{r}$ associate the random  modulus of continuity $w_{0,1}(\tilde{B}_i,\tilde{r})$. The process $\tilde{B}_i$ having bounded sample paths, for each $\tilde{r}$ this random variable is supported on $[0,\infty)$.
Observe that from the standard Brownian bridges we can construct the Brownian bridges $B_i$ on $[a,b]$ of $\wxy^{a,b}$  by setting
\begin{equation*}
B_{i}(t)= ( b-a)^{1/2} \tilde{B}_i\left(\frac{t-a}{b-a}\right) +  \left(\frac{b-t}{b-a}\right) x_i +  \left(\frac{t-a}{b-a}\right) y_i.
\end{equation*}
The $k$ line modulus of continuity $w_{a,b}(\{B_1,\ldots, B_k\},(b-a)\tilde{r})$ may then be bounded by
\begin{equation}\label{modcontr}
w_{a,b}\Big(\{B_1,\ldots, B_k\},(b-a)\tilde{r}\Big) \leq \sup_{1\leq i\leq k} \left((b-a)^{1/2} w_{0,1}(\tilde{B}_i,\tilde{r}) + |x_i-y_i| \tilde{r} \right).
\end{equation}
By assumption $|x_i-y_i|\leq 2M$. Since $T>0$ and we have assumed $a\in [-T-2,-T]$ and $b\in [T,T+2]$ it follows that $b-a\in (c,c^{-1})$ for some constant $c = c_T >0$. Set $\tilde{r} = rc$. By using (\ref{modcontr}) and the fact that the modulus of continuity for $r'<r$ is bounded above by the modulus of continuity for $r$ it follows that
\begin{equation}\label{allabeqn}
w_{a,b}\Big(\{B_1,\ldots, B_k\},r\Big) \leq  \sup_{1\leq i\leq k} \left(c^{-1/2} w_{0,1}(\tilde{B}_i,rc)\right) + 2Mrc.
\end{equation}
Now observe that conditioning $\wxy^{a,b}$ on any event $E$ such that $\wxy^{a,b}(E)>\delta$ is equivalent to conditioning the measure of the $k$ Brownian bridges $\{\tilde{B}_i\}_{i=1}^k$ on some other event $\tilde{E}$ also of measure at least $\delta$. The random variables $w_{0,1}(\tilde{B}_{i},rc)$ are supported on $[0,\infty)$ and converge to zero as $r$ goes to zero (since the Brownian bridges are continuous almost surely), so that, by choosing $r$ small enough (with all of the other variables fixed) we can be assured that, conditioned on the event $\tilde{E}$,
\begin{equation*}
\sup_{1\leq i\leq k} \left(c^{-1/2} w_{0,1}(\tilde{B}_i,rc)\right) +2Mrc \leq \rho
\end{equation*}
with probability at least $1-\eta/2$. Since $\wxy^{a,b}(\ncf_{a,b})=\AP(a,b,x,y,f)\geq \delta$ the above reasoning applies presently since we are conditioning on $\ncf_{a,b}$. By the uniformity over $a,b$ in the specified ranges, the claim follows.
\end{proof}

\begin{proof}[Proof of Proposition \ref{propBrownianGibbsH}]
By Proposition \ref{proptightnessH}, we know that the limit of the probability measure on $\mathcal{L}^N$ is supported on the space of $k$ continuous, non-intersecting curves on $[-T,T]$. It follows (since that space is separable) that we may apply the Skorohod representation theorem (see \cite{Bill} for instance) to conclude that there exists a probability space such that all of the $\{\mathcal{L}_j^N\}_{j=1}^{k}$ as well as a limiting ensemble $\{\mathcal{L}_j^{\infty}\}_{j=1}^{k}$ are defined on the space with the correct marginals, and with the property that for every $\omega\in \Omega$ and $j\in \{1,\ldots, k\}$, $\mathcal{L}_j^N(\omega)\rightarrow \mathcal{L}_j^{\infty}(\omega)$ in the uniform topology.

We will presently show that for any fixed line index $i$ and any two times $a,b\in [-T,T]$ with $a<b$, the law of $\{\mathcal{L}_j^{\infty}\}_{j=1}^{k}$ is unchanged if $\mathcal{L}^{\infty}_i$ is resampled between times $a$ and $b$ according to a Brownian bridge conditioned to avoid the $(i-1)$-st and $(i+1)$-st lines. This property is equivalent to the Brownian Gibbs property. The argument used for one line clearly works for several lines. Also, we assume (for simplicity) that $i\not\in \big\{ 1 , k \big\}$ (i.e.,  the curve in question is neither the highest nor the lowest line).

We show this by coupling the resampling procedure for all values of $N$. In order to do this, we perform the resampling in two steps. We first choose a Brownian bridge and paste it in to agree with the starting and ending points; then we check if it intersects the lines above and below and, if it does not, we accept it. In order to couple this procedure over various values of $N$ we fix a single collection of sampling Brownian bridges. Towards this end, fix a sequence of Brownian bridges $B_\ell:[a,b]\rightarrow \R$ such that $B_\ell(a)=B_\ell(b)=0$; our probability space may be augmented to accommodate these independently of existing data. Define the $\ell$-th resampling of line $i$ to be
$$\mathcal{L}_i^{N,\ell}(t)= B_\ell(t) + \frac{b-t}{b-a} \mathcal{L}_i^N(a) + \frac{t-a}{b-a} \mathcal{L}_i^N(b)$$
where the purpose of the last two terms on the right is to add the necessary affine shift to the Brownian bridge to make sure that $\mathcal{L}_i^{N,\ell}(a)=\mathcal{L}_i^{N}(a)$ and $\mathcal{L}_i^{N,\ell}(b)=\mathcal{L}_i^{N}(b)$. Now define
\begin{equation}\label{eqavoid}
\ell(N) = \min\left\{\ell \in \N: \mathcal{L}_{i-1}^N(t) > \mathcal{L}_i^{N,\ell}(t) > \mathcal{L}_{i+1}^N(t)
\,\, \forall \, t \in [a,b] \right\},
\end{equation}
the index of the first accepted (non-intersecting) Brownian bridge resampling. Write $\{\mathcal{L}_j^{N,\rm{re}}\}_{j=1}^{k}$ for the line ensemble with the $i^{\rm{th}}$ line replaced by $\mathcal{L}_{i}^{N,\ell(N)}$. (Here, $re$ stands for resampled.) Given this notation, another way of stating the Brownian Gibbs property for $\mathcal{L}^N$ is that
\begin{equation}\label{restarstar}
\{\mathcal{L}_j^N\}_{j=1}^{k} \stackrel{(\rm{law})}{=} \{\mathcal{L}_j^{N,\rm{re}}\}_{j=1}^{k}.
\end{equation}

We wish to show that this same property holds for the limiting line ensemble $\{\mathcal{L}_j^{\infty}\}_{j=1}^{k}$, because that will imply the Brownian Gibbs property. Similarly to~(\ref{eqavoid}), we set $\ell(\infty) = \min\big\{\ell \in \N:
\mathcal{L}_{i-1}^\infty(t) > \mathcal{L}_i^{\infty,\ell}(t) > \mathcal{L}_{i+1}^\infty(t)
\,\, \forall \, t \in [a,b] \big\}$. If we can show that almost surely
$$\lim_{N\rightarrow \infty} \ell(N) = \ell(\infty),$$
then we are done. This is because we know both that $\{\mathcal{L}_j^N\}_{j=1}^{k}$ converges to $\{\mathcal{L}_j^\infty\}_{j=1}^{k}$ and that $\{\mathcal{L}_j^{N,\rm{re}}\}_{j=1}^{k}$ converges to $\{\mathcal{L}_j^{\infty,\rm{re}}\}_{j=1}^{k}$ (all in the uniform topology). Since by (\ref{restarstar}) the laws of $\{\mathcal{L}_j^N\}_{j=1}^{k}$ and $\{\mathcal{L}_j^{N,\rm{re}}\}_{j=1}^{k}$ coincide, then so must the laws of their almost sure uniform limits. This proves that the laws of $\{\mathcal{L}_j^\infty\}_{j=1}^{k}$ and $\{\mathcal{L}_j^{\infty,\rm{re}}\}_{j=1}^{k}$ coincide and hence proves the Brownian Gibbs property for $\{\mathcal{L}_j^\infty\}_{j=1}^{k}$.

Thus, it remains to prove that $\ell(N)$ converges almost surely to $\ell(\infty)$. We prove two lemmas which together lead to the desired conclusion.
\begin{lemma}
The sequence $\big\{ \ell(N): N \in \N \big\}$ is bounded almost surely.
\end{lemma}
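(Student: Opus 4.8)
The plan is to exhibit, almost surely, a single resampling index $\ell_0$ that depends on the realization but not on $N$, and that is an accepted resampling of line $i$ for the ensemble $\{\mathcal{L}^N_j\}_{j=1}^k$ for every sufficiently large $N$; combined with the (easy) fact that $\ell(N)$ is almost surely finite for each individual $N$, this bounds the sequence. Work on the coupling probability space introduced above, on which $\mathcal{L}^N_j \to \mathcal{L}^\infty_j$ uniformly on $[a,b]$ for each $j$, and on which the i.i.d.\ bridges $B_\ell:[a,b]\to\R$ with $B_\ell(a)=B_\ell(b)=0$ live independently of the ensembles. Since $\mathcal{L}^\infty$ is continuous and non-intersecting, the gap $\gamma := \min_{t\in[a,b]}\big(\mathcal{L}^\infty_{i-1}(t)-\mathcal{L}^\infty_{i+1}(t)\big)$, together with the four ``endpoint gaps'' $\mathcal{L}^\infty_{i-1}(a)-\mathcal{L}^\infty_i(a)$, $\mathcal{L}^\infty_i(a)-\mathcal{L}^\infty_{i+1}(a)$, $\mathcal{L}^\infty_{i-1}(b)-\mathcal{L}^\infty_i(b)$, $\mathcal{L}^\infty_i(b)-\mathcal{L}^\infty_{i+1}(b)$, are all almost surely strictly positive.

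First I would produce a good limiting index. Fix a realization in which the quantities above are positive, let $A^\infty(t) := \tfrac{b-t}{b-a}\mathcal{L}^\infty_i(a) + \tfrac{t-a}{b-a}\mathcal{L}^\infty_i(b)$ be the affine interpolant, and choose $\e_0>0$ smaller than $\gamma/2$ and than all four endpoint gaps. The curve $\mathcal{L}^{\infty,\ell}_i := B_\ell + A^\infty$ lies strictly between $\mathcal{L}^\infty_{i-1}$ and $\mathcal{L}^\infty_{i+1}$ with margin $\e_0$ exactly when $B_\ell$ stays inside the open region $U := \{(t,y): g_1(t)<y<g_2(t)\}$, where $g_1 := \mathcal{L}^\infty_{i+1}+\e_0-A^\infty$ and $g_2 := \mathcal{L}^\infty_{i-1}-\e_0-A^\infty$. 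By the choice of $\e_0$, the vertical slices of $U$ are nonempty intervals (of width $\geq \gamma-2\e_0>0$) and $g_1<0<g_2$ at both $t=a$ and $t=b$; hence $U$ contains a continuous function vanishing at $a$ and $b$ --- for instance $\tfrac12(g_1+g_2)$ multiplied by a continuous cutoff that equals $0$ near $\{a,b\}$ and $1$ on an interior subinterval, using convexity of the slices of $U$. By Corollary~\ref{coropen} (transported from $[0,1]$ to $[a,b]$), conditionally on $\mathcal{L}^\infty$ each $B_\ell$ lies in $U$ with positive probability; since conditioning on $\mathcal{L}^\infty$ leaves the $B_\ell$ i.i.d., the second Borel--Cantelli lemma yields infinitely many such $\ell$ almost surely, and I take $\ell_0$ to be the least one. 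Then $\mathcal{L}^\infty_{i-1}(t)-\mathcal{L}^{\infty,\ell_0}_i(t)>\e_0$ and $\mathcal{L}^{\infty,\ell_0}_i(t)-\mathcal{L}^\infty_{i+1}(t)>\e_0$ for all $t\in[a,b]$.

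Next I would transfer this to finite $N$ by uniform convergence. Because $\mathcal{L}^N_{i-1}\to\mathcal{L}^\infty_{i-1}$ and $\mathcal{L}^N_{i+1}\to\mathcal{L}^\infty_{i+1}$ uniformly, and because the affine part of $\mathcal{L}^{N,\ell_0}_i$ (which uses the fixed bridge $B_{\ell_0}$ together with the endpoints $\mathcal{L}^N_i(a),\mathcal{L}^N_i(b)$) converges uniformly to $A^\infty$, there is a random $N_1$ with $\|\mathcal{L}^N_{i\pm1}-\mathcal{L}^\infty_{i\pm1}\|_\infty<\e_0/4$ and $\|\mathcal{L}^{N,\ell_0}_i-\mathcal{L}^{\infty,\ell_0}_i\|_\infty<\e_0/4$ on $[a,b]$ for all $N\geq N_1$. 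Inserting these into the margin inequalities above gives $\mathcal{L}^N_{i-1}(t)>\mathcal{L}^{N,\ell_0}_i(t)+\e_0/2>\mathcal{L}^{N,\ell_0}_i(t)$ and likewise $\mathcal{L}^{N,\ell_0}_i(t)>\mathcal{L}^N_{i+1}(t)$ for all $t\in[a,b]$, so $\ell_0$ is accepted and $\ell(N)\leq\ell_0$ for every $N\geq N_1$. Separately, for each fixed $N$ the ensemble $\mathcal{L}^N$ is continuous and non-intersecting, so running the same tube construction with $\mathcal{L}^N$ in place of $\mathcal{L}^\infty$ shows that, conditionally on $\mathcal{L}^N$, each $B_\ell$ is accepted with positive probability; Borel--Cantelli again gives $\ell(N)<\infty$ almost surely, and a countable union keeps this simultaneous over all $N$. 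Therefore $\sup_{N}\ell(N)\leq\max\big(\ell_0,\ \max_{N<N_1}\ell(N)\big)<\infty$ almost surely.

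The one place needing care is the first step: one must verify that the tube $U$ genuinely satisfies the hypothesis of Corollary~\ref{coropen} --- nonempty slices throughout $[a,b]$ (where the positivity of the gap $\gamma$ enters) and the existence inside it of a continuous curve with the correct boundary values at $a$ and $b$ (where the strict ordering of the limiting curves at the endpoints enters) --- and that all the objects constructed from $\mathcal{L}^\infty$ (namely $\gamma$, $\e_0$, $U$, and $\ell_0$) are measurable functions of $\mathcal{L}^\infty$, so that conditioning on $\mathcal{L}^\infty$ (under which the $B_\ell$ remain i.i.d.) is legitimate and the second Borel--Cantelli argument applies. Everything else is routine bookkeeping with uniform convergence.
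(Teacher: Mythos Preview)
Your proof is correct and follows essentially the same approach as the paper's: use the non-intersection of $\mathcal{L}^\infty$ to build a tube of positive width between $\mathcal{L}^\infty_{i-1}$ and $\mathcal{L}^\infty_{i+1}$, invoke Corollary~\ref{coropen} to find a bridge $B_{\ell_0}$ that stays in the tube, and then use the uniform convergence $\mathcal{L}^N \to \mathcal{L}^\infty$ to conclude that the same index is accepted for all large $N$. Your version is somewhat more detailed --- you spell out the Borel--Cantelli step, the measurability issues, and the treatment of the finitely many small $N$ explicitly --- whereas the paper carries out the same construction more tersely and in a slightly different order (first choosing $N_0$ via uniform convergence, then building a single corridor that works for all $N\geq N_0$ simultaneously, then finding $\ell$); but the substance is the same.
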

\begin{proof}
By Proposition \ref{proptightnessH}, we know that $\{\mathcal{L}_j^\infty\}_{j=1}^{k}$ is composed of non-intersecting continuous functions. Therefore, $\delta : = \inf_{t \in [a,b], j \in \{ i,i+1\}} \big\vert \mathcal{L}_j^\infty -  \mathcal{L}_{j-1}^\infty \big\vert$ is almost surely positive.
Uniform convergence implies that there exists an $N_0$ such that, for all $N \geq N_0$, the following conditions hold:
$$\big\vert \mathcal{L}_i^N(a)-\mathcal{L}_i^\infty(a)\big\vert <\delta/4, \qquad \big\vert \mathcal{L}_i^N(b)-\mathcal{L}_i^\infty(b)\big\vert<\delta/4, \qquad \sup_{t\in [a,b]} \big\vert\mathcal{L}_{i\pm 1}^N(t)- \mathcal{L}_{i\pm 1}^\infty(t)\big\vert<\delta/4.$$

Therefore, for $N \geq N_0$, there exists a continuous curve $f$ from $\mathcal{L}_i^\infty(a)$ to $\mathcal{L}_i^\infty(b)$ which has a neighborhood of radius $\delta/2$ which does not intersect any curve $\mathcal{L}_j^{N}$ for $j\neq i$ and $N>N_0$ or $N=\infty$.
A basic property of Wiener measure (recorded in the present paper as Corollary~\ref{coropen}) is that any $L^\infty$-norm neighbourhood of a continuous function has positive Wiener measure. Thus  there is an almost surely  finite $\ell$ such that $\mathcal{L}_i^{N,\ell}$ stays in a neighborhood of $\delta/4$ about the function $f$. By construction, for $N \geq N_0$, $\mathcal{L}_i^{N,\ell}$ will not intersect $\mathcal{L}_{i\pm 1}^{N}$. This shows that almost surely $\ell(N)$ stays bounded as $N \to \infty$.
\end{proof}

\begin{lemma}\label{lemuniquel}
There exists a unique limit point for $\{\ell(N)\}_{N=1}^{\infty}$.
\end{lemma}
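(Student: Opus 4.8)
The plan is to prove the stronger statement that, $\PP$-almost surely, $\ell(N) = \ell(\infty)$ for all sufficiently large $N$; this trivially yields that $\{\ell(N)\}_{N=1}^\infty$ has the single limit point $\ell(\infty)$. I would rely on three ingredients: the uniform convergence $\mathcal{L}_j^N \to \mathcal{L}_j^\infty$ on $[a,b]$ for $1 \le j \le k$ furnished by the Skorohod coupling; the non-intersection of $\mathcal{L}^\infty$ (Proposition \ref{proptightnessH}); and Corollary \ref{cor:touch}, which excludes the degenerate event that a resampling bridge grazes a neighbouring curve of $\mathcal{L}^\infty$ without crossing it.

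First I would fix $\ell \in \N$ and, using the independence of the bridge $B_\ell$ from $\mathcal{L}^\infty$, condition on $\mathcal{L}^\infty$ and treat $\mathcal{L}_{i-1}^\infty$ and $\mathcal{L}_{i+1}^\infty$ as fixed continuous barriers. Because $\mathcal{L}^\infty$ is non-intersecting, the endpoints $\mathcal{L}_i^\infty(a)$ and $\mathcal{L}_i^\infty(b)$ of $\mathcal{L}_i^{\infty,\ell}$ lie strictly between the corresponding endpoints of the neighbouring curves, so any failure of the avoidance inequality $\mathcal{L}_{i-1}^\infty(t) > \mathcal{L}_i^{\infty,\ell}(t) > \mathcal{L}_{i+1}^\infty(t)$ must occur at some $t \in (a,b)$. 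Subtracting the affine shift so as to have a standard Brownian bridge, and applying Corollary \ref{cor:touch} against $\mathcal{L}_{i-1}^\infty$ directly and against $\mathcal{L}_{i+1}^\infty$ after a reflection, I would conclude that with conditional probability one each $\ell$ is of exactly one of two types: (i) \emph{admissible}, i.e. $\mathcal{L}_{i-1}^\infty(t) > \mathcal{L}_i^{\infty,\ell}(t) > \mathcal{L}_{i+1}^\infty(t)$ throughout $[a,b]$; or (ii) there is $t \in (a,b)$ at which $\mathcal{L}_i^{\infty,\ell}$ lies \emph{strictly} above $\mathcal{L}_{i-1}^\infty$ or strictly below $\mathcal{L}_{i+1}^\infty$. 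Integrating out and intersecting over the countably many $\ell$ gives a full-probability event on which every index is of type (i) or (ii).

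Working on that event, the next step is to transfer the dichotomy to finite $N$. If $\ell$ is of type (i), then by continuity and compactness $\inf_{t\in[a,b]} \big(\mathcal{L}_{i-1}^\infty(t) - \mathcal{L}_i^{\infty,\ell}(t)\big) > 0$, and likewise for the lower gap; since $\mathcal{L}_i^{N,\ell} \to \mathcal{L}_i^{\infty,\ell}$ uniformly (the bridge $B_\ell$ is held fixed and only the affine endpoints $\mathcal{L}_i^N(a), \mathcal{L}_i^N(b)$ vary, and they converge) and $\mathcal{L}_{i\pm 1}^N \to \mathcal{L}_{i\pm 1}^\infty$ uniformly, there is a threshold beyond which $\ell$ is admissible for $\mathcal{L}^N$ as well. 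If $\ell$ is of type (ii), a single point $t_0 \in (a,b)$ with a strict inequality there, preserved under a small uniform perturbation, shows $\ell$ fails the avoidance constraint for $\mathcal{L}^N$ for all large $N$. Thus for each fixed $\ell$ the predicate ``$\ell$ is accepted for $\mathcal{L}^N$'' stabilises as $N\to\infty$ to ``$\ell$ is admissible for $\mathcal{L}^\infty$''. Finally, since $\ell(\infty)$ is almost surely finite — by Corollary \ref{coropen}, exactly as in the proof of the preceding lemma applied with $N=\infty$ — there are only finitely many indices $1,\dots,\ell(\infty)$ to consider: $\ell(\infty)$ is admissible and every smaller index is of type (ii); taking $N$ past those finitely many thresholds, $\ell(\infty)$ is accepted for $\mathcal{L}^N$ and no smaller index is, so $\ell(N) = \ell(\infty)$.

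The hard part is the non-degeneracy step: excluding the borderline configurations in which $\mathcal{L}_i^{\infty,\ell}$ touches a neighbour of $\mathcal{L}^\infty$ tangentially without crossing. These would wreck the transfer argument, since an arbitrarily small perturbation could then push $\ell(N)$ either way, and they are ruled out only because the resampling bridges are constructed independently of $\mathcal{L}^\infty$ and a Brownian bridge almost surely cannot touch a fixed continuous barrier without crossing it — precisely Corollary \ref{cor:touch}. The role of Proposition \ref{proptightnessH} here is to guarantee, via non-intersection of $\mathcal{L}^\infty$, that such a touch can only occur in the open interval $(a,b)$, where the corollary applies.
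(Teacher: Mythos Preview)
Your proof is correct and uses the same core idea as the paper: Corollary~\ref{cor:touch} rules out the degenerate event that a resampling bridge $\mathcal{L}_i^{\infty,\ell}$ touches a neighbouring curve $\mathcal{L}_{i\pm1}^\infty$ without crossing it, and uniform convergence then transfers the accept/reject status of each index $\ell$ from $N=\infty$ to all large finite $N$. The only organisational difference is that you prove the stronger conclusion $\ell(N)=\ell(\infty)$ for all large $N$ in one shot, whereas the paper first shows uniqueness of the limit point (via the minimal limit point $\ell^*$ and Corollary~\ref{cor:touch}) and then, in the paragraph immediately following the lemma, separately identifies that limit with $\ell(\infty)$; your argument effectively merges these two steps.
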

\begin{proof}
Clearly, since $\big\{ \ell(N): N \in \N \big\}$ is bounded, it has some limit points. Let $\ell^*$ be the minimum of these (finitely many) limit points. To see uniqueness observe that the only way that there could be an infinite sequence of values of $N$ for which $\ell(N)>\ell^*$  would be if $\mathcal{L}_i^{\infty,\ell^*}$ touches, but never crosses, one of its neighboring lines $\mathcal{L}_{i\pm 1}^{\infty}$. We claim that this event has probability zero. A basic property of Wiener measure (recorded in the present paper as Corollary~\ref{cor:touch}) is that a Brownian bridge and an independent almost surely continuous function which start and end a positive distance apart will almost surely either cross or never touch -- and hence will have probability zero of touching without crossing. Observe then that, up to affine shift, $\mathcal{L}_i^{\infty,\ell^*}$ is just a Brownian bridge that is independent of $\mathcal{L}_{i\pm 1}^{\infty}$ and, moreover, that it starts and ends a positive distance away from its neighboring lines. Thus the event of touching without crossing has probability zero, and so the lemma is proved.
\end{proof}

Having established that $\ell^\infty : = \lim_{N\rightarrow\infty}\ell(N)$ exists, all that remains is to prove that $\ell^\infty=\ell(\infty)$. Clearly, $\ell^\infty\geq \ell(\infty)$, because the only way that $\mathcal{L}_{i}^{N,\ell^\infty}$ could intersect $\mathcal{L}_{i\pm 1}^{\infty}$ is if it touches this line but does not cross it. However, as we saw in the proof of Lemma \ref{lemuniquel}, this event happens with probability zero. Finally, we argue that $\ell^\infty\leq \ell(\infty)$ by establishing a contradiction otherwise. Assume for the moment that there is an $\ell<\ell^\infty$ for which $\mathcal{L}_{i}^{\infty,\ell}$ does intersect $\mathcal{L}_{i\pm 1}^{\infty}$.  Then there is a positive distance between these lines. This  implies that, for some sufficiently large $N_0$, $\mathcal{L}_{i}^{N,\ell}$ intersects $\mathcal{L}_{i\pm 1}^{N}$ for all $N \geq N_0$. Hence, $\lim_{N\rightarrow\infty}\ell(N)\leq \ell<\ell^\infty$, in contradiction to the definition of $\ell^\infty  = \lim_{N\rightarrow\infty}\ell(N)$. Hence, we find that $\ell^\infty = \ell(\infty)$ and so complete the proof of the proposition.
\end{proof}

\subsection{Line ensembles satisfying the general hypotheses}\label{airylikeSEC}
We now introduce a two-parameter family of Brownian bridge line ensembles which satisfy the hypotheses laid out in Definition \ref{hypDef}. We will mention shortly some of the contexts in which line ensembles obtained as scaling limits of this family arise. 

\begin{definition}
Fix $m_1\geq 0$ and $m_2\geq 0$ as well as two vectors $p=\{p_1,\ldots,p_{m_1}\}$ and $q=\{q_1,\ldots, q_{m_2}\}$ such that $p_i\geq p_j$ and $q_i\geq q_j$ for all $i<j$. For each $N>\max(m_1,m_2)$ define the $(p,q)$-perturbed Brownian bridge line ensemble $B^{N;p,q}:\{1,\ldots,N\}\times [-N,N]\to \R$ such that $B^{N;p,q}_i$ are distributed as Brownian bridges conditioned not to intersect with starting points
\begin{equation*}
B^{N;p,q}_i(-N) = \begin{cases}
2^{1/2}N+N^{2/3}p_i & \textrm{for } 1\leq i\leq m_1\\
 0 & \textrm{otherwise},
\end{cases}
\end{equation*}
and ending points
\begin{equation*}
B^{N;p,q}_i(N) = \begin{cases}
2^{1/2}N+N^{2/3}q_i & \textrm{for } 1\leq i\leq m_2\\
 0 & \textrm{otherwise}.
\end{cases}
\end{equation*}
As before when starting points coincide this is understood as the weak limit of the path measures as the starting points converge together. Clearly these line ensembles are continuous, non-intersecting and have the Brownian Gibbs property.

We define the $N$-th $(p,q)$-perturbed edge-scaled Dyson line ensemble $\mathcal{D}^{N;p,q}$ on this probability space to be the ordered list $(\mathcal{D}^{N;p,q}_1,\ldots, \mathcal{D}^{N;p,q}_N)$, where $\mathcal{D}^{N;p,q}_i:[-N^{1/3}, N^{1/3}]\to \R$ is given by
\begin{equation*}
\mathcal{D}^{N;p,q}_i (t) = N^{-1/3}\left(B^{N;p,q}_i(N^{2/3}t) - 2^{1/2}N\right),
\end{equation*}
and, as in the unperturbed case, define the scaled and parabolically shifted line ensemble
$$\tilde{\mathcal{D}}^{N;p,q}_i(t) = 2^{1/2}\mathcal{D}^{N;p,q}_i(t) +t^2.$$
\end{definition}

It is clear that, in the case where $m_1=m_2=0$,  $\mathcal{D}^{N;p,q}$ is simply the edge-scaled Dyson line ensemble given in Definition \ref{Dn}. When only one of $m_1$ or $m_2$ is non-zero, the resulting ensembles have been studied in \cite{ADvM} under the name of Dyson's non-intersecting Brownian motions with {\it a few outliers}. When both $m_1=m_2>0$, these  {\it wanderer} ensembles have been studied in \cite{AFvM}. In the latter case, \cite{AFvM} requires the restriction on the values of $p$ and $q$
that the straight lines connecting $(-N,p_i)$ to $(N,q_i)$ intersect the $y$ axis at points of the form $(0,r_i)$ where $r_i\leq 2N$.
We say that the line ensembles $\tilde{\mathcal{D}}^{N;p,q}$ are {\it Airy-like}. From these results, we find the following.

\begin{proposition}\label{hypsat}
Consider $m_1,m_2\geq 0$ and $p,q$ (depending on $N$) such that either $m_1=0$ or $m_2=0$, or $m_1=m_2>0$ and the values of $p$ and $q$ are such that the straight lines connecting $(-N,p_i)$ to $(N,q_i)$ intersect points $(0,r_i)$ where $r_i\leq 2N$.
Then, for all $k\geq 1$ and $T>0$, the sequence of line ensembles $\mathcal{D}^{N;p,q}$ satisfy Hypotheses $(H)_{k,T}$.
\end{proposition}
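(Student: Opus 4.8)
The plan is to verify, for each fixed $k\geq 1$ and $T>0$, the three components $(H1)_{k,T}$, $(H2)_{k,T}$ and $(H3)_{k,T}$ of Hypothesis $(H)_{k,T}$ in turn. I would begin by observing that the bookkeeping requirement of Definition~\ref{hypDef} is automatic here: since $\mathcal{D}^{N;p,q}$ is indexed by $\{1,\dots,N\}\times[-N^{1/3},N^{1/3}]$, we have $k_N=N\geq k+1$ and $T_N=N^{1/3}\geq T+1$ for all large $N$.

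For $(H1)$, the ensemble $B^{N;p,q}$ is a system of non-intersecting Brownian bridges (coinciding endpoints being handled exactly as in the footnote to Definition~\ref{Dn}, via the $\e\to 0$ weak limit and a Karlin--McGregor computation), and the definition already records that it is continuous, non-intersecting and has the Brownian Gibbs property. I would then note that $\mathcal{D}^{N;p,q}$ is the image of $B^{N;p,q}$ under the affine map $(s,y)\mapsto\big(N^{-2/3}s,\,N^{-1/3}(y-2^{1/2}N)\big)$, which preserves non-intersection of curves and which carries a diffusion-parameter-$1$ Brownian bridge to another diffusion-parameter-$1$ Brownian bridge, precisely because the vertical contraction $N^{-1/3}$ and horizontal contraction $N^{2/3}$ satisfy $\big(N^{-1/3}\big)^2\cdot N^{2/3}=1$. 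Hence the resampling characterization of Definition~\ref{maindefBGP} is preserved under this map, and $\mathcal{D}^{N;p,q}$ is non-intersecting with the Brownian Gibbs property, giving $(H1)_{k,T}$.

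For $(H2)$ I would invoke the known finite-dimensional edge-scaling limits. When $m_1=m_2=0$, the point process $\tilde{\mathcal{D}}^{N;p,q}(N,\{t_1,\dots,t_m\})$ is determinantal with correlation kernel converging in trace-class norm to the extended Airy kernel (cf.\ the discussion after Definition~\ref{Dn} and \cite{PS,KJPNG}); when exactly one of $m_1,m_2$ vanishes, the analogous convergence to a perturbed (``outlier'') extended kernel is proved in \cite{ADvM}; and when $m_1=m_2>0$ under the stated restriction on $p,q$, the convergence to the ``wanderer'' extended kernel is proved in \cite{AFvM}. In each case the limiting point process, at any finite set of times, is bounded above with points accumulating only at $-\infty$ and is simple, so convergence of correlation functions upgrades to weak convergence of the joint law of the top $k$ ordered points at $t_1,\dots,t_m$ (standard facts about determinantal processes, cf.\ \cite{Sosh}); and since $\mathcal{D}^{N;p,q}_i(t)=2^{-1/2}\big(\tilde{\mathcal{D}}^{N;p,q}_i(t)-t^2\big)$ is an affine image at each fixed time, the joint law of $\big\{\mathcal{D}^{N;p,q}_i(s):1\leq i\leq k,\,s\in S\big\}$ converges weakly for every finite $S\subseteq[-T,T]$, which is $(H2)_{k,T}$ (and a fortiori $(H2')_{k,T}$). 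For $(H3)$, fix $t\in[-T,T]$; by $(H2')$, $\big\{\mathcal{D}^{N;p,q}_i(t)\big\}_{i=1}^k$ converges weakly to the list $(Z_1>\dots>Z_k)$ of top $k$ points of the relevant limit, which is simple, so $M_\infty:=\min_{1\leq i\leq k-1}(Z_i-Z_{i+1})$ is almost surely positive and $\PP(M_\infty\leq\delta)\downarrow 0$ as $\delta\downarrow 0$. Since $(z_1,\dots,z_k)\mapsto\min_i|z_i-z_{i+1}|$ is continuous, the event $\{\text{min gap}\leq\delta\}$ is closed, and the Portmanteau theorem gives
\begin{equation*}
\limsup_{N\to\infty}\PP^N\Big(\min_{1\leq i\leq k-1}\big|\mathcal{D}^{N;p,q}_i(t)-\mathcal{D}^{N;p,q}_{i+1}(t)\big|<\delta\Big)\;\leq\;\PP(M_\infty\leq\delta);
\end{equation*}
choosing $\delta$ small and then $N_1$ large yields $(H3)_{k,T}$.

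The main obstacle is concentrated in $(H2)$: one must confirm that the convergence results of \cite{ADvM} and \cite{AFvM} apply in exactly the parameter regimes stated here — in particular that the wanderer constraint on $p,q$ is precisely the one imposed in \cite{AFvM} — and that the convergence they establish is of correlation kernels strong enough to extract weak convergence of the top $k$ ordered curves together with simplicity of the limiting point process. Once this input is in hand, the verifications of $(H1)$ and $(H3)$ above are routine.
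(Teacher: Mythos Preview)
Your proposal is correct and follows essentially the same route as the paper: $(H1)$ is read off from the construction (the paper is terser, simply saying it is immediate from the definition, while you spell out the Brownian-scaling reason the Gibbs property survives the edge rescaling); $(H2)$ is obtained by citing the determinantal structure and kernel convergence of \cite{ADvM,AFvM} together with general facts from \cite{Sosh}; and $(H3)$ comes from simplicity of the limiting determinantal point process plus weak one-time convergence. The only cosmetic difference is that the paper cites \cite[Theorem 4.d]{Sosh} directly for the zero-probability-of-coincidence input, whereas you phrase the passage from simplicity of the limit to the finite-$N$ bound via Portmanteau; these are the same argument.
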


\begin{proof}
Hypothesis $(H1)_{k,T}$ follows immediately from the definition of the line ensembles. The other two hypotheses utilize the determinantal nature of the finite dimensional marginals of the line ensembles, as developed in \cite{ADvM,AFvM}. In \cite[Proposition 3.3]{AFvM},  it is proved that for any set of times $A=\{t_1,\ldots, t_k\}\subseteq [-N^{1/3},N^{1/3}]$, the $A$-indexed collection of point processes
$$
\mathcal{D}^{N;p,q}(A) := \bigcup_{t\in A} \Big\{ \{\mathcal{D}_i^{N;p,q}(t)\}_{i=1}^{N}\Big\}
$$
are determinantal. Likewise is true for $\tilde{\mathcal{D}}^{N;p,q}(A)$ and since the two differ by a deterministic shift and scaling, all conclusions below (which are stated with regards to the second ensemble, holds true for the first as well). What this means is that all of the correlation functions\footnote{The $r^{th}$ correlation function is given essentially by the probability of finding points in small neighborhoods of $(s_i,x_i)$ for $s_i\in A$, $x_i\in \R$ and $i=1,\ldots, r$.} for $\tilde{\mathcal{D}}^{N;p,q}(A)$ can be written in terms of determinants of a single symmetric kernel $K^{N;p,q}:(A\times \R)^2 \to \R$. In the proof of \cite[Theorem 1.2]{AFvM}, it is shown that the kernel $K^{N;p,q}$ converges to a perturbation of the Airy kernel $K^{\rm{Airy};p,q}$. By \cite[Theorem 5]{Sosh}, this implies that the point processes $\tilde{\mathcal{D}}^{N;p,q}(A)$ converge weakly on cylinder sets to a point process $\tilde{\mathcal{D}}^{\rm{Airy};p,q}(A)$ which is still determinantal.

From the determinantal structure one also shows that $\{\tilde{\mathcal{D}}_1^{N;p,q}(t)\}_{t\in A}$ has a limiting joint distribution described via a Fredholm determinant involving the kernel $K^{N;p,q}$. This means that $\tilde{\mathcal{D}}^{\rm{Airy};p,q}(t)$ has a top-most particle for each $t\in A$. It is easy to check (using \cite[Theorem 4.a]{Sosh}) that $\tilde{\mathcal{D}}^{\rm{Airy};p,q}(t)$ has an infinite number of particles. This implies in fact that $\big\{\tilde{\mathcal{D}}_i^{N;p,q}(t): 1\leq i \leq k, t\in A \big\}$ has a limit in distribution as $N$ goes to infinity, which shows that hypothesis $(H2)_{k,T}$ holds.

In Theorem 4.d of \cite{Sosh} it is shown that in a determinantal point process, the probability of two particles coinciding is 0. Since \cite{AFvM} proved that $\tilde{\mathcal{D}}^{\rm{Airy};p,q}(A)$ is such a point process and since $\{\tilde{\mathcal{D}}_i^{N;p,q}(t)\}_{i=1}^{k}$ converges weakly to $\{\tilde{\mathcal{D}}_i^{\rm{Airy};p,q}(t)\}_{i=1}^{k}$, it follows that for all $\e>0$ and $t\in [-T,T]$, there exists $\delta>0$ and $N_1>0$ such that for all $N>N_1$,
$$\PP^N\big[\min_{1\leq i\leq k-1} |\tilde{\mathcal{D}}_i^{N;p,q}(t)-\tilde{\mathcal{D}}_{i+1}^{N;p,q}(t)|<\delta\big] <\e.$$
This proves hypothesis $(H3)_{k,T}$.
\end{proof}


\begin{rem}
The ensemble of non-intersecting Brownian exclusions \cite{TWex} also satisfies $(H)_{k,T}$ Theorem \ref{mainthm2} therefore applies and shows, in particular, uniform convergence of the edge scaled line ensemble to the Airy line ensemble.

There exists a variety of other finite line ensembles which do not display the Brownian Gibbs property, yet which (under appropriate scaling) converge in finite-dimensional distribution to Airy-like line ensembles. These include the line ensembles associated with the polynuclear growth (PNG) model, last passage percolation (LPP) and tiling problems. The finite line ensembles do, however, display Gibbs properties with respect to different underlying path measures. Via an invariance principle, these Gibbs properties converge, under the Airy rescaling, to the Brownian Gibbs property. In fact, if so inclined, one could carry out the program orchestrated in this paper in these non-Brownian settings. However, since the limiting objects (the Airy-like line ensembles) are the same as the ones that we consider, the only benefit of doing so would be to prove tightness of the finite PNG, LPP or tiling line ensembles.

Similarly to the Brownian bridge case, there are simple ways to perturb the PNG and LPP models (for instance by modifying boundary conditions or external sources) which result in perturbations to the line ensembles and hence to their limits \cite{IS,BP,BFP,CFP1}. The above-mentioned modifications to our program provide a means to prove that the limiting processes associated with these perturbations can actually be realized as the marginals of continuous, non-intersecting line ensembles with the Brownian Gibbs property. In fact, a number of these limiting processes coincide with the finite-dimensional distributions of the Airy-like ensembles for which Proposition \ref{hypsat} applies -- hence by Theorem \ref{mainthm2} the existence and Brownian Gibbs property for these limiting line ensembles is immediate.

\end{rem}

\section{Consequences of main results}\label{absconsec}

Theorem \ref{mainthm} (and its more general counterpart Theorem \ref{mainthm2}) has several interesting consequences which are stated here.

\subsection{Local Brownian absolute continuity}\label{subsubabs}
The next result holds for any line ensemble with the Brownian Gibbs property, although for concreteness we state it for the Airy line ensemble.

\begin{proposition}[Local absolute continuity]\label{propabscon}
For $k\in \N$, let $\mathcal{A}_k(\cdot) := \mathcal{A}(k,\cdot)$ denote the $k^{\rm{th}}$ line of the Airy line ensemble. For any $s,t \in \R$, $t>0$, the measure on functions from $[0,t]\to \R$ given by  $$\mathcal{A}_k(\cdot  + s) - \mathcal{A}_k(s)$$ is absolutely continuous with respect to Brownian motion (of diffusion parameter 2) on $[0,t]$. Alternatively, the measure on functions from $[0,t]\to \R$ given by $$\mathcal{A}_k(\cdot  + s) - \left(\frac{t-\cdot}{t} \mathcal{A}_k(s) + \frac{\cdot}{t} \mathcal{A}_k(s+t)\right)$$ is absolutely continuous with respect to Brownian bridge (of diffusion parameter 2) on $[0,t]$.
\end{proposition}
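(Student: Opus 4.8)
Since $\mathcal{A}_k(x)=2^{1/2}\mathcal{L}_k(x)+x^2$, we have $\mathcal{A}_k(s+\cdot)-\mathcal{A}_k(s)=2^{1/2}\big(\mathcal{L}_k(s+\cdot)-\mathcal{L}_k(s)\big)+g$ on $[0,t]$, where $g(r)=r^2+2sr$ satisfies $g(0)=0$ and $g'\in L^2[0,t]$, hence lies in the Cameron--Martin space of Brownian motion on $[0,t]$; likewise, subtracting the affine interpolation of the endpoints introduces instead $g(r)=(s+r)^2-\tfrac{t-r}{t}s^2-\tfrac{r}{t}(s+t)^2$, which vanishes at $r=0$ and $r=t$ and so lies in the Cameron--Martin space of Brownian bridge on $[0,t]$. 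Multiplication by $2^{1/2}$ carries standard Brownian motion (resp.\ bridge) and its Cameron--Martin space to those of diffusion parameter $2$. As scalings, Cameron--Martin shifts by a fixed function, and measurable pushforwards all preserve absolute continuity, it suffices to prove the two statements with $\mathcal{A}_k$ replaced by $\mathcal{L}_k$ and ``diffusion parameter $2$'' by ``standard''; both follow from a single computation.

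Fix $a<s$ and $b>s+t$, say $a=s-1$ and $b=s+t+1$, and apply the Brownian Gibbs property of $\mathcal{L}$ (Definition~\ref{maindefBGP}) with $K=\{k\}$ on $(a,b)$. Conditionally on $\mathcal{L}\big\vert_{D_{\{k\},a,b}^c}$ --- i.e.\ on every line except $\mathcal{L}_k$ on $(a,b)$, which in particular fixes $\mathcal{L}_k(a)$, $\mathcal{L}_k(b)$ and the neighbouring curves $f=\mathcal{L}_{k-1}$ (or $+\infty$ if $k=1$) and $g=\mathcal{L}_{k+1}$ --- the curve $\mathcal{L}_k\big\vert_{[a,b]}$ is a Brownian bridge from $\mathcal{L}_k(a)$ to $\mathcal{L}_k(b)$ conditioned to stay strictly between $g$ and $f$ on $[a,b]$. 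Since $\mathcal{L}$ is continuous and non-intersecting, $\inf_{[a,b]}(f-\mathcal{L}_k)>0$ and $\inf_{[a,b]}(\mathcal{L}_k-g)>0$ almost surely, so there is a continuous path from $\mathcal{L}_k(a)$ to $\mathcal{L}_k(b)$ lying a fixed positive distance inside this corridor, and Corollary~\ref{coropen} shows that the corridor event has strictly positive probability (namely the acceptance probability of Definition~\ref{WBdef}) under the unconditioned measure $\mathcal{W}^{a,b}_{1;\mathcal{L}_k(a),\mathcal{L}_k(b)}$. Consequently the conditional law of $\mathcal{L}_k\big\vert_{[a,b]}$ is absolutely continuous with respect to $\mathcal{W}^{a,b}_{1;\mathcal{L}_k(a),\mathcal{L}_k(b)}$, with Radon--Nikodym derivative the indicator of the corridor event divided by its positive probability.

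To move from the bridge on $[a,b]$ to the target interval, note that a Brownian bridge on $[a,b]$ restricted to $[a,s+t]$ is mutually absolutely continuous with Brownian motion on $[a,s+t]$ started at $\mathcal{L}_k(a)$; here it is crucial that $s+t<b$, so that the restriction avoids the conditioned right endpoint, and the density is the usual ratio of Gaussian heat kernels (alternatively, use Lemma~\ref{browniandecomp}). Restricting further to $[s,s+t]$ and then applying the measurable map $\Psi(h)=h(s+\cdot)-h(s)$ (for the Brownian-motion statement) or $\Psi(h)=h(s+\cdot)-\tfrac{t-\cdot}{t}h(s)-\tfrac{\cdot}{t}h(s+t)$ (for the bridge statement), the Markov property and independence of increments show that $\Psi$ sends Brownian motion on $[s,s+t]$ with any (possibly random) starting value to standard Brownian motion, resp.\ standard Brownian bridge, on $[0,t]$. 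Composing these absolute-continuity-preserving operations yields that, conditionally on $\mathcal{L}\big\vert_{D_{\{k\},a,b}^c}$, the law of $\mathcal{L}_k(s+\cdot)-\mathcal{L}_k(s)$ (resp.\ of its bridge recentering) on $[0,t]$ is absolutely continuous with respect to the fixed measure $\lambda$, standard Wiener (resp.\ bridge) measure on $C([0,t])$. Finally, if $\lambda(B)=0$ then the conditional probability that $\mathcal{L}_k(s+\cdot)-\mathcal{L}_k(s)\in B$ is $0$ almost surely, hence so is its expectation; this gives the unconditional absolute continuity, and combined with the first paragraph proves the proposition.

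The genuinely substantive points are the two soft inputs above: that the corridor/acceptance event has strictly positive probability almost surely (which rests on the strict ordering of the boundary data together with a small-ball estimate of the flavour of Corollary~\ref{coropen}), and that a Brownian bridge restricted to a proper sub-interval omitting its right endpoint is absolutely continuous with respect to Brownian motion. The remaining steps --- fixing a regular conditional distribution and integrating out --- are routine. The conceptual heart is that the Brownian Gibbs property presents every line of $\mathcal{L}$, over any interval, as a bounded density times a Brownian bridge, so local Brownian absolute continuity is immediate once one is careful about the endpoints.
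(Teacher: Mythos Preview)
Your proof is correct and rests on the same core idea as the paper's: the Brownian Gibbs property exhibits $\mathcal{L}_k$ on any interval as a Brownian bridge conditioned on a corridor event of positive probability, and absolute continuity survives integrating out the conditioning. The organisational difference is that the paper first proves the bridge statement directly (Gibbs on $[s,s+t]$ itself), then bootstraps by applying that result on the slightly larger interval $[s,s+t+\epsilon]$ to deduce that $\mathcal{A}_k(s+t)$ has a Lebesgue density, from which the Brownian-motion statement follows; you instead apply Gibbs once on an enlarged interval $[s-1,s+t+1]$ and use the standard fact that a bridge restricted away from its right endpoint is absolutely continuous with respect to Brownian motion, obtaining both statements simultaneously via the pushforward $\Psi$. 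Your route is slightly more unified and makes the Cameron--Martin handling of the parabola explicit; the paper's is terser but requires the extra step of extracting the endpoint density. Both arguments are essentially equivalent.
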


\begin{proof}
The absolute continuity with respect to Brownian bridge (of diffusion parameter 2) follows immediately from the Gibbs property. The reason for the parameter 2 is due to the $2^{1/2}$ factor which arises in relating the Airy line ensemble to the ensemble $\mathcal{L}$ in Theorem \ref{mainthm}. When the right endpoint is not fixed, then the absolute continuity is with respect to Brownian motion (of diffusion parameter 2). To show this, we need only show that the distribution of $\mathcal{A}_k(s+t)$ is absolutely continuous with respect to Lebesgue measure. This can be seen, however, by applying the above shown Brownian bridge absolute continuity result for a slightly larger interval $[s,s+t+\e]$ for some $\e>0$. The Brownian bridge absolute continuity on this larger interval then implies that $\mathcal{A}_k(s+t)$ has a density with respect to the Lebesgue measure.
\end{proof}

Proposition \ref{propabscon} was anticipated in the paper of Pr\"{a}hofer and Spohn \cite{PS} (page 23) where the multi-line Airy process was introduced in terms of finite-dimensional distributions. The focus of that paper was not on studying line ensembles in their own right, but rather on the relationship between these ensembles and particular solvable growth models. Shortly thereafter, Johansson \cite{KJPNG} studied a closely related growth model and strengthened the convergence results of \cite{PS} in that setting by showing weak convergence of the growth interface to the top line of the multi-line Airy process (called just the Airy process).

Pr\"{a}hofer and Spohn's prediction that locally the Airy process looks Brownian was based on the observation that short time Airy process increments had nearly linear variance. More evidence for this claim was provided by \cite{Hagg} which showed that the finite-dimensional distributions of the Airy process converge, as the time differences go to zero and the process is scaled diffusively, to those of Brownian motion. An immediate corollary of our Proposition \ref{propabscon} is a stronger version of these results.

\begin{corollary}[Functional central limit theorem for local Airy]
Let $T > 0$ and $k\geq 1$. Define $B_\e: [-T,T] \to \R$ as the process $t\mapsto \epsilon^{-1/2} \big( \mathcal{A}_k(\epsilon t) - \mathcal{A}_k(0) \big)$. Then, as $\e$ goes to zero,  $B_{\e}$ converges weakly (with respect to the uniform topology on $C([-T,T],\R)$) to two-sided Brownian motion (of diffusion parameter 2) $B:[-T,T] \to \R$, $B(0) = 0$.
\end{corollary}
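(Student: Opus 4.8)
The plan is to derive this functional limit theorem from the local absolute-continuity statement of Proposition~\ref{propabscon}, combined with the exact scaling invariance of Brownian motion and a reverse (backward) martingale argument. Fix $T>0$. The input I need is the two-sided analogue of Proposition~\ref{propabscon} — proved in exactly the same way from the Brownian Gibbs property, now resampling $\mathcal{L}_k$ on a symmetric window about the origin (and splicing the one-sided statements at $0$) — namely that the law $\mu$ on $C([-T,T],\R)$ of the process $u\mapsto \mathcal{A}_k(u)-\mathcal{A}_k(0)$ is absolutely continuous with respect to the law $\nu$ of a two-sided Brownian motion $W$ of diffusion parameter $2$ on $[-T,T]$ with $W(0)=0$. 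Writing $\Phi=d\mu/d\nu\in L^1(\nu)$, we have $\int \Phi\,d\nu = 1$.

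Next I would introduce the scaling maps $\psi_\e\colon C([-T,T],\R)\to C([-T,T],\R)$ given by $\psi_\e(f)(t)=\e^{-1/2}f(\e t)$ for $0<\e\le 1$; note that $\psi_\e(f)$ depends only on $f\vert_{[-\e T,\e T]}$, and $B_\e = \psi_\e\big(\mathcal{A}_k(\cdot)-\mathcal{A}_k(0)\big)$. Two facts drive the argument. First, the exact Brownian scaling property of $W$ yields $(\psi_\e)_*\nu = \nu$ for every $\e$. Second, the $\sigma$-fields $\mathcal{G}_\e := \psi_\e^{-1}(\text{Borel}) = \sigma\big(W(u):|u|\le \e T\big)$ decrease as $\e\downarrow 0$ to the germ $\sigma$-field $\mathcal{G}_0 = \bigcap_{\e>0}\mathcal{G}_\e$, which is $\nu$-trivial by the Blumenthal $0$--$1$ law applied to each of the two independent one-sided Brownian motions making up $W$ (recall that $W(0)=0$ is deterministic).

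With these in hand, for any bounded continuous $F\colon C([-T,T],\R)\to\R$, using $d\mu=\Phi\,d\nu$ and the $\mathcal{G}_\e$-measurability of $F\circ\psi_\e$,
\[
\EE\big[F(B_\e)\big] \;=\; \int (F\circ\psi_\e)\,\Phi\,d\nu \;=\; \int (F\circ\psi_\e)\,\EE_\nu\!\big[\Phi\,\big\vert\,\mathcal{G}_\e\big]\,d\nu .
\]
By the reverse martingale convergence theorem, $\EE_\nu[\Phi\mid\mathcal{G}_\e]\to \EE_\nu[\Phi\mid\mathcal{G}_0]=\int\Phi\,d\nu=1$ in $L^1(\nu)$ as $\e\downarrow 0$, so that
\[
\Big\vert \EE\big[F(B_\e)\big] - \int (F\circ\psi_\e)\,d\nu \Big\vert \;\le\; \Vert F\Vert_\infty \,\big\Vert \EE_\nu[\Phi\mid\mathcal{G}_\e]-1\big\Vert_{L^1(\nu)}\;\longrightarrow\;0 .
\]
Since $\int (F\circ\psi_\e)\,d\nu = \int F\,d\big((\psi_\e)_*\nu\big) = \int F\,d\nu = \EE[F(W)]$ for every $\e$, we conclude $\EE[F(B_\e)]\to\EE[F(W)]$, which is exactly the asserted weak convergence on $C([-T,T],\R)$.

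The real content here is Proposition~\ref{propabscon} itself, which we already have; the rest is soft. The one point deserving care — and the main obstacle — is establishing the two-sided absolute continuity $\mu\ll\nu$, i.e.\ checking that the proof of Proposition~\ref{propabscon} applies on a window straddling the origin and not merely on one-sided intervals. A secondary point is that $\Phi$ lies only in $L^1(\nu)$, not in $L^\infty(\nu)$, which is why the estimate above is carried out via $L^1$-convergence of the conditional expectations $\EE_\nu[\Phi\mid\mathcal{G}_\e]$ rather than a uniform bound; beyond this the argument is entirely routine.
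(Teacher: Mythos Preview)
Your argument is correct and supplies the details that the paper omits: the paper states this corollary as ``an immediate consequence'' of Proposition~\ref{propabscon} and gives no proof, so there is no alternative approach to compare against. Your route---two-sided absolute continuity, exact Brownian scaling invariance $(\psi_\e)_*\nu=\nu$, and the reverse-martingale/Blumenthal argument to show $\EE_\nu[\Phi\mid\mathcal{G}_\e]\to 1$ in $L^1(\nu)$---is precisely the natural way to make ``immediate'' rigorous; the two-sided absolute continuity you flag as the main point to check follows from the Brownian Gibbs property on a symmetric window exactly as in the one-sided case, and the triviality of $\mathcal{G}_0$ can be seen by viewing $(W(t),W(-t))_{t\ge 0}$ as a two-dimensional Brownian motion and applying Blumenthal's $0$--$1$ law.
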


\subsection{Uniqueness of the maximizing location of the Airy line ensemble minus a parabola}\label{uniquenesssec}

Johansson conjectured that there exists a unique maximization point for the Airy process minus a parabola and explained how such a result would lead to a characterization of the transversal fluctuations of directed polymers. As a corollary of our main theorem, we prove that conjecture.

\begin{theorem}[Johansson \cite{KJPNG}, Conjecture 1.5]\label{propjoh}
Let $H(t)=\mathcal{A}_1(t)-t^2$. Then, for each $T>0$, $H(t)$ attains its maximum at a unique point in $[-T,T]$, almost surely. Likewise, $H(t)$ attains its maximum at a unique point on all of $\R$, almost surely.
\end{theorem}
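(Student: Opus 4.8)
The plan is to read off the statement from the Brownian Gibbs property of the ensemble $\mathcal{L}$ produced in Theorem~\ref{mainthm}. Since $\mathcal{A}_1(t)=2^{1/2}\mathcal{L}_1(t)+t^2$, we have $H(t)=\mathcal{A}_1(t)-t^2=2^{1/2}\mathcal{L}_1(t)$, so the theorem is equivalent to the assertion that the top line $\mathcal{L}_1$ attains its maximum on $[-T,T]$ at a unique point, almost surely. I would reduce this to an estimate on disjoint intervals: were $\mathcal{L}_1$ to have two distinct maximizers $t_1<t_2$ in $[-T,T]$, one could pick rationals $q_1<q_2$ with $t_1<q_1<q_2<t_2$, and then $\max_{[-T,q_1]}\mathcal{L}_1=\max_{[q_2,T]}\mathcal{L}_1$, both being the global maximum of $\mathcal{L}_1$ on $[-T,T]$. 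So it is enough to prove that for every pair of disjoint closed intervals $[a,b],[c,d]\subseteq[-T,T]$ (endpoints in $\{-T,T\}\cup\Q$),
\begin{equation*}
\PP\Big(\max_{t\in[a,b]}\mathcal{L}_1(t)=\max_{t\in[c,d]}\mathcal{L}_1(t)\Big)=0,
\end{equation*}
since a countable union over such pairs forbids two distinct maximizers on $[-T,T]$. The assertion on all of $\R$ then follows once one knows that $H(t)\to-\infty$ as $|t|\to\infty$ almost surely (a routine consequence of the one-point upper tail of the Airy process together with the local Brownian modulus of continuity furnished by Proposition~\ref{propabscon}), for then the global maximum is attained at a finite point and any two global maximizers lie in a common $[-T,T]$.

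To establish the displayed identity I would condition on $\mathcal{L}$ restricted to $D_{\{1\},a,b}^c$, that is, on all the data except $\mathcal{L}_1$ on the open interval $(a,b)$. By the Brownian Gibbs property (Definition~\ref{maindefBGP} with $K=\{1\}$, so the curve above is $+\infty$ and the curve below is $\mathcal{L}_2$), the conditional law of $\mathcal{L}_1\big\vert_{[a,b]}$ is $\bxyf^{a,b}$ in the notation of Definition~\ref{WBdef}, with $\bar{x}=\mathcal{L}_1(a)$, $\bar{y}=\mathcal{L}_1(b)$ and $f=\mathcal{L}_2\big\vert_{[a,b]}$: a Brownian bridge between the recorded endpoint heights, conditioned to stay above $\mathcal{L}_2$. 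Because $[c,d]$ is disjoint from $(a,b)$, the quantity $M_2:=\max_{[c,d]}\mathcal{L}_1$ is measurable with respect to this conditioning, whereas $M_1:=\max_{[a,b]}\mathcal{L}_1$ is a functional of the conditioned bridge. Hence it suffices to show that, $\PP$-almost surely, the conditional law of $M_1$ has no atoms; integrating $\PP(M_1=M_2\mid\mathcal{F}_{ext})=0$ then yields the claim.

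The atomlessness is the one place Brownian input enters. The non-crossing event defining $\bxyf^{a,b}$ has positive probability given the boundary data: $\mathcal{L}_1(a)>\mathcal{L}_2(a)$ and $\mathcal{L}_1(b)>\mathcal{L}_2(b)$ by non-intersection and $\mathcal{L}_2$ is continuous, so a continuous path from $(a,\mathcal{L}_1(a))$ to $(b,\mathcal{L}_1(b))$ lying strictly above $\mathcal{L}_2$ exists, and by Corollary~\ref{coropen} its uniform neighbourhoods carry positive Wiener mass; thus $\AP(a,b,\bar{x},\bar{y},f)>0$ almost surely. It follows from Definition~\ref{WBdef} that the conditional law of $\mathcal{L}_1\big\vert_{[a,b]}$ is absolutely continuous with respect to $\mathcal{W}^{a,b}_{1;\bar{x},\bar{y}}$ (one Brownian bridge between the recorded endpoints), with Radon--Nikodym derivative at most $\AP^{-1}$; consequently the conditional law of $M_1$ is absolutely continuous with respect to the law of the maximum of that Brownian bridge. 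Since the maximum of a Brownian bridge with arbitrary fixed endpoints has an absolutely continuous distribution on $\R$ --- classical; see the computations of Chapter~4 of \cite{Karat-Schreve}, of which Lemma~\ref{lembridgemod} is the special case --- the conditional law of $M_1$ is atomless, as required. (Equivalently, one may note that conditionally on the exterior $\mathcal{L}_1\big\vert_{[a,b]}$ is absolutely continuous with respect to Brownian bridge, which is exactly the Gibbs-property input behind Proposition~\ref{propabscon}, and that absolute continuity transfers atomlessness of the maximum.)

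The main obstacle is conceptual rather than computational: the key realisation is that conditioning through the Gibbs property simultaneously \emph{freezes} the competitor $M_2$ --- it becomes measurable with respect to the conditioning --- while leaving $M_1$ with an atomless conditional law. Once this dichotomy is in place, the proof reduces to the classical atomlessness of the maximum of a Brownian bridge, together with the bookkeeping of the reduction to disjoint rational intervals and the elementary fact that $H\to-\infty$ at infinity. The only delicate point is that the conditioned bridge's maximum carries no atom even at the infimum of its support, $\max\{\mathcal{L}_1(a),\mathcal{L}_1(b)\}$; this is immediate from the explicit law of the Brownian-bridge maximum, or from the law of the iterated logarithm near an endpoint, which forces the bridge to exceed $\max\{\mathcal{L}_1(a),\mathcal{L}_1(b)\}$ almost surely.
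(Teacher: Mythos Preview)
Your argument for uniqueness on $[-T,T]$ is correct and is essentially the paper's proof, repackaged. The paper first passes through Proposition~\ref{propabscon} (absolute continuity of $\mathcal{A}_1$ with respect to Brownian motion) and Girsanov, and then proves that Brownian motion has a unique maximizer by exactly your disjoint-interval reduction: condition on the endpoints of one interval, so the maximum there is the maximum of a Brownian bridge, which has a continuous law. You skip the intermediate reduction to Brownian motion and apply the Gibbs conditioning directly to $\mathcal{L}_1$; this is a minor streamlining, not a different idea.

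The one soft spot is your treatment of $H(t)\to-\infty$. Calling it ``routine from the one-point upper tail together with Proposition~\ref{propabscon}'' is not quite enough: local absolute continuity on each $[n,n+1]$ gives no uniform control of the Radon--Nikodym derivative in $n$, so you cannot directly extract a summable bound on $\PP\big(\sup_{[n,n+1]}H>-c\big)$ from it. The paper handles this separately as Proposition~\ref{propargmax} and Corollary~\ref{corneglim}: a Gibbs resampling argument converts the one-point Tracy--Widom tail~(\ref{eqtracywidom}) into a tail bound on $\sup_{[n,n+1]}\mathcal{L}_1$, yielding the needed summability. Your sketch points in the right direction (one-point tail plus some interval control), but the mechanism that actually works is the resampling/stopping-domain argument, not the absolute-continuity statement of Proposition~\ref{propabscon}.
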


\begin{proof}
The first statement follows immediately from three facts: (1) $\mathcal{A}'(t):=\mathcal{A}_1(t)-\mathcal{A}_1(-T)$ is absolutely continuous with respect to Brownian motion (of diffusion parameter 2) $B(\cdot)$ on $[-T,T]$ (starting at zero at time $-T$);  (2) the law of $B(t) - t^2$ on $[-T,T]$ is absolutely continuous with respect to the law of $B(t) - T^2$ on $[-T,T]$; (3) Brownian motion attains its maximum at a unique point on any given interval. (This uniqueness also plays an important role in Williams' decomposition of one-dimensional diffusions \cite{Williams, PY}.)

Fact $1$ follows from Proposition~\ref{propabscon} and fact $2$ from Girsanov's theorem. We furnish a brief proof of fact $3$. Observe that for any two non-overlapping deterministic intervals, the maximum of a Brownian motion (of diffusion parameter 2) $B$ on one interval almost surely differs from the maximum on the other. This is because, conditioned on the values of $B$ at the endpoints of an interval, the maximum attained on the interval has a continuous distribution.
For each $m \in \N$ let $\{I_i^m\}_{i=1}^{m}$ denote the partition of $[-T,T]$ into $m$ non-overlapping intervals of equal length. Were the maximal value of $B$ on $[-T,T]$ achieved more than once, there would exist some value of $m \in \N$ and two indices $i_m\neq j_m$ such that the maxima of $B$ on the two intervals $I_{i_m}^m$ and $I_{j_m}^m$ are equal. That this event has probability zero implies fact $3$.

To prove the second statement in Theorem~\ref{propjoh}, we must show that the argmax stays bounded as $T$ goes to infinity. This result is proved below and recorded as Corollary \ref{corneglim}.
\end{proof}

The remainder of this subsection is devoted to proving Corollary \ref{corneglim} below. Recall that we write the Airy line ensemble minus a parabola and scaled by $2^{1/2}$ as $\mathcal{L}$ (i.e. $\mathcal{L}_i(x) = 2^{-1/2} \big(\mathcal{A}_i(x) - x^2\big)$ for all $i \in \N$ and $x \in \R$). Note that in the notation above, $H(\cdot) = \mathcal{L}_1(\cdot)$. In Theorem \ref{mainthm} we showed the existence of this line ensemble and that it has the Brownian Gibbs property. The one-point distribution of the random variable $\mathcal{A}_1(t)$ has the Tracy-Widom (GUE) distribution. In \cite{TW} precise bounds are proved for the tails of this probability distribution. We do not need the full strength of these bounds and thus record a weaker version here. For some $c > 0$, all $x > 0$ and each $t \in \R$,
\begin{equation}\label{eqtracywidom}
  \PP \Big(  2^{1/2}\mathcal{L}_1(t) + t^2 \leq - x \big) \leq \exp \big\{ - c x^3 \big\} \, , \, \,  \textrm{and} \, \, \, \, \,
 \PP \Big(  2^{1/2}\mathcal{L}_1(t) + t^2 \geq x \big) \leq \exp \big\{ - c x^{3/2} \big\}.
\end{equation}
The factor of $2^{1/2}$ tags along throughout the following argument, but plays no particular consequence.

\begin{proposition}\label{propargmax}
For all $\alpha\in (0,1)$, there exists $c > 0$  and $C(\alpha)>0$ such that for all $t \geq C(\alpha)>0$ and $x \geq - \alpha t^2$,
$$
 \mathbb{P} \Big( \sup_{s \not\in [-t,t]} 2^{1/2}\mathcal{L}_1(s) > x \Big)
 \leq \exp \big\{ -c (t^2 + x)^{3/2} \big\}.
$$
\end{proposition}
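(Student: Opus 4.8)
Write $2^{1/2}\mathcal{L}_1(s)=\mathcal{A}_1(s)-s^2$; the plan is first to reduce to a single-window estimate, and then to exploit the Brownian Gibbs property of $\mathcal{L}$ together with the one-point tails \eqref{eqtracywidom}. By the reflection symmetry $s\mapsto -s$ of the law of the Airy line ensemble (inherited from that of the Brownian-bridge ensembles $\mathcal{D}^N$), it suffices to bound $\mathbb{P}\big(\sup_{s>t}(\mathcal{A}_1(s)-s^2)>x\big)$. Covering $[t,\infty)$ by the unit intervals $[n,n+1]$ with $n\ge\lfloor t\rfloor$, and using $s^2\ge n^2$ on $[n,n+1]$, this is at most $\sum_{n\ge\lfloor t\rfloor}\mathbb{P}\big(\sup_{[n,n+1]}\mathcal{A}_1>n^2+x\big)$. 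I claim it is enough to prove the window estimate: there are universal $c_0>0$ and $y_0>0$ with $\mathbb{P}\big(\sup_{[n,n+1]}\mathcal{A}_1>y\big)\le\exp\{-c_0 y^{3/2}\}$ for all $n\in\Z$ and $y\ge y_0$, the left side being independent of $n$ by $x$-translation invariance of the multi-line Airy process. Granting this, choose $C(\alpha)$ so that $t\ge C(\alpha)$ and $x\ge-\alpha t^2$ force $n^2+x\ge\tfrac12(t^2+x)\ge y_0$ for all $n\ge\lfloor t\rfloor$ (possible since $t^2+x\ge(1-\alpha)t^2$); writing $n^2+x=(t^2+x)+(n^2-t^2)$ for $n\ge\lceil t\rceil$ and invoking $(a+b)^{3/2}\ge a^{3/2}+b^{3/2}$, the sum is at most $\exp\{-c_0 2^{-3/2}(t^2+x)^{3/2}\}$ times a convergent series, hence at most $\exp\{-c(t^2+x)^{3/2}\}$ after a final enlargement of $C(\alpha)$.

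For the window estimate, take the window to be $[0,1]$ and apply the Brownian Gibbs property of $\mathcal{L}$ on $[-1,2]$ with $K=\{1\}$ and floor $f=\mathcal{L}_2$. Conditionally on the external sigma-field $\mathcal{F}_{ext}$, $\mathcal{L}_1\vert_{(-1,2)}$ is a Brownian bridge $B$ from $\mathcal{L}_1(-1)$ to $\mathcal{L}_1(2)$ conditioned on $\{B>\mathcal{L}_2\text{ on }(-1,2)\}$, an event of probability $p:=\AP(-1,2,\mathcal{L}_1(-1),\mathcal{L}_1(2),\mathcal{L}_2)$. Set $p_0:=\exp\{-y^2/200\}$ and let $G_y:=\{\mathcal{A}_1(-1)\le y/4\}\cap\{\mathcal{A}_1(2)\le y/4\}\cap\{p\ge p_0\}$. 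On $G_y$ the conditioned bridge has Radon--Nikodym derivative at most $p_0^{-1}$ against the free bridge; since $\sup_{[0,1]}\mathcal{A}_1>y$ forces $B>(y-1)2^{-1/2}$ on $[0,1]$ while $B$ has endpoints $\le y/4$, an affine reduction to a standard bridge and Lemma~\ref{lembridgemod} bound the conditional probability of $\{\sup_{[0,1]}\mathcal{A}_1>y\}$ by $p_0^{-1}\exp\{-y^2/24\}\le\exp\{-c_0 y^{3/2}\}$ for $y\ge y_0$. Together with $\mathbb{P}(\mathcal{A}_1(-1)>y/4)+\mathbb{P}(\mathcal{A}_1(2)>y/4)\le 2\exp\{-c(y/4)^{3/2}\}$ from the upper tail in \eqref{eqtracywidom}, the window estimate reduces to showing $\mathbb{P}(p<p_0)\le\exp\{-c_0 y^{3/2}\}$.

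The final and hardest step is this bound on $\mathbb{P}(p<p_0)$. One first notes that $p$ is bounded below purely in terms of $\mathcal{F}_{ext}$: if $d:=\min(\mathcal{L}_1(-1),\mathcal{L}_1(2))-\sup_{[-1,2]}\mathcal{L}_2>0$, then, since $B$ lies above the chord of its endpoints plus a bridge from $0$ to $0$, Lemma~\ref{lembridgemod} gives $p\ge 1-\exp\{-2d^2/3\}\ge d^2/3$ for $d\le 1$; hence $\{p<p_0\}$ is contained in the event that $\mathcal{L}_2$ rises to within $\sqrt{3p_0}$ of $\min(\mathcal{L}_1(-1),\mathcal{L}_1(2))$ somewhere on $[-1,2]$. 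Since $\mathcal{L}_2<\mathcal{L}_1$ pointwise and $\sqrt{3p_0}$ is exponentially small in $y^2$, it would suffice to establish a quantitative non-touching estimate strengthening hypothesis $(H3)$ from a single time to a short interval: that, up to an event of probability $\le\exp\{-cy^{3/2}\}$ controlled by the two tails of \eqref{eqtracywidom} applied at $-1$ and $2$, the probability that $\mathcal{L}_1-\mathcal{L}_2$ dips below $\delta$ somewhere on $[-1,2]$ is at most a fixed power of $\delta$. This in turn I would prove by applying the Brownian Gibbs property to the pair $\{1,2\}$ on short subintervals, using \eqref{eqtracywidom} and the elementary bridge estimates of Lemmas~\ref{lembrownbridge} and~\ref{lembridgemod}, and exploiting that on a short interval the acceptance probabilities that enter are themselves bounded below by the same Lemma~\ref{lembridgemod} argument in terms of their endpoint data. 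The whole difficulty of the proposition lies in this last point: every bound on $\mathcal{L}_1$ (or on $\mathcal{L}_1-\mathcal{L}_2$) inside an interval produced by the Gibbs property carries a division by an acceptance probability, so the estimates must be arranged in an order in which the acceptance probabilities invoked are controlled directly by \eqref{eqtracywidom} and Lemma~\ref{lembridgemod}, and never circularly by the quantity being estimated.
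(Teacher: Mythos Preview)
Your proof has a genuine gap in the final step, and more importantly, you are missing the key idea that makes the paper's argument clean.

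The gap: you reduce to showing $\mathbb{P}(p<p_0)\le\exp\{-c_0 y^{3/2}\}$, where $p$ is the acceptance probability for the top curve on $[-1,2]$, but you never actually prove this. You correctly observe that $p$ is bounded below in terms of the gap $d=\min(\mathcal{L}_1(-1),\mathcal{L}_1(2))-\sup_{[-1,2]}\mathcal{L}_2$, and then you say it would suffice to prove a quantitative gap estimate on $\mathcal{L}_1-\mathcal{L}_2$ over the interval $[-1,2]$. But you only sketch a plan for this and explicitly acknowledge the circularity problem: every Gibbs-based bound on $\mathcal{L}_1$ or $\mathcal{L}_1-\mathcal{L}_2$ inside an interval carries a division by an acceptance probability, which is exactly what you are trying to control. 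Nothing in the paper up to this point gives you the needed quantitative gap estimate with tail $\exp\{-cy^{3/2}\}$; Proposition~\ref{propacceptprob} gives only qualitative control (for each $\epsilon$ there exists $\delta$), which is far too weak here.

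The missing idea: you do not need to bound the acceptance probability at all. Conditioning a Brownian bridge to stay above $\mathcal{L}_2$ pushes it \emph{up}, so by Lemmas~\ref{monotonicity} and~\ref{lemmonotonetwo} the conditioned bridge stochastically \emph{dominates} the free bridge with the same endpoints. The paper exploits this as follows. Let $\chi_i$ be the first time in $[i,i+1]$ at which $2^{1/2}\mathcal{L}_1$ reaches $x$; then $(\chi_i,i+2)$ is a stopping domain. On $\{\chi_i<\infty\}\cap\{2^{1/2}\mathcal{L}_1(i+2)\ge -(i+2)^2-M\}$, the strong Gibbs property and the two monotonicity lemmas show that $2^{1/2}\mathcal{L}_1$ on $[\chi_i,i+2]$ dominates a free Brownian bridge from $x$ to $-(i+2)^2-M$, which with probability at least $1/2$ exceeds $\tfrac{1}{2}(x-(i+2)^2-M)$ at the deterministic time $i+1$. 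Hence
\[
\tfrac{1}{2}\,\mathbb{P}\big(\maxone_{i,i+1}>x,\ V_{i+2,M}\big)\le \mathbb{P}\big(2^{1/2}\mathcal{L}_1(i+1)+(i+1)^2>\tfrac{x-M-(i+2)^2}{2}+(i+1)^2\big),
\]
and both sides are now controlled by the one-point tails \eqref{eqtracywidom} with the choice $M=\tfrac{1}{2}(x+(i+2)^2)$. No Radon--Nikodym factor $p^{-1}$ ever appears, and no gap estimate for $\mathcal{L}_1-\mathcal{L}_2$ is needed. Your reduction to unit windows and the final summation are fine; what you should replace is the Radon--Nikodym argument by this monotonicity/stopping-domain argument.
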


\begin{proof}
The proof follows the same strategy that we will use later in establishing Lemma~\ref{lemnobigmax}: if the top curve is ever high, resampling shows that it is likely to be high at some certain fixed time; but, in the present case, the top curve at any given time has the Tracy-Widom (GUE) distribution. In this way, the upper tail in (\ref{eqtracywidom}) applies not only to each  $2^{1/2}\mathcal{L}_1(t) + t^2$ but also to the supremum of this function over a bounded interval of $t$-values.

For $s, t \in \R$, $s < t$,
set $\maxone_{s,t} = \sup_{s \leq r \leq t} 2^{1/2}\mathcal{L}_1(r)$.

Let $M >0$. For $i \in \N$,
set $V_{i,M} = \big\{ 2^{1/2}\mathcal{L}_1(i) + i^2 \geq -M \big\}$. For each $t \in \R$,
$2^{1/2}\mathcal{L}_1(t) + t^2$ has the Tracy-Widom (GUE) distribution; hence, there exists $c > 0$ such that, for each $i \in \N$,
\begin{equation}\label{eqprobvinew}
  \PP\big( V_{i,M}^c \big) \leq \exp \big\{ - c M^3 \big\}.
\end{equation}

Let $x \in \R$ and fix  $i \in \N$; take $i > 0$ for convenience.
 Let $\chi_i =  \chi_{i,x} = \inf \big\{ t \in [i,i+1]: 2^{1/2}\mathcal{L}_1(t) \geq x  \big\}$; if the infimum is taken over the empty-set, we set $\chi_i = \infty$. Of course, $\chi_i < \infty$ precisely when  $\maxone_{i,i+1} \geq x$.

On the event $\chi_i < \infty$, note that $(\chi_i,i+2)$ forms a stopping domain in the sense of Definition~\ref{defstopdom}.
By the strong Gibbs property (Lemma \ref{stronggibbslemma}), almost surely, if $\big\{ \chi_i < \infty \big\} \cap V_{i+2,M}$ occurs,
the conditional distribution of $2^{1/2}\mathcal{L}_1:[\chi_i,i+2] \to \R$, with respect to $\fext(1,\chi_i,i+2)$, is given by Brownian bridge (of diffusion coefficient 2) $B:[\chi_i,i+2] \to \R$, $B(\chi_i) = 2^{1/2}\mathcal{L}_1(\chi_i)$, $B(i+2) = 2^{1/2}\mathcal{L}_1(i + 2)$, conditioned to remain above the curve $2^{1/2}\mathcal{L}_2$.
If $\big\{ \chi_i < \infty \big\} \cap V_{i+2,M}$ occurs then $2^{1/2}\mathcal{L}_1(\chi_i) = x$ and $2^{1/2}\mathcal{L}_1(i+2) \geq -(i+2)^2 - M$, and hence the monotonicity Lemmas~\ref{monotonicity} and \ref{lemmonotonetwo} imply that the conditional distribution of $2^{1/2}\mathcal{L}_1$ stochastically dominates the distribution of a Brownian bridge on $[\chi_i,i+2]$ with endpoint values $x$ and $-(i+2)^2-M$. Noting that $i \leq \chi_i \leq i+1$ implies that $i+1$ lies to the left of the midpoint of the interval $[\chi_i,i+2]$, we see that such a Brownian bridge exceeds $(x - (i+2)^2 - M)/2$ at $i+1$ with probability at least $1/2$.
The conclusion of the argument which we have presented in this paragraph is that
\begin{equation}\label{eqmaxonenew}
 \tfrac{1}{2}  \PP \Big( \big\{ \maxone^N_{i,i+1} > x \big\} \cap V_{i + 2,M} \Big) \leq
    \PP \Big( 2^{1/2}\mathcal{L}_1(i+1) + (i+1)^2  > \tfrac{x - M - (i+2)^2}{2} + (i+1)^2   \Big).
\end{equation}
Now set $M = \tfrac{x + (i+2)^2}{2}$. We require that $M\geq 0$ in order to apply the tail bounds, so this translates into $x\geq -(i+2)^2$. Given this value of $M$,
\begin{equation*}
\tfrac{x- M -(i+2)^2}{2}  + (i+1)^2  = \tfrac{x}{4} -\tfrac{3}{4} (i+2)^2 + (i+1)^2,
\end{equation*}
and given that we have assumed $x\geq -\alpha (i+2)^2$ for some fixed  $\alpha\in (0,1)$, it follows that the above expression exceeds 0 for $i$ larger than some explicit constant $C(\alpha)>0$. Thus we can bound the right hand side in (\ref{eqmaxonenew}) by using (\ref{eqtracywidom}). Also using (\ref{eqprobvinew}) and the fact that $M$ is positive, we find that, for $x > -\alpha (i+2)^2$ and $i>C(\alpha)$,
\begin{equation}\label{eqmaxoneplusnew}
  \PP \Big( \maxone_{i,i+1} > x   \Big) \leq \exp \big\{ - c \big( \tfrac{x}{4} -\tfrac{3}{4} (i+2)^2 + (i+1)^2 \big)^{3/2} \big\}
 \, + \, \exp \big\{ - c \big( \tfrac{x + (i+2)^2}{2} \big)^3 \big\}.
\end{equation}

The identical bound for negative values of $i$ may be similarly obtained. Summing (\ref{eqmaxoneplusnew}) and its negative-$i$ counterpart over $i \in \N$,  $i \geq  \lfloor t \rfloor$,
yields the statement of the proposition with a decrease in the value of $c > 0$.
\end{proof}

The next result is immediate from Proposition \ref{propargmax}.
\begin{corollary}\label{corneglim}
We have that
$$
 \lim_{t \to -\infty} \mathcal{L}_1(t)  =  \lim_{t \to \infty} \mathcal{L}_1(t) = - \infty
$$
$\PP$-almost surely.
\end{corollary}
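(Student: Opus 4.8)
The plan is to obtain Corollary~\ref{corneglim} directly from Proposition~\ref{propargmax} by a Borel--Cantelli argument. First recall that, by the defining relation $\mathcal{L}_i(x) = 2^{-1/2}\big(\mathcal{A}_i(x) - x^2\big)$, one has $2^{1/2}\mathcal{L}_1(s) = \mathcal{A}_1(s) - s^2$; hence $\mathcal{L}_1(s) \to -\infty$ as $s \to \pm\infty$ precisely when $2^{1/2}\mathcal{L}_1(s) \to -\infty$, and it suffices to bound $\sup_{s \notin [-t,t]} 2^{1/2}\mathcal{L}_1(s)$ for large $t$ -- which is exactly the quantity estimated in Proposition~\ref{propargmax}.

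Concretely, I would fix $\alpha = 1/2$ and take $c > 0$ and $C(\alpha) > 0$ as in Proposition~\ref{propargmax}. For each integer $n \geq \max\big(C(\alpha),2\big)$ the choice $x = -n$ satisfies $x \geq -\alpha n^2$, so the proposition gives
\[
\PP\Big( \sup_{s \notin [-n,n]} 2^{1/2}\mathcal{L}_1(s) > -n \Big) \;\leq\; \exp\big\{ -c\,(n^2 - n)^{3/2} \big\}.
\]
Since the right-hand side is summable over $n \in \N$, the Borel--Cantelli lemma yields, almost surely, a finite (random) $n_0$ with $\sup_{s \notin [-n,n]} 2^{1/2}\mathcal{L}_1(s) \leq -n$ for every integer $n \geq n_0$. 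On this event, given any real $s$ with $|s| > n_0 + 1$, put $n = \lceil |s| \rceil - 1$, so that $n \geq n_0$ and $s \notin [-n,n]$; then $2^{1/2}\mathcal{L}_1(s) \leq -n \leq -(|s| - 1)$, i.e.\ $\mathcal{L}_1(s) \leq -2^{-1/2}(|s|-1)$. Letting $|s| \to \infty$ gives $\lim_{s \to \pm\infty}\mathcal{L}_1(s) = -\infty$, as claimed.

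I do not anticipate a genuine obstacle: the substantive work -- exploiting the strong Gibbs property and the monotonicity lemmas together with the Tracy--Widom upper tail in~(\ref{eqtracywidom}) to control the supremum of $2^{1/2}\mathcal{L}_1$ over an unbounded complement of an interval -- is already carried out in Proposition~\ref{propargmax}, and the corollary is a soft deduction from it. The only minor point is the passage from the discrete set of radii $n \in \N$ along which Borel--Cantelli is applied to arbitrary real arguments, which is immediate because $t \mapsto \sup_{s \notin [-t,t]} 2^{1/2}\mathcal{L}_1(s)$ is non-increasing; the choice of an integer $n < |s|$ above encodes exactly this. (If a quantitative rate is wanted, running the same argument with $x = -\alpha t^2$ in Proposition~\ref{propargmax} instead shows that, almost surely, $2^{1/2}\mathcal{L}_1(s) \leq -\tfrac12 (|s|-1)^2$ for all sufficiently large $|s|$.)
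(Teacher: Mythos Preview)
Your argument is correct and is precisely the kind of routine deduction the paper has in mind when it says the corollary is ``immediate from Proposition~\ref{propargmax}''; the paper gives no further details, and your Borel--Cantelli argument is a clean way to fill them in. One could also avoid Borel--Cantelli by noting that, for each fixed $M>0$, the event $\{\limsup_{|s|\to\infty}2^{1/2}\mathcal{L}_1(s)>-M\}$ is contained in $\{\sup_{s\notin[-t,t]}2^{1/2}\mathcal{L}_1(s)>-M\}$ for every $t$, so its probability is bounded by $\exp\{-c(t^2-M)^{3/2}\}$ for all large $t$ and hence vanishes; but this is a stylistic variation, not a different idea.
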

\begin{proof}[Proof of Proposition \ref{propjoh}.]
Write $\setm$ for the set at which $\mathcal{L}_1$ attains its maximum value.
By Corollary \ref{corneglim},
$\PP \big( \setm \cap [-t,t]^c = \emptyset \big) \to 1$ as $t \to \infty$.  Hence, it suffices to show that
the supremum of $\mathcal{L}_1$ on each $[-t,t]$ is attained at a unique point almost surely. This statement is easy to see for Brownian bridge $B:[-t,t] \to \R$ pinned at arbitrary entrance and exit locations. Hence, the required statement follows from Proposition \ref{propabscon}.
\end{proof}
Let $K \in \R$ denotes the $x$ value at which the supremum of $x \to \mathcal{L}_1(x)$ is attained.
\begin{corollary}\label{Ktailcor}
There exists $c > 0$ such that, for all $x > 0$,
$$
\mathbb{P} \big( \vert K \vert \geq x \big) \leq \exp \big\{ - c x^3 \big\}.
$$
\end{corollary}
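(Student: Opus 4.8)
The plan is to recognize this as essentially a repackaging of Proposition~\ref{propargmax}: the event $\{\vert K\vert \geq x\}$ forces the global maximum of $\mathcal{L}_1$ to be attained outside $[-x,x]$, and this — together with the fact that $\mathcal{L}_1(0)$ is very rarely far below $-x^2$ — is precisely the kind of event that Proposition~\ref{propargmax} controls, with the Tracy--Widom lower tail~(\ref{eqtracywidom}) handling the exceptional behaviour at the origin.

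First I would note that, by Corollary~\ref{corneglim} and the continuity of $\mathcal{L}_1$, the supremum of $\mathcal{L}_1$ over $\R$ is finite and, by the second assertion of Theorem~\ref{propjoh}, attained at the unique point $K$. On the event $\{\vert K\vert \geq x\}$ this maximizer lies outside $[-x,x]$, so $\sup_{s\notin[-x,x]}\mathcal{L}_1(s)\geq \sup_{s\in\R}\mathcal{L}_1(s)\geq \mathcal{L}_1(0)$, whence
\begin{equation*}
\PP\big(\vert K\vert\geq x\big)\leq \PP\Big(\sup_{s\notin[-x,x]}2^{1/2}\mathcal{L}_1(s)\geq 2^{1/2}\mathcal{L}_1(0)\Big).
\end{equation*}
Next I would split on the value of $2^{1/2}\mathcal{L}_1(0)=\mathcal{A}_1(0)$, which has the Tracy--Widom (GUE) law. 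If $2^{1/2}\mathcal{L}_1(0)<-x/4$, the lower tail in~(\ref{eqtracywidom}) (with $t=0$) gives probability at most $\exp\{-c(x/4)^3\}$. On the complementary event the displayed probability is at most $\PP\big(\sup_{s\notin[-x,x]}2^{1/2}\mathcal{L}_1(s)>-x/4-1\big)$; for $x$ larger than an absolute constant one has $-x/4-1\geq -\tfrac34 x^2$, so Proposition~\ref{propargmax} (applied with $\alpha=3/4$ and $t=x$, valid once also $x\geq C(3/4)$) bounds this by $\exp\{-c(x^2-x/4-1)^{3/2}\}$, which is at most $\exp\{-c'x^3\}$ once $x$ is large. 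Adding the two contributions yields $\PP(\vert K\vert\geq x)\leq \exp\{-c_1 x^3\}$ for all $x$ beyond some constant $x_0$.

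To upgrade this to the stated bound for every $x>0$, I would use that $\PP(\vert K\vert\geq x)\leq 1$ together with the fact, which follows from Proposition~\ref{propabscon} (local absolute continuity of $\mathcal{L}_1$ with respect to Brownian bridge on a fixed interval, whose argmax has a bounded density that is positive at interior points), that $\PP(\vert K\vert<x)\geq c_2 x$ for all sufficiently small $x>0$; since $e^{-t}\geq 1-t$, this gives $\PP(\vert K\vert\geq x)\leq 1-c_2 x\leq e^{-c_2 x}\leq e^{-c_1 x^3}$ for small $x$ after decreasing $c_1$, and the remaining compact range of $x$-values is absorbed by one further decrease of $c_1$, using that $\PP(\vert K\vert\geq x)$ is bounded away from $1$ there.

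I do not expect a genuine obstacle in this argument — the corollary is a direct consequence of Proposition~\ref{propargmax}. The only points requiring a little care are the bookkeeping that converts the comparison with the \emph{random} quantity $2^{1/2}\mathcal{L}_1(0)$ into a deterministic threshold (done via the split on $\{2^{1/2}\mathcal{L}_1(0)<-x/4\}$), and, if one insists on the clean statement valid for all $x>0$ rather than merely for large $x$, the elementary positive-density estimate for $K$ near the origin.
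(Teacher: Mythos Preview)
Your argument is correct and follows the paper's approach --- combining Proposition~\ref{propargmax} with the Tracy--Widom lower tail~(\ref{eqtracywidom}) --- the only real difference being your choice of threshold $-x/4$ in place of the paper's cleaner $-x^2/2$ (which makes both contributions immediately of order $\exp\{-cx^3\}$ or better). Your handling of the small-$x$ range is in fact more explicit than the paper's one-line ``a decrease in $c$ suffices'', though the claim $\PP(\vert K\vert<x)\geq c_2 x$ via Proposition~\ref{propabscon} would, strictly speaking, need one more line linking the argmax on a fixed interval to the global $K$.
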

\begin{proof}
Note that the occurrence of $\sup_{t \not\in [-x,x]} 2^{1/2}\mathcal{L}_1(t) \leq - x^2/2$ and
 $2^{1/2}\mathcal{L}_1(0) > -x^2/2$ are sufficient to ensure that $\vert K \vert \leq x$.
However, each of these events has probability at least $1 - \exp \big\{ - c x^3 \big\}$: in the first case and for $x$ sufficiently high, this follows from Proposition \ref{propargmax}; in the second, from the lower bound in (\ref{eqtracywidom}). A decrease in $c > 0$ renders the desired inequality valid for all $x > 0$.
\end{proof}

\subsection{Law of the transversal fluctuations of a directed polymer}\label{transec}
We now explain the context and significance of Theorem \ref{propjoh}. In order to do this, we introduce directed polymers, last passage percolation and the PNG model \cite{PS,KJPNG}. For simplicity, we focus only on a certain lattice-based version of these models.

A {\it directed polymer in a random environment} \cite{HH,FH,HHZ,Holl} is a (quenched) probability measure $\PP^n_{\beta}$ on nearest-neighbor random walks $x(\cdot)$ of length $n$ with $x(0)=0$ (we call this set $\Pi_n$):
$$\PP^n_\beta(x(\cdot)) = \frac{1}{Z^n_{\beta}}e^{\beta T(x(\cdot))}\PP^n(x(\cdot))$$
where $\PP^n(x(\cdot))= 2^{-n}$ is the uniform measure on $\Pi_n$, $\beta\geq 0$ is the inverse temperature, and for a given path $x(\cdot)$,
$$T(x(\cdot)) = \sum_{i=1}^{n} w(i,x(i))$$ with $w(i,j)$ i.i.d. random variables. The functional $T$ models the passage time of the path $x(\cdot)$ through a random environment, or, think of $-w(i,j)$ as a potential then and $-T(x(\cdot))$ is the total potential energy of a path. The factor $\exp(\beta T(x(\cdot)))$ is known as a Boltzmann weight, and the necessary normalization $Z^n_{\beta}$ is the partition function.

Taking $\beta$ to infinity, $\PP^n_{\infty}$ concentrates all of its probability measure on the path $x(\cdot)$ with minimal energy or maximal $T(x(\cdot))$. This is called the last passage path and the resulting model is called {\it last passage percolation}.

Two exponents describe the energy and path properties of the directed polymer. Defining the free energy by  $F^{n}_{\beta}=\beta^{-1}\log Z^n_{\beta}$, the exponent $\chi$ represents the size of the free energy fluctuations, i.e., $\var(F^{n}_{\beta}) \approx n^{2\chi}$; the exponent $\xi$ represents the size of transversal fluctuations of the path $x(\cdot)$ away from the line $x(\cdot)\equiv 0$, i.e., $\max(|x(\cdot)|)\approx n^{\xi}$. It has long been predicted in the physics literature that for a wide class of weights (having sufficient moments),
$\chi=1/3$ and $\xi=2/3$, and in particular that $\chi = 2\xi -1$. The belief that the exponents, and also the limiting statistics for these fluctuations, are independent (up to certain centering and scaling) of temperature and of weight distribution is termed KPZ universality, after Kardar, Parisi and Zhang \cite{KPZ} who used earlier (highly non-rigorous) calculations of Forster, Nelson, and Stephen \cite{FNS} to predict the exponents $1/3$ and $2/3$; (they did not predict the limiting statistics associated with these fluctuations -- see the review \cite{CorwinReview} for more recent works).

The first set of mathematical steps towards proving KPZ university was made regarding the zero-temperature ($\beta=\infty$) polymer with specific weight distributions. Following \cite{KJPNG}, fix $w(i,j)$ to be geometric with parameter $q$ (so that $\PP[w(i,j)=m] = (1-q)q^m$) and consider the random variable
$$L(n,y)=\max_{x\in \Pi_n:x(0)=0,x(n)=y} T(x(\cdot))$$
which represents the point-to-point last passage time (or ground-state energy) over paths $x(\cdot)$ in $\Pi_n$ which are pinned to end at $x(n)=y$. Define the random process $t\mapsto H_n(t)$ by linearly interpolating the values set by the relation
$$L(n,y) = c_1 n + c_2 n^{1/3} H_n(c_3 y n^{-2/3}),$$
where
$$
c_1=\frac{2\sqrt{q}}{1-\sqrt{q}},\qquad c_2 =\frac{(\sqrt{q})^{1/3}(1+\sqrt{q})^{1/3}}{1-q},\qquad c_3 = c_2 \frac{1-\sqrt{q}}{1+\sqrt{q}}.
$$
Johansson then showed that, for any fixed $T$, the random function $H_N(t):[-T,T]\rightarrow \R$ converges weakly (as a probability measure on $C([-T,T],\R)$ under the uniform topology) to the random function
\begin{equation}\label{Hstarstar}
t\mapsto H(t)=\mathcal{A}_1(t)-t^2,
\end{equation}
where $\mathcal{A}_1$ is the top line of the Airy line ensemble that we have defined in this paper. (Johansson showed that this top line was a well-defined continuous function, but did not treat other lines of the ensemble.) The Airy process had been introduced a year or so earlier by Pr\"{a}hofer and Spohn (see also the independent and parallel work of Okounkov and Reshetikhin \cite{OkResh}) who considered a related model, for which they proved convergence of finite-dimensional distributions. These results also extended the seminal works of Baik, Deift and Johansson \cite{BDJ} and Johansson \cite{KJ} which only dealt with one-point convergence.

The upshot of this work is that (at least for $\beta=\infty$ and geometric weights) the point-to-point free energy of a directed polymer fluctuates on the scale $n^{\chi}$ for $\chi=1/3$, and has a non-trivial transversal correlation $n^{\xi}$ for $\xi=2/3$ -- and moreover that this entire correlation structure can be described in terms of the Airy process. This picture is believed to hold for all $\beta>0$ and for all distributions with sufficiently many moments. This has only been proved (at the level of the exponent) for one type of distribution \cite{S} or for $\beta \to 0$ as $n \to \infty$  \cite{ACQ,AKQ} in which case one encounters the so-called {\it continuum directed polymer} and the related KPZ stochastic PDE (see also \cite{SaSp}).

Returning to the original problem of the directed polymer (with unconstrained endpoint), define
$$K_n = \inf\big\{u: \sup_{t\leq u}H_n(t)= \sup_{\R}H_n(t)\big\},$$
which is in essence the location of the maximizing path up to time $n$ (the infimum and supremum are due to the discreteness of our weights). Likewise, define $K \in \R$ according to
\begin{equation}\label{defnk}
H(K)= \sup_{t \in \R } H(t),
\end{equation}
where $H(t)$ is as in (\ref{Hstarstar}). This is well-defined in light of Theorem \ref{propjoh}.

Johansson was interested in showing that the limit of the law of $K_n$ would coincide with the law of $K$ (a result that he phrased a little differently, because it was not proven that $K$ was well defined). This convergence implies that the law of the endpoint of the directed polymer (at least at zero temperature) converges to the law of the point at which the maximum of the Airy process minus a parabola is achieved. In this direction, he showed that $\{K_n\}_{n=1}^{\infty}$ is a tight sequence and formulated a theorem which states that, under Conjecture 1.5 of \cite{KJPNG} (which is now proved as Theorem \ref{propjoh}), this convergence indeed holds. Thus, we have shown the next theorem.
\begin{theorem}\label{maxdist}
The random variables $K_n$ converge to $K$ in distribution as $n \to \infty$.
\end{theorem}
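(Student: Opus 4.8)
The plan is to deduce this from three facts already available: Johansson's weak convergence $H_n \Rightarrow H$ in $C([-T,T],\R)$, Johansson's tightness of the sequence $\{K_n\}_{n\geq 1}$, and the almost sure uniqueness of the maximizer of $H$ on each compact interval furnished by Theorem~\ref{propjoh} (together with the decay $H(t) = 2^{1/2}\mathcal{L}_1(t) \to -\infty$ as $t \to \pm\infty$ recorded in Corollary~\ref{corneglim}). The mechanism that ties these together is a continuous-mapping argument for the $\mathrm{argmax}$ functional, with Theorem~\ref{propjoh} supplying exactly the hypothesis under which that functional is continuous.

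First I would isolate the deterministic input: if $f_n \to f$ uniformly on a compact interval $I$, if $f$ is continuous, and if $f$ attains its supremum over $I$ at a unique point $t^\ast$, then for every sequence $t_n \in I$ with $f_n(t_n) \geq \sup_I f_n - o(1)$ one has $t_n \to t^\ast$; in particular the leftmost maximizer of $f_n$ on $I$ converges to $t^\ast$. I would then apply the Skorokhod representation theorem to $H_n \Rightarrow H$ on $C([-T,T],\R)$, invoke Theorem~\ref{propjoh} to guarantee that $H$ has a unique maximizer $K^{(T)}$ on $[-T,T]$ almost surely, and run the deterministic statement path by path, obtaining $K_n^{(T)} \Rightarrow K^{(T)}$ as $n \to \infty$, where $K_n^{(T)}$ denotes the leftmost maximizer of $H_n$ on $[-T,T]$. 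By Corollary~\ref{corneglim} the global maximizer $K$ of $H$ lies in $(-T,T)$ with probability tending to $1$ as $T \to \infty$, and on that event $K^{(T)} = K$.

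It then remains to pass from $K_n^{(T)}$ to the true argmax $K_n$, which is taken over the entire domain of $H_n$ (a domain that grows with $n$). This is exactly where tightness of $\{K_n\}$ enters. Given $\epsilon > 0$, I would choose $M$ so large that $\PP(|K_n| > M) < \epsilon$ for all $n$ and also $\PP(|K| > M) < \epsilon$; then, for any $T > M$, on the event $\{|K_n| \leq M\}$ the global maximum of $H_n$ is already attained inside $[-T,T]$, so $K_n = K_n^{(T)}$ there. A standard three-term bound for $|\EE g(K_n) - \EE g(K)|$ with $g$ bounded and continuous — one term controlled by $\{|K_n|>M\}$, one by $\{|K|>M\}$, and the middle term equal to $|\EE g(K_n^{(T)}) - \EE g(K^{(T)})| + o(1) \to 0$ — then yields $K_n \Rightarrow K$ since $\epsilon>0$ is arbitrary.

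The main obstacle, and the reason the statement was open before this paper, is that the $\mathrm{argmax}$ map is not continuous on $C([-T,T],\R)$: its continuity at a function $f$ requires $f$ to have a unique maximizer. Supplying this property for $H = \mathcal{A}_1 - t^2$ is precisely Theorem~\ref{propjoh}, which itself rests on the local Brownian absolute continuity of Proposition~\ref{propabscon} and hence on the Brownian Gibbs property, so the entire apparatus of the paper is being used here. The secondary difficulty — that the maximizer might escape toward the moving edge of the domain of $H_n$ — is absorbed by Johansson's tightness of $\{K_n\}$; absent a direct citation, one could reprove that tightness from the parabolic curvature of the last passage shape, mimicking at the discrete level the resampling-and-Tracy--Widom argument used to prove Proposition~\ref{propargmax} and Corollary~\ref{corneglim}.
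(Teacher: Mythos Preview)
Your proposal is correct and follows essentially the same route as the paper. The paper's own proof is a one-line appeal to Johansson: he had already formulated and proved (in \cite{KJPNG}) the conditional statement that $K_n \Rightarrow K$ \emph{provided} the maximizer of $H$ is almost surely unique, and he had also established tightness of $\{K_n\}$; the paper simply notes that Theorem~\ref{propjoh} supplies the missing hypothesis. You have reconstructed the argument behind Johansson's conditional theorem --- the argmax continuous-mapping step on compacts, combined with tightness to control escape to infinity --- rather than citing it, but the content is the same.
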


Theorem \ref{maxdist} shows that the polymer endpoint fluctuates in the transversal direction on the scale of $n^{2/3}$ (i.e. $\xi=2/3$) and has a limiting location determined by the location of the maximum of the Airy process minus a parabola. This theorem holds also for lower lines in the Airy line ensemble (so that each curve has a unique maximizing point after a parabola is subtracted). The interpretation of this result in terms of polymers is slightly more obtuse. Johansson considered a multi-line PNG line ensemble (as introduced in \cite{PS}). The second line actually records information about the last passage time for two polymer paths which cannot touch (and the $k^{\rm{th}}$ line corresponds to $k$ such paths). Thus one sees that these multi-path polymers have endpoints whose law can be described in terms of the Airy line ensemble.

Given that $K$ is a well-defined random variable, one might ask what its distribution is, or, for that matter, what is the joint distribution of $(K,H(K))$. From Corollary \ref{Ktailcor} one knows that as $x$ grows, the probability $|K|>x$ decays at least as fast as $\exp\{-c x^3\}$ for some constant $c>0$. The study of the distribution of $(K,H(K))$ has received a lot of attention -- in particular with a number of new results coming after the present paper was posted. For $N$ non-intersecting Brownian bridges (as well as some of the other variants discussed earlier), the question had previously been studied in physics \cite{RS1,RS2,feierl2,FMS} and more recently in \cite{Sch} and mathematically in \cite{Lie}. A direct approach to computing this joint distribution for the Airy line ensemble \cite{MQR} relies on taking Fr{\'e}chet derivatives of the continuum statistics formulas for the top line of the Airy line ensemble (as developed in \cite{CQR}) and also uses the Brownian absolute continuity results provided in this paper in Proposition \ref{propabscon}. These approaches yield different formulas which are shown to coincide in \cite{BLS}. These exact formulas have led to exact values of the tail decay constant $c=4/3$.

\subsection{Positive association and the Adler-Moerbeke conjecture}

Theorem 1.6 of \cite{AvM} computes the large time asymptotics for the joint distribution of the Airy process (to fourth order). As the authors mention, the theorem was contingent upon a claimed, though not proved result -- here stated as Corollary~\ref{propadmoconj}. In fact, a complete proof of this theorem was anyway furnished shortly afterwards by Widom \cite{W} using Fredholm determinants;  more recently, it has been extended to higher-order precision in \cite{ST}.

\begin{corollary}[Positive association]\label{propposass}
Let $T > 0$ and $x \in \R$. The conditional distribution of $\mathcal{A}_1(T)$ given that $A_1(0) \geq x$ stochastically dominates the unconditioned distribution of $\mathcal{A}_1(T)$.
\end{corollary}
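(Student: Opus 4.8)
The plan is to derive the statement from a positive-association (FKG) property of the Airy line ensemble. Fix $T'>T$; it suffices to show that $\mathcal{A}_1\big\vert_{[-T',T']}$ is \emph{positively associated} in the pointwise partial order, i.e.\ that $\EE\big[\phi(\mathcal{A}_1)\psi(\mathcal{A}_1)\big]\geq\EE\big[\phi(\mathcal{A}_1)\big]\EE\big[\psi(\mathcal{A}_1)\big]$ for all bounded functionals $\phi,\psi$ of $\mathcal{A}_1\big\vert_{[-T',T']}$ that are nondecreasing under pointwise increase of the curve. Indeed, $\{\mathcal{A}_1(0)\geq x\}$ and $\{\mathcal{A}_1(T)>y\}$ are both increasing events, so positive association together with $\PP(\mathcal{A}_1(0)\geq x)>0$ (for instance from the Tracy--Widom tail bounds~(\ref{eqtracywidom})) yields $\PP\big(\mathcal{A}_1(T)>y\mid\mathcal{A}_1(0)\geq x\big)\geq\PP\big(\mathcal{A}_1(T)>y\big)$ for every $y\in\R$, which is exactly the asserted stochastic domination.

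To prove the positive association I would descend to the finite approximants. By Theorem~\ref{mainthm}, $\mathcal{A}_1\big\vert_{[-T',T']}$ (which, after the affine relation to $\mathcal{L}_1$, is a coordinatewise-monotone image of $\mathcal{L}_1$) is a monotone image of the weak limit of the top line of $\mathcal{D}^N$ on $[-T',T']$; since positive association is preserved under weak limits — pass to the limit in $\EE[\phi\psi]\geq\EE[\phi]\EE[\psi]$ for bounded continuous increasing $\phi,\psi$, approximating the relevant increasing events from inside by open increasing sets — and since $\mathcal{D}^N$ is obtained from the non-intersecting Brownian bridge ensemble $B^N$ of Definition~\ref{Dn} by applying to each line a deterministic increasing transformation (an affine rescaling of values composed with a rescaling of time) and then selecting the top line over a sub-window, all coordinatewise-monotone operations, it is enough to prove that $B^N$ is positively associated as an ordered $N$-tuple of curves. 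Now $B^N$ is the $\e\downarrow0$ weak limit of the ensemble $\tilde B^N_\e$ of $N$ independent Brownian bridges on $[-N,N]$ with the \emph{deterministic} endpoints $-i\e$ conditioned on non-crossing, so it suffices to prove positive association of $\tilde B^N_\e$. For this I would approximate $\tilde B^N_\e$ by the law $\mathbb{Q}^{N,m}$ obtained by imposing the non-crossing constraint only on a finite grid $-N=t_0<t_1<\cdots<t_m=N$; as the mesh shrinks, $\mathbb{Q}^{N,m}$ converges in total variation to the law of $\tilde B^N_\e$, the densities $\mathbf{1}\{\text{non-crossing on the grid}\}$ against the product-of-bridges measure decreasing to $\mathbf{1}\{\text{strict non-crossing}\}$ with convergent normalising constants (Corollary~\ref{cor:touch} identifying the limiting event). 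Restricted to the free grid values $\{\tilde B^N_i(t_j):1\leq i\leq N,\ 1\leq j\leq m-1\}$, the law $\mathbb{Q}^{N,m}$ has a density on Euclidean space with respect to Lebesgue measure equal to a product of Gaussian transition kernels times the indicator of the (grid) Weyl chamber. Each Gaussian factor $\exp\big\{-(u-v)^2/2\tau\big\}$ is log-supermodular in its pair of arguments (the mixed second partial of $-(u-v)^2$ is positive), a product of log-supermodular functions is log-supermodular, and the Weyl chamber is a sublattice of Euclidean space — coordinatewise maxima and minima of two ordered configurations are again ordered — so its indicator is log-supermodular too. Hence the density is multivariate totally positive of order two, and the Karlin--Rinott form of the FKG inequality shows $\mathbb{Q}^{N,m}$ is positively associated. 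Letting the mesh tend to zero, then $\e\downarrow0$, then $N\to\infty$, and composing with the monotone maps described above, gives positive association of $\mathcal{A}_1$ and hence the corollary.

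The main obstacle is the middle step: proving positive association of the finite non-intersecting Brownian bridge ensemble and transporting it through the three limits (grid refinement, $\e\downarrow0$, $N\to\infty$). The crucial structural point is that the non-crossing constraint, being the indicator of a sublattice, is itself log-supermodular and therefore compatible with the log-supermodular (Gaussian) reference kernels; the limits then cause no trouble since association is closed under weak convergence and preserved by coordinatewise-monotone maps. (Alternatively one might try to obtain positive association of $B^N$ from the monotonicity Lemmas~\ref{monotonicity} and~\ref{lemmonotonetwo} via the monotonicity of the line-by-line Gibbs resampling dynamics for which $B^N$ is stationary, but the route through log-supermodular densities is the most self-contained.)
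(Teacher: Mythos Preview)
Your argument is correct and complete in outline, but it takes a genuinely different route from the paper's. The paper works directly with the specific conditioning event: it observes that for the finite edge-scaled Dyson ensemble $\mathcal{D}^N$, the law conditioned on $\{\mathcal{D}^N_1(0)\geq x\}$ stochastically dominates the unconditioned law --- a fact it does not prove in full but attributes to the same Glauber-dynamics coupling that underlies Lemmas~\ref{monotonicity} and~\ref{lemmonotonetwo} --- and then transports this single domination through the weak limit using the almost-sure coupling built in the proof of Proposition~\ref{propBrownianGibbsH}. You instead establish the strictly stronger statement that the finite non-intersecting bridge ensemble (hence its limit) is \emph{fully positively associated}, by checking the Holley/FKG lattice condition on a grid discretisation: the Gaussian transition kernels are log-supermodular and the Weyl-chamber constraint is the indicator of a sublattice, so the grid law is MTP$_2$ and Karlin--Rinott applies; the limits (grid $\to$ continuous, $\e\downarrow 0$, $N\to\infty$) and the monotone map to $\mathcal{A}_1$ then preserve association. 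Your approach is more systematic and gives more (association for arbitrary pairs of increasing functionals, not just the two events at hand), at the cost of importing the FKG machinery; the paper's approach stays within the dynamic-coupling toolkit already built, but leaves the key monotonicity step as an exercise. At bottom both are exploiting the same structural fact --- monotonicity of the Gibbs resampling --- one statically through the density and one dynamically through the Markov chain.
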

\begin{proof}
Recall from Definition \ref{Dn} the edge-scaled Dyson line ensembles $\mathcal{D}^N_i:[-N^{1/3},N^{1/3}] \to \R$, $1 \leq i \leq N$. Let $x > 0$, and write $E_N(x)$ for the event that
$\mathcal{D}^N_1(0) \geq x$.

We claim that $\mathcal{D}^N$, $1 \leq i \leq N$, given $E_N(x)$, converges weakly as a line ensemble to $\mathcal{L}$ given the event that $\mathcal{L}_1(0) \geq x$.  To verify this, recall from the proof of Proposition~\ref{propBrownianGibbsH} that a simultaneous construction of these line ensembles may be made so that, for each $k \in \N$, $\mathcal{D}^N_k$ converges to $\mathcal{L}_k$ locally uniformly.
Note that $\PP\big( E_N(x) \big)$ has limit $\PP \big( \mathcal{L}_1(0) \geq x  \big)$ whose value is given by the Tracy-Widom (GUE) distribution. Therefore, for $N$ large enough, we know that the event $E_N(x)$ on which we are conditioning has strictly positive probability (with a non-zero limit). This means we may couple the conditioned systems as $N$ variables so as to enjoy the same local uniform convergence of the lines. This in turn implies the weak convergence of the entire ensemble and verifies the claim made at the start of this paragraph.

Note that  $\mathcal{D}^N$ given $E_n(x)$ stochastically dominates $\mathcal{D}^N$ without conditioning. This is not a consequence of the monotonicity lemmas we have proved, but can easily be proved via the same method as those lemmas (see Section \ref{proofslemmas}). We forgo repeating the proof here.

The coupling of conditioned systems shows that $\mathcal{L}$ given $\mathcal{L}_1(0) \geq x$ inherits this stochastic domination property. Hence, after putting this back into a statement about $\mathcal{A}_1$, the corollary follows.
\end{proof}

We obtain the next result as an immediate consequence of Corollary \ref{propposass} by noting that the positive association implies $\PP\Big(\mathcal{A}_1(t)>a \Big \vert \mathcal{A}_1(0)<b\Big) \leq \PP\big(\mathcal{A}_1(T)>a\big).$
\begin{corollary}[Conjecture 1.21 of \cite{AvM}]\label{propadmoconj}
For any $T > 0$,
$$
 \lim_{M \to \infty}  \PP \Big( \mathcal{A}_1(T) > M  \Big\vert  \mathcal{A}_1(0) < - M \Big) = 0
$$
almost surely.
\end{corollary}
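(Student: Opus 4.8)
The plan is to obtain Corollary~\ref{propadmoconj} as an immediate consequence of the positive association statement of Corollary~\ref{propposass}, along the lines indicated in the remark preceding it. The key intermediate fact I would isolate is that, for any $a,b \in \R$ such that both $\PP\big(\mathcal{A}_1(0) \geq b\big)>0$ and $\PP\big(\mathcal{A}_1(0) < b\big)>0$,
\begin{equation*}
\PP\big(\mathcal{A}_1(T) > a \,\big\vert\, \mathcal{A}_1(0) < b\big) \leq \PP\big(\mathcal{A}_1(T) > a\big).
\end{equation*}
In words: conditioning the top line at time $0$ to be \emph{low} can only decrease the chance that the top line at time $T$ is high. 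This is the exact complement of Corollary~\ref{propposass}, which asserts that conditioning $\mathcal{A}_1(0)$ to be large stochastically increases $\mathcal{A}_1(T)$.

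To prove the displayed inequality I would simply decompose over the value of $\mathcal{A}_1(0)$. Writing $p = \PP\big(\mathcal{A}_1(0) \geq b\big) \in (0,1)$, the law of total probability gives
\begin{equation*}
\PP\big(\mathcal{A}_1(T) > a\big) = p\,\PP\big(\mathcal{A}_1(T) > a \,\big\vert\, \mathcal{A}_1(0) \geq b\big) + (1-p)\,\PP\big(\mathcal{A}_1(T) > a \,\big\vert\, \mathcal{A}_1(0) < b\big).
\end{equation*}
By Corollary~\ref{propposass}, $\PP\big(\mathcal{A}_1(T) > a \,\big\vert\, \mathcal{A}_1(0) \geq b\big) \geq \PP\big(\mathcal{A}_1(T) > a\big)$; substituting this lower bound into the first term on the right and solving for the last term yields exactly the claimed inequality (here one uses $p<1$, i.e. that $\{\mathcal{A}_1(0)<b\}$ has positive probability).

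Finally I would specialize to $a = M$ and $b = -M$. The one-point marginal $\mathcal{A}_1(0)$ has the Tracy--Widom (GUE) law, which is supported on all of $\R$, so both $\{\mathcal{A}_1(0) \geq -M\}$ and $\{\mathcal{A}_1(0) < -M\}$ have strictly positive probability for every $M$, and the conditioning in the statement is legitimate. The inequality above then reads $\PP\big(\mathcal{A}_1(T) > M \,\big\vert\, \mathcal{A}_1(0) < -M\big) \leq \PP\big(\mathcal{A}_1(T) > M\big)$, and the right-hand side tends to $0$ as $M \to \infty$ because $\mathcal{A}_1(T)$ is an almost surely finite random variable; indeed, recalling $\mathcal{A}_1(T) = 2^{1/2}\mathcal{L}_1(T) + T^2$ and the upper-tail bound in~(\ref{eqtracywidom}), one has $\PP\big(\mathcal{A}_1(T) > M\big) \leq \exp\{-cM^{3/2}\}$ for $M>0$. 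This proves the corollary; the quantity inside the limit is deterministic, so the qualifier "almost surely" is automatic. There is no substantive obstacle in this argument — the only points needing a little care are the direction of the stochastic domination (handled by the total-probability decomposition) and the positivity of the conditioning events (handled by full support of the Tracy--Widom law).
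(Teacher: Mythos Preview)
Your proof is correct and follows essentially the same approach as the paper: both derive the inequality $\PP\big(\mathcal{A}_1(T)>a \,\big\vert\, \mathcal{A}_1(0)<b\big) \leq \PP\big(\mathcal{A}_1(T)>a\big)$ from the positive association of Corollary~\ref{propposass} and then let $M\to\infty$. You have supplied more detail (the explicit total-probability decomposition and the verification that the conditioning events have positive probability), but the argument is the same.
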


\section{Uniform bounds on acceptance probabilities, \\ minimal line gaps and maximal height}\label{technicalSec}
This section is devoted to the proof of Proposition \ref{propacceptprob}. Let us briefly outline how we proceed.

The claim of equation (\ref{maxmineqn}) is the easy part of the proof. In order to prove the bound of equation (\ref{maxmineqn}), we prove Lemmas \ref{lemnobigmax} and \ref{lemnobigmin}.

We then turn to the claims of equations (\ref{APpropeqn}) and (\ref{GAPpropeqn}) which are much more substantial. The proof strategy begins by noting that, in light of the uniform control provided by equation (\ref{maxmineqn}) on the $(k+1)$-st curve on $[-T,T]$, with high probability the convex majorant of this curve may have high derivative only on short intervals close to the endpoints $-T$ and $T$. Restricting attention to a slightly shorter domain, we can be assured that with high probability this concave majorant does not have a large slope; however, the two $k$-decreasing lists which describe the entrance and exit data of the top $k$ curves at the ends of this new interval could be very poorly behaved, with clustering of adjacent points, for example. In Proposition~\ref{propconcave}, we will show how the mutual avoidance constraint causes this entrance and exit data to improve under a further slight shortening of the domain on which the curves are considered. With well-spaced boundary data and a well-behaved concave majorant for the lower curve boundary data, we are then able to obtain the regularity claimed by equations (\ref{APpropeqn}) and (\ref{GAPpropeqn}).

As an aside, we mention that the effect identified by Proposition~\ref{propconcave} -- that in a system of mutually avoiding processes, low quality boundary data is rare because it entails that the avoidance condition is likely to be quickly violated -- is vaguely reminiscent of the separation of arms in planar critical percolation (see the appendix of \cite{GPS}) and separation of level set contours in two-dimensional random surfaces (see Section 3.5 of \cite{schrammsheffield}).

In the proofs in this section, three devices to which we will often  appeal are (i) the strong Gibbs property (Lemma \ref{stronggibbslemma}), to argue that we may consider random intervals whose size is measurable with respect to the external sigma-fields (called stopping domains) as if there were deterministic; (ii) monotone couplings of non-intersecting Brownian bridge ensembles (Lemmas~\ref{monotonicity} and \ref{lemmonotonetwo}), to understand the behavior of the full $N$ line ensembles in terms of that of $k$ line ensembles; and (iii) the Brownian Gibbs property, to translate regularity of boundary data into regularity of the internal collection of lines.

\subsection{Bounds on the maximal height}
The proof of the following lemma already illustrates in a small way the guiding theme of the paper: the Brownian Gibbs property is not merely an expression for the conditional distributions of the line ensemble; rather, the property offers a valuable probabilistic resampling technique by which to prove regularity of
line ensembles which enjoy it. The method of proof is similar to Proposition \ref{propargmax}'s, which we encountered earlier. In this section, we will often suppress the superscript $N$ in the index of the line ensemble.

\begin{lemma}\label{lemnobigmax}
Fix $k\geq 1$ and $T>0$. Consider a sequence of line ensembles $\{\mathcal{L}^N\}_{N=1}^{\infty}$ satisfying Hypothesis $(H')_{k,T}$. Write $\maxdyson^N(i,T) = \sup_{s \in [-T,T]} \mathcal{L}^N_i(s)$.  Then for all $\e>0$, there exists an $x > 0$ and $N_0>0$ such that, for all $N \geq N_0$ and $i\in \{1,\ldots, k\}$,
\begin{equation}\label{maxdysonprob}
\PP^N \Big(\maxdyson^N(i,T) > x \Big) \leq \e.
\end{equation}
\end{lemma}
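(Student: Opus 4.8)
The plan is to adapt the resampling scheme used in the proof of Proposition~\ref{propargmax}, with the one-point control furnished by Hypothesis $(H2')_{k,T}$ replacing the Tracy--Widom tail bound that was available there. First, since $\mathcal{L}^N$ is non-intersecting (Hypothesis $(H1)_{k,T}$) we have $\mathcal{L}^N_i(s)<\mathcal{L}^N_1(s)$ for all $s$, so $\maxdyson^N(i,T)\leq\maxdyson^N(1,T)$ for every $1\leq i\leq k$; it therefore suffices to treat $i=1$, and we may also restrict to $N$ large enough that $k_N\geq 2$ (so $\mathcal{L}^N_2$ is defined) and $T_N>T$. I would then partition $[-T,T]$ into a constant number $m=m(T)\geq 5$ of sub-intervals $[\alpha_j,\alpha_{j+1}]$ of equal length $\tfrac{2T}{m}$ and attach to each index $j$ a ``buffer'' sub-interval of $[-T,T]$ containing $[\alpha_j,\alpha_{j+1}]$ together with a ``target time'' $\tau_j\in[-T,T]$: for $2\leq j\leq m-1$ the buffer is $[\alpha_{j-1},\alpha_{j+2}]$ with $\tau_j=\alpha_{j+1}$, for $j=1$ it is $[\alpha_1,\alpha_4]$ with $\tau_1=\alpha_2$, and for $j=m$ it is $[\alpha_{m-3},\alpha_{m+1}]$ with $\tau_m=\alpha_{m-1}$. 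The point of the buffers near the ends of $[-T,T]$ is that both their endpoints and all target times $\tau_j$ stay inside $[-T,T]$, which is where Hypothesis $(H2')_{k,T}$ supplies tightness of the one-point marginals $\mathcal{L}^N_1(\,\cdot\,)$.

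For the per-piece estimate, fix $x>0$, $M>0$, and for $1\leq j\leq m-1$ let $\chi_j=\inf\{s\in[\alpha_j,\alpha_{j+1}]:\mathcal{L}^N_1(s)\geq x\}$ while for $j=m$ let $\chi_m=\sup\{s\in[\alpha_m,\alpha_{m+1}]:\mathcal{L}^N_1(s)\geq x\}$; write $A_j$ for the event that the set in question is non-empty, so that $A_j=\{\sup_{[\alpha_j,\alpha_{j+1}]}\mathcal{L}^N_1\geq x\}$ and $\{\maxdyson^N(1,T)>x\}\subseteq\bigcup_{j=1}^m A_j$. Let $b_j$ denote the endpoint of the $j$-th buffer lying on the far side of $[\alpha_j,\alpha_{j+1}]$ from the $\chi_j$-end, and let $V_j=\{\mathcal{L}^N_1(b_j)\geq -M\}$. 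I would check that, on $A_j$, the pair with endpoints $\chi_j$ and $b_j$ is a stopping domain for the line $\mathcal{L}^N_1$ in the sense of Definition~\ref{defstopdom} (the $\chi_j$-end being a first, resp.\ last, passage time, determined by $\mathcal{L}^N_1$ outside the domain, and $b_j$ being deterministic), that $\tau_j$ lies strictly between its endpoints, and that the interpolation fraction $\theta_j$ of $\tau_j$ within this domain satisfies $\theta_j\leq\tfrac12$ when $\chi_j$ is the left endpoint ($1\leq j\leq m-1$) and $\theta_j\geq\tfrac12$ when $\chi_j$ is the right endpoint ($j=m$). Then the strong Brownian Gibbs property (Lemma~\ref{stronggibbslemma}) identifies the conditional law of $\mathcal{L}^N_1$ on this domain, given the external data $\fext(1,\cdot,\cdot)$, as a Brownian bridge between $\mathcal{L}^N_1(\chi_j)$ and $\mathcal{L}^N_1(b_j)$ conditioned to stay above $\mathcal{L}^N_2$, which by the monotonicity Lemma~\ref{monotonicity} stochastically dominates the corresponding unconditioned bridge. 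The unconditioned bridge is Gaussian at the fixed time $\tau_j$ with mean equal to the linear interpolation of its endpoint values there, and on $A_j\cap V_j$ (where these values are $\geq x$ and $\geq -M$) the sign of $\theta_j-\tfrac12$ forces this mean to be at least $\tfrac{x-M}{2}$ as soon as $x\geq -M$; hence, $\PP^N$-almost surely,
\[
\EE^N\big[\mathbf{1}\{\mathcal{L}^N_1(\tau_j)\geq\tfrac{x-M}{2}\}\ \big|\ \fext(1,\cdot,\cdot)\big]\ \geq\ \tfrac12\,\mathbf{1}_{A_j\cap V_j},
\]
and taking expectations yields $\PP^N(A_j\cap V_j)\leq 2\,\PP^N\big(\mathcal{L}^N_1(\tau_j)\geq\tfrac{x-M}{2}\big)$.

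To finish, I would sum over the finitely many $j$ to get $\PP^N(\maxdyson^N(1,T)>x)\leq\sum_{j=1}^m\PP^N(V_j^c)+2\sum_{j=1}^m\PP^N\big(\mathcal{L}^N_1(\tau_j)\geq\tfrac{x-M}{2}\big)$. Given $\e>0$: since each $b_j\in[-T,T]$, tightness from $(H2')_{k,T}$ provides $M$ and $N_1$ with $\PP^N(V_j^c)<\e/(2m)$ for all $j$ and $N\geq N_1$; then, since each $\tau_j\in[-T,T]$, the same tightness provides $x>-M$ and $N_2$ with $\PP^N\big(\mathcal{L}^N_1(\tau_j)\geq\tfrac{x-M}{2}\big)\leq\e/(4m)$ for all $j$ and $N\geq N_2$, so that (\ref{maxdysonprob}) holds with this $x$ for all $N\geq N_0:=\max\{N_1,N_2,\dots\}$, first for $i=1$ and then for all $1\leq i\leq k$.

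I expect the main obstacle to be purely organizational: designing the partition and its buffers so that every target time $\tau_j$ and every control endpoint $b_j$ remains in $[-T,T]$, and then verifying, in each of the boundary cases $j\in\{1,m\}$, both that $(\chi_j,b_j)$ is genuinely a stopping domain and that $\theta_j$ falls on the side of $\tfrac12$ that makes the mean bound $\geq\tfrac{x-M}{2}$ go through. Everything else --- the strong Gibbs reduction, the monotone coupling, and the Gaussian-at-a-fixed-time computation --- is routine and parallels Proposition~\ref{propargmax} exactly.
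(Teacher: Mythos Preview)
Your proposal is correct and follows essentially the same resampling argument as the paper: reduce to $i=1$, partition $[-T,T]$, and on each piece use the first-passage time together with a deterministic far endpoint as a stopping domain, then apply the strong Gibbs property and monotonicity to compare $\mathcal{L}^N_1(\tau_j)$ with a Brownian bridge whose mean at $\tau_j$ is at least $(x-M)/2$. The only cosmetic differences are that the paper partitions at integer points (and treats the two end pieces near $\pm T$ by a sentence rather than an explicit construction), while you partition into $m$ equal sub-intervals and handle the right-end piece explicitly via a supremum-based stopping time; your treatment of the boundary cases is in fact a little more careful than the paper's.
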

\begin{proof}
It suffices to prove the case $i=1$. For $-T \leq s < t \leq T$, set $\maxone^N_{s,t} = \sup_{s \leq r \leq t} \mathcal{L}^N_1(r)$. We will use a resampling argument to show that the event $\big\{\maxone^N_{i,i+1} > x\big\}$ is improbable. If this event takes place, it means a large maximum is achieved at a random time; but a resampling will show that in this case it is also likely that the top curve is high at one of a fixed number of deterministic times. However, the distribution of $\mathcal{L}^N_1$ at fixed times is controlled by Hypothesis  $(H2')_{k,T}$, which will show that this outcome is unlikely.

For $j \in \N$, $-T \leq j \leq T$, set
$$V_j = V_{j,N,M} = \big\{  \mathcal{L}^N_1(j) \geq -M \big\}.$$
By Hypothesis  $(H2')_{k,T}$, for $\e > 0$, there exists $M > 0$ such that, for all $N \in \N$ and for each such $j$,
\begin{equation}\label{eqprobvi}
  \PP^N \big( V_j^c \big) \leq \frac{\e}{4T}.
\end{equation}

Now consider $x > 0$ and restrict $j$ further so that $-T \leq j \leq T - 2$. Let
$$\chi_j =  \chi_{j,x,N} = \inf \big\{ t \in [j,j+1]: \mathcal{L}^N_1(t) \geq x \big\},$$
where, if the infimum is taken over the empty-set, we set $\chi_j = \infty$. Of course, $\chi_j < \infty$ precisely when  $\maxone^N_{j,j+1} \geq x$.

On the event $\chi_j < \infty$, note that $(\chi_j,j+2)$ forms a stopping domain in the sense of Definition~\ref{defstopdom}.
By the strong Gibbs property of Lemma \ref{stronggibbslemma}, $\PP^N$-almost surely, if $\big\{ \chi_j < \infty \big\} \cap V_{j+2}$ occurs,
the conditional distribution of $\mathcal{L}^N_1:[\chi_j,j+2] \to \R$ is given by Brownian bridge $B:[\chi_j,j+2] \to \R$, $B(\chi_j) = \mathcal{L}^N_1(\chi_j)$,
$B(j+2) = \mathcal{L}^N_1(j + 2)$, conditioned to remain above the curve $\mathcal{L}^N_2$. Noting that $\mathcal{L}^N_1(\chi_j) = x$ and that $\mathcal{L}^N_1(j+2) \geq -M$, the monotonicity Lemmas~\ref{monotonicity} and \ref{lemmonotonetwo} imply that this conditional distribution stochastically dominates that of an independent Brownian bridge on $[\chi_j,j+2]$ with endpoint values $x$
and $-M$. Noting that $\chi_j \leq j+1$ implies that $j+1$ lies to the left of the midpoint of the interval $[\chi_j,j+2]$, we see that such a Brownian bridge exceeds $(x-M)/2$ at $j+1$ with probability at least $1/2$.
The conclusion of the argument which we have presented in this paragraph is thus
\begin{equation}\label{eqmaxone}
 \tfrac{1}{2}  \PP^N \Big( \big\{ \maxone^N_{j,j+1} > x \big\} \cap V_{j + 2} \Big) \leq
    \PP^N \Big( \mathcal{L}^N_1(j+1) > \frac{x- M }{2}  \Big)
\end{equation}
for all $N \in \N$.
Invoking again Hypothesis  $(H2')_{k,T}$, we choose $x > 0$ large enough that the right-hand term in (\ref{eqmaxone}) is at most $\e/(8T)$ for each $N \in \N$;  applying (\ref{eqprobvi}), we find that
\begin{equation}\label{eqmaxoneplus}
  \PP^N \Big( \maxone^N_{j,j+1} > x  \Big) \leq \frac{\e}{2T}.
\end{equation}
Technically, we must also justify (\ref{eqmaxoneplus}) in the case where  $\maxone^N_{j,j+1}$ is replaced by
$ \maxone^N_{-T,\lfloor -T \rfloor}$ and by $ \maxone^N_{\lfloor T \rfloor - 1,T}$. Treating
$-T$ and $T$ as deterministic times to which we apply Hypothesis $(H2')_{k,T}$, the derivation of these two extra bounds proceeds very similarly to the argument that we have given.

Summing (\ref{eqmaxoneplus}) over $j \in \N$,  $-T \leq j \leq T - 2$, and adding its counterparts for the two miscellaneous cases, yields the statement of the lemma.
\end{proof}

Lemma \ref{lemnobigmax} has an analog for the minumum value.
\begin{lemma}\label{lemnobigmin}
Fix $k\geq 1$ and $T>0$. Consider a sequence of line ensembles $\{\mathcal{L}^N\}_{N=1}^{\infty}$ satisfying Hypothesis $(H')_{k,T}$. Write $\mindyson^N(i,T) = \inf_{s \in [-T,T]} \mathcal{L}^N_i(s)$.  Then for all $\e>0$, there exists an $x > 0$ and $N_0>0$ such that, for all $N \geq N_0$ and $i\in \{1,\ldots, k\}$,
\begin{equation}\label{mindysonprob}
\PP^N \Big(\mindyson^N(i,T) < - x \Big) \leq \e.
\end{equation}
\end{lemma}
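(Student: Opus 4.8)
The plan is to reduce to the bottom curve $i = k$, use the monotone coupling of Lemma~\ref{monotonicity} to discard the lower barrier $\mathcal{L}^N_{k+1}$, and then control the lowest of a system of $k$ mutually avoiding Brownian bridges with well-behaved boundary data. In contrast with Lemma~\ref{lemnobigmax}, the monotone coupling now points in exactly the direction we need --- removing the barrier from underneath the lowest curve can only push that curve down, which is the bound we want on its infimum --- so no stopping domain is required and the whole argument may be carried out on the deterministic interval $[-T,T]$.

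First I would note that, since $(H1)_{k,T}$ forces $\mathcal{L}^N_1 > \mathcal{L}^N_2 > \cdots > \mathcal{L}^N_k$, we have $\mindyson^N(i,T) \ge \mindyson^N(k,T)$ for each $i \le k$, so it is enough to prove the estimate for $i = k$. Fix $\e > 0$. Invoking $(H2')_{k,T}$ (weak convergence at a single time entails tightness of $\{\mathcal{L}^N_i(\pm T) : 1 \le i \le k\}$) and $(H3)_{k,T}$ at the times $\pm T$, I would choose $M > 0$ large and $\delta > 0$ small so that, for all $N$ beyond some $N_0$, the event
\begin{equation*}
E = \Big\{ \, \big\vert \mathcal{L}^N_i(\pm T) \big\vert \le M \ \ (1 \le i \le k), \quad \mathcal{L}^N_i(\pm T) - \mathcal{L}^N_{i+1}(\pm T) \ge \delta \ \ (1 \le i \le k-1) \, \Big\}
\end{equation*}
satisfies $\PP^N(E^c) < \e/2$; note that $E \in \mathcal{F}_{ext}(k,-T,T)$.

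Next I would condition on $\mathcal{F}_{ext}(k,-T,T)$ and apply the Brownian Gibbs property (Definition~\ref{maindefBGP}) with the block $K = \{1,\dots,k\}$ on $[-T,T]$: conditionally, $\big(\mathcal{L}^N_1,\dots,\mathcal{L}^N_k\big)\big\vert_{[-T,T]}$ is distributed as $k$ non-crossing Brownian bridges with entrance data $\bar x = \{\mathcal{L}^N_i(-T)\}_{i=1}^k$ and exit data $\bar y = \{\mathcal{L}^N_i(T)\}_{i=1}^k$, conditioned also to stay above $\mathcal{L}^N_{k+1}$. Comparing the lower barriers $-\infty$ and $\mathcal{L}^N_{k+1}$, Lemma~\ref{monotonicity} provides a coupling of this conditional law with the law $\wxy^{-T,T}\big(\,\cdot\,\big\vert\, B_1 > \cdots > B_k \textrm{ on } [-T,T]\big)$ of $k$ non-crossing Brownian bridges with the same endpoints and no lower barrier, under which the $i$-th of the former curves dominates $B_i$ pointwise for every $i$; in particular $\inf_{[-T,T]}\mathcal{L}^N_k \ge \inf_{[-T,T]} B_k$ in that coupling, so
\begin{equation*}
\PP^N\big(\inf_{[-T,T]}\mathcal{L}^N_k < -x \,\big\vert\, \mathcal{F}_{ext}(k,-T,T)\big) \le \wxy^{-T,T}\big(\inf_{[-T,T]} B_k < -x \,\big\vert\, B_1 > \cdots > B_k \textrm{ on } [-T,T]\big).
\end{equation*}

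The remaining task is to bound this last quantity on $E$, uniformly over the admissible $\bar x, \bar y$, and it is routine. Let $p = \wxy^{-T,T}\big(B_1 > \cdots > B_k \textrm{ on } [-T,T]\big)$. Writing each $B_i$ as the linear interpolant of its endpoints plus an independent standard bridge on $[-T,T]$, and using that on $E$ consecutive interpolants differ by at least $\delta$ throughout $[-T,T]$, the non-crossing event contains the event that all $k$ of these standard bridges stay within $\delta/4$ of $0$; hence $p \ge q^k$, where $q := \PP\big(\sup_{[-T,T]}|\beta| < \delta/4\big) > 0$ for a standard bridge $\beta$, the positivity coming from Corollary~\ref{coropen} after Brownian rescaling to $[0,1]$ and $q$ depending only on $\delta$ and $T$. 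Consequently the conditioned law of $B_k$ is absolutely continuous with respect to the law of an unconstrained Brownian bridge from $x_k$ to $y_k$ with Radon--Nikodym derivative at most $p^{-1}$, and since that bridge is its linear interpolant (taking values in $[-M,M]$ on $E$) plus a standard bridge, Lemma~\ref{lembridgemod} gives, for $x > M$,
\begin{equation*}
\wxy^{-T,T}\big(\inf_{[-T,T]} B_k < -x \,\big\vert\, B_1 > \cdots > B_k\big) \le p^{-1}\,\PP\big(\sup_{[-T,T]}|\beta| > x - M\big) \le 2 p^{-1} \exp\big\{ -(x - M)^2/T \big\}.
\end{equation*}
Choosing $x$ large enough makes this at most $\e/2$, and combining it with $\PP^N(E^c) < \e/2$ through
\begin{equation*}
\PP^N\big(\mindyson^N(k,T) < -x\big) \le \PP^N(E^c) + \EE^N\Big[ \mathbf{1}_E \, \PP^N\big(\inf_{[-T,T]}\mathcal{L}^N_k < -x \,\big\vert\, \mathcal{F}_{ext}(k,-T,T)\big) \Big]
\end{equation*}
yields the lemma. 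The step demanding the most care is orienting the monotone coupling: discarding the \emph{lower} barrier beneath the \emph{lowest} curve produces a stochastic \emph{lower} bound on $\mathcal{L}^N_k$, which is precisely what is needed; beyond this there is no substantial analytic difficulty, with Hypothesis $(H3)$ entering only so as to keep the acceptance probability $p$ bounded away from $0$.
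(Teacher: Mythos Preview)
Your argument is correct, but it differs from the paper's in a meaningful way. The paper proceeds by induction on the curve index $i$: assuming $\mathcal{L}^N_{i-1}$ is already uniformly bounded below, it resamples only the single curve $\mathcal{L}^N_i$ on $[-T,T]$, removes the lower barrier $\mathcal{L}^N_{i+1}$, lowers the endpoints to a deterministic level, and replaces the upper barrier $\mathcal{L}^N_{i-1}$ by a constant using the inductive hypothesis; the resulting bridge conditioned to stay below a constant is then controlled directly via Lemma~\ref{lembridgemod}. In particular, the paper's proof of this lemma does not invoke Hypothesis $(H3)_{k,T}$ at all.

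Your route instead resamples the entire block $\{1,\dots,k\}$ at once, removes the lower barrier, and then --- crucially --- uses $(H3)_{k,T}$ at the endpoints $\pm T$ to guarantee that the entrance and exit data are $\delta$-separated, which yields a uniform lower bound on the non-crossing acceptance probability $p$ and hence a Radon--Nikodym bound against the free bridge. This is a genuinely different mechanism: the paper trades the block structure for an inductive single-curve argument that needs no endpoint separation, while you trade the induction for a direct block argument that consumes $(H3)$. Both are clean; yours is arguably more self-contained (no induction), whereas the paper's is slightly more economical in its hypotheses for this particular lemma.
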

\begin{proof}
We prove the assertion by induction on $k$. Let $k \geq 1$ and assume the inductive hypothesis at $i=k-1$. (If $k=1$, define a zeroth curve $\mathcal{L}^N_0 \equiv \infty$ so that (\ref{mindysonprob}) trivially holds.) We must show that (\ref{mindysonprob}) now holds for $i=k$ as well.

Define events $E_x$ and $\tilde{E}_{\tilde x}$ as
\begin{eqnarray*}
E_x&=& \big\{ \mathcal{L}^N_{k-1}(t) \geq -x \, \forall t\in [-T,T]\big\}, \\
\tilde{E}_{\tilde{x}} &=&  \big\{ \mathcal{L}^N_{k}(t) \geq -\tilde x \, t=-T,T\big\}.
\end{eqnarray*}

There exists $x_{k-1}>0$ and $\tilde{x}_k>x_{k-1}$ such that for large $N$, $\PP\big(E_{x_{k-1}}\cap \tilde{E}_{\tilde{x}_k}\big)\geq 1-\tfrac{\e}{2}$. This fact follows from the inductive hypothesis, which shows that $\PP(E_{x_{k-1}})\geq 1-\tfrac{\e}{4}$, and Hypothesis $(H2')_{k,T}$, which shows that for $\tilde{x}_k$ large enough, $\PP(\tilde{E}_{\tilde{x}_k})\geq 1-\frac{\e}{4}$.

From now on, we will restrict attention to the portion of the probability space in which the event $E=E_{x_{k-1}}\cap \tilde{E}_{\tilde{x}_k}$, $\mathcal{L}_k(t)$, $t\in[-T,T]$ is distributed as a Brownian bridge from $\mathcal{L}_{k}(-T)$ to $\mathcal{L}_{k}(T)$ conditioned to not intersect $\mathcal{L}_{k-1}$ and $\mathcal{L}_{k+1}$. If we remove the condition to avoid $\mathcal{L}_{k+1}$ and call the resulting conditioned (only on $\mathcal{L}_{k-1}$-avoidance) Brownian bridge $B$, then it follows from Lemma~\ref{monotonicity} that $B{\cdot}$ and $\mathcal{L}_k(\cdot)$ can be coupled so that $B(\cdot)\leq \mathcal{L}_k(\cdot)$. If we further replace the starting and ending heights for $B(\cdot)$ by $-\tilde{x}_k$, then, since this lowers the starting and ending points, we may use Lemma \ref{lemmonotonetwo} to couple the resulting Brownian bridge $B'$ (which is still conditioned to avoid $\mathcal{L}_{k-1}$ but now starts and ends at $\tilde{x}_k$) so that $B'(\cdot)\leq B(\cdot)$. Finally, if we replace the condition of avoiding $\mathcal{L}_{k-1}$ by the condition of staying below $-x_{k-1}$, the resulting Brownian bridge $B''$ can be coupled (again by Lemma \ref{monotonicity}) so that $B''(\cdot)\leq B'(\cdot)$.

The output of this string of deductions is that, on the event $E$, there is a coupling of $\mathcal{L}_k:[-T,T]\to \R$ with $B'':[-T,T]\to \R$ such that $B''(\cdot) \leq \mathcal{L}_k(\cdot)$ where $B''$ is distributed as Brownian bridge with $B''(\pm T) = -\tilde{x}_k$ conditioned to stay below $-x_{k-1}$. By possibly choosing $\tilde{x}_k$ larger, we can ensure that $\tilde{x}_k-x_{k-1}\geq T^{1/2}$. Given this, the conditioned Brownian bridge $B''$ is not likely to go low. In particular, using Lemma~\ref{lembridgemod}, we readily show that
$$\PP\big(B''(t) \leq -\tilde{x}_k - yT^{1/2} \textrm{ for some } t\in [-T,T]\big) \leq e^{2} e^{-2y^2/T}.$$
By the monotonicity between $\mathcal{L}_k$ and $B''$ on the event $E$, and the high probability of $E$, we conclude that
$$\PP\Big(\mathcal{L}_k(t) \leq -\tilde{x}_k - yT^{1/2} \textrm{ for some } t\in [-T,T]\Big) \leq \e/2 + e^{2} e^{-2y^2/T}.$$
Choosing $y=T^{1/2} \sqrt{1-\log\sqrt{\e/2}}$ and setting $x_k = -\tilde{x}_k -y T^{1/2}$, we find that
$$\PP\big(\mathcal{L}_{k}(t)<-x_k \textrm{ for some } t\in [-T,T]\big) \leq \frac{\e}{2}+\frac{\e}{2},$$
which is the result desired to prove the inductive step for $i=k$.
\end{proof}

\subsection{Proof of Proposition \ref{propacceptprob}}\label{proof:propacceptprob}

\begin{figure}[ht]
\begin{center}
\includegraphics[height=8cm]{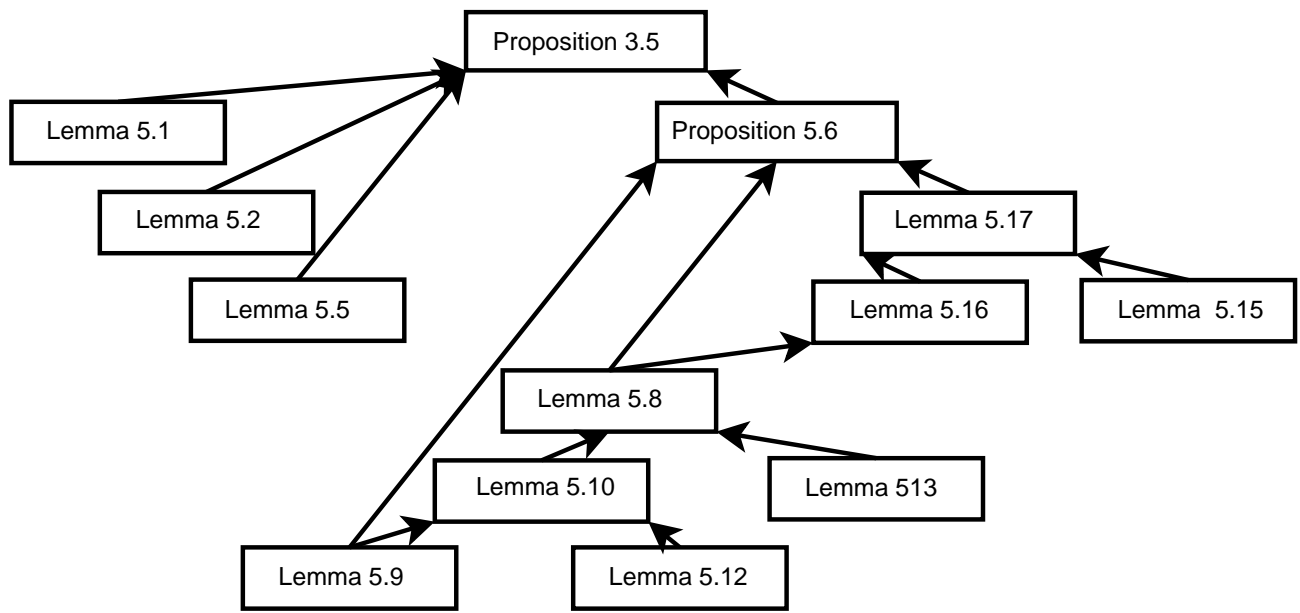}
\caption{Flow chart of the proof of Proposition \ref{propacceptprob}. The left-hand side represents Step 1 and the right-hand side represents Step 2.}
\label{flowchart}
\end{center}
\end{figure}

First observe that equation (\ref{maxmineqn}) follows immediately from Lemmas \ref{lemnobigmax} and \ref{lemnobigmin}. Turning to the proofs of equations (\ref{APpropeqn}) and (\ref{GAPpropeqn}), beyond Lemma \ref{lemnobigmax}, the central element used here is Proposition \ref{propconcave}, a result which we will shortly state. In Proposition \ref{propconcave}, we consider a $k$-curve $(\infty,f)$-avoiding line ensemble defined on an interval $[-T,T]$. The result shows that, although the line ensemble may have poorly behaved entrance and exit data (including points clustered together very closely), the curves in the ensemble typically become well-separated on the slightly shorter interval $[-(T-1),T-1]$, provided only that the lower boundary condition $f$ has a well-behaved convex hull.

A few definitions are needed.

\begin{definition}
For $K> 0$ and for $\ell<r$, let $\CMK$ be the set of functions $f:[\ell,r]\to \R$ such that its least concave majorant $\CM^f:[\ell,r]\to \R$ satisfies
\begin{equation*}
\frac{\vert \CM^f(x) - \CM^f(y) \vert}{\vert x - y \vert} \leq K
\end{equation*}
for all $x,y \in [\ell,r]$. For $M > 0$, let $\XYfM$ be the set of pairs
$\bar{x}=(x_1,\ldots,x_k), \bar{y}=(y_1,\ldots,y_k) \in \Rkle$ such that
\begin{equation*}
x_k > f(\ell), \quad y_k > f(r) \qquad \textrm{ and } \quad -M \leq \min \{x_k,y_k\} < \max \big\{ x_1,y_1 \big\} \leq M.
\end{equation*}
\end{definition}

\begin{definition}
Fix $k\geq 1$, $N\geq k+1$ and $T>0$. Consider a line ensemble
$\mathcal{L}:\{1,\ldots, N\}\times [-T,T]\to \R$. Define $\CM:[-T,T] \to \R$ so that
$\big\{ \big( t,\CM(t) \big): -T\leq t \leq T \big\} \subseteq \R^2$ is the
least concave majorant of $\mathcal{L}_{k+1}$ restricted to $[-T,T]$.
Write $\CM'_{+}:[-T,T) \to \R$ for the right derivative of
$\CM: [-T,T] \to \R$.
Recalling Definition~\ref{defstopdom}, for $K>0$,
define a stopping domain $(\tilde{\mathfrak{l}}_K,\tilde{\mathfrak{r}}_K)$ by
\begin{eqnarray*}
\tilde{\mathfrak{l}}_K &=& \inf \big\{ t \in \big[ -T,T \big] : \CM'_{+}(t) \leq    K \big\}, \\
\tilde{\mathfrak{r}}_K &=& \sup \big\{ t \in  \big[ -T,T \big] : \CM'_{+}(t) \geq - K \big\},
\end{eqnarray*}
adopting the convention that $\inf \emptyset = T$ and $\sup \emptyset = -T$. Note that  $(\tilde{\mathfrak{l}}_K,\tilde{\mathfrak{r}}_K)$ is indeed a stopping domain for lines $\mathcal{L}_1,\ldots, \mathcal{L}_k$ because this random variable is measurable with respect to $\mathcal{L}_{k+1}$.
\end{definition}

\begin{lemma}\label{lemellrbd}
Fix $k\geq 1$, $N\geq k+1$ and $T>0$. Consider a line ensemble
$\mathcal{L}:\{1,\ldots, N\}\times [-T,T]\to \R$.
For $M>0$, on the event
\begin{equation}\label{Eevent}
E_{k,T,M}=\Big\{ \sup_{t \in [-T,T]} \mathcal{L}_1(t)  \leq M \Big\} \cap \big\{ \mathcal{L}_{k+1}(-T) \geq -M \big\} \cap
\big\{ \mathcal{L}_{k+1}(T) \geq -M \big\},
\end{equation}
we have that, almost surely,
\begin{equation*}
- T \leq \tilde{\mathfrak{l}}_{K} \leq - T + 2M/K,\qquad {\textrm and} \qquad  T - 2M/K \leq \tilde{\mathfrak{r}}_K \leq T.
\end{equation*}
\end{lemma}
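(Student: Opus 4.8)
The inequalities $\tilde{\mathfrak{l}}_{K} \geq -T$ and $\tilde{\mathfrak{r}}_K \leq T$ hold by construction: $\tilde{\mathfrak{l}}_K$ is an infimum over a subset of $[-T,T]$ (with the convention $\inf\emptyset = T$), and $\tilde{\mathfrak{r}}_K$ a supremum over such a subset (with $\sup\emptyset = -T$). So the content lies in the two remaining bounds, which I would establish as a purely deterministic implication: for every realization of $\mathcal{L}$ lying in $E_{k,T,M}$, the asserted bounds on $\tilde{\mathfrak{l}}_K$ and $\tilde{\mathfrak{r}}_K$ hold; this immediately yields the almost sure statement on $E_{k,T,M}$.

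The first step is to record the two features of the concave majorant $\CM$ that are forced by the event $E_{k,T,M}$. Since the line ensemble is non-intersecting, on this event $\mathcal{L}_{k+1}(t) < \mathcal{L}_1(t) \leq M$ for all $t \in [-T,T]$, so the constant function $M$ is a concave majorant of $\mathcal{L}_{k+1}$ on $[-T,T]$ and therefore $\CM(t) \leq M$ for every $t \in [-T,T]$. Moreover, the least concave majorant of a continuous function on a compact interval is continuous and concave and agrees with the function at the two endpoints of the interval (a standard fact; at $\pm T$ this is because $\CM(\pm T)$ equals a supremum of chord values $\lambda\, \mathcal{L}_{k+1}(a') + (1-\lambda)\, \mathcal{L}_{k+1}(b')$ over $a' \leq \cdot \leq b'$, and the constraint forces $a' = b' = \pm T$). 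Hence on $E_{k,T,M}$ one has $\CM(-T) = \mathcal{L}_{k+1}(-T) \geq -M$ and $\CM(T) = \mathcal{L}_{k+1}(T) \geq -M$. Being real-valued and concave, $\CM$ possesses a right derivative $\CM'_{+}$ at every point of $[-T,T)$, finite and non-increasing on the interior, and I would use only the two elementary chord bounds, valid at any interior point $s$, namely $\CM'_{+}(s) \leq \big(\CM(s) - \CM(-T)\big)/(s + T)$ and $\CM'_{+}(s) \geq \big(\CM(T) - \CM(s)\big)/(T - s)$.

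With this in hand, the bound $\tilde{\mathfrak{l}}_K \leq -T + 2M/K$ is short. If $2M/K \geq 2T$ there is nothing to prove, so assume $2M/K < 2T$ and put $s := -T + 2M/K \in (-T,T)$. If $\tilde{\mathfrak{l}}_K > s$, then directly from the definition of $\tilde{\mathfrak{l}}_K$ as an infimum we have $\CM'_{+}(s) > K$; plugging this into the first chord bound and using $s + T = 2M/K$ gives $\CM(s) > \CM(-T) + 2M \geq -M + 2M = M$, which contradicts $\CM(s) \leq M$. The bound $\tilde{\mathfrak{r}}_K \geq T - 2M/K$ is the mirror image: assuming $2M/K < 2T$ and $\tilde{\mathfrak{r}}_K < s := T - 2M/K \in (-T,T)$, the definition of $\tilde{\mathfrak{r}}_K$ as a supremum forces $\CM'_{+}(s) < -K$, and the second chord bound with $T - s = 2M/K$ gives $\CM(s) > \CM(T) + 2M \geq M$, again impossible.

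I do not expect a genuine obstacle: this is a routine exercise in the calculus of concave functions. The only points deserving a word of care are the two standard properties of least concave majorants invoked above (continuity/concavity, and agreement with the function at the endpoints of $[-T,T]$) and the bookkeeping around the conventions $\inf\emptyset = T$, $\sup\emptyset = -T$ and the degenerate regime $2M/K \geq 2T$, in which the asserted bounds are vacuous.
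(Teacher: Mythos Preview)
Your proof is correct and follows essentially the same approach as the paper's: both use the elementary chord inequality for a concave function together with the bounds $\CM(\pm T)=\mathcal{L}_{k+1}(\pm T)\geq -M$ and $\CM\leq M$ (the latter via non-intersection with $\mathcal{L}_1$). The only cosmetic difference is that the paper works directly at the point $\tilde{\mathfrak{l}}_K$, noting that $\CM(\tilde{\mathfrak{l}}_K)=\mathcal{L}_{k+1}(\tilde{\mathfrak{l}}_K)$ since this is an extreme point of the concave region, whereas you argue by contradiction at the fixed point $s=-T+2M/K$ and use the global bound $\CM\leq M$; your handling of the degenerate case $2M/K\geq 2T$ and the endpoint agreement is a touch more careful than the paper's.
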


\begin{proof} The line segment that connects $\big(-T,\CM(-T) \big)$ and $\big( \tilde{\mathfrak{l}}_{K}, \CM(\tilde{\mathfrak{l}}_K)  \big)$ has slope at least the left-derivative of $\CM(\tilde{\mathfrak{l}}_K)$, since $\CM$ is concave. This left-derivative is at least $K$, by the definition of $\tilde{\mathfrak{l}}_K$. Hence,
\begin{equation}\label{eqctk}
 \frac{\CM(\tilde{\mathfrak{l}}_K) - \CM(-T)}{\tilde{\mathfrak{l}}_K - (-T)} \geq K.
\end{equation}
Note that $\CM(-T) = \mathcal{L}_{k+1}(-T)$ and  $\CM(\tilde{\mathfrak{l}}_K)= \mathcal{L}_{k+1}(\tilde{\mathfrak{l}}_K)$, the latter since the concave region $\big\{ ( t , y ): t \in [-T,T], y \leq \CM(t) \big\}$ has an extreme point at $\big(\tilde{\mathfrak{l}}_K,\CM(\tilde{\mathfrak{l}}_K)\big)$.
Hence, (\ref{eqctk}) holds also when $\mathcal{L}_{k+1}$ replaces $\CM$. Applying the bounds provided by the occurrence of $E_{k,T,M}$, we find that $\tilde{\mathfrak{l}}_K \leq - T + 2M/K$. Likewise, the second bound.
\end{proof}

Recall that $\bxyflr(\cdot)$ denotes the conditional distribution of $k$ Brownian bridges on $[\ell,r]$ with entrance and exit data $\bar{x}$ and $\bar{y}$ conditioned to meet neither one another nor the curve $f$.

\begin{proposition}\label{propconcave}
Fix $K,M>0$ and two finite intervals $I_1,I_2 \subseteq \R$ for which $\sup I_1< \inf I_2 + 3$.
Then, for all $\epsilon > 0$, there exists $\delta=\delta(\e,K,M,I_1,I_2)> 0$ such that, for $(\ell,r)\in I_1\times I_2$,  $f\in \CMK$ and $(\bar{x},\bar{y})\in \XYfM$,
\begin{equation}\label{APeqn}
\bxyflr \Big(\AP\big(\ell+1,r-1,\{B_{i}(\ell+1)\}_{i=1}^{k},\{B_{i}(r-1)\}_{i=1}^{k}, f \big) < \delta \Big) < \epsilon
\end{equation}
and
\begin{equation}\label{GAPeqn}
\bxyflr \Big(\min_{1 \leq i \leq k - 1} \inf_{s \in [\ell+1,r-1]} \big\vert B_i(s)  - B_{i+1}(s)  \big\vert < \delta \Big) < \epsilon.
\end{equation}
\end{proposition}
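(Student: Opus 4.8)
Write $B_1>\cdots>B_k$ for the curves sampled from $\bxyflr$, and set $\bar z=(B_i(\ell+1))_{i=1}^k$ and $\bar w=(B_i(r-1))_{i=1}^k$. Both displayed bounds will be deduced from a single \emph{separation estimate}: for every $\epsilon>0$ there exist $\delta_1=\delta_1(\epsilon,K,M,I_1,I_2)>0$ and $M'=M'(\epsilon,K,M,I_1,I_2)<\infty$ such that, with $\bxyflr$-probability at least $1-\epsilon$, the pair $(\bar z,\bar w)$ is \emph{good}, meaning that $\bar z$ and $\bar w$ are $k$-decreasing lists all of whose coordinates lie in $[-M',M']$ and all of whose consecutive gaps are at least $\delta_1$. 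We will also use throughout that $f$ is bounded above on $[\ell,r]$ by a constant $F=F(K,M,I_1,I_2)$: indeed $\CM^f(\ell)=f(\ell)$, so the slope bound defining $\CMK$ gives $f(t)\le\CM^f(t)\le f(\ell)+K(t-\ell)\le M+K(\sup I_2-\inf I_1)$.

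\noindent\textbf{From the separation estimate to the proposition.} By the Brownian Gibbs property, conditionally on $(\bar z,\bar w)$ the ensemble restricted to $[\ell+1,r-1]$ has law $\mathcal{B}^{\ell+1,r-1}_{\bar z,\bar w,f}$. When $(\bar z,\bar w)$ is good, a corridor construction — forcing the $j$-th bridge to remain in a thin tube about a fixed piecewise-linear path from $z_j$ to $w_j$, the tubes chosen pairwise disjoint and above the level $F$ — shows $\AP(\ell+1,r-1,\bar z,\bar w,f)\ge\delta_2$ for some $\delta_2=\delta_2(\delta_1,M',K,M,I_1,I_2)>0$ (the constant may be tiny, which is harmless); this is (\ref{APeqn}). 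For (\ref{GAPeqn}), bound, for any event $E$,
\[
\mathcal{B}^{\ell+1,r-1}_{\bar z,\bar w,f}(E)\;\le\;\frac{\mathcal{W}^{\ell+1,r-1}_{k;\bar z,\bar w}\big(E\cap\{B_1>\cdots>B_k\text{ on }[\ell+1,r-1]\}\big)}{\AP(\ell+1,r-1,\bar z,\bar w,f)}\,.
\]
Taking $E=\{\min_{1\le i\le k-1}\inf_{s\in[\ell+1,r-1]}(B_i(s)-B_{i+1}(s))<\delta\}$, the numerator does not involve $f$ and, as $\delta\downarrow0$, decreases to the probability that some consecutive pair of independent Brownian bridges touches without crossing, which has probability zero by Corollary~\ref{cor:touch}; since the denominator is at least $\delta_2$ on good data, and the compactness of the set of good $(\bar z,\bar w)$ makes the decay uniform, (\ref{GAPeqn}) follows.

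\noindent\textbf{Proving the separation estimate.} Split $[\ell,r]=[\ell,\ell+1]\cup[\ell+1,r-1]\cup[r-1,r]$. By the Markov property of Brownian bridge, under $\bxyflr$ the law of $(\bar z,\bar w)$ is absolutely continuous with respect to Lebesgue measure, with density proportional to $R(\bar z,\bar w)\,G(\bar z)\,H(\bar z,\bar w)\,\tilde G(\bar w)$, where $R$ is an explicit Gaussian-type density (bounded above, and bounded below away from $0$, on any fixed bounded set), $G(\bar z)=\mathcal{W}^{\ell,\ell+1}_{k;\bar x,\bar z}(\ncf_{[\ell,\ell+1]})$, $\tilde G(\bar w)=\mathcal{W}^{r-1,r}_{k;\bar w,\bar y}(\ncf_{[r-1,r]})$ and $H(\bar z,\bar w)=\mathcal{W}^{\ell+1,r-1}_{k;\bar z,\bar w}(\ncf_{[\ell+1,r-1]})$. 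The numerator-side estimate is elementary: if some consecutive gap of $\bar z$ is below $\eta$, then bounding $\ncf_{[\ell,\ell+1]}$ by the non-crossing of just that pair and invoking the reflection (Karlin--McGregor) formula gives $G(\bar z)\le C(k,M)\eta$; if some coordinate of $\bar z$ falls outside $[-M',M']$ then $G(\bar z)\le\exp\{-c(M'-M)^2\}$, because the marginal of a single bridge at $\ell+1$ is Gaussian with mean in $[-M,M]$. The same bounds hold for $\tilde G$ in terms of $\bar w$.

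\noindent\textbf{Main obstacle.} The difficulty is the matching \emph{lower} bound on the normalisation $\int R\,G\,H\,\tilde G$: when the external data $\bar x,\bar y$ is badly clustered, $G$, and hence the whole integral, is uniformly tiny, so a crude lower bound cannot dominate the numerator estimates above. I would handle this by a \emph{push-apart comparison}. For a scale $\eta$, one constructs a bijective, bounded-distortion map sending each configuration with some gap below $\eta$ to one in which all gaps are at least $\eta$, by rigidly translating maximal blocks of adjacent coordinates downward through bounded multiples of $\eta$; along the fibres of this map each of $R,G,H,\tilde G$ changes by at most a bounded multiplicative factor — $R$ because Gaussian densities are comparable on a bounded window, and $G,H,\tilde G$ because enlarging inter-curve gaps can only aid the mutual-avoidance events, the downward displacement of the bottom curve toward $f$ being absorbed into an auxiliary event of probability $O(\eta)$ charged against $\epsilon$, using $f\le F$. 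Summing over dyadic scales $\eta=2^{-m}\delta_0$ then controls the $\bxyflr$-probability of clustered interior data uniformly in $(\bar x,\bar y)\in\XYfM$ and $f\in\CMK$; a similar one-sided comparison — exploiting that $R$ decays superpolynomially in a coordinate while $G,H,\tilde G$ vary only polynomially — controls the probability that some coordinate of $\bar z$ or $\bar w$ leaves $[-M',M']$. I expect the simultaneous bookkeeping of the $k$ gaps and the $f$-barrier under the comparison maps, plausibly streamlined by an induction on $k$ in which the lowest curve is treated last, to be the technical heart of the proof.
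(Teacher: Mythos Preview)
Your overall architecture --- reduce both conclusions to a separation estimate at the interior times $\ell+1$ and $r-1$, then derive (\ref{APeqn}) by a corridor construction and (\ref{GAPeqn}) by comparing to free bridges --- is sound and matches the paper's. Your deduction of (\ref{GAPeqn}) from separation plus an acceptance-probability lower bound is also correct in spirit; the paper replaces your compactness/Dini argument by the explicit bound of Lemma~\ref{lembrownbridge}, which is more direct but yours would work. (A small slip: your claim that $G(\bar z)\le e^{-c(M'-M)^2}$ when a coordinate of $\bar z$ is large confuses $G$ with the Gaussian factor $R$; it is $R$, not the non-crossing probability $G$, that carries Gaussian tails.)

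The genuine gap is in the push-apart comparison. Pushing blocks \emph{downward} is the wrong direction: it drives $z_k$ toward $f$, and your remark that this cost is ``absorbed into an auxiliary event of probability $O(\eta)$'' does not survive scrutiny, because after the push the factor $G(\bar z')$ (or $H$) may simply vanish. More fundamentally, your key assertion that $G,H,\tilde G$ change by at most bounded factors under the map ``because enlarging inter-curve gaps can only aid the mutual-avoidance events'' is not justified: $G(\bar z)=\mathcal W^{\ell,\ell+1}_{k;\bar x,\bar z}(\ncf)$ is a non-crossing probability for bridges with both endpoints prescribed, and it is not monotone in arbitrary gap-enlarging moves of $\bar z$ (decreasing $z_j$ helps $z_{j-1}-z_j$ but hurts $z_j-z_{j+1}$). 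The right map pushes the \emph{top} $j-1$ coordinates \emph{upward} by a common amount; then the corresponding bridges on each subinterval are shifted by a common affine function, so their mutual differences are preserved, $B_{j-1}-B_j$ strictly increases, and $f$-avoidance is untouched --- giving genuine monotonicity of $G,H,\tilde G$. You would then also need an a priori upper bound on $B_1(\ell+1)$ to control the Gaussian factor under the upward push.

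The paper handles this differently and more cleanly via two devices you do not use. First, it conditions in two stages: initially only on $\ncf_{[\ell,\ell+1]\cup[r-1,r]}$, where it suffices that the well-spaced event have probability merely at least $\epsilon$ (Lemma~\ref{lemellplusone}); the Radon--Nikodym relation (Lemma~\ref{lemradnik}) between this partially conditioned law and $\bxyflr$ then amplifies this to a $1-\epsilon$ bound (Lemma~\ref{eqheps}). Second, for the push-apart step itself the paper conditions further on all the Brownian-bridge data \emph{except} the values at $\ell+1$, using the decomposition of Lemma~\ref{browniandecomp}; the remaining randomness is a single Gaussian $k$-vector, and the non-crossing constraint becomes membership in a deterministic set $\tilde Q\subseteq\Rkle$ that is closed under $\Rklezero$-displacement. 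The comparison (Lemma~\ref{lemqn}) is then a pure Gaussian calculation --- the upward-push map has large Jacobian and bounded density ratio --- with no path-level monotonicity of $G,H,\tilde G$ required at all.
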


This result is the key to completing our proof of Proposition \ref{propacceptprob}. We will use it with $(\ell,r)$ replaced by stopping domains (as is justified by the strong Gibbs property). The proof of this result, which will be given in Section \ref{proof:propconcave}, relies fundamentally on the resampling technique that is at the heart of the Brownian Gibbs definition.

Returning to the task of finishing the proof of Proposition \ref{propacceptprob}, by a combination of Lemma \ref{lemnobigmax} and the one-point convergence ensured by Hypothesis $(H2')_{k,T}$, we deduce the following: for all $\e>0$, there exists an $N_0>0$ and $M_0>0$ such that, for all $N \geq N_0$ and $M \geq M_0$,
\begin{equation}\label{Eprob}
\PP^N(E_{k,T,M})\geq 1-\e/2,
\end{equation}
where $E_{k,T,M}$ is given in equation (\ref{Eevent}) (this also follows immediately from the now proved equation (\ref{maxmineqn})).
On this event, Lemma \ref{lemellrbd} ensures that by choosing $K=2M$ we have
\begin{equation}\label{lrbound}
\tilde{\mathfrak{l}}_{K} \leq -T+1, \qquad \tilde{\mathfrak{r}}_{K} \geq T-1.
\end{equation}
By the strong Gibbs property given in Lemma \ref{stronggibbslemma}, we may apply Proposition \ref{propconcave} with $(\ell,r)$ replaced by the stopping domain $(\tilde{\mathfrak{l}}_k,\tilde{\mathfrak{r}}_k)$. Also, replace $\e$ by $\e/2$, and let $K$ and $M$ be specified as above, and $I_1=[-T,-T+1]$, $I_2=[T-1,T]$ and $f=\mathcal{L}^N_{k+1}$.
The conclusion (\ref{GAPeqn}), along with (\ref{lrbound}), shows that there exists $\delta$ (depending on $\e$ through $K,M$, and also depending on $k$ and $T$) such that
\begin{equation}\label{condGap}
\PP^N\Big(\{M_{k,-T+2,T-2}^N <\delta\} \cap E_{k,T,M}\Big) < \e/2.
\end{equation}
We now obtain (\ref{GAPpropeqn}) from (\ref{Eprob}) and (\ref{condGap}) where $T$ is replaced by $T+2$.

To obtain (\ref{APpropeqn}), set $\mathfrak{l}^N = \min \big\{ \tilde{\mathfrak{l}}_K +1  , -( T - 2) \big\}$
and  $\mathfrak{r}^N = \max \big\{ \tilde{\mathfrak{r}}_K - 1 , T - 2 \big\}$; certainly,
$\big( \mathfrak{l}^N, \mathfrak{r}^N \big)$ forms a stopping domain. Taking $K = 2M$, note that,
by~(\ref{lrbound}),
the occurrence of $E_{k,T,M}$ ensures that $\mathfrak{l}^N = \tilde{\mathfrak{l}}_K + 1$
and $\mathfrak{r}^N = \tilde{\mathfrak{r}}_K - 1$.
Note also that, by the combination of Lemma \ref{stronggibbslemma} and Proposition \ref{propconcave},
for all $\e>0$, there exists $\delta$ (with the same dependencies as before) such that
\begin{equation*}
\PP^N \Big(\Big\{\AP\Big(\tilde{\mathfrak{l}}_K+1,\tilde{\mathfrak{r}}_K-1,\{\mathcal{L}^N_{i}(\tilde{\mathfrak{l}}_K+1)\}_{i=1}^{k},\{\mathcal{L}^N_{i}(\tilde{\mathfrak{r}}_K-1)\}_{i=1}^{k}, \mathcal{L}^N_{k+1}(\cdot)\Big) < \delta \Big\} \cap
E_{k,T,M} \Big) < \epsilon/2.
\end{equation*}
These last two observations combine with~(\ref{Eprob}) to give (\ref{APpropeqn}).
This completes the proof of Proposition~\ref{propacceptprob}. \qed

\subsection{Precursor lemmas, and the proof of Proposition \ref{propconcave}}\label{proof:propconcave}
In this section we will state and prove two Lemmas, \ref{lemellplusone} and \ref{eqheps}, which will, at the end of the section, serve as key inputs in our proof of Proposition \ref{propconcave}.

Fix $K,M$ and intervals $I_1,I_2$ as in the statement of the proposition and let $\e>0$. To study the law  $\bxyflr = \wxylr \big( \cdot \big\vert \ncf_{[\ell,r]} \big)$ specified in Definition~\ref{WBdef}, we will condition $\wxylr$ in two steps: first, on $\ncf_{[\ell,\ell + 1] \cup [r-1,r]}$, and afterwards, also  on $\ncf_{[\ell +1,r-1]}$.
The principal conclusions of the respective steps will be Lemma \ref{lemellplusone} and Lemma \ref{eqheps}.

\subsubsection{Step 1} We start by analysing the law $\wxylr$ given $\ncf_{[\ell,\ell + 1] \cup [r-1,r]}$, where $f \in \CMK$ and $\bar{x},\bar{y} \in \XYfM$. Although
 the entrance and exit data $\bar{x},\bar{y}$ at times $\ell$ and $r$ may include pairs of adjacent points that are extremely close, we will explain how, under the measure in question, this data improves in the interior of $[\ell,r]$; we will show that, with a conditional probability which is positive uniformly in such
$\bar{x}$, $\bar{y}$ and $f$,  the top $k$ curves at times $\ell + 1$ and $r-1$ have risen significantly from the underlying boundary condition and have separated from each other.

\begin{definition}  Fix $k\geq 1$, $K>0$, $\ell<r$ and $f\in \CMK$. A $k$-decreasing list $\bar{x} = \big(x_1,\ldots,x_k \big)  \in \R^k_>$ will be called {\it $\e$-well-spaced} at $\ell+1$ (or at $r-1$) if $x_k > f(\ell)+K$ (or $x_k > f(r)-K$) and if $\min_{1 \leq i \leq k-1}\big\vert x_{i-1} - x_{i} \big\vert > \epsilon$.
We write $\ws_\epsilon$ for the event that
$\big( B_1(t),\ldots,B_k(t) \big)$ is $\epsilon$-well-spaced at both $t \in \big\{ \ell + 1, r - 1 \big\}$.
\end{definition}

The main result of step 1 is the following.
\begin{lemma}\label{lemellplusone}
There exists an $\e_0=\e_0(K,M,I_1,I_2)$ such that for  $0 < \e <\e_0$, $\ell\in I_1$, $r\in I_2$, $f\in \CMK$ and $(\bar{x},\bar{y})\in \XYfM$,
\begin{equation*}
\wxylr \big( \ws_\epsilon \big\vert \ncf_{[\ell,\ell + 1] \cup [r-1,r]} \big) > \epsilon.
\end{equation*}
\end{lemma}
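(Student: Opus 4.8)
The plan is to peel the two unit-length end intervals $[\ell,\ell+1]$ and $[r-1,r]$ away from the bulk, to show that the entire difficulty of the non-crossing-above-$f$ constraint on these intervals is concentrated near the true endpoints $\ell$ and $r$, and to conclude that, conditionally on that constraint being met, the $k$ curves have relaxed into a well-spaced, elevated configuration at times $\ell+1$ and $r-1$ with a probability bounded below uniformly in the (possibly extremely degenerate) boundary data $\bar x,\bar y\in\XYfM$ and in $f\in\CMK$.

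The first step is a reduction to a one-interval statement. Using the Brownian bridge decomposition of Lemma~\ref{browniandecomp}, I would split each of the $k$ independent bridges $B_i:[\ell,r]\to\R$ at the two times $\ell+1$ and $r-1$; this realizes each $B_i$ from the Gaussian pair $\big(B_i(\ell+1),B_i(r-1)\big)$ together with three independent Brownian bridges carried on $[\ell,\ell+1]$, $[\ell+1,r-1]$ and $[r-1,r]$. Writing $\bar Z$ and $\bar W$ for the vectors of values at $\ell+1$ and $r-1$, the event $\ncf_{[\ell,\ell+1]}$ is measurable with respect to $\bar x$, $\bar Z$, $f|_{[\ell,\ell+1]}$ and the left-hand bridge bundle, the event $\ncf_{[r-1,r]}$ with respect to $\bar y$, $\bar W$, $f|_{[r-1,r]}$ and the right-hand bundle, while $\ws_\epsilon$ depends only on $(\bar Z,\bar W)$ and, given $(\bar Z,\bar W)$, the two bundles are independent. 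Consequently $\wxylr\big(\ws_\epsilon\mid\ncf_{[\ell,\ell+1]\cup[r-1,r]}\big)$ is a ratio of two integrals of the product of the two end-interval survival probabilities against the joint Gaussian law of $(\bar Z,\bar W)$, the numerator carrying the extra indicator of $\ws_\epsilon$. Because the separation of $I_1$ and $I_2$ keeps $r-\ell$ in a compact subinterval of $(0,\infty)$, all these Gaussian variances, and the drift slopes $(y_i-x_i)/(r-\ell)$, are bounded above and below uniformly. Conditioning on the right-hand bundle then reduces the problem to a one-sided claim, the residual correlation between $\bar Z$ and $\bar W$ being harmless: the conditional law of $\bar Z$ remains Gaussian with uniformly controlled variance, and its mean, though possibly large, is then weighted by a correspondingly small right-hand survival factor.

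The heart of the proof is the one-sided claim: there is a constant $c>0$, depending only on $k$, $K$ and $M$, such that for $k$ independent Brownian bridges on $[\ell,r]$ issued from $\bar x$, conditioned to be mutually non-crossing and to lie above $f$ on all of $[\ell,\ell+1]$, the vector of their values at time $\ell+1$ has all consecutive gaps exceeding a fixed $\epsilon_0>0$ and has lowest coordinate a fixed distance above $f$, with probability at least $c$ --- uniformly over $\bar x$ (whose adjacent coordinates may be arbitrarily close and whose lowest coordinate may be arbitrarily close to $f(\ell)$), over the exit data, and over $f\in\CMK$. The key point making such a uniform bound possible is that $\CM^f$ is $K$-Lipschitz, so $f$ rises by at most $K$ over a unit interval; since $K$ and $M$ are fixed, the ``elevated'' requirement is a relaxation through a fixed finite height, not a diverging one. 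To prove the claim I would split $[\ell,\ell+1]$ at its midpoint using Lemma~\ref{browniandecomp}, note that the expensive part of the survival event is borne by the first half-interval, and then show that on $[\ell+\tfrac12,\ell+1]$, started from \emph{any} non-crossing-above-$f$ configuration at the midpoint, the ensemble relaxes to a well-spaced, elevated configuration at $\ell+1$ with a uniformly positive probability. The monotone couplings of Lemmas~\ref{monotonicity} and~\ref{lemmonotonetwo} reduce this last step to an extremal choice of the midpoint configuration and of the lower barrier (its least concave majorant), after which it is an explicit estimate for independent Brownian bridges, using Lemma~\ref{lembridgemod} for the law of the maximum of a bridge, Lemma~\ref{lembrownbridge} for the cost of threading a narrow gap, and Gaussian anti-concentration for the increments on $[\ell+\tfrac12,\ell+1]$, assembled in a short surgery that compares the weighted mass on non-well-spaced windows at $\ell+1$ with that on a fixed good window.

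The main obstacle is exactly this uniformity over degenerate boundary data. The acceptance probability $\wxylr\big(\ncf_{[\ell,\ell+1]}\big)$ is \emph{not} bounded below --- it tends to $0$ as the coordinates of $\bar x$ coalesce or as $x_k$ approaches $f(\ell)$ --- so $\ws_\epsilon$ cannot be bounded below outright and must be compared against the conditioning event multiplicatively. This is precisely what the midpoint split delivers: all of the smallness of $\wxylr\big(\ncf_{[\ell,\ell+1]}\big)$ is already spent on $[\ell,\ell+\tfrac12]$, so the conditional law at the midpoint, though possibly still clustered, can be routed through a fixed good window at $\ell+1$ at a cost uniformly comparable to the unconditioned survival cost on $[\ell+\tfrac12,\ell+1]$. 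Carrying every constant through this comparison so that it depends on $\bar x,\bar y$ and $f$ only through $k$, $K$, $M$, $I_1$ and $I_2$ --- rather than on the fine structure of the boundary data --- is the one genuinely delicate point; the remainder is bookkeeping with Gaussian densities and the bridge estimates of Section~\ref{BBD}.
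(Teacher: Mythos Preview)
Your midpoint-splitting argument is circular. You correctly identify the obstacle: the entrance data $\bar x$ may be arbitrarily clustered, so the acceptance probability on $[\ell,\ell+1]$ is not bounded below and everything must be proved multiplicatively. But splitting at $\ell+\tfrac12$ does not dissolve this obstacle --- it relocates it. The conditional law of the configuration at the midpoint, given $\ncf_{[\ell,\ell+1/2]}$, can still be arbitrarily clustered (nothing forces separation after half a unit of time), so your claim ``started from \emph{any} non-crossing-above-$f$ configuration at the midpoint, the ensemble relaxes to a well-spaced configuration at $\ell+1$ with a uniformly positive probability'' is exactly the one-sided claim you set out to prove, with $[\ell,\ell+1]$ replaced by $[\ell+\tfrac12,\ell+1]$ and $\bar x$ replaced by an equally uncontrolled midpoint vector. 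The monotone couplings do not rescue this: the extremal midpoint configuration for your purposes is the fully degenerate one, which is precisely what you cannot handle. Your closing reference to ``a short surgery that compares the weighted mass on non-well-spaced windows with that on a fixed good window'' gestures at the right kind of argument, but it is doing all the work and is not an argument as stated.

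The paper's proof avoids any time-splitting. It treats the two requirements in $\ws_\epsilon$ separately and asymmetrically. For the gap condition (Lemma~\ref{lemellr}), it conditions on \emph{all} of the bridge data except the single $k$-vector $\big(B_1(\ell+1),\ldots,B_k(\ell+1)\big)$; under $\wxylr$ this vector is Gaussian, and the event $\ncf_{[\ell,\ell+1]\cup[r-1,r]}$ becomes membership in a $\signc$-measurable set $\tilde Q\subset\Rkle$ that is closed under $\Rklezero$-displacement. A deterministic dilation map $\phi_j(\bar x)=(x_1+\alpha,\ldots,x_{j-1}+\alpha,x_j,\ldots,x_k)$ with $\alpha=1+\delta^{-1}(x_{j-1}+a_{j-1}-x_j-a_j)$ sends $\delta$-clustered points of $\tilde Q$ into $\tilde Q$ with Jacobian $1+\delta^{-1}$ and bounded Gaussian density ratio (Lemma~\ref{lemqn}); this yields the gap bound with \emph{high} probability $1-\epsilon$, not merely positive probability. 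The elevation condition $B_k(\ell+1)>f(\ell)+K$ is handled independently (Lemma~\ref{lembkeps}) by pushing the boundary data down via Lemmas~\ref{monotonicity} and~\ref{lemmonotonetwo} and exhibiting explicit corridors, yielding a uniform positive probability $\tilde e_0$. The two are then combined by a union bound: $\PP(\ws_\delta)\ge \tilde e_0-\epsilon(\delta)>\delta$ for $\delta$ small. The asymmetry --- one bound is $1-\epsilon$, the other is merely $\tilde e_0$ --- is what makes the combination work, and is the structural point your proposal misses.
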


\begin{proof}
Inverting the relationship in Lemma \ref{lemellr} shows that for some $\delta_0>0$, for all $\delta<\delta_0$ there exists $\e(\delta)$ (which depends also on $K,M,I_1$ and $I_2$) such that the event of the minimal gap between consecutive lines being less than $\delta$ at times $t=\ell+1$ or $r-1$, has probability at least $1-\e(\delta)$. Note that as $\delta\to 0$, so too can $\e(\delta)\to 0$. Lemma \ref{lembkeps} shows that there is an $\tilde{e}_0>0$ such that the event that $B_k(\ell+1)> f(\ell)+K$ and $B_k(r-1)>f(r)+K$ holds with probability at least $\tilde{e}_0$.

Now let $\e_0$ be chosen such that for all $\delta<\e_0$, $\tilde{e}_0-\e(\delta)>\delta$. The existence of such an $\e_0$ is guaranteed by the decay of $\e(\delta)\to 0$ as $\delta\to 0$. Combining the two bounds given above (via the union bound) shows that, for all $\delta<\e_0$,
\begin{equation*}
\wxylr \big( \ws_\delta \big\vert \ncf_{[\ell,\ell + 1] \cup [r-1,r]} \big) > \tilde{e}_0-\e(\delta) > \delta.
\end{equation*}
Hence by replacing $\delta$ by $\e$, the above bound yields the claimed result.
\end{proof}

The remainder of step 1 is devoted then to stating and proving Lemmas \ref{lemellr} and \ref{lembkeps}. This requires some initial understanding of the vectors $\big( B_1(t),\ldots,B_k(t) \big)$ under the conditioning in the first step. The next result shows that at times $\ell+1$ and $r-1$ it is unlikely that the conditioned value of $B_1$ significantly exceeds a certain value. Not only will it be useful in the proof of Lemma \ref{lemellr} but also in the ultimate proof of Proposition \ref{propconcave}.

\begin{lemma}\label{lemconmax}
There exists a constant $c = c_k > 0$ such that for all $R>0$ and all $\ell\in I_1$, $r\in I_2$, $f\in \CMK$ and $(\bar{x},\bar{y})\in \XYfM$, for $t \in \big\{ \ell + 1, r -1 \big\}$,
\begin{equation*}
\wxylr \Big( B_1(t) > M + K(r - \ell ) + (k + R)(r-\ell)^{1/2} \Big\vert \ncf_{[\ell,\ell + 1] \cup [r-1,r]} \Big) < \exp \big\{ - c R^2 \big\}.
\end{equation*}
\end{lemma}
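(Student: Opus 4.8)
The plan is to prove the lemma by induction on the number $k$ of curves, reducing each inductive step to an estimate for a single Brownian bridge conditioned to stay above a barrier of controlled height. First some reductions. The substitution $t \mapsto \ell + r - t$ interchanges the two intervals $[\ell,\ell+1]$ and $[r-1,r]$ and the roles of the entrance and exit data, so it suffices to treat $t = \ell+1$; and since $B_1(\ell+1) \le \sup_{s \in [\ell,\ell+1]\cup[r-1,r]} B_1(s)$, it is enough to bound the conditional probability that this supremum exceeds $M + K(r-\ell) + (k+R)(r-\ell)^{1/2}$. I also record the elementary fact that $f < M+K$ on $[\ell,\ell+1]\cup[r-1,r]$: the least concave majorant $\CM^f$ of $f$ on $[\ell,r]$ agrees with $f$ at the two endpoints, so $\CM^f(\ell) = f(\ell) < x_k \le M$ and $\CM^f(r) = f(r) < y_k \le M$, and, as $\CM^f$ is $K$-Lipschitz, $f \le \CM^f < M+K$ on $[\ell,\ell+1]\cup[r-1,r]$.

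For the inductive step, write $A = [\ell,\ell+1]\cup[r-1,r]$ and $\mu = \wxylr(\,\cdot \mid \ncf_A)$. The event $\ncf_A$ is the intersection of $\{B_1(s) > B_2(s)\ \forall\, s \in A\}$ with the non-crossing event for the $(k-1)$-curve sub-system $(B_2,\dots,B_k)$ above $f$; since the first event is decreasing in $(B_2,\dots,B_k)$, the monotone-coupling arguments underlying Lemmas~\ref{monotonicity} and~\ref{lemmonotonetwo} show that, under $\mu$, the sub-system $(B_2,\dots,B_k)$ is stochastically dominated pathwise by the $(k-1)$-curve $(\infty,f)$-avoiding ensemble with the same barrier and with entrance and exit data $(x_2,\dots,x_k),(y_2,\dots,y_k) \in \Rkle$, which together with $f$ still meet the conditions defining $\XYfM$ for the same $M$. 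Applying the inductive bound to that ensemble gives that, with $\mu$-probability at least $1 - \exp\{-c_{k-1}R^2/4\}$, one has $\sup_{s \in A} B_2(s) \le h_2 := M + K(r-\ell) + (k-1+R/2)(r-\ell)^{1/2}$. Moreover, conditionally on $(B_2,\dots,B_k)$, under $\mu$ the curve $B_1$ is a Brownian bridge on $[\ell,r]$ with endpoint values $x_1,y_1 \in [-M,M]$ conditioned to stay above $B_2$ on $A$. Granting the single-curve estimate below, the inductive step now follows by a union bound, applied with barrier $\gamma = B_2|_A$ (so $\sup_A \gamma \le h_2$) and $s = 1+R/2$, using the identity $h_2 + (1+R/2)(r-\ell)^{1/2} = M + K(r-\ell) + (k+R)(r-\ell)^{1/2}$; the base case $k=1$ is the same argument with $\gamma = f|_A$, $h = M+K \le M+K(r-\ell)$ and $s = 1+R$. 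One then decreases the constant to obtain $c_k$.

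The crux is the single-curve estimate: for $\beta$ a Brownian bridge on $[\ell,r]$ with $\beta(\ell),\beta(r) \in [-M,M]$, $\gamma$ continuous on $A$ with $\sup_A \gamma \le h$ (and $h \ge M$), $\gamma(\ell)<\beta(\ell)$, $\gamma(r)<\beta(r)$, and $\PP(\beta > \gamma \text{ on }A)>0$, there is an absolute $c>0$ with $\PP\big(\sup_A \beta > h + s(r-\ell)^{1/2} \,\big|\, \beta > \gamma \text{ on } A\big) \le e^{-cs^2}$ for all $s>0$. The point is that the conditioning raises $\beta$ only as far as needed to clear $\gamma \le h$, so $\sup_A\beta$ should exceed $h$ only by a Gaussian-scale fluctuation. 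To implement this I would first condition on the values $\beta(\ell+1)$ and $\beta(r-1)$, rendering the restrictions of $\beta$ to the two unit intervals conditionally independent Brownian bridges, each conditioned to stay above the corresponding part of $\gamma$; on each unit interval one stops at the first time $\beta$ reaches level $h + \tfrac{s}{2}(r-\ell)^{1/2}$, observes that at that time $\beta$ already lies strictly above $\gamma \le h$ so that the remaining barrier constraint is locally inoperative, and bounds the probability of a further rise by $\tfrac{s}{2}(r-\ell)^{1/2}$ using the exact maximum-of-Brownian-bridge formula (the generalisation of Lemma~\ref{lembridgemod}, the affine part of the bridge being downward-sloping from that high level), producing the factor $\exp\{-2(\tfrac{s}{2}(r-\ell)^{1/2})^2/(r-\ell)\} = \exp\{-s^2/2\}$.

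The main obstacle is making this single-curve estimate uniform over the barrier $\gamma$ while $\PP(\beta > \gamma \text{ on }A)$ may be arbitrarily small — which is exactly why the crude bound $\PP(\sup_A\beta > h+\cdot)/\PP(\beta>\gamma\text{ on }A)$ fails, the smallness of the denominator reflecting the cost of separating the \emph{lower} curves, a cost that the induction has already absorbed. Concretely, when one drops the barrier constraint on the interval where $\beta$ is high, that constraint must be retained on the \emph{other} unit interval in order to recover the denominator, so the two intervals are not handled in complete isolation; I would address this by carrying the conditioning on both $\beta(\ell+1)$ and $\beta(r-1)$ through the estimate and by dominating the conditioned bridge on each unit interval, via the $k=1$ case of Lemma~\ref{monotonicity}, by a bridge conditioned to avoid a tame (piecewise-linear, bounded-slope) majorant of $\gamma$ near the endpoints, after which its upper tail is genuinely Gaussian at scale $(r-\ell)^{1/2}$ about level $h$; the required a priori lower bounds on barrier-clearing probabilities come from Corollary~\ref{coropen} and Lemma~\ref{lembrownbridge}, and, if necessary, the inductive hypothesis can be strengthened to bound the modulus of continuity of $B_2$ on $A$ as well, so as to produce such a tame majorant of $\gamma = B_2|_A$.
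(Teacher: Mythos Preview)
Your inductive plan hinges on the single-curve estimate in your last two paragraphs, and you yourself flag it as the ``main obstacle'' without actually proving it. The suggested patches --- a tame piecewise-linear majorant of $\gamma$, a strengthened inductive hypothesis controlling the modulus of continuity of $B_2$, stopping-time arguments on each unit interval --- are vague, and the stopping-time sketch in particular does not close: once you condition on $\beta(\ell+1)$ and $\beta(r-1)$ you must still control the law of those very values under the barrier conditioning, which is the content of the lemma. So as written the proof has a genuine gap.

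The clean resolution of your single-curve estimate is not to tame the barrier but to \emph{raise the endpoints}: by Lemma~\ref{lemmonotonetwo}, replacing $\beta(\ell),\beta(r)\in[-M,M]$ by $h+(r-\ell)^{1/2}$ only increases the conditioned bridge, and for this new bridge the barrier-clearing event $\{\beta'>\gamma\text{ on }A\}$ contains $\{\beta'>h\text{ on }[\ell,r]\}$, whose probability is at least $1-2e^{-2}$ by Lemma~\ref{lembridgemod}. Then the crude ratio bound
\[
\PP\big(\sup_A\beta'>h+s(r-\ell)^{1/2}\,\big|\,\beta'>\gamma\text{ on }A\big)
\le \frac{\PP\big(\sup_{[\ell,r]}\beta'>h+s(r-\ell)^{1/2}\big)}{1-2e^{-2}}
\le \frac{e^{-2(s-1)^2}}{1-2e^{-2}}
\]
gives the Gaussian tail you want, with a constant independent of $\gamma$. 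Once you see this, the induction becomes superfluous: the paper applies exactly this endpoint-raising idea to all $k$ curves simultaneously, choosing $x_j'=y_j'=\max\{x_1,y_1\}+K(r-\ell)+(2k+1-2j)(r-\ell)^{1/2}$ so that the curves sit in disjoint width-$2(r-\ell)^{1/2}$ horizontal strips above $f$. Then the full non-crossing probability $\wxylrprime(\ncf_A)$ is bounded below by $(1-2e^{-2})^k$, and the top curve under $\wxylrprime$ is a free Brownian bridge whose maximum is controlled by Lemma~\ref{lembridgemod}. One application of Lemma~\ref{lemmonotonetwo} and one ratio bound finish the proof in a few lines, with no recursion and no need for the auxiliary stochastic-domination claim about the lower $k-1$ curves (which, incidentally, is plausible but not a direct consequence of the stated monotonicity lemmas).
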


\begin{proof}
For $1 \leq j \leq k$, set
\begin{equation*}
x'_j = y_j' = \max \big\{ x_1, y_1 \big\}  + K(r - \ell) + \big(2k+ 1 - 2j \big)(r-\ell)^{1/2}.
\end{equation*}
These points have been selected so that
$x_j' \geq x_j$ and $y_j' \geq y_j$
for each $1 \leq j \leq k$ and so that
\begin{equation}\label{fmaxprime}
\sup_{t \in [\ell,r]} f(t) \leq x_k' - \big(r-\ell\big)^{1/2} = y_k' - \big(r-\ell\big)^{1/2}.
\end{equation}
This bound is due to the fact that $f \in \CMK$, $x_k \geq f(\ell)$ and $y_k \geq f(r)$, and is explained in Figure~\ref{figconmax}.

\begin{figure}
\begin{center}
\includegraphics[width=0.7\textwidth]{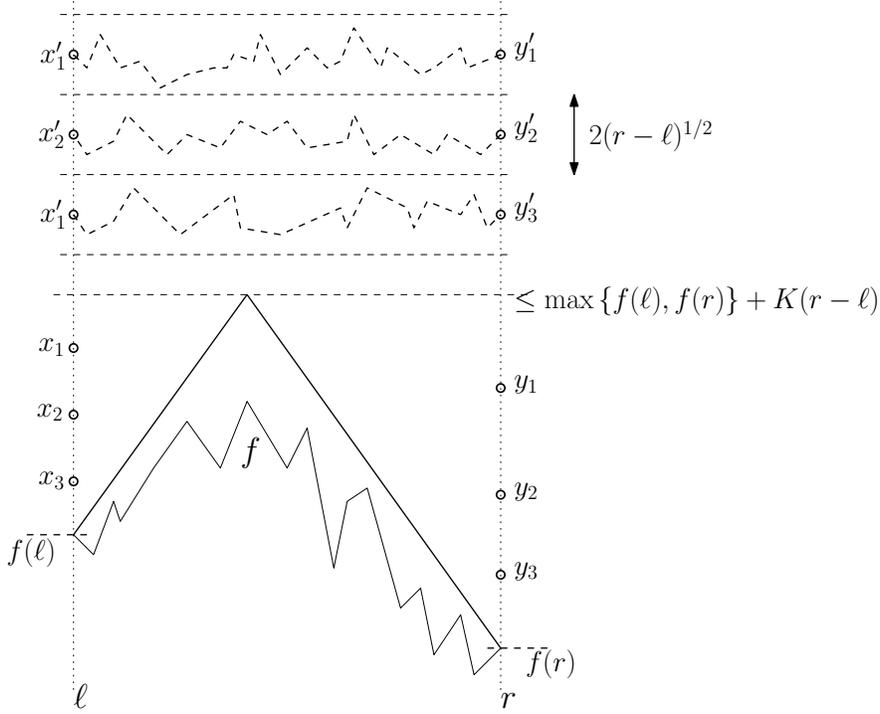}
\end{center}
\caption{Illustrating the proof of Lemma \ref{lemconmax} for the case $k=3$. The bold lines emanating from $\big(\ell,f(\ell)\big)$ and  $\big(r,f(r)\big)$ have slopes $K$ and $-K$, and thus lie above $f \in \CMK$. The lines' point of intersection, should it have $x$-coordinate in $[\ell,r]$, lies at most $K$ units above either of this pair of points. As such, the three dashed rectangular regions in the upper part of the figure are all disjoint from the curve $f$; each has positive probability of containing a Brownian bridge such as those drawn here.}\label{figconmax}
\end{figure}

We claim now that
\begin{equation}\label{eqnclowerbd}
 \wxylrprime \Big( \ncf_{[\ell,r]} \Big) \geq \Big( 1 - 2e^{-2} \Big)^k.
\end{equation}
To verify (\ref{eqnclowerbd}), note that (\ref{fmaxprime}) implies that if the curve $B_k$ does not leave the interval of width $2(r - \ell)^{1/2}$ centered at $x_k'=y_k'$, then it will not meet $f$. Similarly, if each curve $B_i:[\ell,r] \to \R$ does not leave the interval of width $2(r - \ell)^{1/2}$ centered at $x_i'=y_i'$, then mutual self-avoidance of these curves will be achieved. Calling $c$ the probability that the supremum of the modulus of standard Brownian bridge on $[0,r - \ell]$ does exceed $(r-\ell)^{1/2}$ (noter that $c$ is a constant by virtue of  Brownian scaling). The left-hand side of (\ref{eqnclowerbd}) is at least $(1-c)^k$. By Lemma~\ref{lembridgemod}, $c \leq 2 e^{-2}$ and thus we establish the right-hand side of (\ref{eqnclowerbd}).

We now apply the monotonicity Lemma \ref{lemmonotonetwo} with the choice $A = [\ell,\ell+1] \cup [r-1,r]$
to conclude the proof. Indeed, due to the coupling provided by that lemma,
\begin{eqnarray*}
 & & \wxylr \Big( B_1(\ell + 1) > M + K(r - \ell ) + (2k + R)(r-\ell)^{1/2} \Big\vert \ncf_A \Big) \nonumber \\
 & \leq & \wxylrprime \Big( B_1(\ell + 1) > M + K(r - \ell ) + (2k + R)(r-\ell)^{1/2} \Big\vert \ncf_A \Big). \nonumber
\end{eqnarray*}
However,
\begin{eqnarray}
& & \wxylrprime \Big( B_1(\ell + 1) > M +  K(r - \ell ) + (2k + R)(r-\ell)^{1/2} \Big\vert \ncf_A \Big) \nonumber \\
 & \leq &
\frac{\wxylrprime \Big( B_1(\ell + 1) > M + K(r - \ell ) + (2k + R)(r-\ell)^{1/2} \Big)}{\wxylrprime \big( \ncf_A \big)}
  \label{eqwlinetwo} \\
 & \leq &  \big( 1 - 2e^{-2} \big)^{-k} \exp \big\{ -2 k (R + 1)^2 \big\}, \nonumber
\end{eqnarray}
To go from the second line to the third involves bounding the numerator from above and the denominator from below. The denominator is bounded by noting that $\wxylrprime(\ncf_{A})\geq \wxylrprime(\ncf_{[\ell,r]})$ and then using (\ref{eqnclowerbd}). For the numerator, recall that under the measure $\wxylrprime$, the path $B_1:[\ell,r]\to \R$ satisfies $B_1(\ell)=x_1'$ and $B_1(r)=y_1'$. Noting that $x'_1 =y'_1  \leq M + K(r - \ell) + (2k - 1)(r-\ell)^{1/2}$,  the event in question may be realized only if the maximum value attained by $B_1$ exceeds its common endpoint value of $x_1'$ by at least $(R + 1)(r - \ell)^{1/2}$; hence, Lemma~\ref{lembridgemod} provides the bound on the numerator.
\end{proof}
The next lemma was one of the main pieces necessary for the proof of Lemma \ref{lemellplusone}.
\begin{lemma}\label{lemellr}
For all $\e > 0$, there exists $\delta=\delta(\e,K,M,I_1,I_2)>0$ such that, for all $\ell\in I_1$, $r\in I_2$, $f\in \CMK$, $(\bar{x},\bar{y})\in \XYfM$, and $t\in \{\ell+1,r-1\}$,
\begin{equation*}
\wxylr \Big( \min_{1 \leq i \leq k-1}\vert B_{i}(t) - B_{i+1}(t)\vert > \delta  \Big\vert \ncf_{[\ell,\ell+1] \cup [r-1,r]}  \Big) > 1 - \e.
\end{equation*}
\end{lemma}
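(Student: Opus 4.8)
\emph{Plan.} By symmetry between the two sub-intervals it suffices to treat the time $t=\ell+1$, and by a union bound over $i\in\{1,\dots,k-1\}$ it is enough to show, for each fixed $i$, that $\wxylr\big(B_i(\ell+1)-B_{i+1}(\ell+1)\le\delta\,\big|\,\ncf_{[\ell,\ell+1]\cup[r-1,r]}\big)\to 0$ as $\delta\downarrow 0$, uniformly over $\ell\in I_1$, $r\in I_2$, $f\in\CMK$, $(\bar x,\bar y)\in\XYfM$; abbreviate $\mathcal N:=\ncf_{[\ell,\ell+1]\cup[r-1,r]}$, write $\bar a=(B_1(\ell+1),\dots,B_k(\ell+1))$ and $\bar b=(B_1(r-1),\dots,B_k(r-1))$, and note $r-\ell$ ranges over a compact set, say $r-\ell\in[3,L]$. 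The first move is to produce a bounded window that $\bar a$ occupies with high conditional probability. The upper bound is immediate from Lemma~\ref{lemconmax}: choosing $R$ large there is $M'=M'(\e,K,M,L)$ with $\wxylr(B_1(\ell+1)>M'\,|\,\mathcal N)<\e/4$. For a lower bound on the lowest curve I would strip away the curves above $B_k$ and the floor $f$: by a monotone coupling in the spirit of Lemmas~\ref{monotonicity} and~\ref{lemmonotonetwo}, $B_k$ under $\wxylr(\cdot\,|\,\mathcal N)$ stochastically dominates a single Brownian bridge on $[\ell,r]$ from $x_k$ to $y_k$ avoiding $f$ on $[\ell,\ell+1]\cup[r-1,r]$, which by Lemma~\ref{monotonicity} dominates a free bridge from $x_k$ to $y_k$, which by Lemma~\ref{lemmonotonetwo} dominates $-M$ plus a standard bridge on $[\ell,r]$ (using $x_k,y_k\ge-M$); hence $\wxylr(B_k(\ell+1)<-M'\,|\,\mathcal N)$ is at most a Gaussian tail, $<\e/4$ after enlarging $M'$. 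Thus the band event $\mathsf B:=\{\bar a\in[-M',M']^k\}$ satisfies $\wxylr(\mathsf B^c\,|\,\mathcal N)<\e/2$, uniformly.

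Next I would, on $\mathsf B$, compare the conditional law of $\bar a$ to a simpler reference law through a bounded Radon--Nikodym factor. Decomposing each bridge $B_j$ on $[\ell,r]$ through its values at $\ell+1$ and $r-1$ (Lemma~\ref{browniandecomp}), conditionally on $(\bar a,\bar b)$ the three pieces are independent, the piece on $[\ell,\ell+1]$ being $k$ independent bridges from $\bar x$ to $\bar a$, and $\mathcal N$ factorizes as a left event (depending only on the $[\ell,\ell+1]$ pieces and $f|_{[\ell,\ell+1]}$) and a right event. This gives an explicit formula: the conditional density of $\bar a$ at $\bar\alpha$ is proportional to $p_L(\bar\alpha)\big(\prod_j\rho_j(\alpha_j)\big)\EE[p_R(\bar b)\,|\,\bar a=\bar\alpha]$, where $p_L(\bar\alpha)$ is the probability that $k$ independent bridges from $\bar x$ to $\bar\alpha$ on the unit interval $[\ell,\ell+1]$ avoid one another and $f$, $\rho_j$ is the (Gaussian, variance bounded below since $r-\ell\ge 3$) marginal density of $B_j(\ell+1)$, and $p_R$ is the analogous right-hand quantity. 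Comparing with the law of $\bar W(\ell+1)$, where $\bar W$ are $k$ independent Brownian motions started at $\bar x$ on $[\ell,\ell+1]$ conditioned on the same left non-crossing event (density proportional to $p_L(\bar\alpha)\prod_j\phi(\alpha_j-x_j)$ with $\phi$ the unit-variance Gaussian density), the Radon--Nikodym derivative is proportional to $\prod_j\rho_j(\alpha_j)\,\EE[p_R\,|\,\bar a=\bar\alpha]\big/\prod_j\phi(\alpha_j-x_j)$. On $\mathsf B$ with $\bar x\in[-M,M]^k$, numerator and denominator are bounded above and below by constants depending only on $\e,K,M,L$; for the normalising constant one checks, using the band estimate above and the absolute continuity of a bridge over $[\ell,r]$ with respect to Brownian motion over $[\ell,\ell+1]$, that the left non-crossing mass concentrates on the band. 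Consequently it suffices to prove the small-gap bound for the law of $\bar W(\ell+1)$ conditioned on $\ncf_{[\ell,\ell+1]}$.

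For that final reduction I would show: for $k$ independent Brownian motions on an interval of length one started from an arbitrary $k$-decreasing list, conditioned to remain mutually ordered and above $f$, the probability that $W_i-W_{i+1}$ is at most $\delta$ at the right endpoint tends to $0$ as $\delta\downarrow 0$, uniformly. Peeling off the top $i-1$ curves and using the monotone couplings, $W_i-W_{i+1}$ is bounded below by the gap in a two-curve avoiding sub-system, whose difference process is $2^{1/2}$ times a Brownian motion started at the (possibly tiny) gap $x_i-x_{i+1}\ge 0$ and conditioned to stay positive on $[\ell,\ell+1]$; a first-passage/reflection computation in the spirit of Lemmas~\ref{lembridgemod} and~\ref{lembrownbridge} then bounds the probability that such a conditioned process lies below $\delta$ at the right endpoint by $O(\delta^{1/2})$, uniformly in the starting gap (the conditioning on positivity removes any atom at $0$, and the bound stays uniform as the starting gap shrinks to $0$). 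Combining the three steps, taking $\delta$ small, and unioning over $i$ and over the two times $\ell+1$ and $r-1$ yields the lemma.

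\emph{Main obstacle.} The crux is the complete absence of a lower bound on the spacing of the entrance and exit data: adjacent coordinates of $\bar x$ or $\bar y$ may be arbitrarily close, so $\ncf_{[\ell,\ell+1]}$ has probability that is \emph{not} bounded below, and one cannot estimate the conditional probability by bounding numerator and denominator separately. The device of the middle step -- leaving the non-crossing weight $p_L(\bar\alpha)$ untouched and transferring only the bounded ratio of Gaussian factors -- is what makes the estimate uniform, reducing everything to the clustering-robust bound of the last step. A secondary subtlety is producing the lower bound on $B_k(\ell+1)$ uniformly in $f$, since $f$ itself need not be bounded below; this is handled by the monotone domination by a free bridge with endpoints in $[-M,M]$.
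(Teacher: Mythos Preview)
Your reduction in Step~3 contains a genuine error: the monotone-coupling ``peeling'' inequality points the wrong way. Adding constraints from neighbouring curves (an upper barrier $W_{i-1}$ on $W_i$, or a lower barrier $W_{i+2}$ or $f$ on $W_{i+1}$) \emph{squeezes} $W_i$ and $W_{i+1}$ together, so the gap $W_i-W_{i+1}$ in the full $k$-curve conditioned system is stochastically \emph{smaller} than in the free two-curve system --- not larger. (Concretely: conditioning on the extra event $W_{i-1}>W_i$ pushes $W_i$ down more than it pushes $W_{i+1}$ down; conditioning on $W_{i+1}>W_{i+2}$ pushes $W_{i+1}$ up more than it pushes $W_i$ up. Both effects shrink the gap.) Thus the two-curve Bessel-type bound $P(\text{gap}\le\delta)=O(\delta^{1/2})$ gives only an \emph{upper} bound on the full-system gap, which is useless for showing the gap is rarely small. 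There is no monotone coupling in Lemmas~\ref{monotonicity}--\ref{lemmonotonetwo} (or their natural extensions) that controls \emph{gaps} rather than individual curve heights.

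Step~2 has a related, though subtler, difficulty: the ratio $Z_{\text{ref}}/Z$ of normalising constants is not uniformly bounded in $\bar y$. When $\bar y$ has close pairs, $E[p_R(\bar b)\mid\bar a=\bar\alpha]$ is uniformly small and enters $Z$ but not $Z_{\text{ref}}$; one cannot simply assert that this factor ``cancels'', because its dependence on $\bar\alpha$ over the band is not a priori bounded above and below by fixed constants (the Gaussian density ratios in the integrand over $\bar\beta$ are not pointwise bounded once $\bar\beta$ leaves a compact set).

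The paper avoids both obstacles by a completely different device. It conditions on the sigma-field $\Sigma$ generated by \emph{all} bridge data except the $k$-vector $\big(B_1(\ell+1),\dots,B_k(\ell+1)\big)$; under $\wxylr(\cdot\mid\Sigma)$ this vector is Gaussian, and the event $\ncf_{[\ell,\ell+1]\cup[r-1,r]}$ becomes membership in a ($\Sigma$-measurable) set $\tilde Q\subset\Rkle$ that is closed under adding any strictly decreasing positive vector. Lemma~\ref{lemqn} then shows, by a dilation map $\phi_j(\bar x)=(x_1+\alpha,\dots,x_{j-1}+\alpha,x_j,\dots,x_k)$ with $\alpha\approx\delta^{-1}(x_{j-1}-x_j)$, that the close-pair region in $\tilde Q$ has conditional Gaussian mass $O(\delta)$: the map carries the close-pair set into $\tilde Q$, has Jacobian $1+\delta^{-1}$, and changes the Gaussian density by a bounded factor on the band. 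This argument needs no comparison of normalising constants across different $\bar x,\bar y,f$ and no reduction to a two-curve system; the displacement-closure of $\tilde Q$ encodes all the monotonicity that is required.
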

\begin{figure}
\begin{center}
\includegraphics[width=0.7\textwidth]{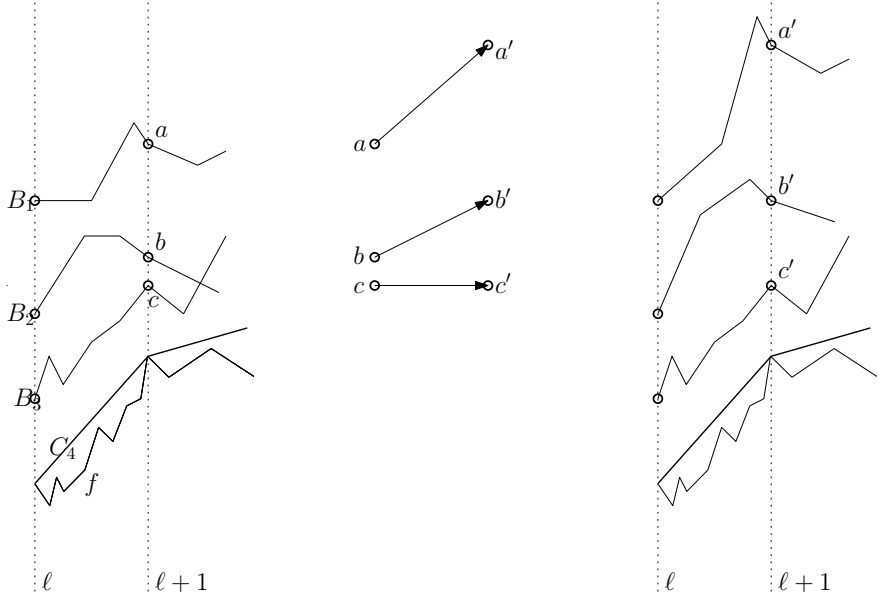} \\
\end{center}
\caption{Illustrating the proof of Lemma \ref{lemellr} in the case that $k=3$. Given the event $\nc_{[\ell,\ell+1] \cup [r-1,r]}$ and conditionally on all the data specifying the top three curves except for their values at $\ell + 1$, the scenario that two among these three values are very close to one another will be shown to be very unlikely.
We will establish this by showing that, for each possible way that two points in the triple may be very close, there is a much probable alternative where this is not the case. This alternative will be found by pushing some elements in the triple upwards, as in the depicted transformation $(a,b,c) \to (a',b',c')$; the pushing map will be chosen to dilate space, so that close-pair regions in the domain of the map are associated with regions in the range in such a way that the range is much more probable than the domain.}\label{figwssketch}
\end{figure}
The proof of Lemma \ref{lemellr} will need some further notation, and another result (Lemma~\ref{lemqn}), but the main idea of the proof is  straightforward to grasp (and is illustrated in Figure~\ref{figwssketch} for the case $t = \ell +1$). Using the decomposition in Lemma \ref{browniandecomp}, we will condition $\wxylr$ on all the information specifying the curves $B_i:[\ell,r] \to \R$, $1 \leq i \leq k$, except for the vector $\big(B_1(\ell+1),\ldots,B_k(\ell + 1)\big)$. The conditioned data generates a sigma-field which we call $\signc$.  This data consists of $k$ pairs of functions,
 $\tilde{B}^i_1:[0,1] \to \R$ and $\tilde{B}^i_2:[0,r-\ell - 1] \to \R$, $1 \leq i \leq k$, with each of these $2k$ functions equal to zero at each endpoint; $\tilde{B}^i_1$ and $\tilde{B}^i_2$
are the affine (and time-translated) shifts of $B_i$ on $[\ell,\ell+1]$ and on $[\ell+1,r]$ chosen so that the endpoint values vanish; as such, the data that remains obscure to a witness of $\signc$
is precisely the $k$-vector  $\big(B_1(\ell+1),\ldots,B_k(\ell + 1)\big)$.
Under $\wxylr$ conditionally on $\signc$, this $k$-vector is normally distributed. We are interested in whether the event $\ncf_{[\ell,\ell+1] \cup [r-1,r]}$ occurs.  There is a certain set (which we will call $\tilde{Q} \subseteq \R^k$ and formally define in the actual proof of Lemma \ref{lemellr}) of choices for this Gaussian $k$-vector which realize  $\ncf_{[\ell,\ell+1] \cup [r-1,r]}$; $\tilde{Q}$ is random and $\signc$-measurable. We will prove Lemma \ref{lemellr} by arguing that, for typical choices of the information recorded by $\signc$, the set $\tilde{Q}$ does not usually contain pairs of points that are extremely close. To establish this, we need a definition.
\begin{definition}\label{closed-disp}
Write $\Rklezero = \big\{ \bar{y} \in \R^k: y_1 > \ldots > y_k > 0 \big\}$, and note that $\Rklezero \subseteq \Rkle$. A subset $Q\subseteq \Rkle$ is {\it closed under $\Rklezero$-displacement} if whenever $\bar{x}\in Q$ and $\bar{y} \in \Rklezero$, then $\bar{x} + \bar{y}\in Q$ as well.
\end{definition}
Note that, given the bridge data $\tilde{B}^i_1$ and $\tilde{B}^i_2$ for $1\leq i\leq k$,
the set $\tilde{Q}$ of acceptable choices for the vector $\big(B_1(\ell+1),\ldots,B_k(\ell + 1)\big)$ is indeed closed under  $\Rklezero$-displacement. For this reason, the next lemma is the general statement about $k$-tuples of i.i.d. Gaussian random variables that we will invoke to show that clustering of the elements in $\tilde{Q}$ does not usually occur.
\begin{lemma}\label{lemqn}
Let $N=\{N_1,\ldots,N_k\}$ denote a Gaussian random variable with independent entries each of variance $\sigma^2$, and let $Q \subseteq \Rkle$ be closed under $\Rklezero$-displacement. For $\bar{a}\in \R^k_>$, let $\mu_{\bar{a},\sigma, Q}$ denote the conditional distribution of $N$ given that $\bar{a} + N \in Q$.

Fix $M>0$ and $c\in (0,1)$. Then for all $\e>0$, there exists $\delta= \delta(\e,M,c)$ such that, for  $\bar{a}\in \R^k_>$ satisfying $||\bar{a}||_{\infty}\leq M$,  $\sigma\in (c,c^{-1})$ and  $Q \subseteq \Rkle$ closed under $\Rklezero$-displacement,
\begin{equation}\label{mueqn}
\mu_{\bar{a},\sigma, Q} \Big\{ \bar{x} \in \R^k : \big\vert (x_i + a_i) - (x_j + a_j) \big\vert < \delta, x_1 \leq M  \Big\} < \e \textrm{ for each } 1 \leq i < j \leq k.
\end{equation}
\end{lemma}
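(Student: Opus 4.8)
The plan is to reduce the bound (\ref{mueqn}) to a Gaussian change-of-variables estimate. Write $\gamma$ for the law of $N$ on $\R^k$ (a product of centred Gaussians of variance $\sigma^2$), and set $Q'=\{\bar{x}\in\R^k:\bar{a}+\bar{x}\in Q\}$, so that $\mu_{\bar{a},\sigma,Q}(\cdot)=\gamma(\cdot\cap Q')/\gamma(Q')$. Fix a pair $i<j$ and write $g(\bar{x})=(x_i+a_i)-(x_j+a_j)$; note $g>0$ on $Q'$, since there $\bar{a}+\bar{x}$ is strictly decreasing. Hence the set in (\ref{mueqn}), intersected with the support of $\mu$, is exactly $A:=Q'\cap\{g(\bar{x})<\delta,\ x_1\le M\}$, and it suffices to produce $A'\subseteq Q'$ with $\gamma(A')\ge\delta^{-1}c_2\,\gamma(A)$ for some constant $c_2=c_2(M,k,c)>0$: then $\gamma(Q')\ge\gamma(A')\ge\delta^{-1}c_2\,\gamma(A)$, whence $\mu_{\bar{a},\sigma,Q}(A)\le\delta/c_2$, and taking $\delta<c_2\e$ finishes the proof (uniformly over the finitely many pairs $i<j$).

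The set $A'$ will be the image of $A$ under an explicit dilating ``push-up'' map. Let $v=(k,k-1,\dots,1)\in\Rklezero$, let $\kappa=(\delta^{-1}-1)/(j-i)$, and define $\Phi(\bar{x})=\bar{x}+\kappa\,g(\bar{x})\,v$. Since $g$ is affine, $\Phi$ is an affine self-map of $\R^k$ with linear part $I+\kappa\,v\,(e_i-e_j)^{T}$ (the $e_i$ being the standard basis vectors), whose determinant, by the matrix determinant lemma, equals $1+\kappa(v_i-v_j)=1+\kappa(j-i)=\delta^{-1}$; in particular $\Phi$ is a bijection with constant Jacobian $\delta^{-1}$. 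On $A$ we have $\kappa\,g(\bar{x})>0$, so $\kappa\,g(\bar{x})\,v\in\Rklezero$, and closedness of $Q$ under $\Rklezero$-displacement gives $\Phi(\bar{x})+\bar{a}\in Q$, i.e.\ $\Phi(A)\subseteq Q'$; we take $A'=\Phi(A)$.

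It then remains to compare the Gaussian masses of $A$ and $\Phi(A)$. Because $g(\bar{x})<\delta$ on $A$, the displacement $\|\Phi(\bar{x})-\bar{x}\|=\kappa\,g(\bar{x})\,\|v\|<(1-\delta)\|v\|/(j-i)\le\|v\|$ is bounded independently of $\delta,\sigma,\bar{a}$ and $Q$. Moreover, on $A$ the constraint $x_1\le M$, together with $\bar{a}+\bar{x}$ decreasing and $\|\bar{a}\|_\infty\le M$, forces $x_\ell\le 3M$ for every $\ell$, so $\langle\bar{x},v\rangle\le 3M\|v\|_1$; consequently $\|\Phi(\bar{x})\|^2-\|\bar{x}\|^2=2\kappa g(\bar{x})\langle\bar{x},v\rangle+\kappa^2 g(\bar{x})^2\|v\|^2\le 6M\|v\|_1+\|v\|^2=:C_1$, once again uniformly (here one uses only $\kappa g(\bar{x})<1$). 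Hence the Gaussian density at $\Phi(\bar{x})$ is at least $\exp(-C_1/(2\sigma^2))\ge\exp(-C_1/(2c^2))=:c_2>0$ times the density at $\bar{x}$, using $\sigma>c$, and the change-of-variables formula for the affine bijection $\Phi$ yields $\gamma(A')=\int_A(\text{density at }\Phi(\bar{x}))\,\delta^{-1}\,d\bar{x}\ge\delta^{-1}c_2\,\gamma(A)$, as required.

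The one genuinely delicate point — and the main obstacle — is to dilate space by the large factor $\delta^{-1}$ in the ``gap direction'' $e_i-e_j$ (which is precisely what supplies the factor $\delta^{-1}$ in measure) while moving every point only a bounded distance, so that the Gaussian density is not degraded, and to verify that the unbounded directions of $Q'$ (the lower coordinates $x_\ell$, which may be arbitrarily negative) cannot spoil this density comparison. The map $\Phi(\bar{x})=\bar{x}+\kappa g(\bar{x})v$ is engineered exactly for this purpose: the three uniform bounds above — constant Jacobian $\delta^{-1}$, displacement at most $\|v\|$, and a density ratio depending only on $M,k,c$ — are the substance of the argument, and everything else is routine bookkeeping.
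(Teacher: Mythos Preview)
Your proof is correct and follows the same strategy as the paper's: build an affine ``push-up'' map from the bad set $A$ into $Q'$ with Jacobian of order $\delta^{-1}$ and Gaussian density ratio bounded below by a constant depending only on $M,k,c$, then conclude $\mu(A)\le\delta/c_2$. The only difference is the displacement direction --- the paper pushes coordinates $1,\ldots,j-1$ by a common $\bar{x}$-dependent amount, whereas you push all coordinates along $v=(k,k-1,\ldots,1)$ --- which is cosmetic (and your choice has the minor virtue that $\kappa g(\bar{x})v$ lies strictly in $\Rklezero$, so the displacement-closure hypothesis on $Q$ applies directly).
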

\begin{proof}
For $j \in \{2,\ldots,k\}$ define $\phi_j: \Rkle \to \Rkle$ by
\begin{equation*}
\phi_j \big( x_1,\ldots,x_k \big) = \big( x_1 + \alpha, \ldots, x_{j-1} + \alpha,x_j,\ldots,x_k \big),
\end{equation*}
where $\alpha = 1 + \delta^{-1} \big((x_{j-1} + a_{j-1}) - (x_j + a_j) \big)$, with $\delta > 0$ to be specified.

Let $\nu$ denote the distribution of $\bar{a} + N$. Thus, for $B \subseteq Q$,
\begin{equation*}
\mu_{\bar{a},\sigma, Q}(B) = \nu(B)/\nu(Q).
\end{equation*}

For $1 \leq i < j \leq k$, set
\begin{equation*}
B^\delta_{i,j} = \Big\{ \bar{x} \in \R^k: \big\vert (x_i + a_i) - (x_j + a_j) \big\vert < \delta, x_1\leq M \Big\} \cap Q.
\end{equation*}
Then $\phi_j(B^\delta_{i,j}) \subseteq Q$. Hence,
\begin{equation}\label{eqmubeps}
\mu_{\bar{a},\sigma, Q}\big(B^\delta_{i,j}\big) \leq \frac{\nu(B^\delta_{i,j})}{\nu \big( \phi_j(B^\delta_{i,j}) \big)}.
\end{equation}
We estimate the right-hand side of (\ref{eqmubeps}) by finding a lower bound for $\nu(\phi_j(B^\delta_{i,j}))$ in terms of $\nu(B^\delta_{i,j})$. To do so, we write $p:\R^k \to [0,\infty)$ for the density of the Gaussian vector $N$, and note that
\begin{equation}\label{eqnu}
\nu\big( \phi_j(B^\delta_{i,j}) \big) = \int_{\phi_j(B^\delta_{i,j})} p(\bar{x} - \bar{a}) d^k \bar{x} = \int_{B^\delta_j}
  p \big( \phi_j(\bar{x}) - \bar{a} \big) J_{\bar{x}} d^k \bar{x},
\end{equation}
where $J_{\bar{x}} \geq 0$ is the Jacobian of $\phi_j:\R^k \to \R^k$ at $\bar{x}$. Note that $J_{\bar{x}} = 1+\delta^{-1}$.
Observe that
\begin{equation*}
 \frac{p \big( \phi_j(\bar{x}) - \bar{a} \big)}{p(\bar{x} - \bar{a})}
=  \prod_{l = 1}^{j-1} \exp \bigg\{ -\frac{1}{2\sigma^2} \bigg( \Big( x_l - a_l
 + 1 + \delta^{-1}\big( (x_i + a_i) - (x_j + a_j) \big) \Big)^2 - \big(x_l - a_l \big)^2 \bigg)\bigg\}.
\end{equation*}
Note also that, if $\bar{x} \in B^\delta_{i,j}$, then $x_l - a_l \leq 2M$ for each $1 \leq l \leq k$ and $\big\vert (x_i + a_i) - (x_j + a_j) \big\vert \leq \delta$. Hence, for such $\bar{x}$,
\begin{equation*}
 \frac{p \big( \phi_j(\bar{x}) - \bar{a} \big)}{p(\bar{x} - \bar{a})}
     \geq \exp \Big\{ - \tfrac{1}{2\sigma^2}(j-1)( 8 M + 4 ) \Big\}.
\end{equation*}
We find from (\ref{eqnu}) that
\begin{eqnarray*}
\nu\big( \phi_j(B^\delta_{i,j}) \big) & \geq &  (1+\delta^{-1}) \exp \Big\{ - \tfrac{1}{2\sigma^2}(j-1)( 8 M + 4 ) \Big\} \int_{B^\delta_j} p\big( \bar{x} - \bar{a} \big) d^n \bar{x} \\
& = & (1+ \delta^{-1}) \exp \Big\{ - \tfrac{1}{2\sigma^2}(j-1)( 8 M + 4 ) \Big\} \nu \big( B^\delta_j \big).
\end{eqnarray*}
Using (\ref{eqmubeps}), we obtain
\begin{equation*}
\mu_{\bar{a},\sigma, Q} \big( B^\delta_{i,j} \big) \leq \frac{1}{1+\delta^{-1}} \exp \Big\{ \tfrac{1}{2\sigma^2}(j-1)( 8 M + 4 ) \Big\}.
\end{equation*}
We see that, by choosing $\delta = \e \exp \Big\{ \tfrac{1}{2\sigma^2}(j-1)( 8 M + 4 ) \Big\}$, (\ref{mueqn}) holds as desired.\end{proof}
\begin{proof}[Proof of Lemma \ref{lemellr}.]
We will consider only the case $t = \ell + 1$ as that of $t=r-1$ follows similarly.
We begin by making precise the notion of conditioning on the data in the top $k$ curves except for their values at $\ell + 1$.
Recall that, under the law $\wxylr$, the curve $B_i:[\ell,r] \to \R$ is distributed as a Brownian bridge that travels from $B_i(\ell) = x_i$ to $B_i(r) = y_i$. Write $\tilde{B}_i:[0,r-\ell] \to \R$ for the affine shift of $B_i$ that is zero at the endpoints, given by the formula
\begin{equation*}
\tilde{B}_i(s) = B_i(s + \ell) - x_i -  s \frac{y_i - x_i}{r - \ell}.
\end{equation*}
We may apply the decomposition of Lemma \ref{browniandecomp} for $\tilde{B}_i$ with the choice $j = 2$ and $t_1  = 1$ (and $t_0=0$).  As such, we write
\begin{equation*}
\tilde{B}_i(s) = {\bf 1}_{s\leq 1}\Big( s \tilde{N}_i + \tilde{B}_{1}^i(s)\Big) + {\bf 1}_{s\geq 1}\Big(\frac{r-\ell-s}{r-\ell-1} \tilde{N}_i + \tilde{B}_{2}^i(s-1)\Big).
\end{equation*}
Here, $\tilde{N}_{i}$ are independent Gaussian random variables with
\begin{equation}\label{sigmaeqn}
\var(\tilde{N}_{i}) = \sigma^2 = \frac{r-\ell-1}{r-\ell},
\end{equation}
and the curves $\tilde{B}_{j}^{i}$, $j=1,2$ are independent standard Brownian bridges of respective durations $1$ and $r-\ell-1$.

Let $\signc$ denote the sigma-field generated by $\big\{\tilde{B}^i_j: 1\leq i\leq k, 1\leq j\leq 2\big\}$. Informally, the information in $\signc$ is the data described by $B_i:[\ell,r]\to \R$, $1\leq i\leq k$, lacking the data which determines $B_i(\ell+1)$, $1\leq i\leq k$.

Define the conditional measure $\wxylr\big( A \big\vert  \signc \big)$ for an event $A$ as the conditional expectation of ${\bf 1}_A$ with respect to the sigma-field $\signc$.
With respect to the conditional measure  $\wxylr\big( \cdot \big\vert  \signc \big)$,
define the $\signc$-measurable random subset $\tilde{Q} \subseteq \Rkle$ as the set of points $\bar{z} \in \Rkle$ such that
if $\big(\tilde{N}_1^1,\ldots,\tilde{N}_1^k \big) = \bar{z}$ then the event $\ncf_{[\ell,\ell+1] \cup [r-1,r]}$ occurs.
Note that almost surely with respect to  $\wxylr\big( \cdot \big\vert  \signc \big)$, $\tilde{Q}$ is closed under $\Rklezero$-displacement (Definition \ref{closed-disp}).

Let $\bar{a} \in \Rkle$ denote the successive values at $\ell + 1$ of the linear interpolations between $\big(\ell,B_i(\ell)\big)$ and $\big(r,B_i(r)\big)$ for $1 \leq i \leq k$:
\begin{equation}\label{eqaxyi}
a_i = x_i + \frac{y_i - x_i}{r - \ell}.
\end{equation}

These values are such that $\tilde{N}_i + a_i = B_{i}(\ell+1)$.

Let $\mu_{\bar{a},\sigma,\tilde{Q}}$ denote the measure defined in the statement of Lemma \ref{lemqn} with parameters specified by $\bar{a}$ in (\ref{eqaxyi}), $\sigma$ in (\ref{sigmaeqn}), and $\tilde{Q}$ defined above. Note that  $\mu_{\bar{a},\sigma,\tilde{Q}}$ is a $\signc$-measurable random measure that specifies the set of values for the Gaussian vector $\big( \tilde{N}_1,\ldots,\tilde{N}_k \big)$ which would cause the event $\ncf_{[\ell,\ell+1] \cup [r-1,r]}$ to occur.

To summarise, the distribution of $\big( B_1(\ell + 1),\ldots,B_k(\ell + 1) \big)$ under  $\wxylr\big( \cdot \big\vert \ncf_{[\ell,\ell+1] \cup [r-1,r]}  \big)$ may be constructed by a two-step procedure. First, the random measure
 $\mu_{\bar{a},\sigma,\tilde{Q}}$ is obtained by realizing the data that generates $\signc$; and then a random variable $Z$ having the law
 $\mu_{\bar{a},\sigma,\tilde{Q}}$ is constructed. The distribution  of $\big( B_1(\ell + 1),\ldots,B_k(\ell + 1) \big)$ under  $\wxylr\big( \cdot \big\vert \ncf_{[\ell,\ell+1] \cup [r-1,r]}  \big)$ has the law of
$Z+ \bar{a}$ averaged over the two steps.

We will apply Lemma~\ref{lemqn} to  $\mu_{\bar{a},\sigma,\tilde{Q}}$. We need to treat separately those parameter choices for this measure such that the measure assigns significant probability to $k$-vectors with high first component, because Lemma~\ref{lemqn} is not useful for such measures. To do this, we introduce a variable $M'\geq M$ to be fixed later.
Let $\Theta$ denote the $\signc$-measurable event
$$
\Theta =\Big\{\mu_{\bar{a},\sigma,\tilde{Q}} \big\{ x_1 > M' \big\} \leq \e \Big\}.
$$

Observe then that we can bound the probability of the event of interest in Lemma~\ref{lemellr} as follows:
\begin{equation}\label{eqwxy}
 \wxylr \Big( \min_{1 \leq i \leq k-1} \big\vert B_{i+1}(\ell + 1) - B_i(\ell + 1) \big\vert \leq \delta \, \Big\vert \, \nc^f_{[\ell,\ell + 1] \cup [r-1,r]} \Big)  \leq  A_1 + A_2,
\end{equation}
where
$$
A_1 = \wxylr \Big( \min_{1 \leq i \leq k-1} \big\vert B_{i+1}(\ell + 1) - B_i(\ell + 1) \big\vert \leq \delta , \Theta \, \Big\vert \, \nc^f_{[\ell,\ell + 1] \cup [r-1,r]} \Big)
$$
and
$$
A_2 =  \wxylr \Big( \Theta^c \, \Big\vert \,  \nc^f_{[\ell,\ell + 1] \cup [r-1,r]} \Big).
$$
Let us first bound $A_2$. Write $\mathcal{\tilde E}$ for the expectation operator of $\wxylr \big(  \cdot \big\vert  \nc^f_{[\ell,\ell + 1] \cup [r-1,r]} \big)$. Then $\mathcal{\tilde E}(\cdot \vert \signc)$ represents the conditional expectation with respect to the sigma-field $\signc$. As such, we may write
$$
\wxylr \Big(  \tilde{N}_1 > M' \, \Big\vert \,  \nc^f_{[\ell,\ell + 1] \cup [r-1,r]} \Big)
=  \mathcal{\tilde E}\big({\bf 1}_{\tilde{N}_1 >M'} \big)
= \mathcal{\tilde E} \Big( \mathcal{\tilde E}\big({\bf 1}_{\tilde{N}_1 >M'} \vert \signc \big) \Big).
$$
The random variable $\mathcal{\tilde E}\big({\bf 1}_{\tilde{N}_1 >M'} \big\vert \signc \big)$ is measurable with respect to $\signc$ and is bounded between $0$ and $1$. Multiplying it by the indicator function of $\Theta^{c}$ serves only to decrease the full expectation. However, from the definition of $\mu_{\bar{a},\sigma,\tilde{Q}}$ and of $\Theta$, we have that, $\wxylr \big(  \cdot \big\vert  \nc^f_{[\ell,\ell + 1] \cup [r-1,r]} \big)$ almost surely,
$$
{\bf 1}_{\Theta^{c}} \mathcal{\tilde E} \big( {\bf 1}_{\tilde{N}_1 >M'} \big\vert \signc \big) = {\bf 1}_{\Theta^{c}} \mu_{\bar{a},\sigma,\tilde{Q}}\big(x_1> M'\big).
$$
On the event $\Theta^c$, $\mu_{\bar{a},\sigma,\tilde{Q}}(x_1> M') \geq \e$ and hence
$$
\mathcal{\tilde E}\Big( {\bf 1}_{\Theta^{c}} \mathcal{\tilde E}\big({\bf 1}_{\tilde{N}_1 >M'}  \big\vert \signc \big) \Big) \geq \mathcal{\tilde E}\big({\bf 1}_{\Theta^{c}}\e\big) =\e A_2.
$$

On the other hand,
\begin{equation}\label{eqwxyr}
\wxylr \Big(  \tilde{N}_1 > M' \, \Big\vert \,  \nc^f_{[\ell,\ell + 1] \cup [r-1,r]} \Big) =  \wxylr \Big(  B_1(\ell + 1) > M' + a_1  \, \Big\vert \,  \nc^f_{[\ell,\ell + 1] \cup [r-1,r]} \Big).
\end{equation}

Recalling that the absolute value of each component of $\bar{x}$ and $\bar{y}$ is assumed to be at most $M$, it follows that the same is true of the components of $\bar{a}$.
Hence, Lemma \ref{lemconmax} implies that the right-hand side of (\ref{eqwxyr}) is at most $\e^2$, provided that $M'>M$ is chosen to be high enough (depending on $\e,K,M,I_1,I_2$). Hence, $\e A_2 \leq \e^2$ or $A_2\leq \e$.


Note that the bound on which we insist for $M'$ enters into the definition of the event $\Theta$,
so that this choice is relevant as we now turn to show that $A_1\leq \e$.
First observe that, in light of (\ref{sigmaeqn}) and the conditions placed on the intervals $I_1$ and $I_2$ by Proposition \ref{propconcave}, there exists a constant  $c\in (0,1)$ such that $\sigma\in(c,c^{-1})$ for all $\ell\in I_1$ and $r\in I_2$. Note also that $(\bar{x},\bar{y})\in \XYfM$ implies that $||\bar{a}||_{\infty}\leq M<M'$. Thus we may apply Lemma \ref{lemqn}, with the choice $Q = \tilde{Q}$ and with $\bar{a}$ and $\sigma$ as just specified, to find that, $\wxylr\big( \cdot \big\vert  \signc \big)$ almost surely,
\begin{equation*}
\mu_{\bar{a},\sigma, \tilde Q} \big\{ \bar{x} \in \Rkle : \vert  (x_i + a_i) - (x_j + a_j) \vert < \delta, x_1 \leq M'  \big\}
 \leq \e.
\end{equation*}
The choice of $\delta$ here depends on $\e, M',c$ and hence depends on the parameters $\e, K,M,I_1,I_2$ as necessary.
By the definition of $\Theta$, it follows that likewise that, $\wxylr\big( \cdot \big\vert  \signc \big)$ almost surely,
$$
\mu_{\bar{a},\sigma, \tilde Q} \big\{ \bar{x} \in \Rkle : \vert  (x_i + a_i) - (x_j + a_j) \vert < \delta \big\} \leq 2\e {\bf 1}_{\Theta} + {\bf 1}_{\Theta^c}.
$$
Summing over $1\leq i<j\leq k$ shows that, restricted to $\Theta$,
$$
\mu_{\bar{a},\sigma, \tilde Q} \big\{ \bar{x} \in \Rkle : \vert  (x_i + a_i) - (x_j + a_j) \vert < \delta \textrm{ for all }i\neq j \big\} \leq k^2 \e.
$$
We again appeal to conditional expectations:
$$
A_1 = \mathcal{\tilde E}\Bigg(\mathcal{\tilde E}\bigg({\bf 1}_\Theta {\bf 1} \Big\{ \min_{1 \leq i \leq k-1}\big\vert B_{i}(\ell + 1) - B_{i+1}(\ell + 1) \big\vert \leq \delta \Big\} \bigg\vert \signc\bigg)\Bigg).
$$
Note then that
\begin{eqnarray*}
& & \mathcal{\tilde E}\Big({\bf 1}_\Theta {\bf 1} \big\{ \min_{1 \leq i \leq k-1} \big\vert B_{i}(\ell + 1) - B_{i+1}(\ell + 1) \big\vert \leq \delta \big\} \Big\vert \signc\Big) \nonumber \\
& = & {\bf 1}_\Theta \mu_{\bar{a},\sigma, \tilde Q} \big\{ \bar{x} \in \Rkle : \big\vert  (x_i + a_i) - (x_j + a_j) \big\vert \leq \delta \textrm{ for all }i\neq j \big\} \leq {\bf 1}_\Theta k^2 \e. \nonumber
\end{eqnarray*}
The equality holds due to the relationship between  $\mu_{\bar{a},\sigma, \tilde Q}$ and
the distribution of $\big( B_1(\ell + 1),\ldots,B_k(\ell + 1) \big)$ under  $\wxylr\big( \cdot \big\vert \ncf_{[\ell,\ell+1] \cup [r-1,r]}  \big)$ explained in the third paragraph after (\ref{eqaxyi}).

In this way, we see that $A_1 \leq \mathcal{\tilde E}({\bf 1}_\Theta \e) \leq k^2 \e$.  Rescaling $\e$ to absorb the constants, we complete the proof of Lemma \ref{lemellr} in the case that $t = \ell + 1$; the case $t=r-1$ is analogous.
\end{proof}

The other component of the proof of Lemma \ref{lemellplusone} is the following result.
\begin{lemma}\label{lembkeps}
There exists an $\e_0=\e_0(K,I_1,I_2)$ such that for each $\ell\in I_1$, $r\in I_2$,
 $f\in \CMK$ and $(\bar{x},\bar{y})\in \XYfM$,
\begin{equation}\label{eqellb}
\wxylr \Big( B_{k}(\ell+1)>f(\ell)+K, B_{k}(r-1)>f(r)+K \, \Big\vert \, \ncf_{[\ell,\ell + 1] \cup [r-1,r]} \Big) > \e_0.
\end{equation}
\end{lemma}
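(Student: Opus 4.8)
The plan is to bound
\[
\wxylr\big(B_k(\ell+1)>f(\ell)+K,\ B_k(r-1)>f(r)+K\,\big\vert\,\ncf_A\big),\qquad A=[\ell,\ell+1]\cup[r-1,r],
\]
from below by a constant free of $M$ and of the detailed shape of $f,\bar x,\bar y$, via a normalization followed by a Cameron--Martin push-up. By Lemma~\ref{lemmonotonetwo}, lowering the entrance and exit data only lowers the curves in the coupling; since $(\bar x,\bar y)\in\XYfM$ guarantees merely $x_k>f(\ell)$, $y_k>f(r)$, I would replace $\bar x,\bar y$ by the clustered data $x_i^\star=f(\ell)+(k-i+1)\eta$, $y_i^\star=f(r)+(k-i+1)\eta$ with $\eta=\min\{1,k^{-1}(x_k-f(\ell)),k^{-1}(y_k-f(r))\}\in(0,1]$, which only decreases the probability above. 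Then subtracting the linear interpolation of $(\ell,f(\ell))$ and $(r,f(r))$ is an affine change of variables that exactly preserves the conditional probability, turns the entrance and exit data into $\bar a_\eta:=(k\eta,\dots,\eta)$, turns $f$ into some $\bar f\in\textrm{CM}^{2K}_{\ell,r}$ with $\bar f(\ell)=\bar f(r)=0$ (the factor $2$ coming from a tilt of slope at most $K$, using that $f\in\CMK$ forces $|f(\ell)-f(r)|\le K(r-\ell)$), and turns the two thresholds into $\gamma_1=K+\tfrac{f(\ell)-f(r)}{r-\ell}$ and $\gamma_2=K+\tfrac{f(r)-f(\ell)}{r-\ell}$, which satisfy $\gamma_1+\gamma_2=2K$ and both lie in $[0,2K]$. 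All dependence on $M$ and on $f$ has disappeared, and it suffices to find $\e_0=\e_0(k,K,I_1,I_2)>0$ with
\[
\mathcal W^{\ell,r}_{k;\bar a_\eta,\bar a_\eta}\big(B_k(\ell+1)>\gamma_1,\ B_k(r-1)>\gamma_2\,\big\vert\,\nc^{\bar f}_A\big)\ge\e_0
\]
for all $\ell\in I_1$, $r\in I_2$, $\bar f\in\textrm{CM}^{2K}_{\ell,r}$ vanishing at $\ell,r$, $\gamma_1\in[0,2K]$, $\gamma_2=2K-\gamma_1$, and $\eta\in(0,1]$.

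For this I would use a push-up by a deterministic bump. Fix $C,C'>0$ to be chosen, set $h_0=2K+C+1$, and let $\psi_i:[\ell,r]\to\R$ be piecewise linear with breakpoints only at $\ell+1$ and $r-1$, equal to $h_0+(k-i)$ on $[\ell+1,r-1]$ and linear down to $0$ on $[\ell,\ell+1]$ and on $[r-1,r]$. Then $\psi_1\ge\dots\ge\psi_k\ge0$ throughout $A$ and $\psi_k(\ell+1)=\psi_k(r-1)=h_0$, so the map $\Phi(B)=B+\psi$ sends $\nc^{\bar f}_A$ into itself (ordering is preserved since $\psi_i\ge\psi_{i+1}$ on $A$; staying above $\bar f$ is preserved since $\psi_i\ge0$ on $A$) and carries the event
\[
R:=\big\{-C\le B_k(\ell+1),\ -C\le B_k(r-1),\ B_1(\ell+1)\le C',\ B_1(r-1)\le C'\big\}\cap\nc^{\bar f}_A
\]
into $\{B_k(\ell+1)>\gamma_1,\ B_k(r-1)>\gamma_2\}\cap\nc^{\bar f}_A$, because on $R$ one has $B_k(\ell+1)+h_0\ge 2K+1>\gamma_1$ and likewise at $r-1$. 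Each $\psi_i$ being piecewise linear with breakpoints only at $\ell+1,r-1$, its Cameron--Martin pairing with a bridge is an affine function of that bridge's values at $\ell+1$ and $r-1$ alone; on $R$ all the coordinates $B_i(\ell+1),B_i(r-1)$ are confined to $[-C,C']$ by the ordering, so by the Cameron--Martin theorem applied to each bridge, $\mathcal W^{\ell,r}_{k;\bar a_\eta,\bar a_\eta}(\Phi(R))\ge c_0\,\mathcal W^{\ell,r}_{k;\bar a_\eta,\bar a_\eta}(R)$, where the likelihood ratio is a product over $i$ of terms $\exp(-\langle\psi_i,\cdot\rangle-\tfrac12\lVert\psi_i\rVert^2)$ that are bounded below on $R$, and $c_0=c_0(k,K,C,C',I_1,I_2)>0$. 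Therefore the displayed quantity is at least $c_0\cdot\mathcal W^{\ell,r}_{k;\bar a_\eta,\bar a_\eta}\big(R\,\big\vert\,\nc^{\bar f}_A\big)$, and it remains to choose $C,C'$ so that this conditional probability exceeds $1/2$ uniformly in $\ell,r,\bar f,\gamma_1,\eta$, whereupon $\e_0:=c_0/2$ works.

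That last requirement is the tightness, uniform in the stated parameters, of the laws of $\big(B_1(\ell+1),B_k(\ell+1),B_1(r-1),B_k(r-1)\big)$ under $\mathcal W^{\ell,r}_{k;\bar a_\eta,\bar a_\eta}(\,\cdot\mid\nc^{\bar f}_A)$. The two upper-tail bounds ($B_1(\ell+1)\le C'$, $B_1(r-1)\le C'$) come directly from Lemma~\ref{lemconmax} (applied with $2K$ in the role of its ``$K$'' and with the bounded normalized data), which is already uniform. For the lower tails of $B_k$ at the interior times, monotonicity in the barrier (Lemma~\ref{monotonicity}, which also holds with the resampling set $A$ in place of the full interval by the same proof) lets one drop $\bar f$ down to $-\infty$, reducing matters to showing that, for $k$ Brownian bridges from $\bar a_\eta$ to $\bar a_\eta$ conditioned only on being ordered on $A$, both $B_k(\ell+1)$ and $B_k(r-1)$ are exponentially unlikely to be very negative, uniformly in $\eta\in(0,1]$ and $(\ell,r)\in I_1\times I_2$. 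I would obtain this from the fact that $\mathcal W^{\ell,r}_{k;\bar a_\eta,\bar a_\eta}(\,\cdot\mid\ncmi_A)$ converges weakly, as $\eta\downarrow0$, to the law of $k$ Brownian bridges issued from the coincident point $0$ at $\ell$ and at $r$ and conditioned to be ordered on $A$ --- a non-degenerate ensemble under which $(B_k(\ell+1),B_k(r-1))$ has a density, hence a genuine vanishing lower tail --- together with continuity of this conditional law in $(\ell,r,\eta)\in I_1\times I_2\times[0,1]$ and compactness. This step --- producing an estimate that survives the $\eta\downarrow0$ degeneration, equivalently controlling the vanishingly small conditioning probability $\mathcal W(\ncmi_A)$ as the entrance and exit points coalesce --- is the main obstacle: a crude ``numerator positive, denominator $\le1$'' bound is useless because the numerator and $\mathcal W(\ncmi_A)$ degenerate at a common rate, which is precisely why the monotone reductions (erasing $M$ and $\bar f$) and the push-up (replacing the degenerate conditioning by the tractable event $R$) are needed.
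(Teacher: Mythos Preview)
Your argument is plausible in outline but substantially more elaborate than what is needed, and the very obstacle you flag as ``main'' is one you created by ordering the monotone reductions suboptimally. The paper's proof proceeds by applying the two monotonicities in the opposite order from yours: it \emph{first} drops $f$ to $-\infty$ in the conditioning (Lemma~\ref{monotonicity}), and only \emph{then} lowers the entrance and exit data (Lemma~\ref{lemmonotonetwo}). Once $f$ has been removed from the conditioning, there is no longer any requirement that the lowered data stay above $f(\ell)$ and $f(r)$; hence one may take the fixed, unit-separated values $x'_i=f(\ell)-1/2-(i-1)$, $y'_i=f(r)-1/2-(i-1)$, with no small parameter $\eta$ at all. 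The conditional probability then dominates the unconditional joint probability
\[
\wxylrprime\Big(B_k(\ell+1)>f(\ell)+K,\ B_k(r-1)>f(r)+K,\ \nc^{-\infty}_{[\ell,r]}\Big),
\]
and this is bounded below by a corridor estimate: each bridge independently stays in a unit-width tube that rises with slope $K$ on $[\ell,\ell+1]$, crosses, and descends with slope $-K$ on $[r-1,r]$, with probability at least $c^{r-\ell}$ for some $c=c(K)\in(0,1)$. No Cameron--Martin shift, no degeneration, no compactness argument.

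By contrast, you keep $f$ in the conditioning while lowering the data, which forces the clustered $\bar a_\eta$ with $\eta$ possibly tending to zero; you then need the push-up and the uniform tightness of $(B_1,B_k)$ at interior times as $\eta\downarrow0$. Your Cameron--Martin step is fine, and the upper-tail tightness via Lemma~\ref{lemconmax} is legitimate. The lower-tail step, however, is only sketched: you assert weak convergence of $\mathcal W^{\ell,r}_{k;\bar a_\eta,\bar a_\eta}(\,\cdot\mid\ncmi_A)$ as $\eta\downarrow0$ and joint continuity in $(\ell,r,\eta)$, and invoke compactness. These claims are believable (the $\eta\downarrow0$ limit is a Karlin--McGregor-type singular conditioning near both endpoints), but they are nontrivial and you have not supplied the argument; note also that the conditioning here is on $\ncmi_A$ rather than on $\ncmi_{[\ell,r]}$, so the standard construction of non-intersecting bridges from coincident endpoints does not apply verbatim. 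In short: your route can likely be completed, but the paper's reordering of the two monotone reductions dissolves your ``main obstacle'' entirely and replaces the whole second half of your argument by a two-line corridor bound.
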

\begin{proof}
Define $\bar{x}',\bar{y}' \in \Rkle$ as $x'_i = f(\ell) - 1/2 - (i-1)$ and $y'_i = f(r) - 1/2 - (i - 1)$ for
$1 \leq i \leq k$. Note that
$x_i \geq f(\ell)$ and
$y_i \geq f(r)$ for such $i$, because
$(\bar{x},\bar{y})\in
\XYfM$; hence, each $x'_i$ (or $y'_i$) is less than its $x$- (or $y$-)counterpart.
\begin{figure}
\begin{center}
\includegraphics[width=0.8\textwidth]{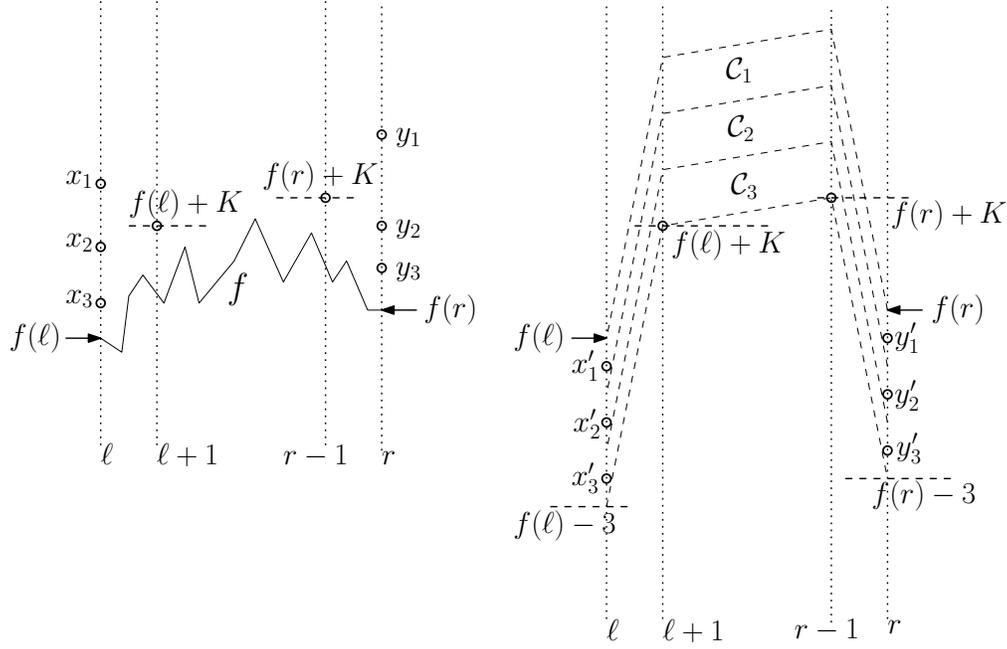} \\
\end{center}
\caption{In the case $k=3$ depicted, Lemma \ref{lembkeps} analyses
three independent Brownian bridges on $[\ell,r]$ from $x_i$ to $y_i$ conditioned to
avoid each other and the curve $f$; this boundary data is shown in the left-hand sketch. (Labels indicate $y$-coordinates except the four lowermost labels in each sketch.) Monotonicities reduce the problem to studying a system of such bridges from $x'_i$ to $y'_i$, as shown in the right-hand sketch; considering the event that these bridges remain in the respective corridors $\mathcal{C}_i$ then provides a lower bound on the probability in (\ref{eqwxi}).}\label{figcorridorsagain}
\end{figure}
By applications of the monotone couplings of Lemmas \ref{monotonicity} and \ref{lemmonotonetwo}, the probability in (\ref{eqellb})
will not rise if we replace the function $f$ in the conditioning by $-\infty$, nor if we consider the law $\wxylrprime$ in place of $\wxylr$. We must therefore bound below
$$
\wxylrprime \Big(  B_{k}(\ell+1)>f(\ell)+K, B_{k}(r-1)>f(r)+K \, \Big\vert \,  \nc_{[\ell,\ell + 1] \cup [r-1,r]}^{-\infty} \Big).
$$
We begin by noting that this quantity is at least
\begin{equation}\label{eqwxi}
\wxylrprime \Big(  B_{k}(\ell+1)>f(\ell)+K, B_{k}(r-1)>f(r)+K,  \nc_{[\ell,r]}^{-\infty} \Big).
\end{equation}
The desired lower bound for (\ref{eqwxi}) is now easily seen by consulting Figure~\ref{figcorridorsagain}.
The unit-width corridors $\mathcal{C}_i$, $1 \leq i \leq k$ have disjoint interiors, so that the event in (\ref{eqwxi})
will happen if each Brownian bridge
$B_i:[\ell,r] \to \R$, $B_i(\ell) = x'_i$, $B_i(r) = y'_i$ remains in the corridor $\mathcal{C}_i$. We claim that each bridge does so
with probability at least $c^{r - \ell}$, for some $c>0$ whose only dependence on the parameters is on $K$.
This follows readily by
decomposing  $B_i:[\ell,r] \to \R$ by splitting its domain $[\ell,\ell+1] \cup [\ell+1,r-1] \cup [r-1,r]$
(using Lemma~\ref{browniandecomp} with a coordinate change of the domain) and
applying a bound (from Lemma~\ref{lembridgemod}) on the maximum modulus of standard Brownian bridge; the argument relies on
each of the three slopes of either boundary curve of $\mathcal{C}_i$ being bounded above in absolute value by a constant which depends only on $K$. The first and the third of these slopes are $K$ and $-K$; the second, which is $\tfrac{f(r) - f(\ell)}{r - \ell - 2}$, is in absolute value at most $3K$,
because $\vert f(r) - f(\ell) \vert \leq K(r - \ell)$ is implied by $f \in \CMK$, while $r - \ell - 2 \geq 1$
by assumption on $I_1$ and $I_2$. This means that the probability of interest (\ref{eqwxi}) is at least $c_K^{k(r - \ell)}$.
The difference $r - \ell$ being at most a constant depending on the pair $(I_1,I_2)$, we see that (\ref{eqwxi}) is bounded below by some $\e_0>0$ having dependencies as desired in the statement of the lemma we are presently proving. This bound, in turn, proves the desired lower bound to complete the proof of this lemma.
\end{proof}

\subsubsection{Step 2} We now begin the second step of the groundwork for proving Proposition~\ref{propconcave}.
In the first step, we learnt in Lemma~\ref{lemellplusone} that there is a uniformly positive probability that the top $k$ curves at times $\ell + 1$ and $r-1$ are $\e$-well-spaced under the law  $\wxylr\big(\,\cdot\, \big\vert \ncf_{[\ell,\ell + 1] \cup [r-1,r]}\big)$.
In the second step, we will exploit  the as-yet-unused conditioning on $\nc_{[\ell + 1,r-1]}$ to improve this inference, finding that, under  $\wxylr\big(\,\cdot\, \big\vert \ncf_{[\ell,r]}\big)$,
these curves are in fact well-separated throughout $[\ell +1,r-1]$ with high probability. This result is achieved by first finding a lower bound (valid with high probability) on the acceptance probability of resampling the line ensemble on the interval $[\ell + 1, r-1]$.

\begin{definition}
Let $\bprime$ be the marginal distribution of $\wxylr\big(\,\cdot\, \big\vert \ncf_{[\ell,r]}\big)$ on $k$ curves from $B':[\ell,\ell+1]\cup [r-1,r]\to \R$ induced by setting $B' = B$ restricted to $[\ell,\ell+1]\cup[r-1,r]$. Likewise, let $\bstar$ be the marginal distribution of $\wxylr\big(\,\cdot\, \big\vert \ncf_{[\ell,\ell+1]\cup[r-1,r]}\big)$ on $k$ curves $B^*_i:[\ell,\ell+1]\cup [r-1,r]\to \R$.
\end{definition}

We prove Lemmas \ref{lemradnik} and \ref{lemfirststep} which together lead to Lemma \ref{eqheps} -- the main result of step 2.
\begin{lemma}\label{lemradnik}
Let $\bar{x},\bar{y} \in \R^k_>$ and let  $f:[\ell,r]\to \R$ be measurable with $x_{k}>f(\ell)$ and $y_k>f(r)$. Then $\bprime$ and $\bstar$ can both be considered as measures on the same probability space of continuous curves from $[\ell,\ell+1]\cup [r-1,r]\to \R$ and can be coupled in such a way that the Radon-Nikodym derivative of $\bprime$ with respect to $\bstar$ is given by
\begin{equation*}
\frac{d\bprime}{d\bstar}(\omega) = \zxyf^{-1} \AP\Big(\ell+1,r-1,\{B^{*}_{i}(\ell+1)\}_{i=1}^{k},\{B^{*}_{i}(r-1)\}_{i=1}^{k}, f \Big)(\omega).
\end{equation*}
The normalization constant $\zxyf$ is given by
\begin{equation*}
\zxyf = \int \AP\Big(\ell+1,r-1,\{B_{i}^{*}(\ell+1)\}_{i=1}^{k},\{B^{*}_{i}(r-1)\}_{i=1}^{k}, f \Big)(\omega) d\bstar(\omega).
\end{equation*}
\end{lemma}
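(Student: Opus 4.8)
The plan is to reduce the claim to the Markov property of Brownian bridge applied at the two deterministic times $\ell+1$ and $r-1$. Both $\bprime$ and $\bstar$ are, by their definitions, probability laws on the space $\Omega^\sharp$ of continuous $\R^k$-valued functions on $[\ell,\ell+1]\cup[r-1,r]$, being the marginals of $\wxylr(\,\cdot\,\vert\,\ncf_{[\ell,r]})$ and of $\wxylr(\,\cdot\,\vert\,\ncf_{[\ell,\ell+1]\cup[r-1,r]})$ respectively (the conditionings being legitimate whenever, as in all our applications and as is implicit in the statement, both non-crossing events have positive $\wxylr$-probability; this is guaranteed in particular when $f$ is continuous with $x_k>f(\ell)$ and $y_k>f(r)$). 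I would first record two elementary observations: the set-theoretic identity $\ncf_{[\ell,r]}=\ncf_{[\ell,\ell+1]\cup[r-1,r]}\cap\ncf_{[\ell+1,r-1]}$, which holds because the ordering of the curves together with their domination of $f$ at every point of $[\ell,r]$ is exactly the conjunction of the same requirement over $[\ell,\ell+1]\cup[r-1,r]$ and over $[\ell+1,r-1]$ (the overlap at the junction points $\ell+1$ and $r-1$ being harmless); and the fact that the function $\phi(\omega):=\AP\big(\ell+1,r-1,\{\omega_i(\ell+1)\}_{i=1}^k,\{\omega_i(r-1)\}_{i=1}^k,f\big)$, where $f$ is read as its restriction to $[\ell+1,r-1]$, is a Borel function of $\omega\in\Omega^\sharp$ valued in $[0,1]$ (measurability of $\AP$ in its boundary data being standard).

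The crucial step is the following. Let $\mathcal{G}$ be the sigma-field generated by the restriction of the $k$ curves to $[\ell,\ell+1]\cup[r-1,r]$; then $\ncf_{[\ell,\ell+1]\cup[r-1,r]}$ and the endpoint values $B_i(\ell+1),B_i(r-1)$ are $\mathcal{G}$-measurable. Under $\wxylr$ the $k$ curves are independent Brownian bridges, so conditionally on $\mathcal{G}$ the restrictions of these curves to the middle interval $[\ell+1,r-1]$ are $k$ independent Brownian bridges from $B_i(\ell+1)$ to $B_i(r-1)$ and are conditionally independent of $\mathcal{G}$. Since $\ncf_{[\ell+1,r-1]}$ is a functional of exactly this middle portion together with the deterministic $f$, this yields $\wxylr\big(\ncf_{[\ell+1,r-1]}\,\vert\,\mathcal{G}\big)=\phi$ almost surely; in other words, the acceptance probability in the statement is literally a conditional probability. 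The rest is bookkeeping: for bounded Borel $G$ on $\Omega^\sharp$, pulled back to a $\mathcal{G}$-measurable functional, the decomposition of $\ncf_{[\ell,r]}$ and the tower property give $\wxylr\big[G\,\mathbf{1}_{\ncf_{[\ell,r]}}\big]=\wxylr\big[G\,\mathbf{1}_{\ncf_{[\ell,\ell+1]\cup[r-1,r]}}\,\phi\big]$; dividing both sides by $\wxylr\big(\ncf_{[\ell,\ell+1]\cup[r-1,r]}\big)$ and reading off the marginals identifies $\int G\,d\bprime$ with $\big(\int G\,\phi\,d\bstar\big)\big/\big(\int\phi\,d\bstar\big)$. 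Taking $G\equiv 1$ shows the normalisation equals $\wxylr(\ncf_{[\ell,r]})/\wxylr(\ncf_{[\ell,\ell+1]\cup[r-1,r]})$, which is positive and, since $\phi\le 1$, finite; this both names it $\zxyf$ and establishes $d\bprime/d\bstar=\zxyf^{-1}\phi$, which is the asserted formula. A coupling, if wanted, is then the obvious accept/reject one: sample $\omega\sim\bstar$ and keep it with probability proportional to $\phi(\omega)$.

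I do not expect a genuine obstacle. The two places needing care are: getting the conditional law of the middle portion exactly right via the Markov property, which is precisely what promotes the formal expression $\AP(\ell+1,r-1,\ldots)$ to an honest $\mathcal{G}$-measurable conditional probability; and verifying that $\ncf_{[\ell,\ell+1]\cup[r-1,r]}$ and the values $B_i(\ell+1),B_i(r-1)$ are $\mathcal{G}$-measurable so that the tower-property manipulation is valid, together with the bookkeeping of the overlapping subintervals at $\ell+1$ and $r-1$.
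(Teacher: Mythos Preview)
Your proposal is correct and is precisely the natural elaboration of the paper's own proof, which reads in its entirety ``This follows immediately from definitions.'' The Markov-property / tower-property argument you spell out is exactly what underlies that one-line proof.
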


\begin{proof}
This follows immediately from definitions.
\end{proof}

\begin{lemma}\label{lemfirststep}
There exists an $\e_0=\e_0(K,M) > 0$ such that, for all $\e<\e_0$ and all $\ell\in I_1$, $r\in I_2$, $f\in \CMK$ and $(\bar{x},\bar{y})\in \XYfM$,
\begin{equation}
 \bstar \bigg( \AP\Big(\ell+1,r-1,\{B^{*}_{i}(\ell+1)\}_{i=1}^{k},\{B^{*}_{i}(r-1)\}_{i=1}^{k}, f \Big) > \epsilon \bigg) > \epsilon.
\end{equation}
\end{lemma}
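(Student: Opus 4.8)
The plan is to reduce the lemma to two uniform estimates. It suffices to produce an event $G$, measurable with respect to $\bstar$, together with positive constants $c_1=c_1(K,M)$ and $c_2=c_2(K,M)$ (we suppress the mild dependence on $I_1,I_2$), such that $\bstar(G)\ge c_2$ and, on $G$, the quantity $\AP\big(\ell+1,r-1,\{B^*_i(\ell+1)\}_{i=1}^k,\{B^*_i(r-1)\}_{i=1}^k,f\big)$ is at least $c_1$. Indeed, for $\e<\e_0:=\min(c_1,c_2)$ one then has
\[
\bstar\big(\AP(\cdots)>\e\big)\ \ge\ \bstar\big(\AP(\cdots)\ge c_1\big)\ \ge\ \bstar(G)\ \ge\ c_2\ >\ \e,
\]
which is the claim. (Alternatively one could route through Lemma~\ref{lemradnik}: since $\AP\le 1$, the assumption $\bstar(\AP>\e)\le\e$ would force the normalisation $\zxyf=\int\AP\,d\bstar$ to be at most $2\e$ by splitting $\int d\bprime=1$ over $\{\AP>\e\}$ and its complement, so it would be enough to bound $\int\AP\,d\bstar$ below by a constant; but the direct route above is shorter, and the lower bound $\int_G\AP\,d\bstar\ge c_1c_2$ would do equally well there.)

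For the event $G$, fix a small $\e'\in(0,1)$ and a large $M'$, both depending only on $K,M,I_1,I_2$ and to be chosen. Take $G=\ws_{\e'}\cap\{B^*_1(\ell+1)\le M',\ B^*_1(r-1)\le M'\}$, where I also enlarge the definition of $\ws_{\e'}$ so that on it $B^*_k(\ell+1)$ exceeds $\CM^f(\ell+1)$ by at least one and $B^*_k(r-1)$ exceeds $\CM^f(r-1)$ by at least one (this costs nothing: re-running the corridor argument in the proof of Lemma~\ref{lembkeps} with the corridors pushed upward by one unit still gives a uniformly positive probability, and this extra buffer passes through the proof of Lemma~\ref{lemellplusone} untouched). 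By Lemma~\ref{lemellplusone}, $\bstar(\ws_{\e'})=\wxylr\big(\ws_{\e'}\,\big|\,\ncf_{[\ell,\ell+1]\cup[r-1,r]}\big)>\e'$ once $\e'$ is below the threshold there. By Lemma~\ref{lemconmax}, applied at $t=\ell+1$ and at $t=r-1$ with $R$ taken large (recall that $r-\ell$ ranges over a bounded set determined by $I_1,I_2$), the $\bstar$-probability that $B^*_1(\ell+1)>M'$ or $B^*_1(r-1)>M'$ is below $\e'/2$ for $M'$ large. Hence $\bstar(G)\ge\e'/2=:c_2$.

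It remains to bound $\AP$ below on $G$. Write $\bar x'=\big(B^*_1(\ell+1),\dots,B^*_k(\ell+1)\big)$ and $\bar y'=\big(B^*_1(r-1),\dots,B^*_k(r-1)\big)$; on $G$ these lie in $\Rkle$ with consecutive gaps at least $\e'$, with $x'_1,y'_1\le M'$, and with $x'_k\ge\CM^f(\ell+1)+1$, $y'_k\ge\CM^f(r-1)+1$. We must show $\mathcal{W}^{\ell+1,r-1}_{k;\bar x',\bar y'}\big(\ncf_{[\ell+1,r-1]}\big)\ge c_1>0$ uniformly, and this is a corridor estimate in the spirit of the proof of Lemma~\ref{lembkeps}. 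Because $f\in\CMK$, the graph of $f$ lies below the two lines of slopes $K$ and $-K$ through $\big(\ell,f(\ell)\big)$ and $\big(r,f(r)\big)$; combined with $x'_k>f(\ell)+K$, $y'_k>f(r)+K$ and with $r-\ell$ bounded, this pins $\CM^f$ on $[\ell+1,r-1]$ to within a constant height of a straight segment joining the boundary data of the bottom curve. One then constructs piecewise-linear guide curves $g_1>\dots>g_k$ on $[\ell+1,r-1]$, each matching $\big(x'_i,y'_i\big)$ at the endpoints, mutually separated by at least $\e'$ (this follows from the gap condition since one may take $g_i-g_{i+1}$ linear), and with $g_k$ rising to sit above $\CM^f$ (hence above $f$) throughout the interior, the one-unit buffer ensuring this persists near the endpoints. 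A tube of radius $\e'/4$ about each $g_i$ then lies inside $\{B_1>\dots>B_k>f\}$ and the tubes are order-separated; writing $B_i=($linear interpolation$)+\beta_i$ with $\beta_i$ a standard Brownian bridge, the event that $B_i$ stays in its tube is the event that $\beta_i$ stays within $\e'/4$ of a fixed piecewise-linear function with a bounded number of breakpoints, which has probability bounded below by a constant (decompose $\beta_i$ at those breakpoints via Lemma~\ref{browniandecomp} and apply the maximal-modulus bound of Lemma~\ref{lembridgemod}, exactly as in Lemma~\ref{lembkeps}). Taking the product over the $k$ independent bridges gives $\AP(\cdots)\ge c_1$ on $G$, and the lemma follows.

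The main obstacle is the last step, and within it the behaviour of the bottom guide curve $g_k$ near the endpoints $\ell+1$ and $r-1$: a resampled Brownian bridge started exactly at $B^*_k(\ell+1)$ can wiggle downward immediately, so if $B^*_k(\ell+1)$ were only infinitesimally above $\CM^f(\ell+1)$ (and $f$ had a poor modulus of continuity there), the bottom tube could dip below $f$; this is precisely why $G$ must carry the extra constant buffer, harvested by the strengthened form of Lemmas~\ref{lembkeps}--\ref{lemellplusone}. Given that buffer and the band bound on $\CM^f$ coming from $f\in\CMK$ together with the boundedness of $r-\ell$, the rest of the construction is a routine (if somewhat intricate) case analysis on the slope of the bottom chord, and the uniformity over $\ell\in I_1$, $r\in I_2$ is automatic.
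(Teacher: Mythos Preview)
Your proposal is correct and follows essentially the same approach as the paper: exhibit an event of uniformly positive $\bstar$-probability on which the endpoint data at $\ell+1$ and $r-1$ is well-spaced and the bottom curve sits comfortably above the barrier, and then run a corridor argument to bound $\AP$ below on that event. The paper's proof is a direct instance of your framework with $G=\ws_\epsilon$; it makes the corridor construction explicit by distinguishing two cases (``close'' versus ``not close'' indices $i$) according to whether the chord from $B_i(\ell+1)$ to $B_i(r-1)$ has slope below $K$ in absolute value. Close indices get a kinked tent-shaped centerline $l_i(t)=\min\{B_i(\ell+1)+K(t-\ell-1),\,B_i(r-1)+K(r-1-t)\}$, which is exactly what keeps the bottom tube above $f\in\CMK$. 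Your ``piecewise-linear guide curves \dots\ case analysis on the slope of the bottom chord'' is the same construction in outline.

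Two small remarks. First, the upper bound $M'$ on $B_1^*$ is unnecessary for this lemma: the probability that a Brownian bridge stays inside an $\e'$-tube about a piecewise-linear centerline depends only on the tube width, the interval length, and the number of kinks, not on the absolute heights or chord slopes; the paper does not introduce such a cap here. Second, your insistence on an explicit constant buffer of $B_k^*$ above $\CM^f$ is well taken. The paper's $\ws_\epsilon$ only requires $B_k(\ell+1)>f(\ell)+K$ strictly, with no quantitative gap, so the $\epsilon/2$-tube could in principle dip below $\CM^f$ near the endpoints; your fix via a shifted version of Lemma~\ref{lembkeps} is the clean way to close that. This also has the pleasant side-effect that your $c_1$ and $c_2$ are genuine constants, so $\e_0=\min(c_1,c_2)$ gives the lemma as stated directly, whereas the paper's bound $\AP>c^{k\epsilon^{-2}}$ on $\ws_\epsilon$ leaves a small relabeling to do.
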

\begin{proof}
We claim that, on the event $\ws_\epsilon$,
\begin{equation}\label{eqacprob}
\AP\Big(\ell+1,r-1,\{B_{i}(\ell+1)\}_{i=1}^{k},\{B_{i}(r-1)\}_{i=1}^{k}, f \Big) > c^{k \epsilon^{-2}}
\end{equation}
for some constant $c \in (0,1)$.
Figure~\ref{figcorridors} illustrates the derivation of this claim.
To bound below the acceptance probability in (\ref{eqacprob}), we will construct a disjoint collection of  width-$\e$ corridors $\mathcal{D}_i$, $1 \leq i \leq k$, all of which lie above the curve $f$.
Set $E_i$ to be the event that $\mathcal{D}_i$ contains the range of an independent Brownian bridge starting at $\ell+1$ with value $B_i(\ell+1)$ and ending at $r-1$ with value $B_i(r-1)$, for all $1\leq i \leq k$. We will find a lower bound on the probability of $E_i$ valid for each $1 \leq i \leq k$.

To define the corridor $\mathcal{D}_i$, we consider two cases. For $1 \leq i \leq k$, the index $i$ is called {\it close} if $\vert B_i(\ell + 1) - B_i(r-1) \vert < K(r - \ell - 2)$. If an index $i$ is close, we will define the domain $\mathcal{D}_i$ to be a corridor with a single kink located so as to insure that it stays strictly above the function $f$; whereas if $i$ is not close, then $\mathcal{D}_i$ will be an unkinked corridor.

For $1 \leq i \leq k$, we define the corridor-centre function $l_i:[\ell + 1,r] \to \R$.
If index $i$ is close, set
$l_i(t) = \min \big\{ B_i(\ell + 1) + K(t- \ell +1), B_i(r-1) + K(r-1 - t) \big\}$ for
$t \in [\ell + 1,r-1]$,
and note that the range of $l_i$ is a polygonal path comprising two segments in this case.
If index $i$ is not close, then set $l_i(t) =  B_i(\ell + 1) + \tfrac{B_i(r-1) - B_i(\ell + 1)}{r - \ell - 2}(t- \ell  - 1)$.

In either case, set $\mathcal{D}_i \subseteq \R^2$,
$$
 \mathcal{D}_i = \Big\{ (x,y) \in \R^2: \ell + 1\leq x \leq r-1, \, l_i(x) -\e/2  \leq y \leq l_i(x) + \e/2  \Big\}.
$$

It is readily confirmed that, for $\e < 2$, the occurrence of $\ws_\e$ ensures that the $k$ corridors $\mathcal{D}_i$ are pairwise disjoint and that their union does not meet the curve $f$. That $f$ is avoided is insured by the bound of $K$ on the convex majorant of $f$ and the kinking of corridors associated with close $i$'s.

If $i$ is not close, then the probability of $E_i$ is simply the probability that the maximum modulus of a standard Brownian bridge of duration $r - \ell - 2$ is at most $\e$.
If $i$ is close then, writing $t \in [\ell +1,r-1]$ for the $x$-coordinate of the kink in the path $l_i$,
this lower bound may be expressed using the Brownian decomposition
Lemma~\ref{browniandecomp} in terms of a normal random variable of variance $\tfrac{(t-\ell-1)(r-1-t)}{r-\ell -2}$ and two Brownian bridges of durations $t - \ell - 1$ and $r - 1 -t$. Lemma~\ref{lembridgemod} is used to control the deviation of these bridges from linear motion. In either case, we find that the probability of $E_i$ is at least $c^{\epsilon^{-2}}$, with $c=c(K,M,I_1,I_2) \in (0,1)$.

\begin{figure}
\begin{center}
\includegraphics[width=0.7\textwidth]{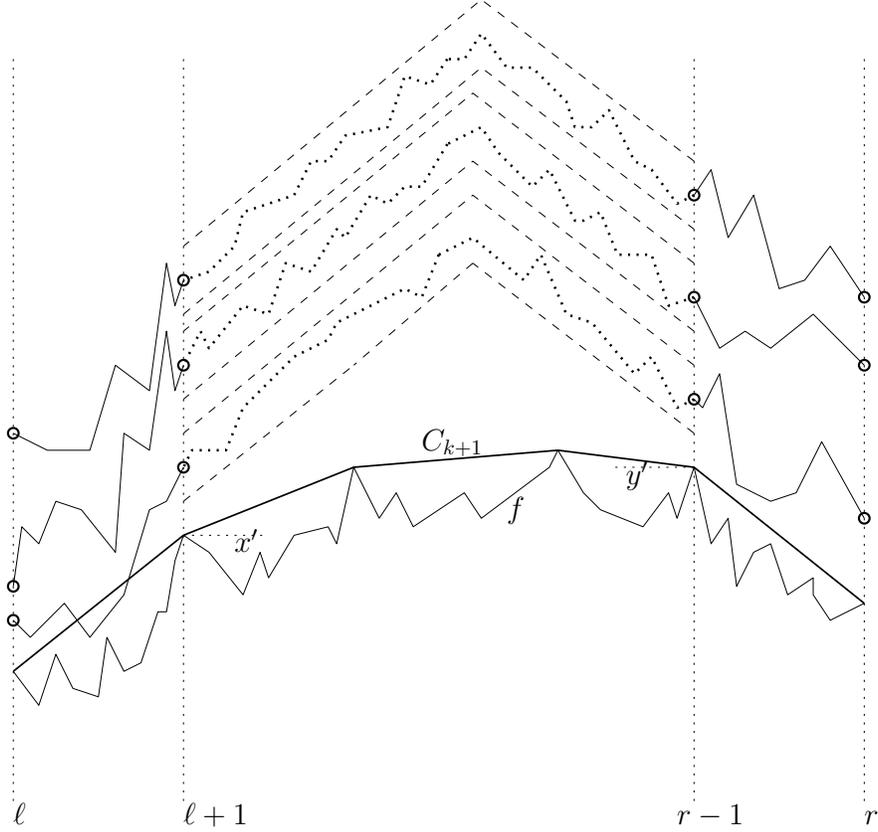} \\
\end{center}
\caption{Illustrating the proof of Lemma \ref{lemfirststep} in the case that $k=3$, and each index $1 \leq i \leq 3$ is close. The bold lines indicate curves realizing the event $\ws_\epsilon$. The three corridors are of width $\epsilon$, and, with conditional probability at least  $c^{k \epsilon^{-2}}$, each will contain a Brownian bridge between the midpoints of the intervals that form the ends of each corridor, as illustrated by the dotted curves in the figure.}\label{figcorridors}
\end{figure}

 To complete the argument, observe that, by Lemma \ref{lemellplusone} and the definition of $\bstar$, there exists $\e_0 > 0$ such that for all $0<\e<\e_0$, $\bstar(\ws_\epsilon)>\e$. This implies that
\begin{equation*}
 \bstar \bigg( \AP\Big(\ell+1,r-1,\{B^{*}_{i}(\ell+1)\}_{i=1}^{k},\{B^{*}_{i}(r-1)\}_{i=1}^{k}, f \Big) > c^{k\e^{-2}} \bigg) > \epsilon
\end{equation*}
and hence completes the proof.
\end{proof}

We now state the main result of step 2.
\begin{lemma}\label{eqheps}
There exists an $\e_0=\e_0(K,M) > 0$ such that for all $0<\e<\e_0$ and all $\ell\in I_1$, $r\in I_2$,  $f\in \CMK$ and $(\bar{x},\bar{y})\in \XYfM$,
\begin{equation}\label{eqbxyf}
\bxyflr \bigg( \AP\Big(\ell+1,r-1,\{B_{i}(\ell+1)\}_{i=1}^{k},\{B_{i}(r-1)\}_{i=1}^{k}, f \Big) > \epsilon^3 \bigg) > 1 - \epsilon.
\end{equation}
\end{lemma}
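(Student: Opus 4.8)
The plan is to derive~(\ref{eqbxyf}) from the Radon--Nikodym description of Lemma~\ref{lemradnik} together with the acceptance-probability lower bound of Lemma~\ref{lemfirststep}, supplemented by a crude lower bound on the normalization constant $\zxyf$. Throughout I write
\[
g(\omega) = \AP\Big(\ell+1,r-1,\{B^{*}_{i}(\ell+1)\}_{i=1}^{k},\{B^{*}_{i}(r-1)\}_{i=1}^{k}, f \Big)(\omega)
\]
for the acceptance-probability functional appearing in Lemmas~\ref{lemradnik} and~\ref{lemfirststep}, viewed as a Borel function on the space of continuous curves from $[\ell,\ell+1]\cup[r-1,r]$ to $\R$ (the same functional governs the left-hand side of~(\ref{eqbxyf})). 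The first step is the observation that $g$ depends on the line ensemble only through the values of the curves at $\ell+1$ and $r-1$; since, by definition, the marginal law of $\bxyflr$ on $[\ell,\ell+1]\cup[r-1,r]$ is $\bprime$, the left-hand side of~(\ref{eqbxyf}) equals $\bprime\big(g>\e^3\big)$, so it suffices to show $\bprime\big(g\le \e^3\big)\le \e$.

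Next I would take $\e_0 = \e_0(K,M)$ to be the constant furnished by Lemma~\ref{lemfirststep}, decreasing it if necessary so that $\e_0\le 1$, and fix $0<\e<\e_0$. Lemma~\ref{lemfirststep} gives $\bstar\big(g>\e\big)>\e$, so the normalization constant from Lemma~\ref{lemradnik} obeys
\[
\zxyf \;=\; \int g(\omega)\,d\bstar(\omega) \;\ge\; \e\,\bstar\big(g>\e\big) \;>\; \e^2 .
\]
Then, applying the identity $d\bprime/d\bstar = \zxyf^{-1} g$ of Lemma~\ref{lemradnik} on the event $\{g\le \e^3\}$,
\[
\bprime\big(g\le \e^3\big) \;=\; \zxyf^{-1}\int_{\{g\le \e^3\}} g(\omega)\,d\bstar(\omega) \;\le\; \zxyf^{-1}\,\e^3 \;<\; \e^{-2}\,\e^3 \;=\; \e .
\]
Combined with the identification of the previous paragraph this yields $\bxyflr\big(g>\e^3\big) = \bprime\big(g>\e^3\big) > 1-\e$, which is exactly~(\ref{eqbxyf}).

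I do not anticipate a genuine obstacle here: given Lemmas~\ref{lemradnik} and~\ref{lemfirststep}, the statement reduces to a short change-of-measure computation, and the bulk of the work in this section lies in establishing those two inputs. The only point demanding care is the measurability bookkeeping of the first paragraph --- namely, that the acceptance-probability functional in~(\ref{eqbxyf}) depends on the line ensemble only through its restriction to the two boundary intervals $[\ell,\ell+1]$ and $[r-1,r]$, so that $\{g>\e^3\}$ is measurable with respect to $\bprime$ (equivalently $\bstar$) and the Radon--Nikodym derivative of Lemma~\ref{lemradnik} applies to it directly.
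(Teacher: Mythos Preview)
Your proposal is correct and follows essentially the same approach as the paper: both arguments note that the acceptance-probability event is measurable with respect to the boundary-interval marginal, invoke Lemma~\ref{lemradnik} to pass from $\bprime$ to $\bstar$ via the density $g/\zxyf$, bound $\zxyf>\e^2$ using Lemma~\ref{lemfirststep}, and then estimate $\bprime(g\le\e^3)\le \zxyf^{-1}\e^3<\e$. The only cosmetic difference is that the paper retains the factor $\bstar(g\le\e^3)\le 1$ explicitly before discarding it, whereas you absorb it immediately.
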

\begin{proof}
The probability in question is the same as under $\bxyf'$, because the acceptance probability in question is measurable with respect to the curves $B_i:[\ell,\ell+1] \cup [r-1,r] \to \R$, $1 \leq i \leq k$. We use Lemma~\ref{lemradnik} to find that
\begin{eqnarray*}
&&\bxyflr \Big( \AP\big(\ell+1,r-1,\{B_{i}(\ell+1)\}_{i=1}^{k},\{B_{i}(r-1)\}_{i=1}^{k}, f \big) \leq \epsilon^3 \Big)\\
&\leq& \zxyf^{-1} \epsilon^3 \bstar\Big( \AP\big(\ell+1,r-1,\{B^{*}_{i}(\ell+1)\}_{i=1}^{k},\{B^{*}_{i}(r-1)\}_{i=1}^{k}, f \big) \leq \epsilon^3 \Big),
\end{eqnarray*}
where
\begin{equation}\label{eqzxyf}
\zxyf =  \int   \AP\Big(\ell+1,r-1,\{B^{*}_{i}(\ell +1)\}_{i=1}^{k},\{B^{*}_{i}(r -1)\}_{i=1}^{k}, f \Big) d  \bstar(\omega) \geq \epsilon^2.
\end{equation}

The inequality in (\ref{eqzxyf}) is due to Lemma \ref{lemfirststep}. Hence, as desired,
$$
\bxyflr \Big( \AP\big(\ell+1,r-1,\{B_{i}(\ell)\}_{i=1}^{k},\{B_{i}(r)\}_{i=1}^{k}, f \big) \leq \epsilon^3 \Big) \leq \epsilon.
$$
\end{proof}

We may now present the proof of Proposition \ref{propconcave} which, when shown, completes the proof of Proposition \ref{propacceptprob}.

\begin{proof}[Proof of Proposition \ref{propconcave}]
Choosing $\delta=\e^{3}$ in Lemma \ref{eqheps}, we obtain the assertion (\ref{APeqn}) regarding acceptance probabilities.

Obtaining the assertion (\ref{GAPeqn}) about the minimal gap requires a little more effort but is closely related to (\ref{APeqn}).
Roughly speaking, we wish to show that, if the minimal gap becomes very small, so must the acceptance probability, so as to obtain (\ref{GAPeqn}) from (\ref{APeqn}). To argue this, we will use Lemma~\ref{lembrownbridge} to bound hitting probabilities for a Brownian bridge.

Note that the law $\bxyflr$ can be formed in two steps: first by realizing its marginal $\bprime$ on
$[\ell,\ell + 1] \cup [r-1,r]$, and second by realizing its conditional marginal on $[\ell +1,r-1]$.
For $M' > 0$, define the event $\aone$ (in terms of data available after the first step -- i.e., the sigma-field generated by the projection onto $[\ell,\ell + 1] \cup [r-1,r]$):
$$
\aone  = \Big\{ \AP\big(\ell+1,r-1,\{B_{i}(\ell+1)\}_{i=1}^{k},\{B_{i}(r-1)\}_{i=1}^{k}, f \big) > \epsilon^3 \Big\}
 \cap \bigcap_{t \in \{ \ell +1,r-1 \}, 1 \leq i \leq k}
\Big\{ \vert B_i(t) \vert  \leq M' \Big\}.
$$

Recalling that we have assumed that $\ell\in I_1$, $r \in I_2$ (with $\sup I_1< \inf I_2 + 3$), $f\in \CMK$ and $(\bar{x},\bar{y})\in \XYfM$,  note that Lemmas  \ref{lemconmax} and \ref{eqheps} imply that, for any given $\e > 0$, there exists  $M'=M'(\e,K,M,I_1,I_2) > 0$ large enough such that
\begin{equation}\label{eqprobaone}
\bxyflr \big(  \aone  \big) > 1 - 2\epsilon.
\end{equation}
In fact, Lemma  \ref{lemconmax} only bounds the probability of $\{B_i(t)\geq M'\}$. We claim that the probability of the event $\{B_i(t)\leq -M'\}$ can easily be bounded as going to zero as $M'$ increases by appealing to the monotonicity result of Lemma \ref{monotonicity}. Indeed, if the function $f$ were replaced by $f\equiv -\infty$, the claim would be immediately true, and by monotonicity, it is likewise true for any $f\geq -\infty$.

Let $m = (\ell + r)/2$ denote the midpoint of the interval $[\ell + 1,r-1]$. We now adopt the shorthand that, on the probability space on which $\bxyflr$ is defined, $\bar{x}'$ and $\bar{y}'$
denote $\big(B_1(\ell +1),\ldots,B_k(\ell +1) \big)$ and  $\big(B_1(r -1),\ldots,B_k(r - 1) \big)$.
Suppose now that $\bar{x}'$ and $\bar{y}'$ are such that $\aone$ is satisfied. Let $\delta > 0$ be a parameter whose value will be set later. For each $i\in \{1,\ldots k-1\}$, let $\chi_i \in [\ell +1,m]$ denote the infimum of those $t \in [\ell + 1,m]$ such that $B_i(t) - B_{i+1}(t) \leq \delta$; if no such point exists, we formally set $\chi_i = \infty$. Then on the event $\{\chi_i<\infty\}$, $(\chi_i,m)$ is a stopping domain (for the top $k$ curves) under the independent Brownian bridge law $\mathcal{W}_{k;\bar{x}',\bar{y}'}^{\ell +1,r-1}$ on the curves $\{B_i\}_{i=1}^{k}$ . Observe that, given $\chi_i < \infty$ and the trajectories $\big\{ B_i(t), B_{i+1}(t): \ell+1 \leq t \leq \chi \big\}$, the conditional probability that $B_i(t) > B_{i+1}(t)$ for all $t \in [\chi_i,r-1]$, is equal to
\begin{equation}\label{eqwdelta}
\mathcal{W}_{1;\delta,y'_i - y'_{i+1}}^{0,2(r - 1 - \chi_i)} \Big( B(s) > 0 \, \, \forall \, \,  s \in \big[0,2(r - 1 - \chi_i)\big]  \Big).
\end{equation}
This expression naturally arises by considering the difference $B_i - B_{i+1}$; the factor of two appearing in the expression for the duration arises because taking the difference of Brownian motions doubles the variance of the process.

Noting that $y_i' - y_{i+1}' \leq 2M'$ (as we assumed $y'$ is such that $A$ can be satisfied) and $r-1 - \chi_i \geq (r - \ell)/2 \geq 1/2$ by assumption,
an evident stochastic domination shows that the probability (\ref{eqwdelta}) is at most
$$
\mathcal{W}_{1;\delta,2M'}^{0,1} \Big( B(s) > 0 \, \, \forall \, \,  s \in \big[0,1\big]  \Big).
$$
Lemma~\ref{lembrownbridge} implies that this quantity is at most $8 \pi^{-1/2} \big( \delta M' \big)^{1/2}$.

Combining these bounds and invoking the strong Gibbs property of Lemma \ref{stronggibbslemma}, we learn that
$$
\mathcal{W}_{k;\bar{x}',\bar{y}'}^{\ell+1,r-1} \big( \nc_{\ell +1,r-1} \big\vert \chi_i < \infty \big) \leq 8 \pi^{-1/2} \big( \delta M' \big)^{1/2}.
$$
By the definition of $\chi_i$, this means that
\begin{equation}\label{eqwxyone}
\mathcal{W}_{k;\bar{x}',\bar{y}'}^{\ell + 1,r-1} \Big( \ncf_{\ell +1,r-1} \Big\vert
 \, \exists \, t \in [\ell +1,m] \textrm{ such that $B_i(t) - B_{i+1}(t) \leq \delta$} \Big) \leq  8 \pi^{-1/2} \big( \delta M' \big)^{1/2}.
\end{equation}
Similarly, we find that
\begin{equation}\label{eqwxytwo}
\mathcal{W}_{k;\bar{x}',\bar{y}'}^{\ell+1,r-1} \Big( \ncf_{\ell +1,r-1} \Big\vert \,
\exists \, t \in [m,r-1] \textrm{ such that $B_i(t) - B_{i+1}(t) \leq \delta$} \Big) \leq  8 \pi^{-1/2} \big( \delta M' \big)^{1/2}.
\end{equation}
Note that if $A,B,C$ are three events in a probability space $(\Omega,\mathbb{P})$ such that
$\Omega = A \cup B$ and
$\max \big\{ \mathbb{P} \big( C \big\vert A \big),  \mathbb{P} \big( C \big\vert B \big) \big\} \leq \phi$, then
$\mathbb{P} \big( C \big) \leq 2 \phi$. This fact may be employed in concert with (\ref{eqwxyone}) and (\ref{eqwxytwo}) to yield that
\begin{equation*}
\mathcal{W}_{k;\bar{x}',\bar{y}'}^{\ell+1,r-1} \Big( \ncf_{\ell +1,r-1} \Big\vert \,
\exists \,  t \in [\ell + 1,r-1] \textrm{ such that $B_i(t) - B_{i+1}(t) \leq \delta$} \Big) \leq 16 \pi^{-1/2} \big( \delta M' \big)^{1/2}.
\end{equation*}
On the event $\aone$, we have that (from the acceptance probability bound)
\begin{equation*}
\mathcal{W}_{k;\bar{x}',\bar{y}'}^{\ell+1,r-1} \big( \ncf_{\ell +1,r-1} \big) \geq \epsilon^3.
\end{equation*}
Hence,
\begin{equation*}
\mathcal{W}_{k;\bar{x}',\bar{y}'}^{\ell+1,r-1} \Big(
\, \exists \, t \in [\ell + 1,r-1] \textrm{ such that $B_i(t) - B_{i+1}(t) \leq \delta$}
 \, \Big\vert \,
\ncf_{[\ell +1,r-1]}
\Big) \leq  16 \pi^{-1/2} \big( \delta M' \big)^{1/2} \epsilon^{-3}.
\end{equation*}
Taking $\delta = (M')^{-1} \tfrac{\epsilon^8}{2^8 \pi (k-1)^2}$
 and summing over $i \in \{ 1,\ldots,k-1\}$, we conclude that, should $\aone$ occur,
\begin{equation}\label{eqwxyprime}
\mathcal{W}_{k;\bar{x}',\bar{y}'}^{\ell+1,r-1} \Big(
 \min_{1 \leq i \leq k-1}\inf_{s \in [\ell + 1,r-1]} \big( B_i(s) - B_{i+1}(s) \big)  \leq   (M')^{-1} \tfrac{\epsilon^8}{2^8 \pi (k-1)^2}
 \, \Big\vert \,
\ncf_{[\ell +1,r-1]}
\Big) \leq \epsilon.
\end{equation}

Let us conclude the proof now. The law $\bxyflr$ can be realized by first sampling from its marginal $\bxyf'$ on $[\ell,\ell+1]\cup [r-1,r]$ and then sampling the remaining path on $[\ell+1,r-1]$ from the measure $\mathcal{W}_{k;\bar{x}',\bar{y}'}^{\ell+1,r-1}\big(\cdot \big\vert \ncf_{[\ell +1,r-1]}\big)$ where $\bar{x}'$ and $\bar{y}'$ are determined from the first step as $\bar{x}' = \big(B_1(\ell+1),\ldots, B_k(\ell+1)\big)$ and $\bar{y}'=\big(B_1(r-1),\ldots, B_k(r-1)\big)$.

By (\ref{eqprobaone}), the event $A$ fails to occur under $\bxyf'$ with probability at most $2\e$. On the event $A$ we may appeal to (\ref{eqwxyprime}). Combining these two facts yields the bound
\begin{equation*}
\bxyflr \bigg( \min_{1 \leq i \leq k -1}\inf_{s \in [\ell + 1,r-1]} \big( B_i(s) - B_{i+1}(s) \big)  \leq   (M')^{-1} \frac{\epsilon^8}{2^8 \pi (k-1)^2}   \bigg) \leq 3 \epsilon.
\end{equation*}
We have obtained (\ref{GAPeqn}) and have thus completed the proof of Proposition~\ref{propconcave}.
\end{proof}

\section{Proof of monotonicity and strong Gibbs property}\label{proofslemmas}

The proofs of the monotonicity lemmas are based on an analogous result for non-intersecting random walks which can be proved by coupling Markov chains. The proof of the strong Gibbs property is a fairly straightforward extension of the usual proof of the strong Markov property.

\begin{proof}[Proof of Lemmas \ref{monotonicity} and \ref{lemmonotonetwo}]
The approach used in proving these two lemmas is identical so we will demonstrate it in the case of Lemma \ref{monotonicity} and briefly explain how it applies equally well to Lemma~\ref{lemmonotonetwo}. We demonstrate this coupling for Brownian motion by exhibiting an analogous coupling of constrained simple symmetric random walks which then converge to the Brownian motions. The result for the random walks follows from a coupling of Monte-Carlo Markov chains which converge to the constrained path measures and hence demonstrates the coupling.

Consider $\bar{x}^n=(x_1^n\ldots, x_k^n)$ and $\bar{y}^n=(y_1^n\ldots, y_k^n)$ such that $x_{i}^n, y_{i}^n \in n^{-1}(2\Z)$ for all $1\leq i\leq k$ and $n>1$ and such that $x_{i}^n \to x_i$ as well as $y_i^n\to y_i$. Write $\wxy^{a,b;n}$ for the law of $k$ independent random walks
$X^n_i:[a,b] \to n^{-1} \Z$, $1 \leq i \leq k$, that satisfy $X^n_i(a) = x_i^n$ and $X^n_i(b) = y_i^n$ and take steps of size $n^{-1}$ in discrete time increments of size $n^{-2}$.
Define $\PP^{k;n}_f$ as the law of $\wxy^{a,b;n} \big( \cdot \big\vert \nc^{f}_{[a,b]} \big)$ (i.e., the probability distribution of $k$ random walk bridges $X^{f;n}=\{X^{f;n}_i\}_{i=1}^k$,  $X^{f;n}_i:[a,b]\rightarrow n^{-1}\Z$ with $X^{f;n}_i(a)=x_i, X^{f;n}_i(b)=y_i$ conditioned on the event of non-intersection with each other and the barrier $f$).
We will now demonstrate that if $f\leq g$ then there is a coupling measure $\PP^{k;n}_{f,g}$ on which $\{X^{f;n}_i\}_{i=1}^{k}$ and $\{X^{g;n}_i\}_{i=1}^{k}$ are defined with marginal distributions $\PP^{k;n}_f$ and $\PP^{k;n}_g$ under which almost surely $X^{f;n}_i(s)\leq X^{g;n}_i(s)$ for all $i$ and all $s\in [a,b]$. As $n \to \infty$, the invariance principle for Brownian bridge shows that this coupling measure converges to the desired coupling of Brownian bridges necessary to complete our proof.

Thus it remains to demonstrate the existence of $\PP^{k;n}_{f,g}$. In order to do this, we introduce a
continuous-time Markov chain dynamic on the random walk bridges $\{X^{f;n}_i\}_{i=1}^{k}$ and $\{X^{g;n}_i\}_{i=1}^{k}$. Let us write the collection of random walk bridges at time $t$ as $(X^{f;n})_t$ and $(X^{g;n})_t$. The time $0$ configurations of $(X^{f;n})_0$ and $(X^{g;n})_0$ are chosen to be the lowest possible trajectories of random walk bridges conditioned to touch neither each other nor the barriers $f$ and $g$ (respectively). It is clear that such a lowest collection of paths exists and is unique and that these initial trajectories are ordered $(X^{f;n}_i(s))_0 \leq (X^{g;n}_i(s))_0$ for all  $s\in [a,b]$ and $i\in \{1,\ldots, k\}$. The dynamics of the Markov chain are as follows: for each $s\in n^{-2}\Z [a,b]$, each $i\in \{1,\ldots, k\}$ and each $m\in \{+,-\}$ ($+$ stands for up flip and $-$ for down flip), there are independent exponential clocks which ring at rate one. When the clock labeled $(s,i,m)$ rings, one attempts to change $X^{f;n}_i(s)$ to $X^{f;n}_i(s)+2m n^{-1}$. This is the only change at that instant attempted, and it is only successful if the change can be made without disturbing the condition of non-intersection of the random walks with themselves and $f$. Likewise, according to the {\it same} clock, one attempts to change $X^{g;n}_i(s)$ to $X^{g;n}_i(s)+2mn^{-1}$, and the same conditions apply.

The first key fact (which is readily checked) is that these time dynamics preserve ordering, so that, for all $t\geq 0$, $(X^{f;n}_i(s))_t \leq (X^{g;n}_i(s))_t$ for all  $s\in [a,b]$ and $i\in \{1,\ldots, k\}$. The second key fact is that the marginal distributions of these time dynamics converge to the invariant measure for this Markov chain, which is given by the measures we have denoted by $\PP^{k;n}_{f}$ and $\PP^{k;n}_{g}$. This fact follows since we have a finite state Markov chain which is irreducible with obvious invariant measure. Combining these two facts implies the existence of the coupling measure $\PP^{k;n}_{f,g}$ as desired.

The proof of Lemma \ref{lemmonotonetwo} also relies on the same Markov chain which preserves ordering. The ordering of the initial data is a direct consequence of the ordering of the starting and ending points, and hence the same argument carries over.
\end{proof}

\begin{proof}[Proof of Lemma \ref{stronggibbslemma}]
The proof proceeds along the same lines as the proof of the strong Markov property for Brownian motion (see Section 3 of Chapter 8 of \cite{durrett}).

The right-hand side of equation (\ref{strongeqn}) is $\mathcal{F}_{ext}(k,\ell,r)$-measurable. Consider functionals $H$ which map line ensembles $\mathcal{L}$ to $\R$. To prove our desired result it suffices to show that for all $H$ which are Borel measurable with respect to $\mathcal{F}_{ext}(k,\mathfrak{l},\mathfrak{r})$,
\begin{equation}\label{stareqn}
\EE\Big[H(\mathcal{L}_1,\ldots,\mathcal{L}_N) \cdot F \big( \mathfrak{l},\mathfrak{r},\mathcal{L}_1\big\vert_{(\mathfrak{l},\mathfrak{r})},\cdots, \mathcal{L}_k\big\vert_{(\mathfrak{l},\mathfrak{r})} \big) \Big] = \EE\Big[H(\mathcal{L}_1,\ldots, \mathcal{L}_N)\cdot \bxyf^{\mathfrak{l},\mathfrak{r}}\big[F(\mathfrak{l},\mathfrak{r},B_1,\ldots, B_k)\big]\Big].
\end{equation}

By the Monotone Class theorem, it suffices to check equation (\ref{stareqn}) for functions $F\in bC^k$ of the form
\begin{equation*}
F\big((\ell,r,f_1,\ldots, f_k)\big) := \prod_{j=1}^{n} g_j(\ell,r,f_{i_{j}}(t_j)),
\end{equation*}
where $(\ell,r,f_1,\ldots, f_k)\in C^k$ and $g_j:\R^3\to \R$ are bounded continuous functions, $i_{j}\in \{1,\ldots,k\}$, and $t_j\in [a,b]$.

We will also make use of the continuity of expectations with respect to boundary conditions. In particular, consider $f:[a,b]\to \R$ fixed and $\bar{x}_n$, $\bar{y}_n$, $\ell_n$ and $r_n$ converging to $\bar{x}$, $\bar{y}$, $\ell$ and $r$. Then for $F\in bC^k$,
\begin{equation}\label{continuityeqn}
\lim_{n\to \infty} \mathcal{B}_{\bar{x}_n,\bar{y}_n,f}^{\ell_n,r_n}\big[F(\ell_n,r_n,B_1,\ldots,B_k)\big] = \mathcal{B}_{\bar{x},\bar{y},f}^{\ell,r}\big[F(\ell,r,B_1,\ldots,B_k)\big].
\end{equation}

We will approximate the stopping domain $(\mathfrak{l},\mathfrak{r})$ via a sequence of stopping domains $(\mathfrak{l}_n,\mathfrak{r}_n)$ which are defined by
$\mathfrak{l}_n = (j+1)2^{-n}$ where $j$ is such that $j2^{-n} \leq \mathfrak{l} < (j+1)2^{-n}$ (if no such $j$ exists then $\mathfrak{l}_n=-\infty$); and $\mathfrak{r}_n =j2^{-n}$ where $j2^{-n} < \mathfrak{r} \leq (j+1)2^{-n}$ (if no such $j$ exists then $\mathfrak{r}_n=\infty$).
It is clear that $(\mathfrak{l}_n,\mathfrak{r}_n)$ form stopping domains which converge to $(\mathfrak{l},\mathfrak{r})$.

Let $A=\{-\infty<\mathfrak{l}\leq \mathfrak{r}<\infty\}$, which coincides with the intersection of the events $\{-\infty<\mathfrak{l}_n\leq \mathfrak{r}_n<\infty\}$ over all $n\geq 1$.

On the event $A$, from the convergence of the stopping domains and the continuity of the $\mathcal{L}_i$, (\ref{continuityeqn}) holds with $\ell, r$ replaced by $\mathfrak{l},\mathfrak{r}$, and with $\bar{x}_n=\{\mathcal{L}_i(\mathfrak{l}_n)\}_{i=1}^k$, $\bar{y}_n=\{\mathcal{L}_i(\mathfrak{r}_n)\}_{i=1}^k$ and $f(\cdot) = \mathcal{L}_{k+1}(\cdot)$ or $-\infty$ if $k=N$.


Using the above deduction as well as the bounded convergence theorem, we then have that

\begin{eqnarray*}
& & \EE\Big[H(\mathcal{L}_1,\ldots, \mathcal{L}_N)\cdot \bxyf^{\mathfrak{l},\mathfrak{r}}\big[F(\mathfrak{l},\mathfrak{r},B_1,\ldots, B_k)\big]\cdot{\bf 1}_{A}\Big]\\
&=& \EE\Big[H(\mathcal{L}_1,\ldots, \mathcal{L}_N)\cdot \lim_{n\to \infty}  \mathcal{B}_{\bar{x}_n,\bar{y}_n,f}^{\mathfrak{l}_n,\mathfrak{r}_n}\big[F(\mathfrak{l}_n,\mathfrak{r}_n,B_1,\ldots, B_k)\big]\cdot {\bf 1}_A\Big]\\
&=& \lim_{n\to \infty} \EE\Big[H(\mathcal{L}_1,\ldots, \mathcal{L}_N)\cdot \mathcal{B}_{\bar{x}_n,\bar{y}_n,f}^{\mathfrak{l}_n,\mathfrak{r}_n}\big[F(\mathfrak{l}_n,\mathfrak{r}_n,B_1,\ldots, B_k)\big]\cdot{\bf 1}_A\Big].
\end{eqnarray*}

By the law of total probability and Fubini's theorem,
\begin{eqnarray*}
 & & \EE\Big[H(\mathcal{L}_1,\ldots, \mathcal{L}_N)\cdot \mathcal{B}_{\bar{x}_n,\bar{y}_n,f}^{\mathfrak{l}_n,\mathfrak{r}_n}\big[F(\mathfrak{l}_n,\mathfrak{r}_n,B_1,\ldots, B_k)\big]\cdot {\bf 1}_A\Big]\\
 & = & \sum_{-\infty <i<j<\infty} \EE\Big[H(\mathcal{L}_1,\ldots, \mathcal{L}_N)\cdot \mathcal{B}_{\bar{x}_n,\bar{y}_n,f}^{\mathfrak{l}_n,\mathfrak{r}_n}\big[F(\mathfrak{l}_n,\mathfrak{r}_n,B_1,\ldots, B_k)\big] \cdot {\bf 1}  \{\mathfrak{l}_n=(i+1)2^{-n}, \mathfrak{r}_n= j2^{-n}\}\Big].
\end{eqnarray*}
Then using the Brownian Gibbs property on each term in the summation above, it follows that the above is equivalent to
\begin{eqnarray*}
& & \sum_{-\infty <i<j<\infty} \EE\Big[H(\mathcal{L}_1,\ldots, \mathcal{L}_N) \cdot F\big(\mathfrak{l}_n,\mathfrak{r}_n,\mathcal{L}_1\big\vert_{(\mathfrak{l}_n,\mathfrak{r}_n)},\cdots, \mathcal{L}_k\big\vert_{(\mathfrak{l}_n,\mathfrak{r}_n)}\big) {\bf 1}\{\mathfrak{l}_n=(i+1)2^{-n}, \mathfrak{r}_n= j2^{-n}\}\Big] \\
&=& \EE\Big[H(\mathcal{L}_1,\ldots, \mathcal{L}_N) \cdot F\big(\mathfrak{l}_n,\mathfrak{r}_n,\mathcal{L}_1\big\vert_{(\mathfrak{l}_n,\mathfrak{r}_n)},\cdots, \mathcal{L}_k\big\vert_{(\mathfrak{l}_n,\mathfrak{r}_n)}\big) {\bf 1}_A\Big].
\end{eqnarray*}
By our choice of $F$, the mapping $(\mathfrak{l},\mathfrak{r})\mapsto F\big(\mathfrak{l}_n,\mathfrak{r}_n,\mathcal{L}_1\big\vert_{(\mathfrak{l}_n,\mathfrak{r}_n)},\ldots, \mathcal{L}_k\big\vert_{(\mathfrak{l}_n,\mathfrak{r}_n)}\big)$ is continuous almost surely with respect to the measure $\PP$ on $\mathcal{L}$. Therefore, $F\big(\mathfrak{l}_n,\mathfrak{r}_n,\mathcal{L}_1\big\vert_{(\mathfrak{l}_n,\mathfrak{r}_n)},\ldots, \mathcal{L}_k\big\vert_{(\mathfrak{l}_n,\mathfrak{r}_n)}\big)$ converges to $F\big(\mathfrak{l},\mathfrak{r},\mathcal{L}_1\big\vert_{(\mathfrak{l},\mathfrak{r})},\ldots, \mathcal{L}_k\big\vert_{(\mathfrak{l},\mathfrak{r})}\big)$ on the event $A$. By the dominated convergence theorem, this implies that
\begin{eqnarray*}
& &\lim_{n\to \infty} \EE\Big[H(\mathcal{L}_1,\ldots, \mathcal{L}_N) \cdot  F\big(\mathfrak{l}_n,\mathfrak{r}_n,\mathcal{L}_1\big\vert_{(\mathfrak{l}_n,\mathfrak{r}_n)},\ldots, \mathcal{L}_k\big\vert_{(\mathfrak{l}_n,\mathfrak{r}_n)}\big) {\bf 1}_A\Big]\\
&=&  \EE\Big[H(\mathcal{L}_1,\ldots, \mathcal{L}_N) \cdot  F\big(\mathfrak{l},\mathfrak{r},\mathcal{L}_1\big\vert_{(\mathfrak{l},\mathfrak{r})},\ldots, \mathcal{L}_k\big\vert_{(\mathfrak{l},\mathfrak{r})}\big) {\bf 1}_A\Big] .
\end{eqnarray*}
This completes the proof of the strong Brownian Gibbs property.
\end{proof}

\end{document}